\theoremstyle{definition}
\newtheorem{thm}{Theorem}
\newtheorem{cor}[thm]{Corollary}
\newtheorem{prop}[thm]{Proposition}
\theoremstyle{definition}
\newtheorem{defn}{Definition}
\theoremstyle{definition}
\newtheorem{rem}{Remark}
\theoremstyle{definition}
\newtheorem{problem}{Problem}
\theoremstyle{definition}
\newtheorem{conj}{Conjecture}
\theoremstyle{definition}
\newtheorem{example}{Example}
\theoremstyle{definition}
\newcommand{\qqed}{\hfill $\square$}
\begin{document}

\thispagestyle{empty}

\begin{center}
{\huge \textbf{Topological Structures Of Sets\\[12pt] And Their Subsets}}\\[12pt]
{\Large \textbf{Fei Ma \footnote{~School of Computer Science, Northwestern Polytechnical University, Xi'an, 710072, CHINA \\ email: mafei123987@163.com}~and ~Bing Yao \footnote{~College of Mathematics and Statistics, Northwest Normal University, Lanzhou, 730070 CHINA \\ email: yybb918@163.com}}}\\[12pt]
\end{center}

\begin{quote}
\textbf{Abstract:} For real application and theoretical investigation of ordinary hypergraphs and non-ordinary hypergraphs, researchers need to establish standard rules and feasible operating methods. We propose a visualization tool for investigating hypergraphs by means of the natural topological structure of finite sets and their subsets, so we are able to construct various non-ordinary hypergraphs, and to reveal topological properties (such as hamiltonian cycles, maximal planar graphs), colorings, connectivity, hypergraph group, isomorphism and homomorphism of hypergraphs.\\[6pt]
\textbf{Mathematics Subject classification}: 05C60, 06B30, 05C65, 05C15\\
\textbf{Keywords:} Hypergraph; set-sets; homomorphism; graphic lattice; Topology code theory.
\end{quote}



\setcounter{page}{1}
\pagenumbering{arabic}

\vskip 0.8cm

\section{Introduction And Preliminary}

\subsection{Researching background}

Things in the world are divided into sets of different categories, each with its own subsets. According to certain rules, the subsets of a set form many relational groups, which can be described by hypergraphs in mathematics, as known, hypergraphs are an important branch of graph theory and information science.

\subsubsection{Hypergraphs are combinatorics of finite sets}

In the abstrct of the book of ``Hypergraphs: Combinatorics of Finite Sets'' by Claude Berge (Ref. \cite{Claude-Berge-North-Holland-1989}), Berge said: \emph{Graph Theory has proved to be an extremely useful tool for solving combinatorial problems in such diverse areas as Geometry, Algebra, Number Theory, Topology, Operations Research and Optimization. It is natural to attempt to generalize the concept of a graph, in order to attack additional combinatorial problems.} The idea of looking at a family of sets from this standpoint took shape around 1960. In regarding each set as a ``generalized edge'' and in calling the family itself a ``hypergraph'', the initial idea was to try to extend certain classical results of Graph Theory such as the theorems of Tur\'{a}n and K\"{o}nig. It was noticed that this generalization often led to simplification.

As a \emph{subset system} of a finite set, hypergraph is the most general \emph{discrete structure}, which is widely used in information science, data structure, life science and other fields. However, \emph{hypergraphs are more difficult to draw on paper than graphs}, there are methods for the visualization of hypergraphs, such as Venn diagram, PAOH \emph{etc}.

Professor Wang, in his book \cite{Jianfang-Wang-Hypergraphs-2008}, investigated ordinary hypergraphs (also, hypergraphs), and pointed: ``\emph{When computers become very powerful, the security theory of information science requires hypergraphs to procedure and protect information data}.''

\subsubsection{Applications of hypergraphs}

Since the hypergraph theory was proposed systematically by Claude Berge in 1989, more and more attention has been paid to the research of hypergraph theory and its application.

\textbf{Hypergraphs in the era of post-quantum encryption.} As known, all things of high-dimensional data sets are interrelated and interact on each other, we need to study the complex structures of high-dimensional data sets, and the interaction between high-dimensional data sets, one of research tools is \emph{hypergraph}.

Hypergraphs can tease out of big data sets proposed in ``\emph{How Big Data Carried Graph Theory Into New Dimensions}'' by Stephen Ornes \cite{how-big-data-graph-theory-20210819}. And large data sets in practical application show that the impact of groups often exceeds that of individuals, thereby, it is more and more important to study hypergraphs.

Undirected hypergraphs are useful in modelling such things as satisfiability problems \cite{Uriel-Kim-Han-Eran-2006}, databases \cite{Beeri-Fagin-Maier-Yannakakis-1983}, machine learning \cite{Huang-Zhang-Yu-Jeffrey-Xu-2015}, and Steiner tree problems \cite{Brazil-M-Zachariasen-2015}. They have been extensively used in machine learning tasks as the data model and classifier regularization (mathematics) \cite{Zhou-Dengyong-Huang-Jiayuan-Scholkopf-Bernhard-2006}. The applications include recommender system (communities as hyperedges) \cite{Tan-Bu-Chen-Xu-Wang-He-2013}, image retrieval (correlations as hyperedges) \cite{Liu-Qingshan-Huang-Metaxas-Dimitris-2013}, and bioinformatics (biochemical interactions as hyperedges) \cite{Patro-Rob-Kingsoford-Carl-2013}. Representative hypergraph learning techniques include hypergraph spectral clustering that extends the spectral graph theory with hypergraph Laplacian \cite{Gao-Wang-Zha-Shen-Li-Wu-Xindong-2013}, and hypergraph semi-supervised learning that introduces extra hypergraph structural cost to restrict the learning results \cite{Tian-Hwang-TaeHyun-Kuang-Rui-2009}. For large scale hypergraphs, a distributed framework \cite{Tan-Bu-Chen-Xu-Wang-He-2013} built using Apache Spark is also available.

Directed hypergraphs can be used to model things including telephony applications \cite{Goldstein-A-1982}, detecting money laundering \cite{Ranshous-Stephen-more-2017}, operations research \cite{Ausiello-Giorgio-Laura-Luigi-2017}, and transportation planning. They can also be used to model Horn-satisfiability \cite{Gallo-Longo-Pallottino-Nguyen-1993}. Directed hypergraphs can be used to model things including telephony applications, detecting money laundering, operations research, and transportation planning, and can also be used to model Horn-satisfiability.

\textbf{Hypergraphs in Deep Learning and Neural Network.} Graph networks attempt to unify various networks in deep learning, and are the generalization of graph neural networks (GNN) in deep learning theory and probabilistic graphical model (PGM). A graph network is composed of graph network blocks and has a flexible topology structure, and it can be transformed into various forms of connectionism models including feedforward neural networks (FNN) and recursive neural network (RNN) \emph{etc}. A graph network framework based on graph network blocks defines a class of functions for relational reasoning on graph structure representations. The graph network framework summarizes and extends various graph neural networks, MPNN, and NLNN methods, and supports building complex architectures from simple building blocks. Combination generalization is the primary task for artificial intelligence to achieve similar abilities to humans, and the structured representation and computation are the key to achieving this goal, and the key to achieving this goal is to represent data in a structured manner, as well as structured computation (Ref. \cite{Peter-Battaglia-Jessica-et-al-arXiv-2018}).

The technique of graph/network embedding can help computer to efficiently analyze and process the complex graph data via vector operations. Graph Neural Network, which aggregates the topological information of the neighbourhoods of each node in a graph to implement graph/network embedding. Most current work is based on static hypergraph structure, making it hard to effectively transmit information. To address this problem, Dynamic Hypergraph Neural Networks based on Key Hyperedges (DHKH) model is proposed. Considering that the graph structure data in the real world is not uniformly distributed both semantically and structurally, we define the key hyperedge as the subgraph composed of a small number of key nodes and related edges in a graph. The key hyperedge can capture the key high-order structure information, which is able to enhance global topology expression. With the supporting of hyperedge and key hyperedge, DHKH can aggregate the high-order information and key information (Ref. \cite{Kang-Li-Yao-Li-Jiang-Peng-Wu-Qi-Dong-2022}).

\vskip 0.2cm

In this article, the topological structure of finite sets and their subsets is also \emph{hypergraph}. Since hypergraphs are more difficult to draw on paper than graphs, we will use a visualization tool to study various hypergraphs including \emph{ordinary hypergraphs} (abbreviated as \emph{hypergraphs}) and \emph{non-ordinary hypergraphs}. We will find particular hyperedge sets, and reveal topological properties of hypergraphs, such as colorings, Hamiltonicity, connectivity and homomorphism, and so on. Our goal is to build up standard rules and feasible operating methods for applying hypergraphs to theoretical investigation and practices. In the final section, we construct non-ordinary hypergraphs by pan-operations for a wider range of applications, since hyperedge set matchings and hypergraph homomorphisms can be used to \emph{fully homomorphic encryption} (Ref. \cite{dai-zhang-jiang-binwu-deng-2009}).

\subsection{Preliminary}

\subsubsection{Basic terminology}

Each graph mentioned here has no multiple-edge and loop-edge. Standard concepts and notations of graph theory and hypergraphs used here can be found in \cite{Bondy-2008}, \cite{Claude-Berge-North-Holland-1989}, \cite{Gallian2022} and \cite{Jianfang-Wang-Hypergraphs-2008}. We introduce the following notation and terminology:

A $(p,q)$-graph is a graph of $p$ vertices and $q$ edges. Since the \emph{number} (also, \emph{cardinality}) of elements of a set $X$ is denoted as $|X|$, so the \emph{degree} $\textrm{deg}_G(x)$ of a vertex $x$ in a $(p,q)$-graph $G$ is equals to $\textrm{deg}_G(x)=|N_{ei}(x)|$, where $N_{ei}(x)$ is the set of neighbors of the vertex $x$ in the $(p,q)$-graph $G$. The short hand symbol $[m,n]$ denotes an integer set $\{m,m+1,\dots ,n\}$ subject to $0\leq m<n$. The symbol $Z^0$ is the set of non-negative integers, and the notation $Z$ stands for the set of integers. For integers $r,k\in Z^0$ and $s,d\in Z^0\setminus\{0\}$, we have two parameterized sets
\begin{equation}\label{eqa:two-parameterized-sets11}
{
\begin{split}S_{s,k,r,d}&=\{k+rd,k+(r+1)d,\dots ,k+(r+s)d\}\\
O_{s,k,r,d}&=\{k+[2(r+1)-1]d,k+[2(r+2)-1]d,\dots ,k+[2(r+s)-1]d\}
\end{split}}
\end{equation}
where two cardinalities $|S_{s,k,r,d}|=s+1$ and $|O_{s,k,r,d}|=s$.

The set of all subsets of a finite set $\Lambda$ is denoted as $\Lambda^2=\{X:~X\subseteq \Lambda\}$, called \emph{power set}, and each subset of $\Lambda^2$ is not empty, and the cardinality $|\Lambda^2|=2^{|\Lambda|}-1$. For a given set $\Lambda=\{a,b,c,d,e\}$, the power set $\Lambda^2$ has its own elements $\{a\}$, $\{b\}$, $\{c\}$, $\{d\}$, $\{e\}$, $\{a,b\}$, $\{a,c\}$, $\{a,d\}$, $\{a,e\}$, $\{b,c\}$, $\{b,d\}$, $\{b,e\}$, $\{c, d\}$, $\{c, e\}$, $\{d,e\}$, $\{a,b,c\}$, $\{a,b,d\}$, $\{a,b,e\}$, $\{a,c,d\}$, $\{a,c,e\}$, $\{a,d,e\}$, $\{b,c,d\}$, $\{b,c,e\}$, $\{b,d,e\}$, $\{c,d,e\}$, $\{a,b,c,d\}$, $\{a,b,c,e\}$, $\{a,c,d,e\}$, $\{a,b,d,e\}$, $\{b,c,d,e\}$ and $\Lambda$.

A \emph{$k$-set} is a set of $k$ elements, and a \emph{set-set} is a set having its own elements being sets.

\subsubsection{Graph operations}

\begin{defn} \label{defn:vertex-split-coinciding-operations}
\cite{Yao-Zhang-Sun-Mu-Sun-Wang-Wang-Ma-Su-Yang-Yang-Zhang-2018arXiv, Yao-Sun-Zhang-Mu-Wang-Jin-Xu-2018} \textbf{Vertex-splitting operation.} We vertex-split a vertex $u$ of a graph $G$ with degree $\textrm{deg}(u)\geq 2$ into two vertices $u\,'$ and $u\,''$, such that three neighbor sets $N_{ei}(u)=N_{ei}(u\,')\cup N_{ei}(u\,'')$ with $N_{ei}(u\,')\neq \emptyset$, $N_{ei}(u\,'')\neq \emptyset$ and $N_{ei}(u\,')\cap N_{ei}(u\,'')=\emptyset$, the resultant graph is denoted as $G\wedge u$, called \emph{vertex-split graph} (see an example shown in Fig.\ref{fig:v-splitting-opration} (a)$\rightarrow$(b)), as well as $|E(G\wedge u)|=|E(G)|$ and $|V(G\wedge u)|=1+|V(G)|$.

\textbf{Vertex-coinciding operation.} (Non-common neighbor vertex-coinciding operation) A vertex-coinciding operation is the \emph{inverse} of a vertex-splitting operation, and vice versa. If two vertices $u\,'$ and $u\,''$ of a graph $H$ holds $N_{ei}(u\,')\neq \emptyset$, $N_{ei}(u\,'')\neq \emptyset$ and $N_{ei}(u\,')\cap N_{ei}(u\,'')=\emptyset$ true, we vertex-coincide $u\,'$ and $u\,''$ into one vertex $u=u\,'\bullet u\,''$, such that three neighbor sets $N_{ei}(u)=N_{ei}(u\,')\cup N_{ei}(u\,'')$, the resultant graph is denoted as $H(u\,'\bullet u\,'')$, called \emph{vertex-coincided graph} (see a scheme shown in Fig.\ref{fig:v-splitting-opration} (b)$\rightarrow$(a)). Moreover, $|E(H(u\,'\bullet u\,''))|=|E(H)|$ and $|V(H(u\,'\bullet u\,''))|=|V(H)|-1$.\qqed
\end{defn}

\begin{figure}[h]
\centering
\includegraphics[width=14cm]{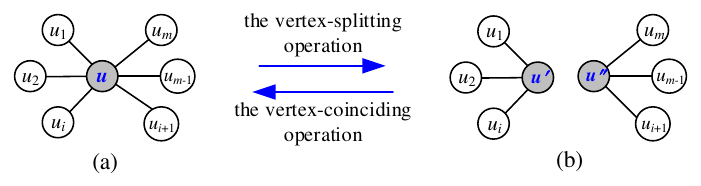}\\
\caption{\label{fig:v-splitting-opration}{\small A scheme for illustrating the vertex-splitting operation and the vertex-coinciding operation defined in Definition \ref{defn:vertex-split-coinciding-operations}.}}
\end{figure}

\begin{defn} \label{defn:split-v-set-vertex-split-graphss}
By the vertex-splitting operation introduced in Definition \ref{defn:vertex-split-coinciding-operations}, we vertex-split each vertex $w$ in a non-empty vertex subset $S$ of a graph $G$ into two vertices $w\,'$ and $w\,''$, such that the adjacent neighbor set $N_{ei}(w)=N_{ei}(w\,')\cup N_{ei}(w\,'')$ with $N_{ei}(w\,')\cap N_{ei}(w\,'')=\emptyset$, $|N_{ei}(w\,')|\geq 1$ and $|N_{ei}(w\,'')|\geq 1$, the resultant graph, called \emph{vertex-split graph}, is denoted as $G\wedge S$, and $V(G\wedge S)=V(G-S)\cup S\,'\cup S\,''$, where $S\,'=\{w\,':w\in S\}$ and $S\,''=\{w\,'':w\in S\}$, as well as $|E(G\wedge S)|=|E(G)|$. \qqed
\end{defn}

\subsubsection{Basic graph colorings}

\begin{defn}\label{defn:define-labelling-basic}
\cite{Yao-Wang-2106-15254v1, Bing-Yao-Cheng-Yao-Zhao2009} A \emph{labelling} $h$ of a graph $G$ is a mapping $h:S\rightarrow [a,b]$ for $S\subseteq V(G)\cup E(G)$ such that $h(x)\neq h(y)$ for any pair of elements $x,y$ of $S$, and write the label set $h(S)=\{h(x): x\in S\}$. The \emph{dual labelling} $h'$ of the labelling $h$ is defined as: $h'(z)=\max h(S)+\min h(S)-h(z)$ for $z\in S$. Moreover, $h(S)$ is called \emph{vertex label set} if $S=V(G)$, $h(S)$ \emph{edge label set} if $S=E(G)$, and $h(S)$ \emph{total label set} if $S=V(G)\cup E(G)$.\qqed
\end{defn}

\begin{defn} \label{defn:general-definition-set-colorings}
\cite{Yao-Ma-arXiv-2024-13354} Suppose that a graph $G$ admits a total set-coloring $\theta:V(G)\cup E(G)\rightarrow S_{et}$, where $S_{et}$ is the set of $m$ sets $e_{1},e_{2},\dots ,e_{m}$. There are the following various set-type colorings:
\begin{asparaenum}[(i) ]
\item If the vertex color set $\theta(V(G))=\emptyset$ and the edge color set $\theta(E(G))\neq \emptyset $, we call $\theta$ \emph{edge set-coloring} of the graph $G$.
\item If the edge color set $\theta(E(G))=\emptyset$ and the vertex color set $\theta(V(G))\neq \emptyset $, we call $\theta$ \emph{vertex set-coloring} of the graph $G$.
\item If the vertex color set $\theta(V(G))\neq \emptyset $ and the edge color set $\theta(E(G))\neq \emptyset $, we call $\theta$ \emph{total set-coloring} of the graph $G$.
\end{asparaenum}

Moreover, as $\theta$ is a total set-coloring of the graph $G$, we have:
\begin{asparaenum}[\textbf{Par}-1. ]
\item If there is a $W$-constraint equation $W[\theta(u),\theta(uv),\theta(v)]=0$ for each edge $uv\in E(G)$ holding true, we call $\theta$ \emph{$W$-constraint total set-coloring} of the graph $G$.
\item If the set $S_{et}=\{e_{1},e_{2},\dots ,e_{m}\}$ holds $e_{i}=\{x_i\}$ and $x_{i}\subset Z^0$ for $i\in [1,m]$, then $\theta$ is a \emph{popular $W$-constraint total coloring} as the $W$-constraint equation $W[\theta(u),\theta(uv),\theta(v)]=0$ for each edge $uv\in E(G)$ holding true (Ref. \cite{Gallian2022, Yao-Wang-2106-15254v1}).
\item If the set $S_{et}=\{e_{1},e_{2},\dots ,e_{m}\}$ is a hyperedge set $\mathcal{E}$ of a hypergraph $\mathcal{H}_{yper}=(\Lambda,\mathcal{E})$, we call $\theta$ \emph{hyperedge-set total coloring} of the graph $G$. Moreover, $\theta$ is a \emph{3I-hyperedge set total coloring} if each edge $uv\in E(G)$ holds $\theta(u)\cap \theta(v)\subseteq \theta(uv)$ with $\theta(u)\cap \theta(v)\neq \emptyset$.\qqed
\end{asparaenum}
\end{defn}

\begin{defn} \label{defn:more-string-total-coloring}
\cite{Yao-Ma-arXiv-2024-13354} Let $G$ be a bipartite $(p,q)$-graph, and its vertex set $V(G)=X\cup Y$ with $X\cap Y=\emptyset$ such that each edge $uv\in E(G)$ holds $u\in X$ and $v\in Y$. For parameters $k_s,d_s\in Z^0$, there is a group of $W$-constraint $(k_s,d_s)$-parameterized colorings
\begin{equation}\label{eqa:555555}
f_s:X\rightarrow S_{m,0,0,d}=\big \{0,d,\dots ,md\big \},~f_s:Y\cup E(G)\rightarrow S_{n,k,0,d}=\big \{k,k+d,\dots ,k+nd\big \}
\end{equation} for $s\in [1,B]$ with integer $B\geq 13$, here it is allowed $f_s(u)=f_s(w)$ for some distinct vertices $u,w\in V(G)$, such that the $W$-constraint $(k_s,d_s)$-coloring $f_s$ is one of \emph{gracefully} $(k_s,d_s)$-total coloring, \emph{odd-gracefully} $(k_s,d_s)$-total coloring, \emph{edge anti-magic} $(k_s,d_s)$-total coloring, \emph{harmonious} $(k_s,d_s)$-total coloring, \emph{odd-elegant} $(k_s,d_s)$-total coloring, \emph{edge-magic} $(k_s,d_s)$-total coloring, \emph{edge-difference} $(k_s,d_s)$-total coloring, \emph{felicitous-difference} $(k_s,d_s)$-total coloring, \emph{graceful-difference} $(k_s,d_s)$-total coloring, \emph{odd-edge edge-magic} $(k_s,d_s)$-total coloring, \emph{odd-edge edge-difference} $(k_s,d_s)$-total coloring, \emph{odd-edge felicitous-difference} $(k_s,d_s)$-total coloring, \emph{odd-edge graceful-difference} $(k_s,d_s)$-total coloring. We have:

\begin{equation}\label{eqa:three-parameterized-colorings}
\begin{tabular}{ll}
&$f_{i_1}(u)f_{i_2}(u)\cdots f_{i_B}(u)$\textrm{ is a permutation of }$f_1(u),f_2(u),\dots $, $f_B(u)$; \\[4pt]
&$f_{j_1}(uv)f_{j_2}(uv)\cdots f_{j_B}(uv)$\textrm{ is a permutation of }$f_1(uv),f_2(uv),\dots ,f_B(uv)$;\\[4pt]
&$f_{s_1}(v)f_{s_2}(v)$ $\cdots f_{s_B}(v)$\textrm{ is a permutation of }$f_1(v),f_2(v),\dots ,f_B(v)$.
\end{tabular}
\end{equation} for each edge $uv\in E(G)$.

(i) \textbf{Parameterized total string-coloring.} By (\ref{eqa:three-parameterized-colorings}), the bipartite $(p,q)$-graph $G$ admits a \emph{parameterized total string-coloring} $F$ defined as follows
\begin{equation}\label{eqa:para-total-string-coloring}
{
\begin{split}
F(u)&=f_{i_1}(u)f_{i_2}(u)\cdots f_{i_B}(u),\quad F(uv)=f_{j_1}(uv)f_{j_2}(uv)\cdots f_{j_B}(uv),\\
F(v)&=f_{s_1}(v)f_{s_2}(v)\cdots f_{s_B}(v)
\end{split}}
\end{equation} for each edge $uv\in E(G)$. Hence, there are $(B!)^3$ parameterized total string-colorings in total.

(ii) \textbf{Parameterized total vector-coloring.} By (\ref{eqa:three-parameterized-colorings}), the bipartite $(p,q)$-graph $G$ admits a \emph{parameterized total vector-coloring} $\alpha$ defined as follows
\begin{equation}\label{eqa:para-total-vector-coloring}
{
\begin{split}
\alpha(u)&=\big (f_{i_1}(u),f_{i_2}(u),\dots ,f_{i_B}(u)\big ),\quad \alpha(uv)=\big (f_{j_1}(uv),f_{j_2}(uv),\dots ,f_{j_B}(uv)\big ),\\
\alpha(v)&=\big (f_{s_1}(v),f_{s_2}(v),\dots ,f_{s_B}(v)\big )
\end{split}}
\end{equation} for each edge $uv\in E(G)$. Similarly with (i), there are $(B!)^3$ parameterized total vector-colorings, in total.

(iii) \textbf{Parameterized total set-coloring.} The bipartite $(p,q)$-graph $G$ admits a \emph{parameterized total set-coloring} $\theta$ defined as follows
\begin{equation}\label{eqa:555555}
{
\begin{split}
\theta(u)&=\big \{f_1(u),f_2(u),\dots ,f_B(u)\big \},\quad \theta(uv)=\big \{f_1(uv),f_2(uv),\dots ,f_B(uv)\big \},\\
\theta(v)&=\big \{f_1(v),f_2(v),\dots ,f_B(v)\big \}
\end{split}}
\end{equation} for each edge $uv\in E(G)$.

(iv) \textbf{Parameterized total lattice-coloring.} By (\ref{eqa:three-parameterized-colorings}), the bipartite $(p,q)$-graph $G$ admits a \emph{parameterized total lattice-coloring} $\varphi$ defined as follows
\begin{equation}\label{eqa:para-total-lattice-coloring}
\varphi(u)=\sum ^B_{k=1}a_kf_{i_k}(u),\quad \varphi(uv)=\sum ^B_{k=1}a_kf_{j_k}(uv),\quad \varphi(v)=\sum ^B_{k=1}a_kf_{s_k}(v)
\end{equation} with $\sum ^B_{k=1}a_k\geq 1$ and $a_k\in Z^0$ for each edge $uv\in E(G)$. So, there are $(B!)^3$ parameterized total lattice-colorings, in total.\qqed
\end{defn}

\begin{rem}\label{rem:333333}
In Definition \ref{defn:more-string-total-coloring}, if $i_k=j_k=s_k$ for $k\in [1,B]$ in (\ref{eqa:para-total-string-coloring}), (\ref{eqa:para-total-vector-coloring}) and (\ref{eqa:para-total-lattice-coloring}), then three colorings $F$, $\alpha$ and $\varphi$ are \emph{uniform}.\qqed
\end{rem}

\begin{defn}\label{defn:total-coloring-Topcode-matrixs}
Let $E(G)=\{e_i=x_iy_i:~i\in [1,q]\}$ be the edge set of a $(p,q)$-graph $G$, and let $\theta:V(G)\cup E(G)\rightarrow S_{pan}$ be a \emph{total pan-coloring} subject to a constraint set $R_{est}(c_1,c_2,\dots, c_m)$ with $m\geq 1$, where $S_{pan}$ is a \emph{pan-set}. Then we call the following matrix
\begin{equation}\label{eqa:topcode-matrix-total-coloring}
{
\begin{split}
T_{code}(G,f)= \left(
\begin{array}{cccccccccc}
\theta(x_1) & \theta(x_2) & \cdots & \theta(x_q)\\
\theta(e_1) & \theta(e_2) & \cdots & \theta(e_q)\\
\theta(y_1) & \theta(y_2) & \cdots & \theta(y_q)
\end{array}
\right)_{3\times q}=(\theta(X),\theta(E),\theta(Y))^T_{3\times q}
\end{split}}
\end{equation} \emph{Topcode-matrix}, where \emph{v-vector} $\theta(X)=(\theta(x_1)$, $\theta(x_2)$, $\dots$, $\theta(x_q))$, \emph{e-vector} $\theta(E)=(\theta(e_1),\theta(e_2)$, $\dots $, $\theta(e_q))$ and \emph{v-vector} $\theta(Y)=(\theta(y_1),\theta(y_2),\dots $, $\theta(y_q))$, such that each constraint $c_i$ of the constraint set $R_{est}(c_1,c_2,\dots, c_m)$ holds $c_i[\theta(x_j),\theta(e_j),\theta(y_j)]=0$ true.\qqed
\end{defn}

\begin{rem}\label{rem:333333}
About Definition \ref{defn:total-coloring-Topcode-matrixs}, we have:

(i) By Definition \ref{defn:more-string-total-coloring}, the total pan-coloring of Definition \ref{defn:total-coloring-Topcode-matrixs} exists.

(ii) The pan-set $S_{pan}$ is a set of sets being string-sets, vector-sets, matrix-sets, graph-sets, \emph{etc}.

(iii) We can require that some one constraint $c_i\in R_{est}(c_1,c_2,\dots, c_m)$ holds $c_i[\theta(x_j),\theta(e_j),\theta(y_j)]=0$ true, or several constraints, not required for each constraint of $R_{est}(c_1$, $c_2$, $\dots$, $c_m)$.\qqed
\end{rem}

\section{Hypergraphs}

\subsection{Concepts of hypergraphs}

\begin{defn}\label{defn:hypergraph-basic-definition}
\cite{Jianfang-Wang-Hypergraphs-2008} A \emph{hyperedge set} $\mathcal{E}$ is a family of distinct subsets $e_1,e_2$, $\dots $, $e_n$ of the power set $\Lambda^2$ based on a finite set $\Lambda=\{x_1,x_2,\dots ,x_n\}$, and satisfies:

(i) Each subset $e\in \mathcal{E}$, called \emph{hyperedge}, holds $e\neq \emptyset $ true;

(ii) $\Lambda=\bigcup _{e\in \mathcal{E}}e$, where each element $x_i$ of $\Lambda$ is called a \emph{vertex} (or \emph{hypervertex}).

The symbol $\mathcal{H}_{yper}^{gen}=(\Lambda,\mathcal{E})$ stands for a \emph{general hypergraph} with its own \emph{hyperedge set} $\mathcal{E}$ defined on the \emph{vertex set} $\Lambda$, and the cardinality $|\mathcal{E}|$ is \emph{size}, and the cardinality $|\Lambda|$ is \emph{order} of the general hypergraph $\mathcal{H}_{yper}=(\Lambda,\mathcal{E})$. \qqed
\end{defn}

\begin{example}\label{exa:8888888888}
Fig.\ref{fig:1-example-hypergraph} shows a hypergraph $\mathcal{H}_{yper}=(\Lambda,\mathcal{E})$ with its own vertex set $\Lambda=[1,15]$ and its own hyperedge set $\mathcal{E}=\{e_1,e_2,e_3,e_4\}$.\qqed
\end{example}

\begin{figure}[h]
\centering
\includegraphics[width=14cm]{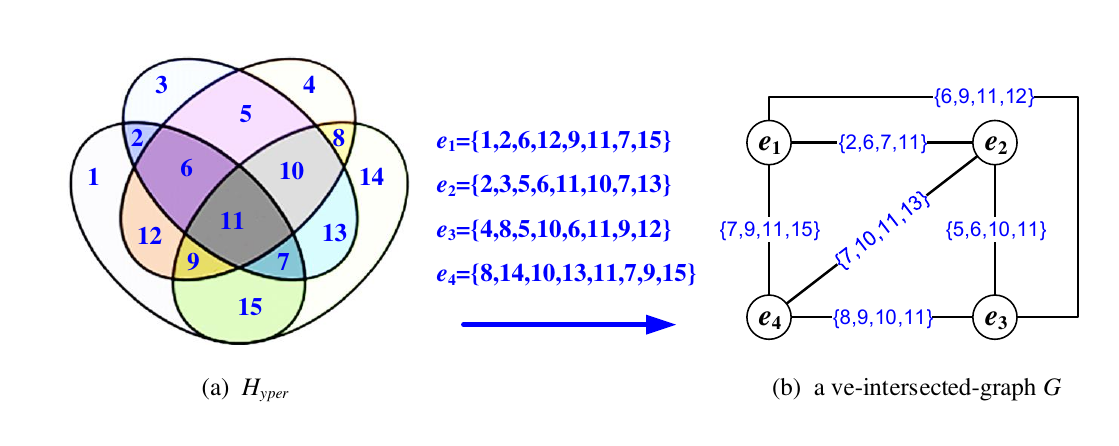}\\
\caption{\label{fig:1-example-hypergraph}{\small An example from an $8$-uniform hypergraph $H_{yper}$ to a ve-intersected graph $G$ defined in Definition \ref{defn:vertex-intersected-graph-hypergraph} admitting a set-coloring, where (a) Venn's four-set diagram using four ellipses.}}
\end{figure}

\begin{defn}\label{defn:more-conceptd-hypergraphs}
For Definition \ref{defn:hypergraph-basic-definition} and a finite set $\Lambda=\{x_1,x_2,\dots ,x_n\}$, there are the following terminology and notation for hypergraphs:
\begin{asparaenum}[\textrm{\textbf{Con}}-1.]
\item A hyperedge set $\mathcal{E}$ is \emph{proper} if any hyperedge $e\in \mathcal{E}$ holds $e\not \subseteq e\,'\in \mathcal{E}\setminus e$ true.
\item A hyperedge set $\mathcal{E}$ is called \emph{intersected} if each hyperedge $e\in \mathcal{E}$ corresponds to another hyperedge $e\,'\in \mathcal{E}\setminus e$ holding $e\cap e\,'\neq \emptyset $.
\item A hyperedge set $\mathcal{E}$ has its own \emph{complementary hyperedge set} $\overline{\mathcal{E}}$ defined by $\overline{\mathcal{E}}=\{e\,'=\Lambda \setminus e: ~e\in \mathcal{E}\}$ as the hyperedge set $\mathcal{E}$ is proper. If each hyperedge $e\in \overline{\mathcal{E}}$ corresponds to another hyperedge $e\,'=\Lambda \setminus e$, thus $\mathcal{E}=\overline{\mathcal{E}}$, we say $\mathcal{E}$ to be \emph{self-complementary}.
\item If a hyperedge set $\mathcal{E}$ can be expressed as $\mathcal{E}=S_1\cup S_2$ with $S_1\cap S_2=\emptyset$, such that each subset $e_{i,j}\in S_i$ corresponds to another subset $e_{3-i,k}\in S_{3-i}$ for $i=1,2$ holding $e_{i,j}\cap e_{3-i,k}\neq \emptyset$, and any pair of two subsets $e_{i,j},e_{i,s}\in S_i$ for $i=1,2$ holds $e_{i,j}\cap e_{i,s}=\emptyset$. Then we call the hyperedge set $\mathcal{E}$ to be \emph{bipartite hyperedge set}.

\item \cite{Jianfang-Wang-Hypergraphs-2008} An \emph{ear} $e\in \mathcal{E}$ holds:

\qquad (i) $e\cap e\,'=\emptyset$ for any hyperedge $e\,'\in \mathcal{E}\setminus e$;

\qquad (ii) there exists another hyperedge $e^*\in \mathcal{E}$, such that each vertex of $e\setminus e^*$ is not in any element of $\mathcal{E}\setminus e$.

\item An \emph{isolated vertex} $x\in \Lambda$ is in a unique hyperedge $e_i\in \mathcal{E}$, also $x\not\in e_j\in \mathcal{E}$ for $j\neq i$.
\item If each \emph{hyperedge} $e\in \mathcal{E}$ has its own cardinality $|e|=k$, then we call $\mathcal{H}_{yper}=(\Lambda,\mathcal{E})$ \emph{$k$-uniform hypergraph} (or $k$-graph used in some articles). So, ordinary graphs are 2-uniform hypergraphs.
\item \cite{Jianfang-Wang-Hypergraphs-2008} The \emph{Graham reduction} of a hyperedge set $\mathcal{E}$ is obtained by doing repeatedly

\qquad GR-1: delete a vertex $x$ if $x$ is an isolated vertex;

\qquad GR-2: delete $e_i$ if $e_i\subseteq e_j$ for $i\neq j$.
\item The \emph{hyperedge norm} $||\mathcal{E}||$ of a proper hyperedge set $\mathcal{E}$ is $||\mathcal{E}||=\sum_{e\in \mathcal{E}} |e|$ with
\begin{equation}\label{eqa:555555}
n=\min \big \{||\mathcal{E}||: \mathcal{E}\subset \Lambda^2\big \},\quad \frac{n(n-1)\cdots (n-k+1)}{k!}\leq \max \big \{||\mathcal{E}||: \mathcal{E}\subset \Lambda^2\big )\big \}
\end{equation}
\item For each vertex $x_j\in \Lambda$ with $j\in [1,n]$, the number of $x_j$ appeared in the hyperedges $e_{i,1}$, $e_{i,2}$, $\dots $, $e_{i,b_j}$ of a hyperedge set $\mathcal{E}_i$ is denoted as $b_j=\textrm{deg}_{\mathcal{E}_i}(x_j)$, called \emph{hypervertex degree}.\qqed
\end{asparaenum}
\end{defn}

\begin{rem}\label{rem:important-terminology-hypergraphs}
We, about Definition \ref{defn:hypergraph-basic-definition}, can consider other cases: each $x_i\in \Lambda$ is a graph, or a matrix, or a string, or a vector, or a set, or a hypergraph, or any thing in the world.

In practical application, a general hypergraph $\mathcal{H}_{yper}=(\Lambda,\mathcal{E})$ is a network, and each hyperedge $e_i\in \mathcal{E}$ can be seen as a community, or a local network \emph{etc}.\qqed
\end{rem}

\begin{defn} \label{defn:111111}
For a finite set $\Lambda=\{x_1,x_2,\dots ,x_n\}$, a $k$-uniform hypergraph $\mathcal{H}_{yper}=(\Lambda,\mathcal{E})$ is called \emph{self-complementary} if there is a permutation $\sigma:\Lambda\rightarrow \Lambda$, called \emph{self-complementing}, such that for every $k$-subset $e$ of $\Lambda$, each hyperedge $e \in \mathcal{E}$ if, and only if $\sigma(e)\not \in \mathcal{E}$. The hypergraph $\mathcal{H}_{yper}$ is isomorphic with the hypergraph $\mathcal{H}'_{yper}=\Big (\Lambda$, $\Lambda\choose k$$-\mathcal{E}\Big )$, where $\Lambda\choose k$ denotes the set of $k$-subsets of the power set $\Lambda^2$ and $\mathcal{E}\subset$$ \Lambda\choose k$.\qqed
\end{defn}

\subsection{Hypergraph sets}

\subsubsection{Intersected hyperedge sets}

\begin{defn}\label{defn:new-hypergraphs-sets}
\textbf{3I-hyperedge set and hypergraph set.} A hyperedge set $\mathcal{E}$ is a set of subsets of the power set $\Lambda^2$ based on a finite set $\Lambda=\{x_1,x_2,\dots ,x_n\}$ (Ref. Definition \ref{defn:hypergraph-basic-definition} and Definition \ref{defn:more-conceptd-hypergraphs}), if $\mathcal{E}$ holds each one of the following constraints:

(i) \textbf{Independence.} Any pair of distinct hyperedges $e,e\,'\in \mathcal{E}$ holds $e\not \subset e\,'$ and $e\,'\not \subset e$;

(ii) \textbf{Intersection.} Each hyperedge $e\in \mathcal{E}$ corresponds to another hyperedge $e\,'\in \mathcal{E}\setminus e$ holding $e\cap e\,'\neq \emptyset $ true; and

(iii) \textbf{Integrity.} $\Lambda=\bigcup _{e\in \mathcal{E}}e$.

Then we call $\mathcal{E}$ \emph{$3$I-hyperedge set} and the set $\mathcal{E}\big (\Lambda^2\big )=\{\mathcal{E}_i:~i\in [1,n(\Lambda)]\}$ containing all 3I-hyperedge sets of the power set $\Lambda^2$ \emph{hypergraph set}, where $n(\Lambda)=\big |\mathcal{E}\big (\Lambda^2\big )\big |$.\qqed
\end{defn}

\begin{rem}\label{rem:333333}
(i) The set $\mathcal{E}_{gen}\big (\Lambda^2\big )$ contains each hyperedge set $\mathcal{E}$ of the power set $\Lambda^2$ that does not satisfy each of Independence, Intersection and Integrity shown in Definition \ref{defn:new-hypergraphs-sets}.

(ii) We modify the ``Intersection'' in Definition \ref{defn:new-hypergraphs-sets} as ``Operation'', that is: Each hyperedge $e\in \mathcal{E}$ corresponds to another hyperedge $e\,'\in \mathcal{E}\setminus e$ holding $e[\bullet ]e\,'\neq \emptyset$ under the operation ``$[\bullet]$''. Then we call $\mathcal{E}$ \emph{IOI-hyperedge set} of $\mathcal{E}\big (\Lambda^2\big )$.\qqed
\end{rem}

\begin{defn}\label{defn:full-intersected-hyperedge-sets}
Notice that the \emph{hypervertex degree} $\textrm{deg}_{\mathcal{E}}(x_i)$ of each vertex $x_j\in \Lambda$ with $j\in [1,n]$ is the number $b_j$ of $x_j$ appeared in the hyperedges $e_{i_1}$, $e_{i_2}$, $\dots $, $e_{i_{b_j}}$ of a hyperedge set $\mathcal{E}$, denoted as $b_j=\textrm{deg}_{\mathcal{E}}(x_i)$.

Then, $\textbf{\textrm{d}}_{\mathcal{E}}=\big (\textrm{deg}_{\mathcal{E}}(x_1),\textrm{deg}_{\mathcal{E}}(x_2),\dots ,\textrm{deg}_{\mathcal{E}}(x_n)\big )$ with $\textrm{deg}_{\mathcal{E}}(x_i)\leq \textrm{deg}_{\mathcal{E}}(x_{i+1})$ for $i\in [1,n-1]$ is called \emph{hypervertex degree series} based on the hyperedge set $\mathcal{E}$. And moreover, $\mathcal{E}$ is a \emph{full hyperedge set} if each $\textrm{deg}_{\mathcal{E}}(x_i)\geq 1$ for $i\in [1,n]$.\qqed
\end{defn}

\begin{rem}\label{rem:333333}
There are $n(\Lambda)$ hypergraphs $\mathcal{H}_{yper}=(\Lambda,\mathcal{E}_i)$ for each 3I-hyperedge set $\mathcal{E}_i\in \mathcal{E}\big (\Lambda^2\big )$ with $i\in [1,n(\Lambda)]$ by Definition \ref{defn:new-hypergraphs-sets}. A \emph{simple hypergraph} $\mathcal{H}_{yper}=(\Lambda,\mathcal{E})$ based on a 3I-hyperedge set $\mathcal{E}\in \mathcal{E}\big (\Lambda^2\big )$ holds that any vertex $x\in \Lambda$ belongs to two or more subsets of the 3I-hyperedge set of the hypergraph set $\mathcal{E}\big (\Lambda^2\big )$. Also, each 3I-hyperedge set $\mathcal{E}\in \mathcal{E}\big (\Lambda^2\big )$ is a full hyperedge set.

If a hypergraph $\mathcal{H}_{yper}=(\Lambda,\mathcal{E})$ does not have $\mathcal{E}\in \mathcal{E}\big (\Lambda^2\big )$, we call it \emph{general hypergraph}, and denote it as $\mathcal{H}_{yper}^{gen}=(\Lambda,\mathcal{E})$ (Ref. Definition \ref{defn:hypergraph-basic-definition}), hereafter.\qqed
\end{rem}

\begin{problem}\label{question:444444}
About Definition \ref{defn:new-hypergraphs-sets} and Definition \ref{defn:full-intersected-hyperedge-sets}, we propose the following questions:
\begin{asparaenum}[\textbf{Hyper}-1]
\item \textbf{Characterize} the hypergraph set $\mathcal{E}\big (\Lambda^2\big )$, and \textbf{compute} the number $n(\Lambda)$, since no report is for computing the number $n(\Lambda)$.
\item \textbf{Characterize} each 3I-hyperedge set $\mathcal{E}^*$ with $|\mathcal{E}^*|=\max \{|\mathcal{E}|:\mathcal{E}\in \mathcal{E}\big (\Lambda^2\big )\}$.
\item \textbf{Is} there a proper hyperedge set $\mathcal{E}^*\in \mathcal{E}\big (\Lambda^2\big )$ such that the hyperedge norm $||\mathcal{E}^*||=m$ for each integer $m$ subject to $n<m<n(n-1)$? \textbf{Find} connections between the elements of the hypergraph set $\mathcal{E}\big (\Lambda^2\big )$.
\item \textbf{What} connections are there between two hypergraphs $\mathcal{H}_{yper}=(\Lambda,\mathcal{E})$ and $\mathcal{H}\,'_{yper}=(\Lambda,\mathcal{E}\,')$, as $\mathcal{E}\neq \mathcal{E}\,'$ based on the same vertex set $\Lambda$.
\item \textbf{Is} there a connection between two hypergraphs $\mathcal{H}_{yper}=(\Lambda,\mathcal{E})$ and $\mathcal{H}^*_{yper}=(\Lambda^*,\mathcal{E}^*)$, as $\Lambda\neq \Lambda^*$, $\Lambda\not\subset \Lambda^*$ and $\Lambda^*\not\subset \Lambda$?
\item Since the hypergraph set $\mathcal{E}\big (\Lambda^2\big )=\mathcal{E}\big (\Lambda^2_{nofull}\big )\cup \mathcal{E}\big (\Lambda^2_{full}\big )$ with $\mathcal{E}\big (\Lambda^2_{nofull}\big )\cap \mathcal{E}\big (\Lambda^2_{full}\big )=\emptyset$, where $\mathcal{E}\big (\Lambda^2_{full})$ contains every full-3I-hyperedge set (Ref. Definition \ref{defn:full-intersected-hyperedge-sets}), but $\mathcal{E}\big (\Lambda^2_{nofull})$ does not include any of the full-3I-hyperedge sets of the hypergraph set $\mathcal{E}\big (\Lambda^2\big )$. \textbf{Determine} the set $\mathcal{E}\big (\Lambda^2_{full})$, and \textbf{find} matchings $(\mathcal{E},\overline{\mathcal{E}})$ of full-3I-hyperedge sets $\mathcal{E}$ and their complementary full-3I-hyperedge sets $\overline{\mathcal{E}}$ in $\mathcal{E}\big (\Lambda^2_{full})$.
\end{asparaenum}
\end{problem}

\begin{problem}\label{question:444444}
\textbf{Find} a finite set $\Lambda_{*}$ for which a graph $G$ admits a total set-coloring $F:V(G)\cup E(G)\rightarrow \mathcal{E}^{*}\in \mathcal{E}(\Lambda^2_{*})$ with $\Lambda_{*}=\bigcup_{e\in \mathcal{E}^{*}}e$, such that the graph $G$ admits any total set-coloring $f:V(G)\cup E(G)\rightarrow \mathcal{E}\in \mathcal{E}\big (\Lambda^2\big )$ with $\Lambda=\bigcup_{s\in \mathcal{E}}s$ and $|\Lambda_{*}|\leq |\Lambda|$, as well as one or more of the following constraints:
\begin{asparaenum}[(i) ]
\item $F(uv)\supseteq F(u)\cap F(v)\neq \emptyset$ for each edge $uv\in E(G)$.
\item $f(xy)\supseteq f(x)\cap f(y)\neq \emptyset$ for each edge $xy\in E(G)$.
\end{asparaenum}
\end{problem}

We can observe the following results:

\begin{prop}\label{proposition:99999}
For a finite set $\Lambda=\{x_1,x_2,\dots ,x_m\}$, we have

(i) There are particular hyperedge sets $\mathcal{E}^*_0=\Lambda$, $\mathcal{E}^*_1=\big \{\{x_1\}$, $\{x_2\}$, $\dots $, $\{x_m\}\big \}\subset \Lambda^2$ and $\mathcal{E}_r=\big \{\{x_r\}\big \}$ with $r\in [1,m]$, such that the 3I-hyperedge set $\mathcal{E}_i\in \mathcal{E}\big (\Lambda^2\big )\setminus \big \{\mathcal{E}^*_0,\mathcal{E}^*_1,\mathcal{E}_1, \mathcal{E}_2, \dots, \mathcal{E}_m\big \}$ has its own cardinality holding $2=|\mathcal{E}_0|+1\leq |\mathcal{E}_i|\leq 2^m-3-m$.

(ii) Each hyperedge $e\in \Lambda^2$ with $2\leq |e|\leq m-1$ is in some 3I-hyperedge set $\mathcal{E}\in \mathcal{E}\big (\Lambda^2\big )$. Otherwise, we have a new 3I-hyperedge set $\mathcal{E}^*=\mathcal{E}\cup \{e\}$, such that $\mathcal{E}^*\not\in \mathcal{E}\big (\Lambda^2\big )$, a contradiction with the definition of the hypergraph set $\mathcal{E}\big (\Lambda^2\big )$. An example is $\mathcal{E}=\{e_i:|e_i|=|e|\}$ with $e\not \in \mathcal{E}$.
\end{prop}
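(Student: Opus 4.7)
The plan is to handle parts (i) and (ii) separately, since (i) is a double-sided counting/exclusion argument and (ii) is an explicit construction. Throughout, I rely only on the three axioms \emph{Independence}, \emph{Intersection}, and \emph{Integrity} from Definition \ref{defn:new-hypergraphs-sets}, together with the fact that $|\Lambda^2|=2^m-1$ (non-empty subsets only).

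For the lower bound in (i), I would observe directly from \emph{Intersection} that any $\mathcal{E}_i$ with $|\mathcal{E}_i|=1$ has no partner hyperedge to intersect, so $|\mathcal{E}_i|\ge 2$; the degenerate singletons $\mathcal{E}^*_0=\{\Lambda\}$ and $\mathcal{E}_r=\{\{x_r\}\}$ are precisely the families of cardinality $1$ and are excluded by construction, which explains the equality $2=|\mathcal{E}^*_0|+1$. For the upper bound I would argue by exclusion on which subsets can appear in a non-degenerate $\mathcal{E}_i$. First, if $\Lambda\in\mathcal{E}_i$, then \emph{Independence} forces every other $e'\in\mathcal{E}_i$ to satisfy $e'\not\subset\Lambda$, which is impossible unless $\mathcal{E}_i=\{\Lambda\}=\mathcal{E}^*_0$; hence $\Lambda\notin\mathcal{E}_i$. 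Second, if some singleton $\{x_r\}\in\mathcal{E}_i$, then by \emph{Independence} no other hyperedge of $\mathcal{E}_i$ contains $x_r$, so $\{x_r\}$ has no intersection partner, forcing $\mathcal{E}_i=\{\{x_r\}\}=\mathcal{E}_r$ or $\mathcal{E}_i=\mathcal{E}^*_1$ (which itself fails \emph{Intersection}); hence no $\{x_r\}$ lies in $\mathcal{E}_i$. Removing $\Lambda$, the $m$ singletons, and one further forbidden subset (needed to push the count from $2^m-m-2$ down to $2^m-m-3$, which I expect comes from the antichain constraint once $\Lambda$ and all singletons are excluded) gives the stated upper bound.

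For (ii), the plan is to exhibit, for every $e\in\Lambda^2$ with $2\le|e|\le m-1$, an explicit 3I-hyperedge set containing $e$. I take
\[
\mathcal{E}^{\,\star}=\big\{f\in\Lambda^2:\ |f|=|e|\big\},
\]
the family of all $k$-subsets of $\Lambda$ for $k=|e|$. I verify the three axioms: \emph{Independence} is immediate because all members share the same cardinality, so no proper containment exists; \emph{Intersection} holds since for any $f\in\mathcal{E}^{\,\star}$ and any $x\in f$, since $k\le m-1$ there is another $k$-subset containing $x$, giving a nonempty intersection; \emph{Integrity} holds because every $x\in\Lambda$ lies in $\binom{m-1}{k-1}\ge 1$ subsets of size $k$. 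Thus $\mathcal{E}^{\,\star}\in\mathcal{E}(\Lambda^2)$ and $e\in\mathcal{E}^{\,\star}$. The embedded ``otherwise'' clause is then a contrapositive remark: if $e$ sat in no 3I-hyperedge set, then the family $\mathcal{E}^{\,\star}\setminus\{e\}$ (which one checks is still 3I when $\binom{m}{k}\ge 2$) would contradict Definition \ref{defn:new-hypergraphs-sets} upon re-adding $e$.

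The main obstacle I anticipate is the precise upper bound $2^m-m-3$ rather than the more naive $2^m-m-2$ that falls out of removing $\Lambda$ and the $m$ singletons. Justifying the extra unit requires identifying at least one additional subset in $\Lambda^2\setminus(\{\Lambda\}\cup\{\{x_r\}\}_{r=1}^m)$ that cannot coexist with the remaining ones inside a 3I family (most plausibly from the fact that at least one $(m-1)$-subset must be omitted to preserve \emph{Independence} once its subsets are present). The constructive step for (ii) and the degeneracy arguments for (i) are otherwise routine bookkeeping on the three axioms.
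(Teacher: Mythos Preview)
Your treatment of part~(ii) is precisely the paper's own argument: the justification there is embedded in the statement itself, and the ``example'' $\mathcal{E}=\{e_i:|e_i|=|e|\}$ is exactly the $k$-uniform family you build and verify. Your write-up is cleaner because you check Independence, Intersection and Integrity explicitly, whereas the paper only names the family.

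For part~(i) the paper gives no separate proof at all; there is nothing beyond the assertion. Your argument for the lower bound and for excluding $\Lambda$ and all singletons is correct and yields $|\mathcal{E}_i|\le 2^m-m-2$. The obstacle you flag---getting the extra $-1$ down to $2^m-m-3$---is not a gap in your reasoning but a defect in the stated bound. For $m=3$ the family $\{\{1,2\},\{1,3\},\{2,3\}\}$ satisfies all three axioms and has cardinality $3>2=2^3-3-3$, so the inequality $|\mathcal{E}_i|\le 2^m-3-m$ is false as written. Your exclusion argument is therefore already optimal at the level of ``remove $\Lambda$ and the $m$ singletons''; no further forbidden subset exists in general, and your speculation about an $(m-1)$-subset being forced out by the antichain constraint does not hold for $m=3$. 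In short: you have proved what can be proved, and the paper's $2^m-m-3$ appears to be an off-by-one slip (it recurs later in the paper as well).
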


\subsubsection{Hyperedge set matchings and fixed-point hyperedge sets}

\begin{defn} \label{defn:fixed-point-closed}
For a finite set $X=\{t_1,t_2,\dots ,t_m\}$, if there is a transforation $\varphi$ such that $\varphi(t_i)\in X$ for $i\in [1,m]$, i.e. $\varphi(X)= X$, then we say the set $X$ to be a \emph{fixed-point set} under the transforation $\varphi$, also, we say that the set $X$ is \emph{closed} to the transforation $\varphi$. \qqed
\end{defn}

\begin{prop}\label{prop:key-matching-equitable-uniform}
A \emph{hyperedge set matching} $(\mathcal{E},\overline{\mathcal{E}})$ holds $\overline{\mathcal{E}}=\{e\,'=\Lambda \setminus e: ~e\in \mathcal{E}\}$, where the 3I-hyperedge set $\mathcal{E}\in \mathcal{E}\big (\Lambda^2\big )$ and its own complementary hyperedge set $\overline{\mathcal{E}}\in \mathcal{E}\big (\Lambda^2\big )$ defined in Definition \ref{defn:more-conceptd-hypergraphs} and Definition \ref{defn:new-hypergraphs-sets}.

(i) If the 3I-hyperedge set $\mathcal{E}\in \mathcal{E}\big (\Lambda^2\big )$ is \emph{$k$-uniform}, namely, $|e_i|=|e_j|=k$ for any pair of sets $e_i$ and $e_j$ of the 3I-hyperedge set $\mathcal{E}$ (as a \emph{private-key}), then the complementary hyperedge set $\overline{\mathcal{E}}$ (as a \emph{public-key}) is $k^*$-uniform too, where $k^*=|\Lambda|-k$.

(ii) If the 3I-hyperedge set $\mathcal{E}\in \mathcal{E}\big (\Lambda^2\big )$ is \emph{equitable}, namely, $\big ||e_i|-|e_j|\big |\leq 1$ for any pair of sets $e_i$ and $e_j$ of the 3I-hyperedge set $\mathcal{E}$ (as a \emph{private-key}), then the complementary hyperedge set $\overline{\mathcal{E}}$ (as a \emph{public-key}) is equitable too.

(iii) If the elements of the 3I-hyperedge set $\mathcal{E}$ (as a \emph{private-key}) is inequality from each other, namely, $|e_i|\neq |e_j|$ as $i\neq j$, then the elements of the complementary hyperedge set $\overline{\mathcal{E}}$ (as a \emph{public-key}) is inequality from each other too, since $e\,'_i=\Lambda \setminus e_i$ and $e\,'_j=\Lambda \setminus e_j$, as well as $|e\,'_i|=|\Lambda | -|e_i|$ and $|e\,'_j|=|\Lambda |-|e_j|$.\qqed
\end{prop}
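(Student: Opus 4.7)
The plan is to reduce all three claims to the single cardinality identity $|e\,'| = |\Lambda| - |e|$, which holds whenever $e\,' = \Lambda \setminus e$. Set $n = |\Lambda|$, and for each hyperedge $e_i \in \mathcal{E}$ write its image in the complementary hyperedge set as $e\,'_i = \Lambda \setminus e_i$, so that $|e\,'_i| = n - |e_i|$. Since the map $e \mapsto \Lambda \setminus e$ is a bijection between $\mathcal{E}$ and $\overline{\mathcal{E}}$, every statement about cardinalities of the hyperedges of $\overline{\mathcal{E}}$ can be translated verbatim into a statement about the corresponding cardinalities in $\mathcal{E}$, and I would establish this bijectivity as a preliminary remark.

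For (i), I would substitute the hypothesis $|e_i| = k$ into the cardinality identity to obtain $|e\,'_i| = n - k$ for every $e\,'_i \in \overline{\mathcal{E}}$, so $\overline{\mathcal{E}}$ is $k^*$-uniform with $k^* = n - k$. For (ii), I would compute $|e\,'_i| - |e\,'_j| = (n - |e_i|) - (n - |e_j|) = |e_j| - |e_i|$, hence $\big||e\,'_i| - |e\,'_j|\big| = \big||e_i| - |e_j|\big| \leq 1$, so equitability is preserved. For (iii), the map $t \mapsto n - t$ is strictly decreasing, hence injective on integers, so distinct cardinalities $|e_i| \neq |e_j|$ immediately force $|e\,'_i| = n - |e_i| \neq n - |e_j| = |e\,'_j|$, which is exactly the conclusion already previewed in the statement of (iii).

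Since the proposition already takes $\overline{\mathcal{E}} \in \mathcal{E}(\Lambda^2)$ as part of the hypothesis on the matching $(\mathcal{E},\overline{\mathcal{E}})$, no separate verification of Independence, Intersection and Integrity for $\overline{\mathcal{E}}$ is required here; the 3I-structure is given, and only the uniformity-type parameters need to be tracked across complementation. Consequently, the argument reduces entirely to elementary cardinality bookkeeping, and there is no substantial obstacle. The one place where a little care is needed is to record that complementation is a bijection of hyperedges, so that the quantifiers ``for all $e\,'\in \overline{\mathcal{E}}$'' and ``for all $e \in \mathcal{E}$'' may be freely interchanged when transferring uniformity, equitability or pairwise-distinct-cardinality conditions from $\mathcal{E}$ to $\overline{\mathcal{E}}$.
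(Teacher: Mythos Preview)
Your proposal is correct and follows essentially the same approach as the paper: the paper does not give a separate proof for this proposition but simply records the key cardinality identity $|e\,'_i|=|\Lambda|-|e_i|$ inside the statement of part~(iii), which is exactly the single ingredient you use for all three parts. Your write-up is more explicit (spelling out the computations for (i) and (ii) and noting the bijectivity of complementation), but the underlying argument is identical.
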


\begin{problem}\label{question:444444}
Notice:

(i) There is no a 3I-hyperedge set $\mathcal{E}\in \mathcal{E}\big (\Lambda^2\big )$ with $|\mathcal{E}|\geq 2$ such that $\Lambda\in \mathcal{E}$.

(ii) A 3I-hyperedge set $\mathcal{E}=\big \{\{1,M\},\{2,M\},\dots ,\{M-1,M\},[1,M-1]\big \}$ belongs to $\mathcal{E}\big ([1,M]^2\big )$, but its own complementary hyperedge set $\overline{\mathcal{E}}=\big \{[1,M]\setminus \{1,M\},[1,M]\setminus \{2,M\},\dots ,[1,M]\setminus \{M-1,M\},\{M\}\big \}\not \in \mathcal{E}\big ([1,M]^2\big )$, since there is no hyperedge $e\in \overline{\mathcal{E}}\setminus \{M\}$ holding $e\cap \{M\}\neq \emptyset$ true.

For a hypergraph set $\mathcal{E}\big (\Lambda^2\big )$, we propose the following questions:
\begin{asparaenum}[(1) ]
\item \textbf{Find} all matchings $(\mathcal{E},\overline{\mathcal{E}})$ of 3I-hyperedge sets $\mathcal{E}$ (as \emph{private-keys}) and their own complementary hyperedge sets $\overline{\mathcal{E}}$ (as \emph{public-keys}) in the hypergraph set $\mathcal{E}\big (\Lambda^2\big )$.
\item \textbf{Find} all matchings $(\mathcal{E},\overline{\mathcal{E}})$, such that the 3I-hyperedge set $\mathcal{E}$ and its own complementary hyperedge set $\overline{\mathcal{E}}$ are \emph{equitable} (Ref. Proposition \ref{prop:key-matching-equitable-uniform}).
\item \textbf{Find} all matchings $(\mathcal{E},\overline{\mathcal{E}})$, such that each 3I-hyperedge set $\mathcal{E}$ is $k$-uniform, and its own complementary hyperedge set $\overline{\mathcal{E}}$ is $k^*$-uniform with $k^*=|\Lambda|-k$.
\item If there is a transformation $\phi$ holding $\phi(\mathcal{E})=\mathcal{E}$, we call $\mathcal{E}$ \emph{fixed-point 3I-hyperedge set} under the transformation $\phi$, for example, the inverse transformation $\overline{\mathcal{E}}=\mathcal{E}$. \textbf{Find} transformations $\phi$ holding $\phi(\mathcal{E})=\mathcal{E}$ true.
\item \textbf{Find} all fixed-point 3I-hyperedge sets for a hypergraph set $\mathcal{E}\big ([1,M]^2\big )$.
\end{asparaenum}
\end{problem}

\begin{prop}\label{proposition:99999}
If a 3I-hyperedge set $\mathcal{E}=\{e_i: i\in [1,m]\}\in \mathcal{E}\big ([1,M]^2\big )$ holds $[1,M]\setminus e_i$ to be in $\mathcal{E}$ for each subset $e_i\in \mathcal{E}$, then two 3I-hyperedge sets $\overline{\mathcal{E}}$ and $\mathcal{E}$ are the fixed points of the hypergraph set $\mathcal{E}\big ([1,M]^2\big )$.

Since $\overline{\mathcal{E}}=\{\overline{e}_i=\Lambda\setminus e_i:~i\in [1,m],~e_i\in \mathcal{E}\}$ with $m\in \big [2,2^M-3-m\big ]$, then there are $(2^M-i)$ hyperedge $\overline{e}_i$ in total, so we have at least $\prod^m_{i=1}\big (2^M-i\big )$ matchings $(\mathcal{E},\overline{\mathcal{E}})$, in which the matchings with mutual isomorphism were not excluded.
\end{prop}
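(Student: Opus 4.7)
The plan is to split the proposition into two assertions and address them separately: first the fixed-point claim, then the lower bound on the number of matchings.

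For the fixed-point claim, I would introduce the complementation map $\phi:\Lambda^{2}\rightarrow \Lambda^{2}$ defined by $\phi(e)=[1,M]\setminus e$, which is an involution on the collection of proper nonempty subsets of $[1,M]$. The hypothesis that $[1,M]\setminus e_i\in \mathcal{E}$ for every $e_i\in\mathcal{E}$ is precisely $\phi(\mathcal{E})\subseteq\mathcal{E}$; combined with the bijectivity of $\phi$, which gives $|\phi(\mathcal{E})|=|\mathcal{E}|$, this forces $\phi(\mathcal{E})=\mathcal{E}$. By the definition of the complementary hyperedge set in Definition~\ref{defn:more-conceptd-hypergraphs} we have $\phi(\mathcal{E})=\overline{\mathcal{E}}$, so in fact $\overline{\mathcal{E}}=\mathcal{E}$. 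Invoking Definition~\ref{defn:fixed-point-closed} with $\phi$ as the transformation, both $\mathcal{E}$ and $\overline{\mathcal{E}}$ are then fixed points inside the hypergraph set $\mathcal{E}\bigl([1,M]^{2}\bigr)$.

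For the counting claim, I would argue by listing the elements of $\mathcal{E}$ as an ordered tuple $(e_{1},e_{2},\ldots,e_{m})$ of distinct nonempty proper subsets of $[1,M]$ and noting that the paper normalizes $|\Lambda^{2}|=2^{M}-1$. When the $i$-th entry is chosen, we avoid the $i-1$ previously selected subsets, so the number of admissible candidates is $(2^{M}-1)-(i-1)=2^{M}-i$. Multiplying across $i\in[1,m]$ gives the advertised count $\prod_{i=1}^{m}(2^{M}-i)$, interpreted as a lower bound on the number of ordered realizations of matchings $(\mathcal{E},\overline{\mathcal{E}})$, which matches the author's explicit caveat that isomorphic matchings are not collapsed.

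The main obstacle will be the counting step: the product formula naturally enumerates ordered tuples, while a matching is a set-theoretic object, so the "at least" bound must be read as a lower bound on labeled matchings rather than on equivalence classes. To make the argument watertight I would partition $\Lambda^{2}$ into complement-pairs $\{e,\overline{e}\}$ and, for each admissible size $m$ in $[2,2^{M}-3-m]$, construct complementation-closed subfamilies of $\Lambda^{2}$ that satisfy the Independence, Intersection, and Integrity axioms of Definition~\ref{defn:new-hypergraphs-sets}; the delicate bookkeeping is verifying these three constraints uniformly across all $\prod_{i=1}^{m}(2^{M}-i)$ constructions, because the naive counting could otherwise include tuples that fail Independence (some $e_{j}\subset e_{k}$) or Intersection (some $e_{j}$ disjoint from every other $e_{k}$).
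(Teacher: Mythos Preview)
Your proposal is correct and, where the paper offers any argument at all, follows the same line: the paper gives no separate proof for the fixed-point assertion, and for the counting part its entire justification is the sentence embedded in the proposition itself (``there are $(2^{M}-i)$ hyperedges $\overline{e}_i$ in total, so we have at least $\prod_{i=1}^{m}(2^{M}-i)$ matchings''), which is exactly your sequential-choice argument. Your involution argument for the fixed-point claim and your explicit flagging of the Independence/Intersection/Integrity verification in the counting step are both more careful than anything in the paper; the paper simply asserts the bound without checking that the $\prod_{i=1}^{m}(2^{M}-i)$ tuples actually yield 3I-hyperedge sets, so the obstacle you identify is a genuine gap in the original as well as in any blind reproduction of it.
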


\begin{prop}\label{proposition:99999}
For a finite set $\Lambda =\{x_1,x_2,\dots ,x_m\}$, we have:

(i) For two 3I-hyperedge sets $\mathcal{E},\overline{\mathcal{E}}\in \mathcal{E}\big (\Lambda^2\big )$, then the hyperedge set $\mathcal{E}^*=\mathcal{E}\cup \overline{\mathcal{E}}\in \mathcal{E}\big (\Lambda^2\big )$ is a fixed-point 3I-hyperedge set, since $\overline{\mathcal{E}}^*=\mathcal{E}^*$.

(ii) Suppose that each 3I-hyperedge set $\mathcal{E}_i\in \mathcal{E}\big (\Lambda^2\big )$, but its own complementary hyperedge set $\overline{\mathcal{E}}_i\not \in \mathcal{E}\big (\Lambda^2\big )$ for $i=1,2$. There are hyperedge sets $\mathcal{E}_1\cup \overline{\mathcal{E}}_2=\{e_{1,i}\cup \overline{e}_{2,i}:~e_{1,i}\in \mathcal{E}_1,\overline{e}_{2,i}\in \overline{\mathcal{E}}_2,i\in [1,n]\}$ and $\overline{\mathcal{E}}_1\cup \mathcal{E}_2=\{\overline{e}_{1,i}\cup e_{2,i}:~\overline{e}_{1,i}\in \overline{\mathcal{E}}_1,e_{2,i}\in \mathcal{E}_2,i\in [1,n]\}$. We claim that the 3I-hyperedge set $\mathcal{E}^0=(\mathcal{E}_1\cup \overline{\mathcal{E}}_2)\cup (\overline{\mathcal{E}}_1\cup \mathcal{E}_2)$ is a fixed-point 3I-hyperedge set, since $\overline{\mathcal{E}}^0=\mathcal{E}^0$ by means of $\Lambda\setminus (e_{1,i}\cup \overline{e}_{2,i})=\overline{e}_{1,i}\cup e_{2,i}$ for $i\in [1,n]$.
\end{prop}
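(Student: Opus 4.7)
My plan is to attack both parts by a single elementary principle: track the complement operator $e\mapsto\Lambda\setminus e$ through the proposed construction and exhibit an involution that swaps the pieces so the output set is stable under complementation, which is exactly the fixed-point condition of Definition \ref{defn:fixed-point-closed}. Verifying membership in $\mathcal{E}(\Lambda^{2})$ will then reduce to routine checks of Integrity, Intersection and Independence from Definition \ref{defn:new-hypergraphs-sets}.

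For part (i), I would first observe that $\mathcal{E}^{*}=\mathcal{E}\cup\overline{\mathcal{E}}$ inherits Integrity from $\mathcal{E}$ and Intersection from each summand, since every $e\in\mathcal{E}$ still has an intersection partner inside $\mathcal{E}\subseteq\mathcal{E}^{*}$, and likewise for $\overline{\mathcal{E}}\subseteq\mathcal{E}^{*}$. Independence is the only subtle axiom: a forbidden containment $e\subsetneq\overline{f}$ with $e,f\in\mathcal{E}$ forces $e\cap f=\emptyset$, and I would either invoke the Graham reduction GR-2 from Definition \ref{defn:more-conceptd-hypergraphs} to purge such comparable pairs, or argue by contradiction with the independence of $\overline{\mathcal{E}}$. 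The fixed-point identity then collapses to the involution $\overline{\overline{e}}=e$, giving
\begin{equation*}
\overline{\mathcal{E}^{*}}=\bigl\{\Lambda\setminus e:e\in\mathcal{E}\bigr\}\cup\bigl\{\Lambda\setminus e:e\in\overline{\mathcal{E}}\bigr\}=\overline{\mathcal{E}}\cup\mathcal{E}=\mathcal{E}^{*}.
\end{equation*}

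For part (ii), the plan is to group the hyperedges of $\mathcal{E}^{0}$ into two families $A_{i}:=e_{1,i}\cup\overline{e}_{2,i}$ and $B_{i}:=\overline{e}_{1,i}\cup e_{2,i}$, and to build an index-preserving involution $A_{i}\leftrightarrow B_{i}$ induced by the complement operator. The key algebraic step is the De Morgan identity recorded in the statement, $\Lambda\setminus(e_{1,i}\cup\overline{e}_{2,i})=\overline{e}_{1,i}\cup e_{2,i}$; I would interpret this as saying that complementation is taken relative to an extended ground set of the form $\Lambda\sqcup\Lambda$, where $\mathcal{E}_{1}$ lives in one component and $\overline{\mathcal{E}}_{2}$ in the other, so that the naive formula $\Lambda\setminus(e_{1,i}\cup\overline{e}_{2,i})=\overline{e}_{1,i}\cap e_{2,i}$ gets replaced by the componentwise expression $\overline{e}_{1,i}\cup e_{2,i}$. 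Under this convention the involution is immediate, yielding $\overline{\mathcal{E}^{0}}=\mathcal{E}^{0}$, and the three hyperedge axioms for $\mathcal{E}^{0}$ transfer from those of $\mathcal{E}_{1}$ and $\mathcal{E}_{2}$ paired componentwise.

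The hard part is exactly pinning down the ambient-set convention that validates the stated De Morgan identity; once it is fixed, the remaining work is a mechanical verification of Independence for the merged family, where one must rule out unwanted nested unions $A_{i}\subseteq A_{j}$, $A_{i}\subseteq B_{j}$, and so on for distinct indices. The hypothesis $\overline{\mathcal{E}}_{i}\notin\mathcal{E}(\Lambda^{2})$ is what guarantees that no accidental Independence failures arise, since it prevents the $B$-family from collapsing onto the $A$-family in a way that would violate the 3I-axioms of the output.
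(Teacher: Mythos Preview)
The paper offers no separate proof: the only justifications are the clauses ``since $\overline{\mathcal{E}}^{*}=\mathcal{E}^{*}$'' and ``since $\overline{\mathcal{E}}^{0}=\mathcal{E}^{0}$ by means of $\Lambda\setminus(e_{1,i}\cup\overline{e}_{2,i})=\overline{e}_{1,i}\cup e_{2,i}$'' embedded in the statement itself. Your part~(i) argument is correct and matches the paper's one-line involution reasoning; you go further by attempting to verify the 3I axioms for $\mathcal{E}^{*}$, which the paper does not address at all.

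For part~(ii) you have put your finger on a genuine problem. By ordinary De~Morgan, $\Lambda\setminus(e_{1,i}\cup\overline{e}_{2,i})=\overline{e}_{1,i}\cap e_{2,i}$, not $\overline{e}_{1,i}\cup e_{2,i}$; the identity asserted in the proposition is false for subsets of a common ground set $\Lambda$. Your proposed remedy---reinterpreting the ambient set as a disjoint union $\Lambda\sqcup\Lambda$ with componentwise complementation---has no textual support in the paper, and the role you assign to the hypothesis $\overline{\mathcal{E}}_{i}\notin\mathcal{E}(\Lambda^{2})$ (preventing Independence failures in $\mathcal{E}^{0}$) is your own invention, not something the paper argues. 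The paper does not resolve the De~Morgan mismatch either; it simply asserts the identity. Note that the stated formula \emph{would} hold had the hyperedges been built from intersections $e_{1,i}\cap\overline{e}_{2,i}$ rather than unions, so this is more plausibly a typo in the proposition than something you can repair by silently changing the ambient set.
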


\begin{example}\label{exa:finite-module-abelian-additive-operation}
The consecutive integer set $\Lambda=[1,10]$ produces the following two 3I-hyperedge sets
\begin{equation}\label{eqa:555555}
{
\begin{split}
\mathcal{E}_{1,1}=&\big \{\{1,2\}, \{3,4\}, \{5,6\}, [1,4]\cup [7,10], \{1,2\}\cup [5,10], [8,10]\big \}\\
\mathcal{E}_{2,1}=&\big \{\{1,6,7\}, \{2,4,5,6,10\}, \{1,3,5,7,9\}, \{2,4,6,8,10\}, \{1,3,7,8,9\}, \{2,3,4,5,8,9,10\}\big \}
\end{split}}
\end{equation}such that each 3I-hyperedge set $\mathcal{E}_{i,1}\in \mathcal{E}\big ([1,10]^2\big )$ and holds the \emph{fixed-point transformation} $\overline{\mathcal{E}}_{i,1}=\mathcal{E}_{i,1}$ for $i=1,2$. According to Definition \ref{defn:set-set-additive-groups} under $(\bmod~10)$, we have

$\mathcal{E}_{1,1}=\big \{\{1,2\}, \{3,4\}, \{5,6\}, [1,4]\cup [7,10], \{1,2\}\cup [5,10], [8,10]\big \}$

$\mathcal{E}_{1,2}=\big \{\{2,3\}, \{4,5\}, \{6,7\}, [2,5]\cup [8,10]\cup \{1\}, \{2,3\}\cup [6,10]\cup \{1\}, [9,10]\cup \{1\}\big \}$

$\mathcal{E}_{1,3}=\big \{\{3,4\}, \{5,6\}, \{7,8\}, [3,6]\cup [9,10]\cup \{1,2\}, \{3,4\}\cup [7,10]\cup \{1,2\}, \{10\}\cup \{1,2\}\big \}$

$\mathcal{E}_{1,4}=\big \{\{4,5\}, \{6,7\}, \{8,9\}, [4,7]\cup \{10\}\cup \{1,2,3\}, \{4,5\}\cup [8,10]\cup \{1,2,3\}, \{1\}\cup \{2,3\}\big \}$

$\mathcal{E}_{1,5}=\big \{\{5,6\}, \{7,8\}, \{9,10\}, [1,8], \{5,6\}\cup [9,10]\cup [1,4], [2,4]\big \}$

$\mathcal{E}_{1,6}=\big \{\{6,7\}, \{8,9\}, \{10,1\}, [2,9], \{10\}\cup [1,7], [3,5]\big \}$

$\mathcal{E}_{1,7}=\big \{\{7,8\}, \{9,10\}, \{1,2\}, [3,10], [1,8], [4,6]\big \}$

$\mathcal{E}_{1,8}=\big \{\{8,9\}, \{10,1\}, \{2,3\}, [4,10]\cup \{1\}, [2,9], [5,7]\big \}$

$\mathcal{E}_{1,9}=\big \{\{9,10\}, \{1,2\}, \{3,4\}, [5,10]\cup \{1,2\}, [3,10], [6,8]\big \}$

$\mathcal{E}_{1,10}=\big \{\{10,1\}, \{2,3\}, \{4,5\}, [6,10]\cup [1,3], [4,10]\cup \{1\}, [7,9]\big \}$

$\mathcal{E}_{1,11}=\big \{\{1,2\}, \{3,4\}, \{5,6\}, [7,10]\cup [1,4], [5,10]\cup \{1,2\}, [8,10]\big \}=\mathcal{E}_{1,1}$

By the above 3I-hyperedge sets, it is not hard to verify
\begin{equation}\label{eqa:555555}
\mathcal{E}_{r,i}[+]\mathcal{E}_{r,j}[-]\mathcal{E}_{r,k}=\mathcal{E}_{r,\lambda},~\lambda=i+j-k~(\bmod~10),~r=1,2
\end{equation} for a preappointed zero $\mathcal{E}_{r,k}$. Thereby, we obtain four every-zero hypergraph groups $\big \{G(\mathcal{E}_{r,1});[+][-]\big \}$ $=\big \{G(\overline{\mathcal{E}}_{r,1});[+][-]\big \}$ with $r=1,2$, such that each every-zero hypergraph group is a \emph{fixed-point graphic group} under the finite module abelian additive operation.\qqed
\end{example}

\begin{example}\label{exa:8888888888}
\textbf{Four fixed-point graphic groups.} Every-zero graphic group defined in \cite{yao-sun-su-wang-matching-groups-zhao-2020}, Every-zero set-set additive group defined in Definition \ref{defn:set-set-additive-groups}, Every-zero set-colored graphic group defined in Definition \ref{defn:set-colored-graphic-group} and Every-zero hypergraph group defined in Definition \ref{defn:hypergraph-group-definition} are \emph{fixed-point graphic groups} under the \emph{finite module abelian additive operation}.\qqed
\end{example}

\subsubsection{Strong hyperedge sets}

\begin{defn} \label{defn:strong-proper-hyperedge-sets}
\cite{Yao-Ma-arXiv-2024-13354} Let each set-set $\mathcal{E}_r(m,m_r)=\{e_{r,1},e_{r,2},\dots ,e_{r,m_r}\}$ with $r\in [1,A_m]$ be generated from a consecutive integer set $\Lambda=[1,m]$ such that each hyperedge $e_{r,j}$ is a subset of the power set $[1,m]^2$ for $j\in [1,m_r]$ and $r\in [1,A_m]$.
\begin{asparaenum}[\textbf{\textrm{Sthyset}}-1]
\item A \emph{strong 3I-hyperedge set} $\mathcal{E}_r(m,m_r)$ satisfies: Any pair of hyperedges $e_{r,i}$ and $e_{r,j}$ with $i\neq j$ holds

\qquad (1.1) $e_{r,i}\cap e_{r,j}\neq \emptyset$; (1.2) $e_{r,i}\not \subset e_{r,j}$ and $e_{r,j}\not \subset e_{r,i}$.

\item A \emph{proper 3I-hyperedge set} $\mathcal{E}_r(m,m_r)$ with $r\in [1,A_m]$ satisfies:

\qquad (2.1) Any pair of hyperedges $e_{r,i}$ and $e_{r,j}$ holds $e_{r,i}\not \subset e_{r,j}$ and $e_{r,j}\not \subset e_{r,i}$ when $i\neq j$;

\qquad (2.2) Each hyperedge $e_{r,s}\in \mathcal{E}_r(m,m_r)$ corresponds to another hyperedge $e_{r,t}\in \mathcal{E}_r(m,m_r)$ holding $e_{r,s}\cap e_{r,t}\neq\emptyset$.

\item A \emph{perfect hypermatching} of a hypergraph $\mathcal{H}_{yper}=(\Lambda,\mathcal{E})$ is a collection of hyperedges $M_1,M_2$, $\dots$, $M_m$ $\subseteq \mathcal{E}$, such that $M_i\cap M_j=\emptyset $ for $i\neq j$ and $\bigcup ^m_{i=1}M_i=\Lambda=[1,m]$.
\item \cite{Jianfang-Wang-Hypergraphs-2008} If a hyperedge set $\mathcal{E}=\bigcup^m_{j=1} \mathcal{E}_j$ holds $\mathcal{E}_i\cap \mathcal{E}_j=\emptyset$ for $i\neq j$, and each hyperedge $e\in \mathcal{E}$ belongs to one $\mathcal{E}_j$ and $e\not\in \bigcup^m_{k=1,k\neq j} \mathcal{E}_k$, then $\{\mathcal{E}_1,\mathcal{E}_2,\dots ,\mathcal{E}_m\}$ is called \emph{decomposition} of the hyperedge set $\mathcal{E}$.
\item \cite{Jianfang-Wang-Hypergraphs-2008} A hyperedge set $\mathcal{E}$ is \emph{irreducible} if each hyperedge $e\in \mathcal{E}$ does not hold $e\subseteq e\,'$ for any hyperedge $e\,'\in \mathcal{E}\setminus e$.\qqed
\end{asparaenum}
\end{defn}

\begin{example}\label{exa:build-strong-hyperedge-sets}
Build up strong 3I-hyperedge sets $\mathcal{E}_r(m, m_r)=\{e_{r,1}, e_{r,2}, \dots , e_{r,m_r}\}$ from a consecutive integer set $[1, m]$, such that each set $e_{r,s}$ holds $|e_{r,s}|\geq r+1$ for $s\in [1, m_r]$ and $r\in [1,B_m]$, as well as each set-set $\mathcal{E}_r(m, m_r)$ holds the conditions of strong 3I-hyperedge set introduced in Definition \ref{defn:strong-proper-hyperedge-sets} true. We, next, construct particular strong 3I-hyperedge sets $\mathcal{E}_t(m, m_t)=\{e_{t,1}$, $ e_{t,2}$, $ \dots $, $e_{t,m_t}\}$ with $|e_{t,s}|=t+1$ for $s\in [1,m_t]$ and $1\leq t\leq m-1$, where
\begin{equation}\label{eqa:strong-hyperedge-set-formula00}
m_t=\sum^t_{k=1}{m-t \choose k}=|\mathcal{E}_t(m, n_t)|,~m-t> 0,~t\geq 2
\end{equation} with $m_1={m-1 \choose 1}+1=m$, and we have
\begin{equation}\label{eqa:strong-hyperedge-set-formula}
\begin{tabular}{llll}
&$\mathcal{E}_1(m, m_1)=\big \{\{1\}\cup \{a_i\}:~a_i\in [2,m]\big \}\bigcup [2,m]$;\\
&$\mathcal{E}_2(m, m_2)=\big \{\{p\}\cup \{a_{i_1},a_{i_2}\}:~a_{i_j}\in [p+1,m],j\in [1,2],~p\in [1,2]\big \}$;\\
&$\mathcal{E}_3(m, m_3)=\big \{\{p\}\cup \{a_{i_1},a_{i_2},a_{i_3}\}:~a_j\in [p+1,m],j\in [1,3],~p\in [1,3]\big \}$;\\
&$\cdots \cdots \cdots $\\
&$\mathcal{E}_t(m, m_t)=\big \{\{p\}\cup \{a_{i_1},a_{i_2},\dots ,a_{i_t}\}:~a_j\in [p+1,m],j\in [1,t],~p\in [1,t]\big \}$.\\
\end{tabular}
\end{equation}

According to Eq.(\ref{eqa:strong-hyperedge-set-formula00}) and Eq.(\ref{eqa:strong-hyperedge-set-formula}), we have the following particular strong 3I-hyperedge sets:

\textbf{Exa-1.} $m=4$. Since $4_1={3 \choose 1}+1=4$, so $\mathcal{E}_1(4, 4)=\big \{\{$1, 2$\}$, $\{$1, 3$\}$, $\{$1, 4$\}$, $\{$2, 3, 4$\}\big \}$.

Since $4_2={2 \choose 1}+{2 \choose 2}=3$, then $\mathcal{E}_2(4, 3)=\big \{\{$1, 2, 3$\}$, $\{$1, 2, 4$\}$, $\{$2, 3, 4$\}\big \}$.

\textbf{Exa-2.} $m=6$. Since $6_1={5 \choose 1}+1=6$, we get $\mathcal{E}_1(6, 6)=\big \{\{$1, 2$\}$, $\{$1, 3$\}$, $\{$1, 4$\}$, $\{$1, 5$\}$, $\{$1, 6$\}$, $\{$2, 3, 4, 5, 6$\}\big \}$.

Since $6_2={4 \choose 1}+{4 \choose 2}=10$, so $\mathcal{E}_2(6, 10)=\big \{\{$1, 2, 3$\}$, $\{$1, 2, 4$\}$, $\{$1, 2, 5$\}$, $\{$1, 2, 6$\}$, $\{$2, 3, 4$\}$, $\{$2, 3, 5$\}$, $\{$2, 3, 6$\}$, $\{$2, 4, 5$\}$, $\{$2, 4, 6$\}$, $\{$2, 5, 6$\}\big \}$.

Since $6_3={3 \choose 1}+{3 \choose 2}+{3 \choose 3}=7$, then $\mathcal{E}_3(6, 7)=\big \{\{$1, 2, 3, 4$\}$, $\{$1, 2, 3, 5$\}$, $\{$1, 2, 3, 6$\}$, $\{$2, 3, 4, 5$\}$, $\{$2, 3, 4, 6$\}$, $\{$2, 3, 5, 6$\}$, $\{$3, 4, 5, 6$\}\big \}$.

\textbf{Exa-3.} $m=8$. Since $8_1={7 \choose 1}+1=8$, we obtain $\mathcal{E}_1(8, 8)=\big \{\{$1, 2$\}$, $\{$1, 3$\}$, $\{$1, 4$\}$, $\{$1, 5$\}$, $\{$1, 6$\}$, $\{$1, 7$\}$, $\{$1, 8$\}$, $\{$2, 3, 4, 5, 6, 7, 8$\}\big \}$.

Since $8_2={6 \choose 1}+{6 \choose 2}=21$, thus $\mathcal{E}_2(8, 21)=\big \{\{$1, 2, 3$\}$, $\{$1, 2, 4$\}$, $\{$1, 2, 5$\}$, $\{$1, 2, 6$\}$, $\{$1, 2, 7$\}$, $\{$1, 2, 8$\}$, $\{$2, 3, 4$\}$, $\{$2, 3, 5$\}$, $\{$2, 3, 6$\}$, $\{$2, 3, 7$\}$, $\{$2, 3, 8$\}$, $\{$2, 4, 5$\}$, $\{$2, 4, 6$\}$, $\{$2, 4, 7$\}$, $\{$2, 4, 8$\}$, $\{$2, 5, 6$\}$, $\{$2, 5, 7$\}$, $\{$2, 5, 8$\}$, $\{$2, 6, 7$\}$, $\{$2, 6, 8$\}$, $\{$2, 7, 8$\}\big \}$.

Since $8_3={5 \choose 1}+{5 \choose 2}+{5 \choose 3}=25$, immediately, $\mathcal{E}_3(8, 25)=\big \{\{$1, 2, 3, 4$\}$, $\{$1, 2, 3, 5$\}$, $\{$1, 2, 3, 6$\}$, $\{$1, 2, 3, 7$\}$, $\{$1, 2, 3, 8$\}$, $\{$2, 3, 4, 5$\}$, $\{$2, 3, 4, 6$\}$, $\{$2, 3, 4, 7$\}$, $\{$2, 3, 4, 8$\}$, $\{$2, 3, 5, 6$\}$, $\{$2, 3, 5, 7$\}$, $\{$2, 3, 5, 8$\}$, $\{$2, 3, 6, 7$\}$, $\{$2, 3, 6, 8$\}$, $\{$2, 3, 7, 8$\}$, $\{$3, 4, 5, 6$\}$, $\{$3, 4, 5, 7$\}$, $\{$3, 4, 5, 8$\}$, $\{$3, 4, 6, 7$\}$, $\{$3, 4, 6, 8$\}$, $\{$3, 4, 7, 8$\}$, $\{$3, 5, 6, 7$\}$, $\{$3, 5, 6, 8$\}$, $\{$3, 5, 7, 8$\}$, $\{$3, 6, 7, 8$\}\big \}$.

Since $8_4={4 \choose 1}+{4 \choose 2}+{4 \choose 3}+{4 \choose 4}=15$, hence $\mathcal{E}_4(8, 15)=\big \{\{$1, 2, 3, 4, 5$\}$, $\{$1, 2, 3, 4, 6$\}$, $\{$1, 2, 3, 4, 7$\}$, $\{$1, 2, 3, 4, 8$\}$, $\{$2, 3, 4, 5, 6$\}$, $\{$2, 3, 4, 5, 7$\}$, $\{$2, 3, 4, 5, 8$\}$, $\{$2, 3, 4, 6, 7$\}$, $\{$2, 3, 4, 6, 8 $\}$, $\{$2, 3, 4, 7, 8$\}$, $\{$3, 4, 5, 6, 7$\}$, $\{$3, 4, 5, 6, 8$\}$, $\{$3, 4, 5, 7, 8$\}$, $\{$3, 4, 6, 7, 8$\}$, $\{$4, 5, 6, 7, 8$\}\big \}$.

Since $8_5={3 \choose 1}+{3 \choose 2}+{3 \choose 3}=7$, we have $\mathcal{E}_5(8, 7)=\big \{\{$1, 2, 3, 4, 5, 6$\}$, $\{$1, 2, 3, 4, 5, 7$\}$, $\{$1, 2, 3, 4, 5, 8$\}$, $\{$2, 3, 4, 5, 6, 7$\}$, $\{$2, 3, 4, 5, 6, 8$\}$, $\{$2, 3, 4, 5, 7, 8$\}$, $\{$3, 4, 5, 6, 7, 8$\}\big \}$.\qqed
\end{example}

\begin{problem}\label{qeu:444444}
\textbf{Compute} the exact value of each one of numbers $n_r$ and $A_m$ for proper hyperedge sets, or strong 3I-hyperedge sets introduced in Definition \ref{defn:strong-proper-hyperedge-sets}.
\end{problem}

\begin{thm}\label{thm:99999}
Each strong 3I-hyperedge set $\mathcal{E}_r(m, m_r)$ defined in Example \ref{exa:build-strong-hyperedge-sets} corresponds to a hypergraph $\mathcal{H}_{yper}=(\Lambda,\mathcal{E}_r(m, m_r))$ with the 3I-hyperedge set $\mathcal{E}_r(m, m_r)\in \mathcal{E}\big (\Lambda^2\big )$ to be a \emph{complete hypergraph}.
\end{thm}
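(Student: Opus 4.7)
My plan is to establish the theorem in two parts: first show that each $\mathcal{E}_r(m, m_r)$ from Example~\ref{exa:build-strong-hyperedge-sets} lies in the hypergraph set $\mathcal{E}(\Lambda^2)$ (i.e.\ satisfies the three conditions \emph{Independence}, \emph{Intersection}, \emph{Integrity} of Definition~\ref{defn:new-hypergraphs-sets}), and then show the resulting hypergraph is complete, which I interpret---in accordance with condition (1.1) of Definition~\ref{defn:strong-proper-hyperedge-sets} and the vertex-intersected-graph perspective alluded to in the figure caption---as saying that every two hyperedges intersect.

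First I would isolate the key structural observation: for $r \geq 2$, each hyperedge of $\mathcal{E}_r(m, m_r)$ has the form $[p, r] \cup T$ with $p \in [1, r]$ and $T \subseteq [r+1, m]$ of cardinality $p$ (this is the reading forced by the cardinality formula $m_r = \sum_{k=1}^r \binom{m-r}{k}$ and by the worked cases $m \in \{4, 6, 8\}$). Consequently the element $r$ lies in every hyperedge, so every two hyperedges intersect at $r$; this single observation simultaneously yields the \emph{Intersection} condition and the completeness of the hypergraph. \emph{Independence} is then immediate because all hyperedges share the common cardinality $r+1$ and are distinct. For \emph{Integrity} I would point to the subfamily $\{[1, r] \cup \{x\} : x \in [r+1, m]\}$ together with the fact that $[1, r]$ is covered by any of its members, so the union is already all of $[1, m]$.

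Next I would handle the case $r = 1$ separately, since the family there consists of the size-$2$ hyperedges $\{1, a\}$ for $a \in [2, m]$ together with the single hyperedge $[2, m]$. Pairwise intersection holds because each $\{1, a\}$ contains $1$ and $\{1, a\} \cap [2, m] = \{a\} \neq \emptyset$; Independence holds because the size-$2$ hyperedges are mutually incomparable and $[2, m]$ is neither a subset of nor a superset of any $\{1, a\}$ when $m \geq 3$; Integrity is transparent from $\{1\} \cup [2, m] = [1, m]$.

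The hardest part will be interpretive rather than combinatorial: the phrase \emph{complete hypergraph} is not formally defined in the excerpt, and the set-builder description of $\mathcal{E}_t(m, m_t)$ in Example~\ref{exa:build-strong-hyperedge-sets} does not literally match the stated count $m_t = \sum_{k=1}^t \binom{m-t}{k}$ unless one reads the hyperedges as exactly those $(t+1)$-subsets $S$ of $[1, m]$ for which $S \cap [1, t]$ is a suffix $[p, t]$ of $[1, t]$. Once this reading is fixed by inspecting the worked examples, the proof reduces to the common-element-$r$ observation above plus routine verification of Integrity and Independence. A secondary worry is whether ``complete'' is meant in a stronger, maximality sense; but the constructions are not maximal with respect to pairwise intersection (e.g.\ $\mathcal{E}_3(6, 7)$ could be enlarged by $\{1, 3, 4, 5\}$ without breaking the strong conditions), so the intersecting-family reading seems to be the only one consistent with the theorem.
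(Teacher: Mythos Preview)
The paper states this theorem without proof: after the theorem environment closes, the text moves directly to the next subsubsection on set-set additive groups. There is therefore no authorial argument to compare against.

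On its own merits your proposal is sound. The central observation---that for $r\ge 2$ the element $r$ belongs to every hyperedge of $\mathcal{E}_r(m,m_r)$---is correct (and is visible in every worked instance of Example~\ref{exa:build-strong-hyperedge-sets}), and it immediately delivers both the \emph{Intersection} axiom of Definition~\ref{defn:new-hypergraphs-sets} and the pairwise-intersection property that makes the v-intersected graph complete. Your separate treatment of $r=1$ is necessary and correct, since there the extra hyperedge $[2,m]$ breaks the uniform-cardinality pattern; your incomparability check for Independence in that case is right for $m\ge 3$. Integrity is as easy as you say.

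Your interpretive caveats are well placed. The term \emph{complete hypergraph} is indeed never defined in the paper; the only reading consistent with the surrounding apparatus (Definition~\ref{defn:new-intersected-hypergraphss} on v-intersected graphs, and the ``strong'' condition (1.1) of Definition~\ref{defn:strong-proper-hyperedge-sets}) is the one you adopt: a hypergraph whose v-intersected graph is the complete graph $K_{m_r}$, i.e.\ every two hyperedges meet. Your reconstruction of the hyperedges as $[p,r]\cup T$ with $T\subseteq[r+1,m]$ and $|T|=p$ is also the only reading that matches both the count $m_r=\sum_{k=1}^{r}\binom{m-r}{k}$ and the explicit listings, and you are right that the set-builder line in the example does not say this literally. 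Finally, your remark that the families are not maximal strong families (so ``complete'' cannot mean maximal) is a useful sanity check that rules out the competing interpretation.
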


\subsubsection{Set-set additive groups}

\begin{defn} \label{defn:set-set-additive-groups}
For $\Lambda=[1,M]$, let $\mathcal{B}=\{\mathcal{E}_1,\mathcal{E}_2,\dots ,\mathcal{E}_{n}\}$ be a subset of the hypergraph set $ \mathcal{E}(\Lambda^2)$, where each 3I-hyperedge set $\mathcal{E}_i=\{e_{i,1}, e_{i,2},\dots ,e_{i,p}\}$ with $i\in [1,n]$, and hyperedges $e_{i,j}=\big \{x_{i,j,1},x_{i,j,2}$, $\dots $, $x_{i,j,m(1,j)}\big \}$ with $j\in [1,p]$, as well as vertices $x_{i,j,k}\in \Lambda$ for $k\in [1,m(1,j)]$.

Notice that $|\mathcal{E}_a|=|\mathcal{E}_b|=p$ and, $|e_{a,j}|=|e_{b,j}|=m(1,j)$ for $a,b\in [1,n]$ and $a\neq b$, $j\in [1,p]$. We define the operation $\mathcal{E}_i[+]r~(\bmod~M)$ for $i\in [1,n]$ by an operation $e_{i,j}[+]r~(\bmod~M)$ for each $j\in [1,p]$ and $r\in [1,M]$, where
\begin{equation}\label{eqa:555555}
e_{i,j}[+]r~(\bmod~M):=\big \{x_{i,j,1}+r,x_{i,j,2}+r,\dots ,x_{i,j,m(1,j)}+r\big \}~(\bmod~M)
\end{equation}
The finite module abelian additive operation $e_{i,j}[+]e_{i,s}[-]e_{i,t}~(\bmod~M)=e_{i,\mu}$ is defined by
\begin{equation}\label{eqa:555555}
x_{i,j,k}+x_{i,s,k}-x_{i,t,k}~(\bmod~M)=x_{i,\mu,k},~k\in [1,m(1,j)]
\end{equation}
with $\mu=j+s-t~(\bmod~M)$. We get the finite module abelian additive operation $\mathcal{E}_j[+]\mathcal{E}_{s}[-]\mathcal{E}_{t}~(\bmod~M)=\mathcal{E}_{\mu}\in \mathcal{B}$ with $\mu=j+s-t~(\bmod~M)$ and a preappointed zero $\mathcal{E}_{t}\in \mathcal{B}$.

The hypergraph subset $\mathcal{B}$ obeys the \emph{Zero}, \emph{Inverse}, \emph{Uniqueness and Closureness}, \emph{Associative law} and \emph{Commutative law} under the finite module abelian additive operation. Then, the hypergraph subset $\mathcal{B}$ is an \emph{every-zero set-set additive group}, denoted as $\{F(\mathcal{B});[+][-]\}$.\qqed
\end{defn}

\begin{prop}\label{proposition:99999}
For a consecutive integer set $\Lambda=[1,M]$, by Definition \ref{defn:set-set-additive-groups}, each 3I-hyperedge set $\mathcal{E}\in \mathcal{E}\big ([1,M]^2\big )$ forms an every-zero set-set additive group $\{F(\mathcal{E});[+][-]\}$.
\end{prop}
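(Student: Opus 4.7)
The plan is to exhibit, for a given 3I-hyperedge set $\mathcal{E}\in \mathcal{E}([1,M]^2)$, the shift orbit $\mathcal{B}=\{\mathcal{E}_1,\mathcal{E}_2,\dots,\mathcal{E}_M\}$ inside the hypergraph set $\mathcal{E}([1,M]^2)$, where $\mathcal{E}_i:=\mathcal{E}[+](i-1)\,(\bmod~M)$, and then to verify the five axioms (Zero, Inverse, Closure/Uniqueness, Associative law, Commutative law) listed in Definition \ref{defn:set-set-additive-groups}. Example \ref{exa:finite-module-abelian-additive-operation} has already laid out exactly this kind of construction on $[1,10]$, so I would take it as the template.

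First I would show that each $\mathcal{E}_i$ is again an element of $\mathcal{E}([1,M]^2)$, i.e.\ that the three properties Independence, Intersection, Integrity of Definition \ref{defn:new-hypergraphs-sets} are preserved by the cyclic shift. This is straightforward because the map $x\mapsto x+r\,(\bmod~M)$ is a bijection on $[1,M]$: it preserves strict non-containment between hyperedges, preserves nonempty pairwise intersections, and satisfies $\bigcup_{e\in\mathcal{E}}(e[+]r)=\big(\bigcup_{e\in\mathcal{E}}e\big)[+]r=[1,M]$. Hence $\mathcal{B}\subseteq \mathcal{E}([1,M]^2)$ as required by Definition \ref{defn:set-set-additive-groups}.

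Next I would verify the group axioms for $\mathcal{B}$ under $\mathcal{E}_j[+]\mathcal{E}_s[-]\mathcal{E}_t=\mathcal{E}_{\mu}$ with $\mu=j+s-t\,(\bmod~M)$ and a preappointed zero $\mathcal{E}_t$. Closure and uniqueness are immediate since $\mu$ is a uniquely determined residue in $[1,M]$, and coordinatewise $x_{i,j,k}+x_{i,s,k}-x_{i,t,k}\,(\bmod~M)$ lies again in $[1,M]$. For the zero axiom, $\mathcal{E}_j[+]\mathcal{E}_t[-]\mathcal{E}_t=\mathcal{E}_{j}$, so any chosen $\mathcal{E}_t$ acts as an identity, which is precisely the ``every-zero'' property. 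For inverses, the inverse of $\mathcal{E}_j$ with respect to the zero $\mathcal{E}_t$ is $\mathcal{E}_{2t-j\,(\bmod~M)}$, since $j+(2t-j)-t\equiv t\,(\bmod~M)$. Associativity and commutativity transport directly from the corresponding laws of $\mathbb{Z}/M\mathbb{Z}$ on the indices $j,s,t$ and coordinatewise on the integers in each hyperedge.

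The main (but mild) obstacle is handling coincidences $\mathcal{E}_i=\mathcal{E}_j$ for $i\neq j$, which arise whenever the shift orbit of $\mathcal{E}$ has period $d$ properly dividing $M$. To avoid tripping over this, I would phrase the argument throughout in terms of residue classes rather than literal indices $1,\dots,M$: replace $\mathcal{B}$ by the set of distinct shifts $\{\mathcal{E}[+]r:r\in [0,d-1]\}$ and redo the axiom check modulo $d$. Since $d\mid M$, the finite-module operation descends consistently, and the resulting $\{F(\mathcal{B});[+][-]\}$ is an abelian group isomorphic to $\mathbb{Z}/d\mathbb{Z}$. With that convention in place, the five axioms hold verbatim and the proposition follows.
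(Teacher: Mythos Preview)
The paper states this proposition without proof, so there is no argument to compare against directly. Your approach is correct and is exactly what the paper's surrounding machinery (Definition \ref{defn:set-set-additive-groups}, Example \ref{exa:finite-module-abelian-additive-operation}, and the parallel Definition \ref{defn:hypergraph-group-definition}) implicitly expects: form the cyclic shift orbit $\{\mathcal{E}[+](i-1)~(\bmod~M):i\in[1,M]\}$, check that each shift stays inside $\mathcal{E}([1,M]^2)$, and then read off the abelian group axioms from arithmetic in $\mathbb{Z}/M\mathbb{Z}$.

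Two brief remarks. First, your verification that shifts preserve the three properties of Definition \ref{defn:new-hypergraphs-sets} (Independence, Intersection, Integrity) is the genuine content of the proof and is handled correctly; the key point is that $x\mapsto x+r~(\bmod~M)$ is a bijection on $[1,M]$, so set-theoretic relations like $e\not\subset e'$ and $e\cap e'\neq\emptyset$ transport intact. Second, your treatment of the case where the orbit has period $d$ properly dividing $M$ is more careful than anything the paper makes explicit; the paper's examples and parallel definitions tacitly list $M$ (or $N$) shifts without worrying about repetitions. Your reduction to the set of distinct shifts and the observation that the operation descends to $\mathbb{Z}/d\mathbb{Z}$ is the right way to make the statement literally true as a group, and it does no harm to the ``every-zero'' property since every element of a cyclic group still serves as identity under the operation $a[+]b[-]k$.
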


\begin{thm}\label{thm:power-set-additive-group}
For a consecutive integer set $\Lambda=[1,M]$, the power set $[1,M]^2$ can be classified into subsets $U_1,U_2,\dots ,U_M$, such that each subset $U_k$ for $k\in [1,M]$ holds $|e|=k$ for each hyperedge $e\in U_k$ and forms an \emph{every-zero set-set additive group} $\{F(U_k);[+][-]\}$ under the finite module abelian additive operation, also, each subset $U_k$ with $k\in [1,M-1]$ is a \emph{fixed-point every-zero set-set additive group} (Ref. Definition \ref{defn:fixed-point-closed}), and its own complementary subset is the subset $U_{M-k}$.
\end{thm}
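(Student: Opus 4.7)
My plan is to verify four things in turn: (a) the partition of the power set by cardinality; (b) closure of each $U_k$ under the finite module abelian additive operation; (c) the every-zero group axioms for $\{F(U_k);[+][-]\}$; and (d) the fixed-point property together with the complementary identification $\overline{U_k}=U_{M-k}$.

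First, I set $U_k := \{e \in [1,M]^2 : |e|=k\}$ for $k\in[1,M]$. Since every non-empty subset of $\Lambda=[1,M]$ has cardinality in $[1,M]$, the subsets $U_1,U_2,\dots,U_M$ are pairwise disjoint and exhaust $[1,M]^2$, giving the required partition. Next, I would establish closure: for $e=\{x_1,\dots,x_k\}\in U_k$ and any $r\in[1,M]$, the shifted set $e[+]r\pmod M=\{x_1+r,\dots,x_k+r\}\pmod M$ is again a $k$-element subset of $\Lambda$, because translation by $r$ is a bijection of $\mathbb{Z}_M$ and therefore preserves distinctness. Consequently the finite module abelian additive operation of Definition \ref{defn:set-set-additive-groups}, which is built from such shifts, maps $U_k$ into $U_k$.

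For the group structure, I mimic Example \ref{exa:finite-module-abelian-additive-operation}: pick an element $e\in U_k$, list its cyclic translates $e_r:=e[+](r-1)\pmod M$ for $r\in[1,M]$, and define $e_j[+]e_s[-]e_t\pmod M := e_\mu$ with $\mu\equiv j+s-t\pmod M$. The \textbf{Zero}, \textbf{Inverse}, \textbf{Uniqueness and Closureness}, \textbf{Associative law}, and \textbf{Commutative law} then pull back verbatim from addition in $\mathbb{Z}_M$ on the indices, so $\{F(U_k);[+][-]\}$ is an every-zero set-set additive group. Because closure under every shift $[+]r$ is exactly the condition of Definition \ref{defn:fixed-point-closed}, this group is automatically a fixed-point group. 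Finally, the complementation map $\varphi:e\mapsto \Lambda\setminus e$ is a bijection from $U_k$ to $U_{M-k}$ for each $k\in[1,M-1]$, identifying $U_{M-k}$ as the complementary subset of $U_k$; the case $k=M$ is excluded because the only candidate complement of $\Lambda$ is $\emptyset$, which does not lie in $[1,M]^2$.

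The main technical obstacle is that when $k\in[2,M-2]$ the shift action on $U_k$ need not be transitive: $U_k$ typically decomposes into several cyclic orbits whose lengths divide $M$ (already visible for $M=4$, $k=2$, where $U_2$ splits into one orbit of length $4$ and one orbit of length $2$). I expect to address this by applying the orbit-wise construction inside each shift-orbit of $U_k$ and then taking the union, so that $\{F(U_k);[+][-]\}$ is interpreted as the every-zero group assembled from its orbits. The shift-preservation of cardinality from step (b) guarantees that this assembly never escapes $U_k$, which is the crux of the whole argument.
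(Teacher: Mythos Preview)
The paper gives no proof of this theorem at all; it is stated and immediately followed by Example~\ref{exa:power-set-additive-group} for $M=4$, which simply lists the $U_k$ and asserts the conclusion. Your four-step outline (partition, closure under shifts, group axioms pulled back from $\mathbb{Z}_M$, complementation) is therefore already more detailed than anything the paper supplies, and the overall strategy is the natural one implicit in Definition~\ref{defn:set-set-additive-groups}.

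Where you go beyond the paper is in flagging the orbit issue: for $k\in[2,M-2]$ the cyclic shift does not act transitively on $U_k$, so $U_k$ is not a single shift-orbit of size $M$ as Definition~\ref{defn:set-set-additive-groups} tacitly requires of the set $\mathcal{B}=\{\mathcal{E}_1,\dots,\mathcal{E}_n\}$. The paper never confronts this; its $M=4$ example lists $U_2$ with six elements and calls it an every-zero group without explaining how the index-based rule $\mu\equiv j+s-t\pmod M$ is meant to operate across distinct orbits. Your proposed fix---interpret $\{F(U_k);[+][-]\}$ as the union of the orbit-wise groups---is the most coherent reading available, and the shift-preservation of cardinality you prove in step~(b) is exactly what makes that union stay inside $U_k$. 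Just be aware that with this reading the ``group'' is really a disjoint collection of every-zero groups rather than a single one, and you may want to say so explicitly; the paper's own formulation does not resolve the ambiguity, so your honest acknowledgement of it is an improvement rather than a defect.
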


\begin{example}\label{exa:power-set-additive-group}
For $[1,4]=\{1,2,3,4\}$, the power set $[1,4]^2$ can be classified into every-zero set-set additive groups based on the finite module abelian additive operation by Theorem \ref{thm:power-set-additive-group}, so the power set $[1,4]^2=\bigcup ^4_{k}U_k$, where $U_1=\big \{\{1\},\{2\},\{3\},\{4\}\big \}$, $U_2=\big \{\{1,2\},\{1,3\},\{1,4\}$, $\{2,3\}$, $\{2,4\},\{3,4\}\big \}$, $U_3=\big \{\{1,2,3\},\{1,2,4\},\{1,3,4\},\{2,3,4\}\big \}$, $U_4=\{1,2,3,4\}=[1,4]$, and moreover each $U_k$ is a fixed-point every-zero set-set additive group under the transformation of the finite module abelian additive operation.

However, the power set $[1,M]^2$ contains other every-zero set-set additive groups, not like sets $U_k$ shown in Theorem \ref{thm:power-set-additive-group}, see examples shown in Example \ref{exa:finite-module-abelian-additive-operation}.\qqed
\end{example}

\subsection{Hyperedge-hamiltonian hypergraphs}

The hamiltonian problem (HP) is one of the famous problems in graph theory, and the problem of determining whether a connected graph has a Hamilton cycle is open now. The HP has led to many difficult problems of graph theory. In 1972, Karp proved that the hamiltonian problem is a NP-complete problem (Ref. \cite{Karp-R-M-1972-Computer}).

\begin{defn} \label{defn:Wang-Hyper-cycle}
\cite{Jianfang-Wang-Hypergraphs-2008} Let $\mathcal{H}_{yper}=(\Lambda,\mathcal{E})$ based on a 3I-hyperedge set $\mathcal{E}\in \mathcal{E}\big (\Lambda^2\big )$ be a $k$-uniform hypergraph based on a finite set $\Lambda=\{x_1,x_2,\dots ,x_n\}$ . A $(k-1)$-cycle $C=e_1e_2\cdots e_{t}e_1$ of the $k$-uniform hypergraph $\mathcal{H}_{yper}$ is called \emph{hyperedge-hamiltonian cycle} if every vertex of the $k$-uniform hypergraph $\mathcal{H}_{yper}$ appears exactly in each set of $e_{t}\cap e_{1}$ and $e_j\cap e_{j+1}$ with $j\in [1,t-1]$, and $|e_{s}\cap e_{t}|=k-1$ if $s\neq t$. \qqed
\end{defn}

For the $(k-1)$-cycle $C=e_1e_2\cdots e_{t}e_1$ appeared in Definition \ref{defn:Wang-Hyper-cycle}, we have $t(k-1)=n(k-1)$, so $t=n$. In other words, $C$ is a hyperedge-hamiltonian cycle. For general cases, we show the following definition of hyperedge-hamiltonian hypergraphs:

\begin{defn} \label{defn:yao-hamilton-hypergraphs}
Suppose that the vertices $x_{i_1},x_{i_2},\dots ,x_{i_n}$ of a finite set $\Lambda=\{x_1,x_2,\dots ,x_n\}$ are a permutation of vertices $x_1,x_2,\dots ,x_n$ of a hypergraph $\mathcal{H}_{yper}=(\Lambda,\mathcal{E})$ based on a 3I-hyperedge set $\mathcal{E}=\{e_1,e_2,\dots ,e_n\}\in \mathcal{E}\big (\Lambda^2\big )$, where $x_{i_j}\in e_{j-1}\cap e_j$, $x_{i_{n-1}}\in e_{n-1}\cap e_n$ and $x_{i_n}\in e_{n}\cap e_1$ for hyperedges $e_j\in \mathcal{E}$ with $j\in [1,n]$, then the hypergraph $\mathcal{H}_{yper}=(\Lambda,\mathcal{E})$ contains a \emph{proper hyperedge-hamiltonian path} $\mathcal{P}(e_1,e_n)=e_1e_2\cdots e_n$ and a \emph{proper hyperedge-hamiltonian cycle} $\mathcal{C}=e_1e_2\cdots e_ne_1$, respectively.\qqed
\end{defn}

Similarly with Definition \ref{defn:yao-hamilton-hypergraphs}, if each hyperedge of a 3I-hyperedge set $\mathcal{E}\in \mathcal{E}\big (\Lambda^2\big )$ is in a proper hyperedge-hamiltonian cycle $\mathcal{C}$ of the hypergraph $\mathcal{H}_{yper}=(\Lambda,\mathcal{E})$, such that $|\mathcal{E}|=|\mathcal{C}|$, then we call $\mathcal{H}_{yper}=(\Lambda,\mathcal{E})$ to be \emph{proper hyperedge-hamiltonian hypergraph}.

\begin{example}\label{exa:8888888888}
The graph $G$ shown in Fig.\ref{fig:4-uniform-H-graph}(a) is a ve-intersected graph (Ref. Definition \ref{defn:vertex-intersected-graph-hypergraph}) of a hypergraph $\mathcal{H}_{yper}=([1,8],\mathcal{E})$ based on a 3I-hyperedge set
{\small
\begin{equation}\label{eqa:555555}
\mathcal{E}=\big \{\{1,2,3,4\}, \{2,3,4,5\}, \{3,4,5,6\}, \{4,5,6,7\}, \{5,6,7,8\}, \{6,7,8,1\}, \{7,8,1,2\}, \{8,1,2,3\} \big \}
\end{equation}
}where the ve-intersected graph $G$ contains a $3$-uniform hyperedge-hamiltonian cycle $C_1$ and a $1$-uniform hyperedge-hamiltonian cycle $C_1$ shown in Fig.\ref{fig:4-uniform-H-graph} (b) and (c) respectively, and moreover the ve-intersected graph $G$ holds that each edge is in a hamiltonian cycle of the graph $G$, so the ve-intersected graph $G$ is \emph{edge-hamiltonian}. It is noticeable, the 3I-hyperedge set $\mathcal{E}$ is an every-zero set-set additive group under the finite module abelian additive operation $e_i[+]e_j[-]e_k=e_\lambda$ with $\lambda=i+j-k~(\bmod~8)$ for a preappointed zero $e_k$.\qqed
\end{example}

\begin{figure}[h]
\centering
\includegraphics[width=16.4cm]{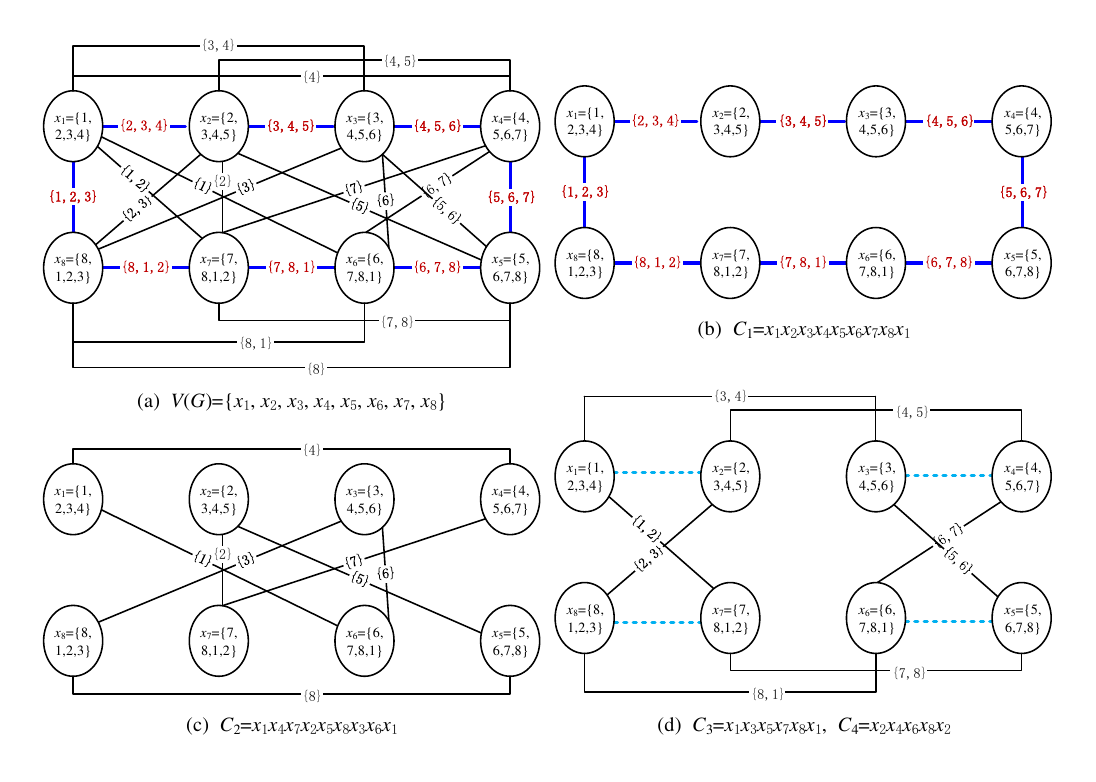}\\
\caption{\label{fig:4-uniform-H-graph}{\small A scheme for illustrating Definition \ref{defn:Wang-Hyper-cycle} and Definition \ref{defn:yao-hamilton-hypergraphs} by the ve-intersected graphs defined in Definition \ref{defn:new-intersected-hypergraphss}.}}
\end{figure}

\begin{thm}\label{thm:k-uniform-hamilton-hyperedge-cycle}
For a finite set $\Lambda=\{x_1,x_2,\dots ,x_n\}$, there are at least $B$ $k$-uniform hypergraphs $\mathcal{H}^i_{yper}=(\Lambda,\mathcal{E}_i)$ for each 3I-hyperedge set $\mathcal{E}_i\in \mathcal{E}(\Lambda^2)$ with $i\in [1, B]$, where $B=(n-1)\cdot n!$, such that each $k$-uniform hypergraphs $\mathcal{H}^i_{yper}$ contains a hyperedge-hamiltonian cycle.
\end{thm}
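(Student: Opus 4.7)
The plan is to exhibit the required hypergraphs explicitly via a sliding-window construction parameterised by the uniformity $k$ and an ordering of the vertex set, and to verify in each case that the construction satisfies both the $3$I conditions of Definition~\ref{defn:new-hypergraphs-sets} and the hyperedge-hamiltonian cycle condition of Definition~\ref{defn:yao-hamilton-hypergraphs}. Concretely, first I would fix $k\in[2,n]$ and a permutation $\pi=(\pi(1),\pi(2),\ldots,\pi(n))$ of $[1,n]$, and set
\begin{equation*}
e_j^{k,\pi}=\bigl\{x_{\pi(j)},x_{\pi(j+1)},\ldots,x_{\pi(j+k-1)}\bigr\},\qquad j\in[1,n],
\end{equation*}
with all subscripts taken modulo $n$; then let $\mathcal{E}_{k,\pi}=\{e_1^{k,\pi},\ldots,e_n^{k,\pi}\}$ and $\mathcal{H}^{k,\pi}_{yper}=(\Lambda,\mathcal{E}_{k,\pi})$. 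The Example~\ref{exa:8888888888} with $n=8$, $k=4$ is the prototype of this construction.

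Second, I would verify that $\mathcal{E}_{k,\pi}\in\mathcal{E}(\Lambda^2)$. Independence is immediate from uniformity $|e_j^{k,\pi}|=k$, since distinct equal-cardinality sets cannot be nested. Intersection holds because $e_{j-1}^{k,\pi}\cap e_j^{k,\pi}\supseteq\{x_{\pi(j)},\ldots,x_{\pi(j+k-2)}\}$, which is nonempty for $k\geq 2$. Integrity holds because every $x_i\in\Lambda$ equals $x_{\pi(r)}$ for a unique $r$ and therefore lies in each window whose index range $\{j,\ldots,j+k-1\}\pmod n$ contains $r$. Hence $\mathcal{H}^{k,\pi}_{yper}$ is a $k$-uniform hypergraph with $\mathcal{E}_{k,\pi}\in\mathcal{E}(\Lambda^2)$. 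To exhibit the hyperedge-hamiltonian cycle, take $\mathcal{C}=e_1^{k,\pi}e_2^{k,\pi}\cdots e_n^{k,\pi}e_1^{k,\pi}$ and the vertex sequence $x_{\pi(1)},x_{\pi(2)},\ldots,x_{\pi(n)}$, which is a permutation of $\Lambda$. By construction $x_{\pi(j)}\in e_{j-1}^{k,\pi}\cap e_j^{k,\pi}$ for every $j$ modulo $n$, matching Definition~\ref{defn:yao-hamilton-hypergraphs} verbatim.

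Finally, I would count: each choice of $k\in[2,n]$ (giving $n-1$ options) together with a permutation $\pi$ of $[1,n]$ (giving $n!$ options) produces a construction $\mathcal{H}^{k,\pi}_{yper}$, yielding $(n-1)\cdot n!=B$ indexed hypergraphs, each containing a hyperedge-hamiltonian cycle; relabelling $(k,\pi)$ by $i\in[1,B]$ produces the claimed family $\mathcal{H}^i_{yper}=(\Lambda,\mathcal{E}_i)$.

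The main obstacle is counting-duplication rather than the construction itself: different pairs $(k,\pi)$ may yield identical hyperedge sets, since for fixed $k$ the $2n$ cyclic shifts and reversal of $\pi$ produce the same window family, and the boundary case $k=n$ collapses $\mathcal{E}_{k,\pi}$ to the single hyperedge $\Lambda$. Under the natural reading of the theorem as counting indexed constructions (consistent with the notation ``$i\in[1,B]$''), the bound $B=(n-1)\cdot n!$ is immediate from the product rule. If instead one insists on counting pairwise distinct $3$I-hyperedge sets, one would have to divide by the $2n$-fold dihedral symmetry of the cyclic window and restrict to $k\in[2,n-1]$, obtaining the sharper count $(n-2)(n-1)!/2$; I would present the loose version for simplicity and flag this equivalence explicitly in a remark.
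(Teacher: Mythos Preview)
Your approach is essentially identical to the paper's: the paper also builds the sliding-window hyperedges $e_{i,j}=[1+j,k+j]\pmod n$, verifies the overlap $|e_{i,j}\cap e_{i,j+1}|=k-1$, reads off the hyperedge-hamiltonian cycle $e_{i,1}e_{i,2}\cdots e_{i,n}e_{i,1}$, and then multiplies by the $n!$ permutations of $\Lambda$ to reach $(n-1)\cdot n!$; the only cosmetic difference is that the paper phrases the window shift via the finite module abelian additive operation rather than directly via a permutation $\pi$. Your explicit check of the three $3$I conditions and your remark on the dihedral over-count and the degeneracy at $k=n$ are points the paper's proof leaves implicit or unaddressed, so your write-up is in fact the more careful of the two.
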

\begin{proof} Let $\Lambda=\{x_1,x_2,\dots ,x_n\}\rightarrow \Lambda_0=[1,n]$, we have a group $\{[1,k];[+][-]\}$ with $[1,k]=\{1,2,\dots ,k\}$ for $1<k\leq n-1$ based on the finite module abelian additive operation, and $[1+i,k+i]~(\bmod~n)$ for $i\in [1,n]$, $k+i=k+i~(\bmod~n)$ if $k+i\leq n$, $k+i=k+i-n$ if $k+i> n$. Then
\begin{equation}\label{eqa:555555}
[1+i,k+i][+][1+j,k+j][-][1+s,k+s]=[1+\lambda,k+\lambda],~\lambda=i+j-s~(\bmod~n)
\end{equation}
We have 3I-hyperedge sets $\mathcal{E}_i=\{e_{i,j}=[1+j,k+j]~(\bmod~n):j\in [1,n]\}$ with $i\in [1,n-1]$, and $\mathcal{E}_i\in \mathcal{E}(\Lambda^2_0)$, such that
\begin{equation}\label{eqa:555555}
e_{i,j}\cap e_{i,j+1}=[1+j+1,k+j],~|e_{i,j}\cap e_{i,j+1}|=k-1
\end{equation} since $k+j-(1+j+1)+1=k-1$, where $e_{i,n}=\{n\}\cup [1,k-1]$ and $e_{i,1}\cap e_{i,n}=[1,k-1]$. Notice that $j\in e_{i,j}\cap e_{i,j+1}$, we claim that each $k$-uniform hypergraph $\mathcal{H}^i_{yper}=(\Lambda,\mathcal{E}_i)$ containing a hyperedge-hamiltonian cycle $C_i=e_{i,1}e_{i,2}\cdot e_{i,n}e_{i,1}$ for each $i\in [1, n-1]$.

Each permutation $x_{i_1}, x_{i_2},\dots, x_{i_n}$ of elements $x_1,x_2,\dots ,x_n$ of the finite set $\Lambda$ corresponds to the integer set $\Lambda_0=[1,n]$, which shows that there are $k$-uniform hypergraphs $\mathcal{H}^i_{yper}=(\Lambda,\mathcal{E}_i)$ containing a hyperedge-hamiltonian cycle $C_i=e_{i,1}e_{i,2}\cdot e_{i,n}e_{i,1}$ with $i\in [1, n-1]$.

The proof of the theorem is complete.
\end{proof}

\begin{rem}\label{rem:333333}
There, about Theorem \ref{thm:k-uniform-hamilton-hyperedge-cycle}, are the following facts:

(i) We can use ve-intersected graphs of hypergraphs defined in Definition \ref{defn:vertex-intersected-graph-hypergraph} to prove Theorem \ref{thm:k-uniform-hamilton-hyperedge-cycle}, since each ve-intersected graph of a hypergraph is a visualization tool.

(ii) In \cite{Jianfang-Wang-Hypergraphs-2008}, Prof. Wang proposed: ``For a finite integer set $\Lambda=[1,n]$ and $n-2\geq k\geq 2$, the $(n-k-1)$-uniform hypergraph $\mathcal{H}_{yper}=(\Lambda,\overline{\mathcal{E}})$ contains a hyperedge-hamiltonian cycle if the $k$-uniform hypergraph $\mathcal{H}_{yper}=(\Lambda,\mathcal{E})$ containing a hyperedge-hamiltonian cycle''. However, it should have two hyperedge sets $\mathcal{E},\overline{\mathcal{E}}\in \mathcal{E}\big (\Lambda^2\big )$. \qqed
\end{rem}

\section{Topological Properties Of Hypergraphs}

The vertex intersected graphs introduced in this subsection are one of representations of the topological structure of sets and their subsets.

\subsection{A visualization tool of hypergraphs}

Hypergraphs can be studied by means of the topological characterization of subsets of the power set $\Lambda^2$ based on a finite set $\Lambda=\{x_1,x_2,\dots ,x_m\}$. The vertex intersected graphs introduced here are used as a visualization tool of hypergraphs.

\begin{defn}\label{defn:new-intersected-hypergraphss}
\textbf{The v-intersected graph.} Let $\mathcal{E}$ be a 3I-hyperedge set of the hypergraph set $\mathcal{E}\big (\Lambda^2\big )$ (Ref. Definition \ref{defn:new-hypergraphs-sets}) based on a finite set $\Lambda=\{x_1,x_2$, $\dots$, $x_m\}$. Suppose that a $(p,q)$-graph $H$ admits a set-coloring $\varphi: V(H)\rightarrow \mathcal{E}$, such that the vertex color set $\varphi(V(H))=\mathcal{E}$, and each edge $uv$ of $E(H)$ holds $\varphi(u)\cap \varphi(v)\neq \emptyset$. Conversely, each pair of hyperedges $e,e\,'\in \mathcal{E}$ with $|e\cap e\,'|\geq 1$ corresponds to an edge $xy\in E(H)$, such that $\varphi(x)=e$ and $\varphi(y)=e\,'$. Then the $(p,q)$-graph $H$ is called a \emph{v-intersected graph} of the hypergraph $\mathcal{H}_{yper}=(\Lambda,\mathcal{E})$.\qqed
\end{defn}

\begin{thm}\label{thm:new-intersected-hypergraph-topcode-matrix}
Let $E(H)=\{w_i=u_iv_i:i\in [1,q]\}$ be the edge set of the $(p,q)$-graph $H$ and let $\varphi(w_i)=\varphi(u_i)\cap \varphi(v_i)$ defined in Definition \ref{defn:new-intersected-hypergraphss}. By Definition \ref{defn:total-coloring-Topcode-matrixs}, the $(p,q)$-graph $H$ has its own Topcode-matrix as follows
\begin{equation}\label{eqa:11111111111111111}
{
\begin{split}
T_{code}(H,\varphi)= \left(
\begin{array}{cccccccccc}
\varphi(u_1) & \varphi(u_2) & \cdots & \varphi(u_q)\\
\varphi(w_1) & \varphi(w_2) & \cdots & \varphi(w_q)\\
\varphi(v_1) & \varphi(v_2) & \cdots & \varphi(v_q)
\end{array}
\right)_{3\times q}=(\varphi(U),\varphi(E),\varphi(V))^T_{3\times q}
\end{split}}
\end{equation}

Thereby, there is a graph set $G_{raph}(T_{code})$ with $T_{code}=T_{code}(H,\varphi)$, such that each graph $T_k\in G_{raph}(T_{code})$ is \emph{graph homomorphism} to $H$, i.e., $T_k\rightarrow H$, and each graph $T_k$ has its own Topcode-matrix $T_{code}(T_k,\varphi_k)=T_{code}$.
\end{thm}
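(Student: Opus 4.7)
The plan is to exhibit $G_{raph}(T_{code})$ explicitly as the family of graphs obtained from $H$ by finite sequences of vertex-splitting operations (Definition~\ref{defn:vertex-split-coinciding-operations} and Definition~\ref{defn:split-v-set-vertex-split-graphss}), with the set-coloring $\varphi$ extended in the natural way after each split. Since every vertex-splitting operation preserves $|E|$, every graph produced has exactly $q$ edges, matching the width of $T_{code}$, so this is at least the right shape of object to consider.

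First I would prove Topcode-matrix preservation by induction on the number of splittings. Given $T_k$ with coloring $\varphi_k:V(T_k)\to \mathcal{E}$ already satisfying $T_{code}(T_k,\varphi_k)=T_{code}$, apply a vertex-splitting to a vertex $u$ of degree at least two, producing $T_k\wedge u$ and replacing $u$ by $u\,', u\,''$; then extend the coloring by $\varphi_{k+1}(u\,')=\varphi_{k+1}(u\,'')=\varphi_k(u)$. The disjointness $N_{ei}(u\,')\cap N_{ei}(u\,'')=\emptyset$ guaranteed by the definition ensures that each edge originally incident to $u$ has a \emph{unique} new endpoint in $\{u\,',u\,''\}$, and the covering $N_{ei}(u)=N_{ei}(u\,')\cup N_{ei}(u\,'')$ ensures no edges are lost or duplicated. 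Hence each column $(\varphi_k(u_i),\varphi_k(w_i),\varphi_k(v_i))^T$ is preserved literally, and the consistency $\varphi_{k+1}(u\,'v)=\varphi_{k+1}(u\,')\cap \varphi_{k+1}(v)=\varphi_k(u)\cap \varphi_k(v)=\varphi_k(uv)$ holds for the edge color.

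Next I would construct the homomorphism $\phi_k:T_k\to H$. By Definition~\ref{defn:new-intersected-hypergraphss} the coloring $\varphi:V(H)\to \mathcal{E}$ satisfies $\varphi(V(H))=\mathcal{E}$, so for each $e\in \mathcal{E}$ fix a preimage in $V(H)$; this determines a map $\phi_k(x):=\varphi^{-1}(\varphi_k(x))$ on $V(T_k)$ (well-defined since $\varphi_k(V(T_k))\subseteq \mathcal{E}$ by construction). For every edge $xy\in E(T_k)$ we have $\varphi_k(x)\cap \varphi_k(y)\supseteq \varphi_k(xy)\neq \emptyset$, and so the converse clause of Definition~\ref{defn:new-intersected-hypergraphss} yields $\phi_k(x)\phi_k(y)\in E(H)$. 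Thus $\phi_k$ is a graph homomorphism, establishing $T_k\to H$.

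The main obstacle is conceptual rather than computational: ensuring that after extending the coloring to split vertices, no column of $T_{code}$ silently changes or duplicates. This is exactly what the disjoint-neighborhood clause of Definition~\ref{defn:vertex-split-coinciding-operations} is designed to enforce, and giving the two split copies the same color is what keeps the column entries invariant. A secondary subtlety is that $T_{code}$ is only defined up to reordering of its $q$ columns (relabelling of edges); this is harmless since the unordered multiset of columns is invariant under both edge-relabelling and vertex-splitting, so equality $T_{code}(T_k,\varphi_k)=T_{code}$ should be read in the latter sense. Finally, one should note that $G_{raph}(T_{code})$ obtained this way is genuinely nontrivial: every splitting strictly increases $|V|$, so the family contains arbitrarily large graphs, all sitting above $H$ in the homomorphism order.
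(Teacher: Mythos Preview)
The paper states this theorem without proof; it is presented as an immediate consequence of the Topcode-matrix formalism together with the vertex-splitting/vertex-coinciding machinery of Definitions~\ref{defn:vertex-split-coinciding-operations} and~\ref{defn:split-v-set-vertex-split-graphss}. Your construction of $G_{raph}(T_{code})$ as the family of iterated vertex-splittings of $H$, with the split copies inheriting the parent's color, is exactly the intended picture (compare the discussion around Example~\ref{example:444444} and Fig.~\ref{fig:a-visualization-tool}, where the graphs $H_i\rightarrow H$ are produced precisely this way). Your inductive verification that the Topcode-matrix is preserved column-by-column is clean and correct.

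There is, however, a small gap in your homomorphism step. You define $\phi_k(x):=\varphi^{-1}(\varphi_k(x))$ by fixing one preimage in $V(H)$ for each $e\in\mathcal{E}$, and then appeal to the converse clause of Definition~\ref{defn:new-intersected-hypergraphss} to conclude $\phi_k(x)\phi_k(y)\in E(H)$. But that clause only guarantees that \emph{some} edge of $H$ has endpoint colors $\varphi_k(x),\varphi_k(y)$; if $\varphi$ is not injective on $V(H)$ (the definition only requires $\varphi(V(H))=\mathcal{E}$, i.e.\ surjectivity), the particular preimages you fixed need not be adjacent. The remedy is simpler than your detour through colors: since $T_k$ is obtained from $H$ by a finite sequence of vertex-splittings, the composite of the corresponding vertex-coinciding operations (Definition~\ref{defn:vertex-split-coinciding-operations}) is itself a map $V(T_k)\to V(H)$ sending each split copy back to its ancestor, and this map carries edges to edges tautologically. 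That is the homomorphism $T_k\rightarrow H$, with no appeal to the converse clause needed.
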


\begin{rem}\label{rem:hyperedgeset-matrix}
Since $\varphi(u_i),\varphi(v_i)\in \mathcal{E}$ with $i\in [1,q]$ in Theorem \ref{thm:new-intersected-hypergraph-topcode-matrix}, the Topcode-matrix $T_{code}$ is also the \emph{topological code matrix} of the hypergraph set $\mathcal{E}\big (\Lambda^2\big )$, especially, denoted as $T^v_{code}(\Lambda,\mathcal{E})$, called \emph{hypergraph code-matrix}.\qqed
\end{rem}

\begin{problem}\label{question:444444}
Each graph $T_i\in G_{raph}(T_{code})$ in Theorem \ref{thm:new-intersected-hypergraph-topcode-matrix} is graph homomorphism to $H$, also $T_i\rightarrow H$, in other words, the adjacent matrix $A(T_i)_{n_i\times n_i}$ of the graph $T_i$ of $n_i$ vertices is \emph{adjacent matrix homomorphism} to the adjacent matrix $A(H)_{m\times m}$ of the graph $H$ of $m$ vertices, namely, $A(T_i)_{n_i\times n_i}\rightarrow A(H)_{m\times m}$ since $n_i\geq m+1$. There are many interesting investigations in the adjacent matrix homomorphism.
\end{problem}

\begin{example}\label{exa:8888888888}
The complement of a v-intersected graph $G$ is different from that of an ordinary graph, for example, we have a 3I-hyperedge set $\mathcal{E}=\big \{\{1,M\}, \{2,M\}, \dots ,\{M-1,M\}, [1,M-1]\big \}$, the v-intersected graph of the hypergraph $\mathcal{H}_{yper}=([1,M],\mathcal{E})$ with the 3I-hyperedge set $\mathcal{E}\in \mathcal{E}\big ([1,M]^2\big )$ is a complete graph of $M$ vertices, and the complementary hyperedge set of the 3I-hyperedge set $\mathcal{E}$ is $\overline{\mathcal{E}}=\big \{[2,M-1], [1,M]\setminus \{2,M\}, [1,M]\setminus \{3,M\}, \dots ,[1,M]\setminus \{M-1,M\}, \{M\}\big \}$. Notice that the complementary hyperedge set $\overline{\mathcal{E}}\not \in \mathcal{E}\big ([1,M]^2\big )$, so the v-intersected graph $H$ of the hypergraph $\mathcal{H}_{yper}=([1,M-1],\overline{\mathcal{E}}\setminus \{M\})$ based on a 3I-hyperedge set $\overline{\mathcal{E}}\setminus \{M\}\in \mathcal{E}([1,M-1]^2)$ is a complete graph of $M-1$ vertices, $|V(G)|=M=1+|V(H)|$.\qqed
\end{example}

\begin{prop}\label{proposition:99999}
Let $G_i$ be a v-intersected graph of a hypergraph $\mathcal{H}_{yper}=(\Lambda,\mathcal{E}_i)$ for $i=1,2$.

(i) If each $e\in \mathcal{E}_i$ holds $e\not \subseteq e\,'\in \mathcal{E}_{3-i}\setminus (\mathcal{E}_1\cap \mathcal{E}_2)$ with $i=1,2$, then the union graph $G_1\cup G_2$ is a v-intersected graph $H$ of a hypergraph $\mathcal{H}_{yper}=(\Lambda,\mathcal{E}_1\cup \mathcal{E}_2)$, and vice versa.

(ii) For $i=1,2$, we have $G_i\not \subseteq G_{3-i}$ if, and only if $\mathcal{E}_i\not \subseteq \mathcal{E}_{3-i}$.
\end{prop}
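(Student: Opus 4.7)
The plan is to argue both parts by tracking the defining bijection between the hyperedges of a 3I-hyperedge set and the vertices of its v-intersected graph from Definition \ref{defn:new-intersected-hypergraphss}. The key fact I will use repeatedly is that the set-coloring $\varphi$ witnessing a v-intersected graph $G_i$ is a bijection $V(G_i)\to\mathcal{E}_i$, and that the edges of $G_i$ correspond exactly to unordered pairs $\{e,e'\}\subseteq\mathcal{E}_i$ with $e\cap e'\neq\emptyset$. Thus the proof reduces almost entirely to checking set-theoretic conditions at the level of hyperedges and then transporting them back to the graph side.

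For the forward direction of part (i), I would first verify that $\mathcal{E}_1\cup\mathcal{E}_2$ is itself a 3I-hyperedge set, i.e., satisfies Independence, Intersection, and Integrity from Definition \ref{defn:new-hypergraphs-sets}. Integrity is immediate since $\Lambda=\bigcup_{e\in\mathcal{E}_i}e$ already holds. Intersection is inherited because every $e\in\mathcal{E}_i$ has a partner inside $\mathcal{E}_i\subseteq\mathcal{E}_1\cup\mathcal{E}_2$. For Independence, I would split distinct pairs $\{e,e'\}\subseteq\mathcal{E}_1\cup\mathcal{E}_2$ into three cases: both in $\mathcal{E}_1$, both in $\mathcal{E}_2$, or one in each. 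The first two are handled by the Independence of $\mathcal{E}_1$ and $\mathcal{E}_2$; the cross case reduces to the hypothesis after observing that if one member lies in $\mathcal{E}_1\cap\mathcal{E}_2$ then the pair is already covered by one of the first two cases, so the remaining instances genuinely involve $e'\in\mathcal{E}_{3-i}\setminus(\mathcal{E}_1\cap\mathcal{E}_2)$ where the hypothesis directly applies. With $\mathcal{E}_1\cup\mathcal{E}_2\in\mathcal{E}(\Lambda^2)$ in hand, I would then construct the v-intersected graph $H$ of $(\Lambda,\mathcal{E}_1\cup\mathcal{E}_2)$ and verify by a direct matching of vertex sets and edge sets that $H=G_1\cup G_2$, identifying the duplicated vertices arising from hyperedges in $\mathcal{E}_1\cap\mathcal{E}_2$.

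For the converse of (i), if $G_1\cup G_2$ is a v-intersected graph of $(\Lambda,\mathcal{E}_1\cup\mathcal{E}_2)$, then by definition $\mathcal{E}_1\cup\mathcal{E}_2$ itself lies in $\mathcal{E}(\Lambda^2)$ and hence obeys Independence, which immediately yields the containment hypothesis (again by inspecting the cross case). For part (ii), the bijection $V(G_i)\leftrightarrow\mathcal{E}_i$ does all the work: any vertex in $V(G_i)\setminus V(G_{3-i})$ is labeled by some $e\in\mathcal{E}_i\setminus\mathcal{E}_{3-i}$, and conversely any hyperedge in $\mathcal{E}_i\setminus\mathcal{E}_{3-i}$ labels a vertex of $G_i$ not present in $G_{3-i}$, so the two non-containments are equivalent. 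The symmetric argument for $i=2$ closes the equivalence.

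The main obstacle I expect is the bookkeeping around common hyperedges: each $e\in\mathcal{E}_1\cap\mathcal{E}_2$ furnishes one vertex in $G_1$ and another in $G_2$, and these two copies must collapse to a single vertex in $H$ for the identification $H=G_1\cup G_2$ to be literal rather than merely isomorphic. Once the graph union is read with this vertex identification in place, one must verify that every new edge incident with such an identified vertex (namely those coming from intersections between $e$ and hyperedges on the opposite side) is generated naturally by the graph union, which is exactly where the stated hypothesis on $\mathcal{E}_{3-i}\setminus(\mathcal{E}_1\cap\mathcal{E}_2)$ earns its place in the statement.
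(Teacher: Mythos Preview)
The paper states this proposition without proof, so there is no argument in the text to compare against; what follows is an assessment of your proposal on its own terms.

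Your treatment of part (ii) via the vertex--hyperedge correspondence is sound (modulo the implicit assumption that the $G_i$ are the canonical v-intersected graphs with one vertex per hyperedge), and your verification that $\mathcal{E}_1\cup\mathcal{E}_2$ satisfies Independence, Intersection, and Integrity under the stated hypothesis is correct. However, there is a genuine gap in part (i): you never address edges of $H$ arising from pairs $e\in\mathcal{E}_1\setminus\mathcal{E}_2$ and $e'\in\mathcal{E}_2\setminus\mathcal{E}_1$ with $e\cap e'\neq\emptyset$. Definition~\ref{defn:new-intersected-hypergraphss} forces such a pair to contribute an edge to the v-intersected graph of $(\Lambda,\mathcal{E}_1\cup\mathcal{E}_2)$, yet neither $G_1$ nor $G_2$ contains vertices for both $e$ and $e'$, so that edge cannot lie in $G_1\cup G_2$. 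Your closing paragraph claims the non-containment hypothesis ``earns its place'' precisely by handling these cross edges, but this conflates two different conditions: the hypothesis $e\not\subseteq e'$ secures Independence of $\mathcal{E}_1\cup\mathcal{E}_2$ and says nothing about $e\cap e'$, so it cannot rule out such cross intersections.

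A concrete instance: take $\Lambda=\{1,2,3,4\}$, $\mathcal{E}_1=\big\{\{1,2\},\{2,3,4\}\big\}$, $\mathcal{E}_2=\big\{\{1,4\},\{2,3,4\}\big\}$. Both are 3I-hyperedge sets on $\Lambda$, the hypothesis of (i) is satisfied, and $G_1\cup G_2$ (after identifying the common vertex for $\{2,3,4\}$) is a path on three vertices. But the v-intersected graph of $\mathcal{E}_1\cup\mathcal{E}_2=\big\{\{1,2\},\{1,4\},\{2,3,4\}\big\}$ is a triangle, since $\{1,2\}\cap\{1,4\}=\{1\}\neq\emptyset$. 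Either the proposition tacitly assumes there are no such cross intersections, or ``union graph'' carries a meaning beyond the ordinary graph union; in any case your plan to ``verify by a direct matching of vertex sets and edge sets that $H=G_1\cup G_2$'' will not go through as written, and your write-up should flag this rather than assert the identification succeeds.
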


\begin{thm}\label{thm:infinite-v-intersected-graphs}
(a) A v-intersected graph $H$ of a hypergraph $\mathcal{H}_{yper}=(\Lambda,\mathcal{E})$ based on a 3I-hyperedge set $\mathcal{E}\in \mathcal{E}\big (\Lambda^2\big )$ corresponds to infinite hypergraphs $\mathcal{H}_{yper}=(\Lambda_k,\mathcal{E}_k)$, such that $G$ is a v-intersected graph of each one of the infinite hypergraphs $\mathcal{H}_{yper}=(\Lambda_k,\mathcal{E}_k)$ with the 3I-hyperedge set $\mathcal{E}_k\in \mathcal{E}(\Lambda_k^2)$.

(b) A hypergraph $\mathcal{H}_{yper}=(\Lambda,\mathcal{E})$ based on a 3I-hyperedge set $\mathcal{E}\in \mathcal{E}\big (\Lambda^2\big )$ has infinite v-intersected graphs defined in Definition \ref{defn:new-intersected-hypergraphss}.
\end{thm}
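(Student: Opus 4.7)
My plan is to treat the two parts separately, but with a common underlying philosophy: in each case I produce a combinatorial ``inflation'' operation that is clearly iterable and that preserves exactly the data entering the v-intersected graph definition (namely the incidence/intersection pattern of hyperedges). Since both parts ask only for \emph{infinitely many} witnesses, it suffices to exhibit a single such operation and apply it repeatedly.

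For part (a), given $H$ together with a set-coloring $\varphi:V(H)\rightarrow \mathcal{E}=\{e_1,\dots,e_n\}$ witnessing that $H$ is a v-intersected graph of $\mathcal{H}_{yper}=(\Lambda,\mathcal{E})$, I would construct, for each integer $k\geq 1$, a new ground set by replacing each element $x_j\in \Lambda$ with $k$ fresh copies $x_j^{(1)},\dots,x_j^{(k)}$, so that $\Lambda_k=\{x_j^{(\ell)}:x_j\in\Lambda,\ \ell\in[1,k]\}$. Lift each hyperedge $e_i\in\mathcal{E}$ to $e_i^{(k)}=\{x_j^{(\ell)}:x_j\in e_i,\ \ell\in[1,k]\}$ and put $\mathcal{E}_k=\{e_i^{(k)}:i\in[1,n]\}$. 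The routine verifications I would do are: (i) Independence, because $e_i^{(k)}\subset e_j^{(k)}$ iff $e_i\subset e_j$; (ii) Intersection, because $e_i^{(k)}\cap e_j^{(k)}\neq\emptyset$ iff $e_i\cap e_j\neq\emptyset$; (iii) Integrity, by construction. Hence $\mathcal{E}_k\in\mathcal{E}(\Lambda_k^2)$. Finally, defining $\varphi_k:V(H)\rightarrow \mathcal{E}_k$ by $\varphi_k(u)=(\varphi(u))^{(k)}$, the edge test $\varphi_k(u)\cap\varphi_k(v)\neq\emptyset \Leftrightarrow \varphi(u)\cap\varphi(v)\neq\emptyset$ shows $H$ is a v-intersected graph of every $\mathcal{H}_{yper}=(\Lambda_k,\mathcal{E}_k)$. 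Since $|\Lambda_k|=k|\Lambda|$ grows without bound, the $(\Lambda_k,\mathcal{E}_k)$ are pairwise distinct.

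For part (b), I fix a v-intersected graph $H$ with coloring $\varphi$ and construct $H'$ from $H$ by adding one new vertex $v^*$ with $\varphi(v^*):=\varphi(u_0)=e$ for some chosen $u_0\in V(H)$, and a single new edge $v^*w$ where $w$ is any neighbor of $u_0$ in $H$ (such a $w$ exists because $\mathcal{E}$ is intersected by the Intersection axiom of a 3I-hyperedge set, so $u_0$ has degree $\geq 1$). The three checks I would carry out are: the extended $\varphi$ is still surjective onto $\mathcal{E}$; the new edge satisfies $\varphi(v^*)\cap\varphi(w)=\varphi(u_0)\cap\varphi(w)\neq\emptyset$ since $u_0w\in E(H)$; and every intersecting pair $(e_i,e_j)$ still has a representing edge in $H'$ because all old edges of $H$ survive. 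Iterating this one-vertex extension produces an infinite sequence $H=H_0,H_1,H_2,\dots$ with $|V(H_t)|=|V(H)|+t$, hence mutually non-isomorphic v-intersected graphs of the same hypergraph.

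The step I expect to be the main (genuine) obstacle is verifying in part (a) that the blown-up set $\mathcal{E}_k$ really lies in the hypergraph set $\mathcal{E}(\Lambda_k^2)$, i.e., that none of the three 3I conditions is broken by the copying procedure; the Independence check is the most delicate because one must rule out new accidental containments among the inflated hyperedges, and this is exactly what my ``duplicate every element the same number of times'' uniformity is engineered to prevent. Everything else reduces to straightforward bookkeeping once this lemma is in place.
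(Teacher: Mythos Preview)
Your proposal is correct, and in both parts you take a different (and arguably cleaner) route than the paper.

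For part (a), the paper enlarges each hyperedge $e_i$ by adjoining a \emph{private} set $X_{k,i}$ of fresh elements (disjoint from $\Lambda$ and from the other $X_{k,j}$), setting $\mathcal{E}_k=\{e_i\cup X_{k,i}\}$ and $\Lambda_k=\Lambda\cup\bigcup_i X_{k,i}$; since the added pieces are pairwise disjoint, intersections and non-containments among the new hyperedges coincide with those among the old ones. Your uniform $k$-fold duplication of every ground element achieves the same goal by a global blow-up rather than local padding: the map $e_i\mapsto e_i^{(k)}$ is an order-isomorphism of the containment posets, so the 3I check you flag as ``the main obstacle'' is in fact automatic. The paper's construction has the mild advantage of allowing non-uniform growth (different $|X_{k,i}|$), but for the bare statement your version is simpler.

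For part (b), the paper produces a new v-intersected graph by taking a full copy $H'$ of $H$ and wiring it to $H$ via a three-step joining rule (each image vertex $u'$ is joined to $u$, to $N_{ei}(u)$, and $u$ is joined to $N_{ei}(u')$), with $u'$ inheriting the color of $u$. Your one-vertex-at-a-time extension is more economical and makes the verification transparent: surjectivity of the coloring and the ``every intersecting pair has a witnessing edge'' clause are inherited from $H$, and the single new edge satisfies the intersection test by construction. Both approaches exploit the same key observation, namely that the definition only requires \emph{some} edge to witness each intersecting pair $(e,e')$, not that every vertex colored $e$ be adjacent to every vertex colored $e'$.
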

\begin{proof}(a) For a finite set $\Lambda=\{x_1,x_2$, $\dots$, $x_m\}$, we have a 3I-hyperedge set $\mathcal{E}=\{e_i:i\in [1,m]\}\in \mathcal{E}\big (\Lambda^2\big )$. Since $H$ is a v-intersected graph of the hypergraph $\mathcal{H}_{yper}=(\Lambda,\mathcal{E})$ by Definition \ref{defn:new-intersected-hypergraphss}, we set $\mathcal{E}_k=\big \{e_i\cup X_{k,i} :e_i\in \mathcal{E},i\in [1,m] \big \}$, where $X_{k,i}=\{x_{k,i,1},x_{k,i,2}, \dots, x_{k,i,n_{k,i}}\}$ with $n_{k,i}\geq 0$, and $X_{k,i}\cap X_{k,j}$ if $i\neq j$ and $\Lambda\cap X_{k,j}=\emptyset$ for $j\in [1,m]$, as well as $\Lambda_k=\Lambda\bigcup \big (\bigcup ^m_{i=1}X_{k,i} \big )$ and $\sum^m_{i=1}n_{k,i}\geq 1$. Clearly, $\mathcal{E}_k\in \mathcal{E}(\Lambda_k^2)$ is a 3I-hyperedge set. We claim that $H$ is a v-intersected graph of each hypergraph $\mathcal{H}_{yper}=(\Lambda_k,\mathcal{E}_k)$ for integers $k\geq 1$.

If $\Lambda_*=\bigcup ^m_{i=1}X_{k,i}$ with $X_{k,i}\neq \emptyset $ for $i\in [1,m]$ and the set-set $\mathcal{E}_*=\big \{X_{k,i}:i\in [1,m]\big \}$ form $\mathcal{E}_*\in \mathcal{E}(\Lambda_*^2)$, we get a v-intersected graph $G$ of the hypergraph $\mathcal{H}_{yper}=(\Lambda_*,\mathcal{E}_*)$. Then the union graph $L=H\cup G$ is a v-intersected graph of the hypergraph $\mathcal{H}_{yper}=(\Lambda_k,\mathcal{E}_k)$, such that the union graph $L=H\cup G$ has two vertex disjointed graphs $H$ and $G$.

(b) Notice that it is allowed that a v-intersected graph $H$ of a hypergraph $\mathcal{H}_{yper}=(\Lambda,\mathcal{E})$ has two ends of an edge are colored the same color by Definition \ref{defn:new-intersected-hypergraphss}. So the graph $H$ admits a set-coloring $\varphi: V(H)\rightarrow \mathcal{E}$, such that each edge $uv$ of $E(H)$ holds $\varphi(u)\cap \varphi(v)\neq \emptyset$. We take a copy $H\,'$ of the v-intersected graph $H$ of the hypergraph $\mathcal{H}_{yper}=(\Lambda,\mathcal{E})$. For each image vertex $u\,'\in V(H\,')$, we do:

(1) joining the image vertex $u\,'$ with the original image vertex $u\in V(H)$ by a new edge $u\,'u$;

(2) joining the image vertex $u\in V(H)$ with each vertex $v\,'_i\in N_{ei}(u\,')$ by a new edge $uv\,'_i$; and

(3) joining the image vertex $u\,'$ with each vertex $v_i\in N_{ei}(u)$ by a new edge $u\,'v_i$. \\
The resultant graph is denoted as $H[\ominus ]H\,'$. Now, we define a set-coloring $\theta$ for the graph $H[\ominus ]H\,'$ as follows:

(i) $\theta(x)=\varphi(x)$ for each vertex $x\in V(H)$;

(ii) $\theta(u\,')=\varphi(u)$ for each image vertex $u\,'\in V(H\,')$ and its original image vertex $u\in V(H)$. \\
Hence, the graph $H[\ominus ]H\,'$ is a v-intersected graph of the hypergraph $\mathcal{H}_{yper}=(\Lambda,\mathcal{E})$, because of $H$ is a v-intersected graph of the hypergraph $\mathcal{H}_{yper}=(\Lambda,\mathcal{E})$.

The theorem has been proved.
\end{proof}

\begin{defn} \label{defn:hyperedge-hamiltonian-connected}
\textbf{Hyperedge connectivity and hyperedge-hamiltonian connectivity}. If each pair of hyperedges $e,e\,'$ of a 3I-hyperedge set $\mathcal{E}$ corresponds to a hyperedge-path $\mathcal{P}(e,e\,')$ with the starting point $e$ and the end point $e\,'$, then the hypergraph $\mathcal{H}_{yper}=(\Lambda,\mathcal{E})$ based on a 3I-hyperedge set $\mathcal{E}\in \mathcal{E}\big (\Lambda^2\big )$ is a \emph{hyperedge-connected hypergraph}, correspondingly, and its v-intersected graph is \emph{hyperedge-connected}. If each pair of hyperedges $e,e\,'$ of the 3I-hyperedge set $\mathcal{E}$ is connected by a hyperedge-hamiltonian path $\mathcal{P}_{hami}(e,e\,')$, then the v-intersected graph $H$ is hamiltonian-connected, and the hypergraph $\mathcal{H}_{yper}=(\Lambda,\mathcal{E})$ is a \emph{hyperedge-hamiltonian connected hypergraph}.\qqed
\end{defn}

\begin{thm}\label{thm:666666}
For an integer set $\Lambda=[1,n]$ with $n\geq 3$, let $H$ be a v-intersected graph of a hypergraph $\mathcal{H}_{yper}=(\Lambda$, $\mathcal{E})$ based on a 3I-hyperedge set $\mathcal{E}\in \mathcal{E}\big (\Lambda^2\big )$. By Definition \ref{defn:new-intersected-hypergraphss} and Definition \ref{defn:hyperedge-hamiltonian-connected}, we have:

(i) If the v-intersected graph $H$ is \emph{hamiltonian-connected}, then the hypergraph $\mathcal{H}_{yper}=(\Lambda,\mathcal{E})$ is \emph{hyperedge-hamiltonian connected}.

(ii) If the v-intersected graph $H$ is \emph{edge-hamiltonian}, namely, each edge $uv\in E(H)$ is in a hamiltonian cycle of the v-intersected graph $H$, then the hypergraph $\mathcal{H}_{yper}=(\Lambda,\mathcal{E})$ is \emph{hyperedge-hyperedge-hamiltonian hypergraph}, also, each hyperedge $e\in \mathcal{E}$ is in a hyperedge-hamiltonian cycle of the hypergraph $\mathcal{H}_{yper}=(\Lambda,\mathcal{E})$.
\end{thm}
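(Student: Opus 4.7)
The plan is to transport Hamilton structures from the v-intersected graph $H$ to the hypergraph $\mathcal{H}_{yper}$ through the set-coloring $\varphi:V(H)\to \mathcal{E}$ supplied by Definition \ref{defn:new-intersected-hypergraphss}. The defining property of $H$---namely, $uv\in E(H)$ forces $\varphi(u)\cap \varphi(v)\neq \emptyset$, and conversely every intersecting pair in $\mathcal{E}$ is realised by an edge---is the bridge: a walk in $H$ becomes a sequence of hyperedges of $\mathcal{E}$ in which consecutive hyperedges have non-empty intersection, and vice versa.

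For part (i), given any two hyperedges $e,e\,'\in \mathcal{E}$, I would pick vertices $u,v\in V(H)$ with $\varphi(u)=e$ and $\varphi(v)=e\,'$. Since $H$ is hamiltonian-connected, there is a Hamilton path $P:u=u_1u_2\cdots u_n=v$ in $H$ (with $n=|V(H)|=|\mathcal{E}|$ under the bijective reading of $\varphi$). Writing $f_j=\varphi(u_j)$, the sequence $f_1,f_2,\dots ,f_n$ lists every hyperedge of $\mathcal{E}$, and $f_{j-1}\cap f_j\neq \emptyset$ for each $j\in [2,n]$ because $u_{j-1}u_j\in E(H)$. Choosing a witness $x_{i_j}\in f_{j-1}\cap f_j$ in each intersection turns $f_1f_2\cdots f_n$ into a proper hyperedge-hamiltonian path $\mathcal{P}_{hami}(e,e\,')$ in the sense of Definition \ref{defn:yao-hamilton-hypergraphs}, so $\mathcal{H}_{yper}$ is hyperedge-hamiltonian connected.

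For part (ii), fix any $e\in \mathcal{E}$. Because $\mathcal{E}$ is a 3I-hyperedge set, there is $e^*\in \mathcal{E}$ with $e\cap e^*\neq \emptyset$, and these correspond under $\varphi$ to an edge $uv\in E(H)$. The edge-hamiltonicity of $H$ provides a Hamilton cycle $C=uu_2u_3\cdots u_{n-1}vu$ containing $uv$; mapping the vertices of $C$ through $\varphi$ yields a cyclic listing of all hyperedges of $\mathcal{E}$ with consecutive non-empty intersections that passes through $e$ and $e^*$, which is the desired hyperedge-hamiltonian cycle through $e$. Since $e$ was arbitrary, $\mathcal{H}_{yper}$ is hyperedge-hyperedge-hamiltonian.

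The main obstacle I expect is verifying that the witness vertices $x_{i_j}$ can simultaneously be chosen to form a permutation of $\Lambda$, as literally demanded in Definition \ref{defn:yao-hamilton-hypergraphs}; mere non-emptiness of each intersection does not force distinct representatives. I would deal with this via Hall's marriage theorem applied to the bipartite graph whose one part is the $n$ consecutive intersections along the Hamilton path or cycle and whose other part is $\Lambda$, incidence being membership. Hall's condition will be enforced by the integrity $\Lambda=\bigcup_{e\in \mathcal{E}}e$ together with the intersection clause of Definition \ref{defn:new-hypergraphs-sets}: any offending subset of intersection-positions would force a piece of $\Lambda$ to lie outside the union of the hyperedges appearing in those intersections, contradicting integrity or the bijective way in which $H$ was built. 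Once this selection lemma is in place, the rest of both parts is direct translation through $\varphi$.
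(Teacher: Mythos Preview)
Your core mechanism---pushing Hamilton paths and cycles of $H$ through the set-coloring $\varphi$ to obtain hyperedge sequences with consecutive non-empty intersections---is exactly what the paper intends. In fact the paper gives no separate proof of this theorem at all: it is stated as an immediate consequence of Definitions \ref{defn:new-intersected-hypergraphss} and \ref{defn:hyperedge-hamiltonian-connected}, and the translation you describe in your first two paragraphs is precisely that immediate consequence.

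Where you diverge is in the ``main obstacle'' paragraph. You are reading the conclusion through Definition \ref{defn:yao-hamilton-hypergraphs}, which defines the \emph{proper} hyperedge-hamiltonian path/cycle and does require the intersection witnesses $x_{i_j}$ to form a permutation of $\Lambda$. But the present theorem invokes Definition \ref{defn:hyperedge-hamiltonian-connected}, whose underlying notion of hyperedge-path (see \textbf{Ter}-4 in Definition \ref{defn:more-terminology-group}) only asks that consecutive hyperedges have non-empty intersection. The paper explicitly separates these two notions in the Remark following Definition \ref{defn:hyperedge-structure-hypergraphs}. So the permutation condition is not part of what you must prove here, and your Hall-theorem detour is unnecessary.

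That detour is also not sound as sketched. Your proposed verification of Hall's condition appeals to integrity $\Lambda=\bigcup_{e\in \mathcal{E}}e$, but integrity constrains the union of \emph{all} hyperedges, not the union of any chosen family of pairwise intersections; nothing prevents, say, every consecutive intersection along the Hamilton path from being the same singleton, in which case a system of distinct representatives of size $|\mathcal{E}|-1$ plainly fails to exist. So if you did need the ``proper'' conclusion, this argument would not close the gap. Fortunately you do not need it: drop the Hall step entirely and your first two paragraphs already constitute the proof the paper has in mind.
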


\begin{thm}\label{thm:666666}
For an integer set $\Lambda=[1,n]$ with $n\geq 3$, there exists a 3I-hyperedge set $\mathcal{E}\in \mathcal{E}\big (\Lambda^2\big )$ with some $e_i\in \mathcal{E}$ holding $|e_i|\geq 3$, such that a v-intersected graph of the hypergraph $\mathcal{H}_{yper}=(\Lambda,\mathcal{E})$ is a maximal planar graph.
\end{thm}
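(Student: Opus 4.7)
The plan is to realize any maximal planar graph $H$ as a v-intersected graph of a suitable hypergraph through an \emph{edge-incidence representation}. Fix a triangulation $H$ on vertex set $V(H)=\{v_1,\dots,v_m\}$ with edge set $E(H)=\{w_1,\dots,w_q\}$, where $q=3m-6$; every vertex of such a triangulation with $m\geq 4$ has degree at least $3$, a fact that will be crucial. Identify each edge $w_j$ with the integer $j\in[1,q]$, and for each $i\in[1,m]$ define
\[
 e_i\;=\;\{\,j\in[1,q]\ :\ w_j\text{ is incident to }v_i\,\}\subseteq [1,q].
\]

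For $n\geq q$, one accommodates the extra labels $[q+1,n]$ as \emph{private padding}: choose pairwise disjoint subsets $P_1,\dots,P_m\subseteq [q+1,n]$ with $\bigcup_i P_i=[q+1,n]$ (for instance, dump them all into $P_1$), and redefine $e_i\leftarrow e_i\cup P_i$. The resulting family $\mathcal{E}=\{e_1,\dots,e_m\}$ is the candidate 3I-hyperedge set, and because $|e_i|\geq \deg_H(v_i)\geq 3$ it contains a hyperedge of size at least $3$.

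Next one verifies the three axioms of Definition \ref{defn:new-hypergraphs-sets}. \textbf{Integrity} is immediate since the edge-labels exhaust $[1,q]$ and the $P_i$ cover $[q+1,n]$. \textbf{Intersection} holds because every $v_i$ has a neighbor $v_k$ in $H$, and the label of $v_iv_k$ lies in $e_i\cap e_k$. \textbf{Independence} is the most delicate step: suppose $e_i\subseteq e_k$ with $i\neq k$; since each element of $P_i$ is private to $e_i$, necessarily $P_i=\emptyset$, and the containment reduces to the assertion that every edge of $H$ incident to $v_i$ must also be incident to $v_k$. Among such edges the only possibility is $v_iv_k$ itself, whereas $\deg_H(v_i)\geq 3$ yields at least two other edges incident to $v_i$ that cannot touch $v_k$, a contradiction. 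Hence $\mathcal{E}\in \mathcal{E}([1,n]^2)$. Finally, two hyperedges $e_i,e_k$ meet nontrivially exactly when $v_i,v_k$ are adjacent in $H$ (the private paddings produce no spurious overlaps), so by Definition \ref{defn:new-intersected-hypergraphss} the v-intersected graph of $\mathcal{H}_{yper}=([1,n],\mathcal{E})$ is isomorphic to $H$, which is maximal planar by choice.

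The main obstacle is the small range $n\in\{3,4,5\}$, where no triangulation on $m\geq 4$ vertices fits, since $q=3m-6\geq 6>n$. These cases are handled directly: for $n=3$ take $\mathcal{E}=\{[1,3]\}$, whose v-intersected graph $K_1$ is vacuously maximal planar; for $n=4$ use the strong 3I-hyperedge set $\mathcal{E}=\{\{1,2,3\},\{1,2,4\},\{1,3,4\},\{2,3,4\}\}$ from Example \ref{exa:build-strong-hyperedge-sets}, whose v-intersected graph is $K_4$, the unique maximal planar graph on four vertices; and for $n=5$ adjoin the singleton padding $\{5\}$ to one hyperedge of the $n=4$ family, retaining both Independence and the isomorphism to $K_4$. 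From $n\geq 6$ onward the general edge-incidence construction with $m=4$ (or a larger triangulation) produces the required hypergraph, completing the proof.
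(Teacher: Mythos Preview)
Your argument is correct for $n\geq 4$ and follows a genuinely different route from the paper. The paper proceeds by induction on $n$: it starts from $K_3$ coloured by the three $2$-subsets of $[1,3]$, and passes from $[1,n]$ to $[1,n+1]$ either by enlarging the colour of a degree-$3$ vertex inside an existing $K_4$ subgraph, or by inserting a fresh vertex in a triangular face and appending the label $n+1$. Your edge-incidence representation is more direct: it realises an arbitrary triangulation $H$ in one stroke, and the verification of Independence via $\deg_H(v_i)\geq 3$ isolates cleanly the combinatorial point that the inductive step in the paper handles only implicitly. The private padding also makes the role of $n$ transparent, whereas the paper is forced to grow the ground set one label at a time.

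The one soft spot is your treatment of $n=3$. The family $\mathcal{E}=\{[1,3]\}$ does not literally satisfy the Intersection axiom of Definition~\ref{defn:new-hypergraphs-sets} (there is no ``another hyperedge $e'\in\mathcal{E}\setminus e$'' to pair with), and declaring $K_1$ maximal planar is a convention not universally adopted. More to the point, Independence forces any hyperedge of size $3$ on $[1,3]$ to be the \emph{only} member of $\mathcal{E}$, so no genuine 3I-family on $[1,3]$ can contain a hyperedge of size $\geq 3$; the clause ``some $|e_i|\geq 3$'' is therefore unattainable at the bottom value of $n$. This is not a defect of your strategy but of the statement itself at $n=3$; the paper's own base case quietly sidesteps the issue by using only $2$-element hyperedges and letting a size-$\geq 3$ hyperedge appear at a later inductive step. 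It would be cleaner for you to note this explicitly and begin the honest construction at $n=4$, where your $K_4$ example (all four $3$-subsets of $[1,4]$) already works perfectly.
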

\begin{proof} By the induction. For $\Lambda=[1,3]$, the complete graph $K_3$ is a maximal planar graph of 3 vertices, let $V(K_3)=\{x,y,z\}$. We define a set-coloring $f$ for $K_3$ as follows: $f(x)=\{1,2\}$, $f(y)=\{1,3\}$, $f(z)=\{2,3\}$, so we get a 3I-hyperedge set $\mathcal{E}=\{\{1,2\}, \{1,3\}, \{2,3\}\}\in \mathcal{E}\big ([1,3]^2\big )$.

Suppose that a maximal planar graph $G$ is a v-intersected graph of the hypergraph $\mathcal{H}_{yper}=([1,n]$, $\mathcal{E})$ under a set-coloring $h: V(G)\rightarrow \mathcal{E}\in \mathcal{E}\big ([1,n]^2\big )$.

\textbf{Case-1.} If the maximal planar graph $G$ contains a $K_4$ with $V(K_4)=\{x_1,x_2,x_3,x_4\}$, such that $n\in h(x_i)$ for $i\in [1,4]$, so $G-x_4$ is still a maximal planar graph, where $x_4$ is a vertex of three degree. We define a set-coloring $g$ for the maximal planar graph $G$ as follows: $g(x)=h(x)$ for $x\in V(G)\setminus \{x_4\}$, $g(x_4)=h(x_4)\cup \{n+1\}$. Then we get a 3I-hyperedge set $\mathcal{E}^*=\big (\mathcal{E}\setminus h(x_4)\big )\cup g(x_4)\in \mathcal{E}\big ([1,n+1]^2\big )$.

\textbf{Case-2.} There is no $K_4$ in the maximal planar graph $G$. We select a triangle face $\Delta uvw$ of the maximal planar graph $G$, where $n\in h(u)=S_u$, or $n\in h(w)=S_v$, or $n\in h(w)=S_w$, but $n\not \in h(x)$ for any vertex $x\in V(G)\setminus \{u,v,w\}$, since $G$ is a maximal planar graph.

We add a new vertex $t$ in the triangle face $\Delta uvw$ and join $t$ with three vertices $u,v,w$ by edges $ut$, $vt$ and $wt$, the resultant graph is denoted as $G^*$ which still is a maximal planar graph. We make a set-coloring $h^*$ for the maximal planar graph $G^*$ in the following way: Set $h^*(x)=h(x)$ for each vertex $x\in V(G)\setminus \{u,v,w\}\subset V(G^*)$, $h^*(u)=S_u\cup \{n\}$, $h^*(v)=S_v\cup \{n\}$, $h^*(w)=S_w\cup \{n\}$, $h^*(t)=\{n,n+1\}$, we obtain a new hyperedge set $\mathcal{E}^*=\big (\mathcal{E}\setminus \{h(u),h(v),h(w)\}\big )\cup \{h^*(u),h^*(v),h^*(w),h^*(t)\}$, and $h^*:V(G^*)\rightarrow \mathcal{E}^*$. It is not hard to verify $\mathcal{E}^*\in \mathcal{E}\big ([1,n+1]^2\big )$.

The proof is complete by the induction.
\end{proof}

\begin{thm}\label{thm:666666}
For an integer set $\Lambda=[1,n]$ with $n\geq 3$, there exists a 3I-hyperedge set $\mathcal{E}\in \mathcal{E}\big (\Lambda^2\big )$ with some $e_i\in \mathcal{E}$ holding $|e_i|\geq 3$, such that a v-intersected graph of the hypergraph $\mathcal{H}_{yper}=(\Lambda,\mathcal{E})$ is a tree.
\end{thm}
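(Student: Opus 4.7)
The plan is to mirror the inductive style of the preceding maximal-planar theorem. For the base case I would take $n=4$ and exhibit $\mathcal{E}_4=\{\{1,2,3\},\{3,4\}\}\in \mathcal{E}([1,4]^2)$: the two hyperedges are not nested (sizes $3$ and $2$) and share the element $3$, their union is $[1,4]$, and the hyperedge $\{1,2,3\}$ has size $\geq 3$. Its v-intersected graph is a single edge, which is a tree. The case $n=3$ is degenerate, because the size-three requirement forces $[1,3]\in \mathcal{E}$ and then Independence forbids any other hyperedge; I would treat $\mathcal{E}_3=\{[1,3]\}$ as a one-vertex tree with Intersection read vacuously, or else let the theorem begin at $n=4$.

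For the inductive step I would pick any element $x\in [1,n]$ that belongs to exactly one hyperedge $e^{\ast}\in \mathcal{E}_n$ and adjoin the new hyperedge $f=\{x,n+1\}$, setting $\mathcal{E}_{n+1}=\mathcal{E}_n\cup\{f\}$. Integrity is restored because $n+1\in f$; Independence holds because $n+1$ sits in no old hyperedge, while $f$ has only two elements so no old hyperedge can be contained in $f$ either; Intersection for $f$ is witnessed by $e^{\ast}$ via $x$, and the pre-existing Intersection relations in $\mathcal{E}_n$ are unaffected. The v-intersected graph $T_{n+1}$ is then obtained from $T_n$ by attaching the new vertex $v_f$ as a leaf at the unique vertex representing $e^{\ast}$, which preserves the tree property. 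The hyperedge of size $\geq 3$ inherited from $\mathcal{E}_n$ remains in $\mathcal{E}_{n+1}$.

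The main obstacle is the \emph{private element} condition: the argument collapses if $x$ happens to lie in several hyperedges of $\mathcal{E}_n$, because $v_f$ would then be joined to several old vertices and almost certainly create a cycle in $T_{n+1}$. I would therefore strengthen the inductive hypothesis to: every $\mathcal{E}_n$ produced by the induction contains at least one element belonging to exactly one of its hyperedges. The base case satisfies this (the element $4$ is private to $\{3,4\}$ in $\mathcal{E}_4$), and the step preserves it because $n+1$ is private to $f$ in $\mathcal{E}_{n+1}$. A cleaner alternative that sidesteps the induction entirely is the explicit caterpillar construction $\mathcal{E}_n=\{\{1,2,3\}\}\cup \{\{i,i+1\}:3\leq i\leq n-1\}$ for $n\geq 4$: consecutive sets share exactly one element and non-consecutive sets are disjoint, so the v-intersected graph is transparently the path on $n-2$ vertices, which is a tree, and the required size-three hyperedge $\{1,2,3\}$ is present by construction.
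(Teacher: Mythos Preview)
Your proposal is correct. Both the inductive argument (with the strengthened hypothesis guaranteeing a private element, so that the new hyperedge $f=\{x,n+1\}$ meets exactly one old hyperedge) and the explicit caterpillar $\mathcal{E}_n=\{\{1,2,3\}\}\cup\{\{i,i+1\}:3\le i\le n-1\}$ are sound; the caterpillar is the cleanest route and makes the path structure of the v-intersected graph immediate.

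The paper takes a genuinely different inductive step: rather than adjoining a new hyperedge, it keeps the \emph{same} tree $T$ throughout and merely re-colors a leaf $u$ and its unique neighbor $v$, setting $g(u)=\{n+1\}$ and $g(v)=f(v)\cup\{n+1\}$; its base case is $\mathcal{E}=\{\{1,2,3\},\{1\},\{2\},\{3\}\}$ on $[1,3]$, whose v-intersected graph is the star $K_{1,3}$. So in the paper the tree never grows---only the ground set does---whereas your tree gains one leaf at each step. Your approach has the advantage of respecting the Independence axiom throughout (the paper's singletons $\{1\},\{2\},\{3\}$ are strictly contained in $\{1,2,3\}$, which conflicts with Definition~\ref{defn:new-hypergraphs-sets}(i)), and your closed-form construction avoids induction entirely. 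The paper's approach, on the other hand, exhibits a \emph{fixed} tree serving as v-intersected graph for every $[1,n]$, which is a slightly different emphasis. Your hesitation about $n=3$ is well founded: under strict Independence any hyperedge of size $3$ on $[1,3]$ must stand alone, so the one-vertex tree is indeed the honest reading there.
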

\begin{proof} By the induction. The set $\mathcal{E}^*=\big \{\{1,2,3\},\{1\},\{2\},\{3\}\big \}$ for $\Lambda=[1,3]$ is a 3I-hyperedge set of $\mathcal{E}\big ([1,3]^2\big )$, and the v-intersected graph of the hypergraph $\mathcal{H}_{yper}=([1,3],\mathcal{E}^*)$ is just a tree.

Suppose that a v-intersected graph $T$ of the hypergraph $\mathcal{H}_{yper}=([1,n],\mathcal{E})$ is a tree. So, the tree $T$ admits a set-coloring $f: V(T)\rightarrow \mathcal{E}\in \mathcal{E}\big ([1,n]^2\big )$. For a vertex $u$ of one degree, the graph $T-u$ is still a tree. We define a new set-coloring $g$ for the tree $T$ as follows: $g(x)=f(x)$ for $x\in V(T)\setminus \{u,v\}$ where $uv\in E(T)$, $g(u)=\{n+1\}$, $g(v)=f(v)\cup \{n+1\}$. Then, we get a new 3I-hyperedge set $\mathcal{E}^*=\big(\mathcal{E}\setminus \{f(u),f(v)\}\big )\cup \{g(u),g(v)\}\in \mathcal{E}\big ([1,n+1]^2\big )$, such that the tree $T$ is a v-intersected graph of the hypergraph $\mathcal{H}_{yper}=([1,n+1],\mathcal{E}^*)$.

The proof is complete by the induction.
\end{proof}

\begin{problem}\label{question:444444}
(1) \textbf{Determine} the following particular 3I-hyperedge sets $\mathcal{E}\in \mathcal{E}\big ([1,n]^2\big )$:

(1.1) $\mathcal{E}$ is a \emph{tree hyperedge set} if a v-intersected graph of the hypergraph $\mathcal{H}_{yper}=([1,n],\mathcal{E})$ is a tree.

(1.2) A v-intersected graph of the hypergraph $\mathcal{H}_{yper}=([1,n],\mathcal{E})$ is a hamiltonian graph, we call $\mathcal{E}$ \emph{hamiltonian hyperedge set}.

(1.3) A v-intersected graph of the hypergraph $\mathcal{H}_{yper}=([1,n],\mathcal{E})$ is a maximal planar graph, we call $\mathcal{E}$ \emph{MPG-hyperedge set}.

(2) \textbf{List} all 3I-hyperedge sets of the hypergraph set $\mathcal{E}\big ([1,n]^2\big )$. However, it is NP-hard, since the hypergraph set $\mathcal{E}\big ([1,n]^2\big )$ corresponds to a graph set $G_{raph}(\mathcal{E})$ of v-intersected graphs, and listing all graphs of the graph set $G_{raph}(\mathcal{E})$ will meet the Subgraph Isomorphic Problem, which is a NP-complete problem.
\end{problem}

\begin{defn}\label{defn:vertex-intersected-graph-hypergraph}
\cite{Yao-Ma-arXiv-2024-13354} \textbf{The ve-intersected graph.} For a 3I-hyperedge set $\mathcal{E}\in \mathcal{E}\big (\Lambda^2\big )$ based on a finite set $\Lambda=\{x_1,x_2,\dots ,x_m\}$ (Ref. Definition \ref{defn:new-hypergraphs-sets}). Suppose that a $(p,q)$-graph $G$ admits a total set-coloring $F: V(G)\cup E(G)\rightarrow \mathcal{E}$ with $F(x)\neq F(y)$ for each edge $xy\in E(G)$ and the total color set $F( V(G)\cup E(G))=\mathcal{E}$, and $R_{est}(c_0,c_1,c_2,\dots ,c_m)$ with $m\geq 0$ is a constraint set, such that each edge $uv$ of $E(G)$ is colored with an edge color set $F(uv)$ and

(i) the first constraint $c_0$: $F(uv)\supseteq F(u)\cap F(v)\neq \emptyset$.

(ii) the $k$th constraint $c_k$: There is a function $\varphi_k$ for some $k\in [1,m]$, we have three numbers $c_{uv}\in F(uv)$, $a_u\in F(u)$ and $b_v\in F(v)$ holding the $k$th constraint $c_k:\varphi_k[a_u,c_{uv},b_v]=0$ true.

Each pair of hyperedges $e,e\,'\in \mathcal{E}$ with $|e\cap e\,'|\geq 1$ corresponds to an edge $xy\in E(G)$, such that $F(x)=e$, $F(xy)\supseteq e\cap e\,'$ and $F(y)=e\,'$. Then we call $G$ \emph{ve-intersected graph} of the hypergraph $\mathcal{H}_{yper}=(\Lambda,\mathcal{E})$ based on a 3I-hyperedge set $\mathcal{E}\in \mathcal{E}\big (\Lambda^2\big )$ subject to the constraint set $R_{est}(c_0,c_1,c_2,\dots ,c_m)$, since $F(V(G)\cup E(G))=\mathcal{E}$.\qqed
\end{defn}

\begin{defn} \label{defn:more-terminology-group}
About a ve-intersected graph $H$ of a hypergraph $\mathcal{H}_{yper}=(\Lambda,\mathcal{E})$ based on a 3I-hyperedge set $\mathcal{E}\in \mathcal{E}\big (\Lambda^2\big )$ defined in Definition \ref{defn:vertex-intersected-graph-hypergraph}, we define:
\begin{asparaenum}[\textbf{\textrm{Ter}}-1. ]
\item Each hyperedge $e\in \mathcal{E}$ has its own \emph{hyperedge-degree} $\textrm{deg}_{\mathcal{E}}(e)=\textrm{deg}_H(x)$ if $F(x)=e$ for $x\in V(H)$.
\item The \emph{hyperedge-degree sequence} $\{\textrm{deg}_{\mathcal{E}}(e_1),\textrm{deg}_{\mathcal{E}}(e_2),\dots, \textrm{deg}_{\mathcal{E}}(e_n)\}$ with $e_i\in \mathcal{E}$ satisfies Erd\"{o}s-Galia Theorem. In fact, each hyperedge-degree
\begin{equation}\label{eqa:555555}
\textrm{deg}_{\mathcal{E}}(e_i)=\Big |\big \{e_j:e_i\cap e_j\neq \emptyset, e_j\in \mathcal{E}\setminus e_i\big \}\Big |
\end{equation}
\item If each hyperedge $e\in \mathcal{E}$ has its own hyperedge-degree to be even, then $\mathcal{E}$ is called an \emph{Euler's hyperedge set}.
\item A \emph{hyperedge-path} $\mathcal{P}$ in the hypergraph $\mathcal{H}_{yper}=(\Lambda,\mathcal{E})$ is
\begin{equation}\label{eqa:555555}
\mathcal{P}(e_1,e_m)=e_1e_2\cdots e_m=e_1(e_1\cap e_2)e_2(e_2\cap e_3)\cdots (e_{m-1}\cap e_m)e_m
\end{equation} with hyperedge intersections $e_{i}\cap e_{i+1}\neq \emptyset$ for $i\in [1,m-1]$, and each hyperedge $e_i$ is not an ear for $i\in [2,m-1]$. Moreover, the hyperedge-path $\mathcal{P}$ is \emph{pure} if $e_1$ and $e_m$ are not ears of $\mathcal{E}$. If hyperedge intersections $|e_{i}\cap e_{i+1}|\geq r$ for $i\in [1,m-1]$, we call $\mathcal{P}$ $r$-\emph{uniform hyperedge-path}.
\item If the ve-intersected graph $H$ is \emph{bipartite}, then the 3I-hyperedge set $\mathcal{E}=X_{\mathcal{E}}\cup Y_{\mathcal{E}}$ with $X_{\mathcal{E}}\cap Y_{\mathcal{E}}=\emptyset$, such that any two hyperedges $e,e\,'\in X_{\mathcal{E}}$ (resp. $e,e\,'\in Y_{\mathcal{E}}$) holds $e\cap e\,'=\emptyset$.
\item A \emph{spanning hypertree} $\mathcal{T}$ of the ve-intersected graph $H$ holds that each vertex color set $F(x)$ is not an ear of $\mathcal{E}$ if $x\not\in L(\mathcal{T})$, where $L(\mathcal{T})$ is the set of all leaves of $\mathcal{T}$, and $\mathcal{T}$ contains no hyperedge-cycle.
\item If the ve-intersected graph $H$ admits a proper vertex coloring $\theta:V(H)\rightarrow [1,\chi(H)]$, then the 3I-hyperedge set $\mathcal{E}$ admits a proper hyperedge coloring $\theta:\mathcal{E}\rightarrow [1,\chi(H)]$ such that $\theta(e)$ differs from $\theta(e\,')$ if $e\,'\cap e\neq \emptyset $ for any pair of hyperedges $e,e\,'\in \mathcal{E}$, where $\chi(H)$ is the chromatic number of the graph $H$.
\item If the ve-intersected graph $H$ is connected and the 3I-hyperedge set $\mathcal{E}$ contains no ear, so the \emph{diameter} $D(H)$ of the ve-intersected graph $H$ is defined by
\begin{equation}\label{eqa:555555}
\max \{d(x,y): d(x,y)\textrm{ is the length of a shortest path between two vertices $x$ and $y$ in } H\}
\end{equation} then the \emph{hyperdiameter} $D(\mathcal{E})$ of the 3I-hyperedge set $\mathcal{E}$ is defined by $D(\mathcal{E})=D(H)$.

\item A \emph{dominating hyperedge set} $\mathcal{E}_{domi}$ is a proper subset of the 3I-hyperedge set $\mathcal{E}$ and holds: Each hyperedge $e\in \mathcal{E}\setminus \mathcal{E}_{domi}$ corresponds to some hyperedge $e^*\in \mathcal{E}_{domi}$ such that $e\cap e^*\neq \emptyset$.
\item The \emph{dual} $\mathcal{H}_{dual}$ of a hypergraph $\mathcal{H}_{yper}=(\Lambda,\mathcal{E})$ is also a hypergraph having its own vertex set $\Lambda_{dual}=\mathcal{E}=\{e_1,e_2,\dots ,e_n\}$ and its own 3I-hyperedge set $\mathcal{E}_{dual}=\{X_j\}^n_{j=1}$ with $X_j=\{e_j:x_j\in e_j,~x_j\in \Lambda\}$ and $n=|\mathcal{E}|$. Clearly, the dual of the hypergraph $\mathcal{H}_{dual}$ is just the original hypergraph $\mathcal{H}_{yper}=(\Lambda,\mathcal{E})$.\qqed
\end{asparaenum}
\end{defn}

\begin{thm}\label{thm:integer-set-vs-vertex-intersected-graph}
For any connected graph $G$, there is an integer set $\Lambda=[1,M]$, then the connected graph $G$ admits a set-coloring $F:V(G)\rightarrow \mathcal{E}\in \mathcal{E}\big (\Lambda^2\big )$ holding each edge $uv\in E(G)$ to be colored with $F(uv)=F(u)\cap F(v)\neq \emptyset$, and $F(uv)\neq F(uw)$ for distinct neighbors $v,w\in N_{ei}(u)$, such that $G$ is just a ve-intersected graph of the hypergraph $\mathcal{H}_{yper}=\big (\Lambda,\mathcal{E}\big )$ based on the 3I-hyperedge set $\mathcal{E}\in \mathcal{E}\big (\Lambda^2\big )$.
\end{thm}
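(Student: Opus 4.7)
The plan is to construct the required coloring explicitly by combining a \emph{vertex tag} with a \emph{neighbor signature}. Let $n=|V(G)|$ and $q=|E(G)|$, and fix arbitrary orderings $v_1,\dots,v_n$ of the vertices and $e_1,\dots,e_q$ of the edges of $G$. Take $M=n+q$ and $\Lambda=[1,M]$, and define, for each $j\in[1,n]$,
$$F(v_j)=\{j\}\cup\bigl\{n+i:\ v_j\in e_i,\ i\in[1,q]\bigr\}.$$
The singleton $\{j\}$ plays the role of a unique identity bit for $v_j$, while the elements $n+i$ encode which edges of $G$ are incident with $v_j$.

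I would then verify that $\mathcal{E}:=\{F(v_j):j\in[1,n]\}$ lies in $\mathcal{E}(\Lambda^2)$ by checking the three conditions of Definition \ref{defn:new-hypergraphs-sets}. \emph{Integrity} is immediate: every $j\in[1,n]$ lies in $F(v_j)$, and every label $n+i$ lies in $F(v_j)$ for either endpoint $v_j$ of $e_i$, so $\bigcup_j F(v_j)=[1,n+q]=\Lambda$. \emph{Independence} follows from the vertex tag: if $F(v_j)\subseteq F(v_k)$ then $j\in F(v_k)\cap[1,n]=\{k\}$, hence $j=k$. \emph{Intersection} uses connectedness of $G$: for $n\geq 2$ every vertex $v_j$ has some neighbor $v_k$, giving $n+i\in F(v_j)\cap F(v_k)$ for the edge $e_i=v_jv_k$. (The degenerate case $n=1$ is trivial.)

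Next, for each edge $e_i=v_jv_k$ of the simple graph $G$, the two endpoints share only this one edge, so $F(v_j)\cap F(v_k)=\{n+i\}$. Setting $F(uv):=F(u)\cap F(v)$ therefore yields a nonempty set, and distinct neighbors $v_k,v_\ell$ of a common vertex $u$ produce singletons indexed by different edges, giving $F(uv_k)\neq F(uv_\ell)$. Conversely, $F(v_j)\cap F(v_k)\neq\emptyset$ forces $v_jv_k\in E(G)$, so the adjacency structure of $G$ is recovered exactly from the intersection pattern of $\mathcal{E}$, which is the defining property for $G$ to be the ve-intersected graph of $\mathcal{H}_{yper}=(\Lambda,\mathcal{E})$ in the sense of Definition \ref{defn:vertex-intersected-graph-hypergraph}.

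The only delicate point is the independence condition of the 3I-hyperedge set. The naive choice $F(v)=\{i:v\in e_i\}$ already satisfies intersection and integrity, but it fails independence whenever $G$ has a pendant vertex or, more generally, a vertex whose incidence set is contained in that of another vertex (consider a star, or the endpoints of any path of length two). Injecting the distinct tags $1,\dots,n$ into the sets $F(v_1),\dots,F(v_n)$ is the minimal repair that makes containment impossible while preserving the intersection pattern, and this is the key observation on which the whole argument rests.
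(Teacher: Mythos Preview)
Your argument is correct, and it takes a genuinely different route from the paper. The paper proceeds by induction on $|V(G)|$: it removes a leaf $x$ of a spanning tree, invokes the inductive hypothesis to color $G-x$ over some $[1,M_x]$, and then extends by assigning fresh labels $M_x+1,\dots,M_x+m$ to $x$ and to its $m$ neighbours. Your construction is instead explicit and one-shot: you fix $M=p+q$ once and for all and set $F(v_j)=\{j\}\cup\{n+i:v_j\in e_i\}$, i.e.\ a vertex tag glued onto the edge-incidence set.

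What each approach buys: your direct construction is shorter, avoids the bookkeeping of the inductive step, and immediately yields the concrete bound $M=p+q$; it also makes the converse direction (nonempty intersection $\Rightarrow$ adjacency) transparent, since the only possible common element of $F(v_j)$ and $F(v_k)$ for $j\neq k$ is an edge label $n+i$, forcing $e_i=v_jv_k$. The paper's inductive argument, by contrast, is more flexible as a template (one could in principle vary what is added at each step) but gives no clean closed-form bound on $M$ and leaves the converse implicit. Your final paragraph correctly isolates the one subtlety---that the naive incidence coloring can fail Independence at pendant vertices or nested neighbourhoods---and your vertex-tag repair is exactly the minimal fix.
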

\begin{proof} Using the induction. Take a leaf $x$ of a spanning tree $T$ of the connected graph $G$, where the spanning tree $T$ has the maximal leaves. So, the vertex-removing graph $G-x$ is connected. There is an integer set $\Lambda_x=[1,M_x]$, such that the vertex-removing graph $G-x$ admits a set-coloring $F_x:V(G-x)\rightarrow \mathcal{E}_x\in \mathcal{E}\big (\Lambda^2_x\big )$ and is a ve-intersected graph of the hypergraph $\mathcal{H}_{yper}(t)=\big (\Lambda_x,\mathcal{E}_x\big )$. Let $N_{ei}(x)=\{y_1,y_2,\dots ,y_m\}$ be the set of neighbors of the vertex $x$ in the connected graph $G$, and let $\mathcal{E}_x=\mathcal{E}^1_x\cup \mathcal{E}^2_x$, where $\mathcal{E}^2_x=\{F_x(y_i):y_i\in N_{ei}(x)\}$.

Let $\Lambda=[1,M]=[1,1+M_x]$. We define a set-coloring $F$ for the connected graph $G$ by setting $F(x)=\{1+M_x, 2+M_x, \dots ,m+M_x\}$, $F(y_i)=F_x(y_i)\cup \{i+M_x\}$ with $i\in [1,m]$, so each edge $xy_i\in E(G)$ is colored with $F(xy_i)=\{i+M_x\}$ with $i\in [1,m]$, and moreover we set $F(u)=F_x(u)$ for $u\in V(G)\setminus \{x,y_i:y_i\in N_{ei}(x)\}$. Clearly, $F(uv)\neq F(uw)$ for distinct vertices $v,w\in N_{ei}(u)$ in the connected graph $G$. Then we get a hyperedge set
\begin{equation}\label{eqa:555555}
\mathcal{E}=\mathcal{E}^1_x\bigcup F(x)\bigcup \left (\bigcup _{y_i\in N_{ei}(x)}F(y_i)\right )\in \mathcal{E}\big (\Lambda^2\big )
\end{equation} such that the connected graph $G$ admits a set-coloring $F:V(G)\rightarrow \mathcal{E}\in \mathcal{E}\big (\Lambda^2\big )$. Thereby, the connected graph $G$ is a ve-intersected graph of the hypergraph $\mathcal{H}_{yper}(t)=\big (\Lambda,\mathcal{E}\big )$.

We are done according to the induction.
\end{proof}

\begin{thm}\label{thm:666666666666666}
Let $E(G)=\{w_i=x_iy_i:i\in [1,q]\}$ be the edge set of the $(p,q)$-graph $G$ and let $F(w_i)\supseteq F(x_i)\cap F(y_i)\neq \emptyset$ in Definition \ref{defn:vertex-intersected-graph-hypergraph}. By Definition \ref{defn:total-coloring-Topcode-matrixs}, the $(p,q)$-graph $G$ has its own Topcode-matrix as follows
\begin{equation}\label{eqa:2222222222222}
{
\begin{split}
T_{code}(G,F)= \left(
\begin{array}{cccccccccc}
F(x_1) & F(x_2) & \cdots & F(x_q)\\
F(w_1) & F(w_2) & \cdots & F(w_q)\\
F(y_1) & F(y_2) & \cdots & F(y_q)
\end{array}
\right)_{3\times q}=(F(X),F(E),F(Y))^T_{3\times q}
\end{split}}
\end{equation} Then there is a graph set $G_{raph}(T_{code})$ with $T_{code}=T_{code}(G,F)$, such that each graph $H_k\in G_{raph}(T_{code})$ (as a \emph{public-key}) is \emph{graph homomorphism} to $G$ (as a \emph{private-key}), namely, $H_k\rightarrow G$, and each graph $H_k$ has its own Topcode-matrix $T_{code}(H_k,h_k)=T_{code}$, and the hypergraph code-matrix $T^{ve}_{code}(\Lambda,\mathcal{E})=T_{code}$ (Ref. Remark \ref{rem:hyperedgeset-matrix}).
\end{thm}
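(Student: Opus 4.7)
\noindent\textbf{Proof plan for Theorem \ref{thm:666666666666666}.}

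The plan is to exploit the fact that a Topcode-matrix records only the per-edge triple $(F(x_i),F(w_i),F(y_i))$ and is blind to how columns are incident through shared endpoints. So any two graphs whose edges (with colors) can be listed in the same column-by-column order produce the same Topcode-matrix. The vertex-splitting operation of Definition \ref{defn:vertex-split-coinciding-operations} and Definition \ref{defn:split-v-set-vertex-split-graphss} is exactly the operation that changes the incidence structure while preserving edges, so it is the natural engine for building the desired set $G_{raph}(T_{code})$.

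First I would define $G_{raph}(T_{code})$ to be the family of all graphs obtained from $G$ by iteratively applying the vertex-splitting operation on any non-empty subset $S \subseteq V(G)$ of vertices of degree $\geq 2$ (using Definition \ref{defn:split-v-set-vertex-split-graphss}); then transport the coloring $F$ to the resulting graph $H_k$ by $F_k(w') := F(w)$ and $F_k(w'') := F(w)$ for the two split copies of any $w \in S$, and leave the edge colors unchanged. Since vertex-splitting preserves $|E(\cdot)|$ and the adjacency of each edge (only the two endpoint names change, not the ``which edge goes where''), the list of color triples is unchanged; hence $T_{code}(H_k,F_k) = T_{code}(G,F) = T_{code}$. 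Because $F(xy) \supseteq F(x)\cap F(y) \neq \emptyset$ held on $G$, the same inclusion holds on $H_k$, so $H_k$ is itself a ve-intersected graph of the hypergraph $\mathcal{H}_{yper}=(\Lambda,\mathcal{E})$, and the constraint set $R_{est}(c_0,c_1,\dots,c_m)$ from Definition \ref{defn:vertex-intersected-graph-hypergraph} is inherited column-by-column.

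Next I would establish $H_k \to G$. The inverse of vertex-splitting is the non-common neighbor vertex-coinciding operation (Definition \ref{defn:vertex-split-coinciding-operations}); applying vertex-coincidence to all split copies $w',w''$ (undoing the iterated splits that produced $H_k$) yields a map $\psi_k: V(H_k) \to V(G)$ that sends each split copy back to its original vertex and is the identity elsewhere. By construction $\psi_k$ preserves adjacency: every edge $u'v$ or $u'v''$ of $H_k$ descends from an edge $uv$ of $G$, so $\psi_k(u')\psi_k(v) = uv \in E(G)$. Moreover $F(\psi_k(z)) = F_k(z)$ for every $z \in V(H_k)$ and $F(\psi_k(e)) = F_k(e)$ for every $e \in E(H_k)$, which is the graph homomorphism $H_k \rightarrow G$ claimed. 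The final identity $T^{ve}_{code}(\Lambda,\mathcal{E}) = T_{code}$ is then an immediate consequence of the column-equality above and the fact that the entries of $T_{code}(G,F)$ lie in $\mathcal{E}$ by the very definition of the total set-coloring $F:V(G)\cup E(G)\rightarrow \mathcal{E}$.

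The main technical obstacle is showing that the split coloring $F_k$ still respects the $W$-type constraints $c_1,\dots,c_m$ whenever splitting causes two previously identical vertices to become distinct while one edge now reads the same triple as another; this is handled column by column, because the triple at each edge is literally unchanged by splitting. A secondary obstacle is avoiding multi-edges: the non-common neighbor condition $N_{ei}(w') \cap N_{ei}(w'') = \emptyset$ in Definition \ref{defn:vertex-split-coinciding-operations} is exactly what guarantees that the resulting $H_k$ remains a simple graph, so no additional work is needed there. Finally, since every vertex of $G$ of degree $\geq 2$ admits a split, the family $G_{raph}(T_{code})$ is non-empty (it contains $G$ itself by zero splits), and in fact infinite in general whenever $G$ has a vertex of degree $\geq 2$, giving the desired set of public-keys $H_k$ with the private-key $G$.
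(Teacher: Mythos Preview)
Your approach is correct and matches the paper's implicit method: the theorem is stated without an explicit proof, but Section~3.2 records $G\wedge S\rightarrow G$ as a graph homomorphism under vertex-coinciding, and Example~\ref{example:444444} (Fig.~\ref{fig:a-visualization-tool}) exhibits the graphs $H_1,H_2,H_3$ sharing $T_{code}(H,\varphi)$ via exactly this vertex-splitting construction. One minor overstatement to fix: iterated vertex-splitting of a finite $(p,q)$-graph terminates once every vertex has degree~$1$ (the $q$-edge matching), so the $G_{raph}(T_{code})$ you build this way is finite rather than infinite --- but since the theorem only asserts the existence of such a set, this does not affect your argument.
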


\begin{example}\label{example:444444}
Fig.\ref{fig:a-visualization-tool} is for illustrating Definition \ref{defn:new-intersected-hypergraphss} and Definition \ref{defn:vertex-intersected-graph-hypergraph}, we, in which, can see a v-intersected $(4,6)$-graph $G$ of a hypergraph $\mathcal{H}_{yper}=([1,15],\mathcal{E})$ based on a 3I-hyperedge set $\mathcal{E}=\{e_1,e_2,e_3,e_4\}$ and a ve-intersected $(4,6)$-graph $H$ of a hypergraph $\mathcal{H}_{yper}=([1,15],\mathcal{E}^*)$ based on a 3I-hyperedge set $\mathcal{E}^*=\{e_1,e_2,e_3,e_4,u_{12}, u_{13},u_{14},u_{23},u_{24},u_{34}\}$, where the $(4,6)$-graph $H$ admits a total set-coloring $\varphi$ defined as follows:

$e_1=\{1,2,6,12,9,11,7,15\}$; $e_2=\{2,3,5,6,11,10,7,13\}$;

$e_3=\{4,8,5,10,6,11,9,12\}$; $e_4=\{8,14,10,13,11,7,9,15\}$.

$u_{12}=\{2,6,7,11,12,15\}$ holding $u_{12}\supset e_1\cap e_2=\{2,6,7,11\}$ true;

$u_{13}=\{2,3,6,7,9,11,12,13\}$ holding $u_{13}\supset e_1\cap e_3=\{6,9,11,12\}$ true;

$u_{14}=\{4,5,6,7,9,11,12,15\}$ holding $u_{14}\supset e_1\cap e_4=\{7,9,11,15\}$ true;

$u_{23}=\{4,5,6,7,9,11,15\}$ holding $u_{23}\supset e_2\cap e_3=\{5,6,10,11\}$ true;

$u_{24}=\{1,3,6,7,9,10,11,13\}$ holding $u_{24}\supset e_2\cap e_4=\{7,10,11,13\}$ true;

$u_{34}=\{1,2,3,4,8,9,10,11,13\}$ holding $u_{34}\supset e_3\cap e_4=\{8,9,10,11\}$ true.

By Definition \ref{defn:total-coloring-Topcode-matrixs}, the $(4,6)$-graph $H$ has its own Topcode-matrix as follows
\begin{equation}\label{eqa:Topcode-matrix-split-operation}
{
\begin{split}
T_{code}(H,\varphi)=& \left(
\begin{array}{cccccccccc}
e_1 & e_1 & e_1 & e_2 & e_2 & e_3 \\
u_{12} & u_{13} & u_{14} & u_{23} & u_{24} & u_{34} \\
e_2 & e_3 & e_4 & e_3 & e_4 & e_4
\end{array}
\right)\\
=&\left(
\begin{array}{cccccccccc}
e_1 \\
u_{12}\\
e_2
\end{array}
\right)\bigcup\left(
\begin{array}{cccccccccc}
e_1 \\
u_{13}\\
e_3
\end{array}
\right)\bigcup\left(
\begin{array}{cccccccccc}
e_1 \\
u_{14}\\
e_4
\end{array}
\right)\bigcup\left(
\begin{array}{cccccccccc}
e_2 \\
u_{23}\\
e_3
\end{array}
\right)\bigcup\left(
\begin{array}{cccccccccc}
e_2 \\
u_{24}\\
e_4
\end{array}
\right)\bigcup\left(
\begin{array}{cccccccccc}
e_3 \\
u_{34}\\
e_4
\end{array}
\right)\\
=&T^{ve}_{code}(\mathcal{E}^*)
\end{split}}
\end{equation}

In Fig.\ref{fig:a-visualization-tool}, there are \emph{set-colored graph homomorphisms} $H_i\rightarrow H$ and Topcode-matrices $T_{code}(H,\varphi)=$ $T_{code}(H_i$, $\varphi_i)$ for $i\in [1,3]$, we say that the set-colored graph $H$ is \emph{core ve-intersected graph} of the hypergraph $\mathcal{H}_{yper}=([1,15],\mathcal{E}^*)$, since each set-colored graph $H_i$ is a ve-intersected graph of the hypergraph $\mathcal{H}_{yper}=([1,15],\mathcal{E}^*)$ for $i\in [1,3]$. Eq.(\ref{eqa:Topcode-matrix-split-operation}) shows us the \emph{Topcode-matrix splitting operation}.
\end{example}

\begin{figure}[h]
\centering
\includegraphics[width=14.2cm]{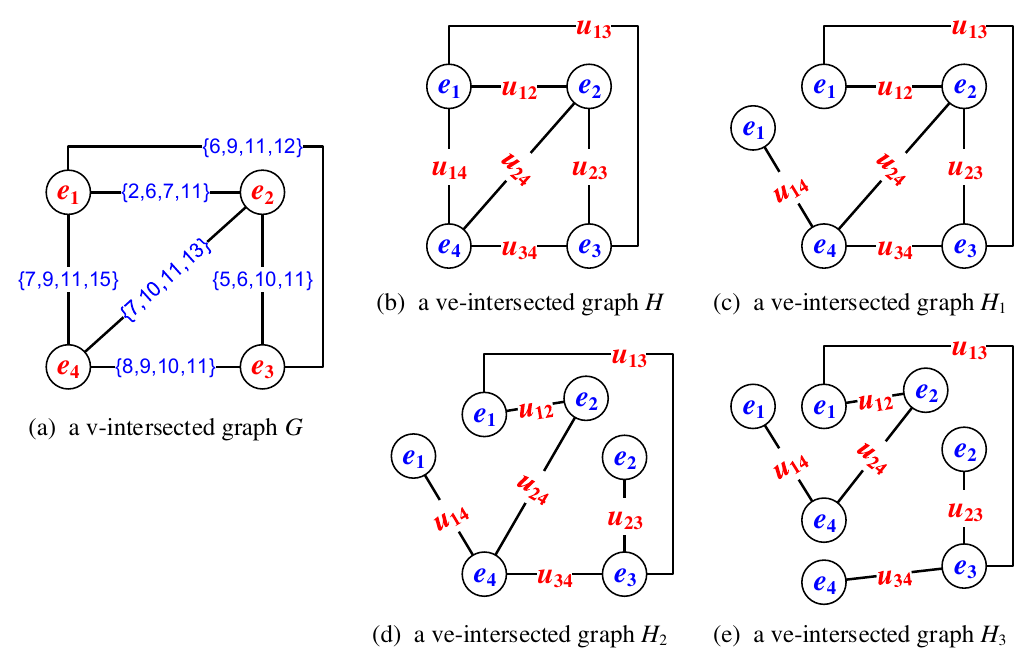}\\
\caption{\label{fig:a-visualization-tool}{\small The set-colored graphs for Definition \ref{defn:new-intersected-hypergraphss} and Definition \ref{defn:vertex-intersected-graph-hypergraph}.}}
\end{figure}

\begin{defn}\label{defn:hyperedge-structure-hypergraphs}
If a v-intersected graph $G$ of a hypergraph $\mathcal{H}_{yper}=(\Lambda,\mathcal{E})$ based on a 3I-hyperedge set $\mathcal{E}\in \mathcal{E}\big (\Lambda^2\big )$ defined in Definition \ref{defn:new-intersected-hypergraphss}, or a ve-intersected graph $G$ of a hypergraph $\mathcal{H}_{yper}=(\Lambda,\mathcal{E})$ based on a 3I-hyperedge set $\mathcal{E}\in \mathcal{E}\big (\Lambda^2\big )$ defined in Definition \ref{defn:vertex-intersected-graph-hypergraph} is one of tree, planar graph, graph having hamiltonian cycle, Euler graph, bipartite graph, and multiple-partition graph, then we say the hypergraph $\mathcal{H}_{yper}=(\Lambda,\mathcal{E})$ based on a 3I-hyperedge set $\mathcal{E}\in \mathcal{E}\big (\Lambda^2\big )$ to be \emph{tree-hypergraph}, \emph{hyperedge-planar hypergraph}, \emph{hyperedge-hamiltonian hypergraph}, \emph{hyperedge-Euler hypergraph}, \emph{hyperedge-bipartite hypergraph}, and \emph{hyperedge-multiple-partition hypergraph}.
\end{defn}

\begin{rem}\label{rem:333333}
\textbf{Distinguishability.} Hyperedge-hamiltonian hypergraphs defined in Definition \ref{defn:hyperedge-structure-hypergraphs} differ completely from proper hyperedge-hamiltonian hypergraphs defined in Definition \ref{defn:yao-hamilton-hypergraphs}, since a proper hyperedge-hamiltonian hypergraph $\mathcal{H}_{yper}=(\Lambda,\mathcal{E})$ based on a 3I-hyperedge set $\mathcal{E}\in \mathcal{E}\big (\Lambda^2\big )$ is related with some ordinary hamiltonian graph as the 3I-hyperedge set $\mathcal{E}$ is a 2-uniform 3I-hyperedge set, similarly, a proper connected hypergraph $\mathcal{H}_{yper}=(\Lambda,\mathcal{E})$ based on a 3I-hyperedge set $\mathcal{E}\in \mathcal{E}\big (\Lambda^2\big )$ is related with some ordinary connected graph.\qqed
\end{rem}

Thereby, we claim that
\begin{prop}\label{proposition:99999}
There is no necessary and sufficient condition for determining whether a hypergraph is a hyperedge-hamiltonian hypergraph, since there is no necessary and sufficient condition for determining whether a graph contains a hamiltonian cycle.
\end{prop}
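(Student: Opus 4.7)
The plan is to establish the claim by a reduction argument that exploits the visualization tools developed in the preceding sections. The core observation is that hyperedge-hamiltonicity of a hypergraph is, by Definition \ref{defn:hyperedge-structure-hypergraphs}, the hamiltonicity of an associated ve-intersected graph, so any characterization of the former yields one of the latter.

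First, I would invoke Theorem \ref{thm:integer-set-vs-vertex-intersected-graph}, which guarantees that every connected graph $G$ arises as a ve-intersected graph of some hypergraph $\mathcal{H}_{yper}=(\Lambda,\mathcal{E})$ with $\Lambda=[1,M]$ and $\mathcal{E}\in \mathcal{E}\big (\Lambda^2\big )$. This step installs a one-way translation from the class of ordinary connected graphs into the class of hypergraphs.

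Second, I would combine this with Definition \ref{defn:hyperedge-structure-hypergraphs}, under which the resulting hypergraph $\mathcal{H}_{yper}$ is hyperedge-hamiltonian precisely when its ve-intersected graph $G$ contains a hamiltonian cycle. Consequently the two decision problems are equivalent under the construction $G\mapsto \mathcal{H}_{yper}$.

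Third, I would argue by contrapositive: suppose a necessary and sufficient condition $\mathcal{C}^{*}$ were available for determining hyperedge-hamiltonicity. Given any connected graph $G$, we produce $\mathcal{H}_{yper}$ by Theorem \ref{thm:integer-set-vs-vertex-intersected-graph} and then test $\mathcal{C}^{*}$ on $\mathcal{H}_{yper}$; by the equivalence above, this would yield a necessary and sufficient condition for $G$ to be hamiltonian. Since the hamiltonian cycle problem is NP-complete (Karp, \cite{Karp-R-M-1972-Computer}) and no such characterization is known for ordinary graphs, no such condition can exist for hyperedge-hamiltonian hypergraphs. The main obstacle is really one of interpretation rather than mathematics: the statement is meta-mathematical, so the proof must be read modulo what one accepts as a ``necessary and sufficient condition'' (e.g., a polynomial-time checkable criterion, or a clean combinatorial characterization), and the content of the argument is the reduction itself, which is immediate from the two theorems cited.
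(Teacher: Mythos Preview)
Your proposal is correct and follows essentially the same line as the paper. In fact, the paper gives no explicit proof of this proposition at all: it simply states the claim immediately after Definition \ref{defn:hyperedge-structure-hypergraphs} with the phrase ``Thereby, we claim that'', relying on the definition (hyperedge-hamiltonicity is by construction the hamiltonicity of a ve-intersected graph) together with the well-known intractability of the hamiltonian cycle problem (Karp, \cite{Karp-R-M-1972-Computer}). Your argument spells out precisely this reduction---using Theorem \ref{thm:integer-set-vs-vertex-intersected-graph} to realize any connected graph as a ve-intersected graph and then appealing to Definition \ref{defn:hyperedge-structure-hypergraphs}---so it is a faithful and more explicit version of the paper's implicit reasoning.
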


\begin{problem}\label{question:444444}
\textbf{Find} the positive integer $M^*$ for each connected graph such that Theorem \ref{thm:integer-set-vs-vertex-intersected-graph} based on the integer set $\Lambda=[1,M^*]$ holds true.
\end{problem}

Theorem \ref{thm:integer-set-vs-vertex-intersected-graph} enables us to obtain the following theorems:

\begin{thm}\label{thm:infinite-hypergraph-v-a-graph}
Any connected graph $G$ corresponds to infinite hypergraphs, such that the connected graph $G$ is a ve-intersected graph of each one of the infinite hypergraphs.
\end{thm}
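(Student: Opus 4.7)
The plan is to combine Theorem \ref{thm:integer-set-vs-vertex-intersected-graph} with an augmentation construction in the spirit of the proof of Theorem \ref{thm:infinite-v-intersected-graphs}(a). First, I would apply Theorem \ref{thm:integer-set-vs-vertex-intersected-graph} to the connected graph $G$ to obtain an integer set $\Lambda=[1,M]$, a 3I-hyperedge set $\mathcal{E}\in \mathcal{E}\big(\Lambda^2\big)$, and a total set-coloring $F:V(G)\cup E(G)\rightarrow \mathcal{E}$ witnessing that $G$ is a ve-intersected graph of $\mathcal{H}_{yper}=(\Lambda,\mathcal{E})$; in particular $F(V(G)\cup E(G))=\mathcal{E}$ and each edge $uv\in E(G)$ satisfies $F(uv)\supseteq F(u)\cap F(v)\neq\emptyset$.

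Next, for every integer $k\geq 1$ I would build a fresh hypergraph as follows. For each $\alpha\in V(G)\cup E(G)$, introduce a pairwise disjoint finite set $X_{k,\alpha}$ of brand-new vertices, disjoint from $\Lambda$ and from every $X_{k,\beta}$ with $\beta\neq\alpha$, with $\sum_{\alpha}|X_{k,\alpha}|\geq k$ (this ensures that different values of $k$ genuinely produce different vertex sets). Set
\begin{equation}
\Lambda_k=\Lambda\cup \bigcup_{\alpha\in V(G)\cup E(G)} X_{k,\alpha},\qquad F_k(\alpha)=F(\alpha)\cup X_{k,\alpha},\qquad \mathcal{E}_k=\big\{F_k(\alpha):\alpha\in V(G)\cup E(G)\big\}.
\end{equation}
Because the $X_{k,\alpha}$'s are mutually disjoint and disjoint from $\Lambda$, for any two elements $\alpha,\beta\in V(G)\cup E(G)$ we have $F_k(\alpha)\cap F_k(\beta)=F(\alpha)\cap F(\beta)$, and containment $F_k(\alpha)\subsetneq F_k(\beta)$ is ruled out by the presence of elements of $X_{k,\alpha}\setminus X_{k,\beta}$. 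This immediately gives Independence; Intersection is inherited from $\mathcal{E}$ via the identity above; Integrity is built in. Hence $\mathcal{E}_k\in \mathcal{E}\big(\Lambda_k^2\big)$ is a 3I-hyperedge set, and the edge constraint $F_k(uv)\supseteq F_k(u)\cap F_k(v)=F(u)\cap F(v)\neq\emptyset$ holds because $F_k(uv)\supseteq F(uv)\supseteq F(u)\cap F(v)$.

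Finally I would conclude by invoking Definition \ref{defn:vertex-intersected-graph-hypergraph}: the total set-coloring $F_k$ has image $\mathcal{E}_k$ and satisfies the first constraint $c_0$ for every edge, so $G$ is a ve-intersected graph of each $\mathcal{H}_{yper}(k)=(\Lambda_k,\mathcal{E}_k)$. Since the cardinalities $|\Lambda_k|$ can be made strictly increasing in $k$, the hypergraphs $\mathcal{H}_{yper}(k)$ are pairwise distinct, furnishing the required infinite family. The main obstacle I expect is the bookkeeping for the Independence condition: I must make the added decorations $X_{k,\alpha}$ not only disjoint from one another but chosen so that no expanded hyperedge accidentally becomes a subset of another, and so that previously higher-constraint requirements (such as those embedded in the edge coloring) continue to respect $F_k(V(G)\cup E(G))=\mathcal{E}_k$; a uniform choice of singleton decorators indexed by distinct fresh integers handles this cleanly, but care is required to confirm no duplicates arise among the $F_k(\alpha)$'s when distinct $\alpha,\beta$ happen to satisfy $F(\alpha)=F(\beta)$.
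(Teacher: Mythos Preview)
Your proposal is correct and matches the paper's implied argument: the paper states this theorem without an explicit proof, presenting it as an immediate consequence of Theorem \ref{thm:integer-set-vs-vertex-intersected-graph}, and the natural way to supply the ``infinitely many'' part is exactly the augmentation trick you lift from the proof of Theorem \ref{thm:infinite-v-intersected-graphs}(a). Your bookkeeping concern at the end is well-placed; taking each $X_{k,\alpha}$ to be a nonempty singleton of a fresh integer (distinct across all $\alpha$) simultaneously guarantees Independence, $F_k(\alpha)\neq F_k(\beta)$ for $\alpha\neq\beta$, and preservation of the converse adjacency condition in Definition \ref{defn:vertex-intersected-graph-hypergraph}, since $F_k(\alpha)\cap F_k(\beta)=F(\alpha)\cap F(\beta)$ throughout.
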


\begin{thm} \label{them:hyperedge-Hamilton-cycles}
Suppose that $G$ is a ve-intersected graph of a hypergraph $\mathcal{H}_{yper}=(\Lambda,\mathcal{E})$ based on a 3I-hyperedge set $\mathcal{E}\in \mathcal{E}\big (\Lambda^2\big )$, so the ve-intersected graph $G$ admits a proper total set-coloring $F: V(G)\cup E(G)\rightarrow \mathcal{E}$ holding $F(x)\neq F(y)$ for each edge $xy\in E(G)$ defined in Definition \ref{defn:vertex-intersected-graph-hypergraph}. Then we have:
\begin{asparaenum}[(1) ]
\item If the ve-intersected graph $G$ contains a hamiltonian cycle, then the hypergraph $\mathcal{H}_{yper}=(\Lambda,\mathcal{E})$ contains a \emph{hyperedge-hamiltonian cycle}.
\item If the ve-intersected graph $G$ is a tree, then the hypergraph $\mathcal{H}_{yper}=(\Lambda,\mathcal{E})$ is acyclic by the Graham reduction defined in \cite{Jianfang-Wang-Hypergraphs-2008}.
\item If the ve-intersected graph $G$ holds $|F(E(G))|=|E(G)|$ and $F(uv)\cap F(xy)=\emptyset$ for any pair of edges $uv$ and $xy$ of $E(G)$, and the ve-intersected graph $G$ is not a tree, then the hypergraph $\mathcal{H}_{yper}=(\Lambda,\mathcal{E})$ contains a \emph{hyperedge cycle}.
\end{asparaenum}
\end{thm}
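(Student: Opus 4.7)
The plan is to transfer each topological feature from the ve-intersected graph $G$ to the hypergraph $\mathcal{H}_{yper}=(\Lambda,\mathcal{E})$ by reading off the total set-coloring $F$ along the relevant subgraph of $G$, using the defining inequality $F(uv)\supseteq F(u)\cap F(v)\neq \emptyset$ of Definition \ref{defn:vertex-intersected-graph-hypergraph} and the correspondence between intersecting hyperedge pairs and edges of $G$.

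For part (1), let $v_1v_2\cdots v_pv_1$ be a hamiltonian cycle of $G$ and set $e_j=F(v_j)\in \mathcal{E}$. For each adjacent pair I have $e_j\cap e_{j+1}\supseteq F(v_j)\cap F(v_{j+1})\neq \emptyset$, so I can select a witness $x_{i_{j+1}}\in e_j\cap e_{j+1}$ from $\Lambda$ (and $x_{i_1}\in e_p\cap e_1$). The proper-coloring requirement $F(v_j)\neq F(v_{j+1})$ on adjacent vertices keeps consecutive $e_j$'s distinct, so the cyclic list $e_1e_2\cdots e_pe_1$ together with the chosen witnesses meets the conditions of Definition \ref{defn:yao-hamilton-hypergraphs} and is the desired proper hyperedge-hamiltonian cycle.

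For part (2) I argue contrapositively. Suppose $\mathcal{H}_{yper}$ contained a hyperedge-cycle $e_1e_2\cdots e_ke_1$ with $e_i\cap e_{i+1}\neq \emptyset$ and no containments among the $e_i$. By the correspondence in Definition \ref{defn:vertex-intersected-graph-hypergraph}, each $e_i$ appears as $F(w_i)$ for some $w_i\in V(G)$, and every intersection $e_i\cap e_{i+1}\neq \emptyset$ forces an edge $w_iw_{i+1}\in E(G)$. The resulting closed walk $w_1w_2\cdots w_kw_1$ then contains a cycle in $G$, contradicting the tree hypothesis. Hence no such hyperedge-cycle can exist, which is exactly the statement that the Graham reduction successively strips off pendant hyperedges until $\mathcal{E}$ is empty.

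For part (3), since $G$ is not a tree it contains a cycle $v_1v_2\cdots v_kv_1$. I interleave vertex- and edge-colors along this cycle into the cyclic sequence
\[
F(v_1),\, F(v_1v_2),\, F(v_2),\, F(v_2v_3),\, \ldots,\, F(v_k),\, F(v_kv_1),\, F(v_1),
\]
whose consecutive entries intersect since $F(v_i)\cap F(v_iv_{i+1})\supseteq F(v_i)\cap F(v_{i+1})\neq \emptyset$. The edge-hyperedges along the cycle are mutually distinct by $|F(E(G))|=|E(G)|$ and mutually disjoint by the hypothesis $F(uv)\cap F(xy)=\emptyset$, and the proper-coloring condition keeps adjacent vertex-hyperedges distinct from one another. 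The main obstacle I anticipate is the bookkeeping required to guarantee distinctness of \emph{all} $2k$ entries of this interleaved cycle: a vertex-hyperedge $F(v_i)$ could in principle coincide with an edge-hyperedge $F(v_jv_{j+1})$, but any such coincidence would force a non-empty intersection between two edge-hyperedges (through the adjacency of $v_i$ with its own incident edges) and hence contradict the disjointness hypothesis. Whenever a residual repetition survives, one shortcuts the cyclic sequence between the two repeated entries to obtain a strictly shorter hyperedge-cycle, and iterating this yields a genuine hyperedge-cycle in $\mathcal{H}_{yper}$.
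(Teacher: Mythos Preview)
The paper does not give a proof of this theorem at all; it is listed (together with the two surrounding theorems) as a consequence of Theorem~\ref{thm:integer-set-vs-vertex-intersected-graph} and of the definitions, with no argument supplied. So your write-up is more detailed than anything in the paper, and for parts (1) and (3) your idea of reading the coloring $F$ along a Hamiltonian cycle or an ordinary cycle of $G$ is exactly the intended mechanism.

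Two points deserve correction. First, in part (1) you invoke Definition~\ref{defn:yao-hamilton-hypergraphs} and claim to obtain a \emph{proper} hyperedge-hamiltonian cycle. That definition requires the witnesses $x_{i_1},\dots ,x_{i_n}$ to be a \emph{permutation of all of} $\Lambda$ and in particular forces $|\mathcal{E}|=|\Lambda|$; nothing in the hypotheses guarantees this, and your argument never checks it. The theorem only asserts a ``hyperedge-hamiltonian cycle'' in the sense of Definition~\ref{defn:hyperedge-structure-hypergraphs} (a hyperedge-cycle visiting every hyperedge), which is what your construction actually yields. Drop the word ``proper'' and the appeal to Definition~\ref{defn:yao-hamilton-hypergraphs}.

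Second, part (2) has a real gap. ``Acyclic by the Graham reduction'' means that iterating the two rules GR-1 (delete isolated vertices) and GR-2 (delete a hyperedge contained in another) empties the hypergraph; this is $\alpha$-acyclicity, not the mere absence of a hyperedge-cycle, and the two notions do not coincide. Your contrapositive therefore proves the wrong statement. Moreover, your lifting step is incomplete: from $e_i\cap e_{i+1}\neq\emptyset$ Definition~\ref{defn:vertex-intersected-graph-hypergraph} gives you \emph{some} edge $xy$ with $F(x)=e_i$, $F(y)=e_{i+1}$, but since non-adjacent vertices of $G$ may share a color you cannot assume the successive witnesses glue into a single closed walk. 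A correct argument should instead run the Graham reduction directly: pick a leaf $v$ of the tree $G$, observe that $F(v)$ intersects only $F(u)$ for the unique neighbour $u$ (any further intersection would force another edge at $v$), strip the vertices of $F(v)\setminus F(u)$ via GR-1, then remove $F(v)\subseteq F(u)$ via GR-2, and recurse on the smaller tree $G-v$.
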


\begin{thm}\label{thm:theorem3535}
For any connected $(p,q)$-graph $G$, there is a set-coloring $F:V(G)\rightarrow \mathcal{E}$, where the 3I-hyperedge set $\mathcal{E}\in \mathcal{E}\big (\Lambda^2\big )$ (Ref. Definition \ref{defn:new-hypergraphs-sets}), such that $F(x)\neq F(y)$ for distinct vertices $x,y\in V(G)$ and $F(V(G))=\mathcal{E}$, as well as induced edge colors $F(uv)=F(u)\cap F(v)$ for each edge $uv\in E(G)$, and $F(xy)\neq F(wz)$ for distinct edges $xy,wz\in E(G)$, and moreover $|\Lambda|$ is the smallest one for any set-coloring $F^*:V(G)\rightarrow \mathcal{E}^*$ based on a 3I-hyperedge set $\mathcal{E}^*\in \mathcal{E}(\Lambda^2_{*})$, as well as $F(x)\not\subset F(y)$ for distinct vertices $x,y\in V(G)$ and $F(xy)\not\subset F(wz)$ for distinct edges $xy,wz\in E(G)$, as well as $|\Lambda|\leq |\Lambda_{*}|$.
\end{thm}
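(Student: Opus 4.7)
The plan is to split the claim into two parts: (A) the existence of at least one set-coloring $F:V(G)\to \mathcal{E}\in \mathcal{E}(\Lambda^2)$ satisfying all the listed distinctness, intersection, and incomparability requirements simultaneously, and (B) showing that the infimum of $|\Lambda|$ over all such admissible colorings is attained. Part (B) is essentially a well-ordering observation once (A) produces one concrete realization, so the substance of the work lies in (A).

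For part (A), I would give an explicit construction that refines the recipe used in the proof of Theorem \ref{thm:integer-set-vs-vertex-intersected-graph}. Enumerate $E(G)=\{e_1,\dots,e_q\}$ and $V(G)=\{v_1,\dots,v_p\}$, choose pairwise distinct symbols $\alpha_1,\dots,\alpha_q,\beta_1,\dots,\beta_p$ and put $\Lambda_0=\{\alpha_1,\dots,\alpha_q\}\cup\{\beta_1,\dots,\beta_p\}$. Define
\begin{equation*}
F(v_i)=\{\alpha_k:\ v_i\in e_k\}\cup\{\beta_i\},\qquad i\in[1,p].
\end{equation*}
I would then check the properties in order. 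First, if $uv=e_k$ is an edge, then the only element jointly occurring in the $\alpha$-parts of $F(u)$ and $F(v)$ is $\alpha_k$, and the private tags $\beta_i$ are pairwise disjoint; hence $F(u)\cap F(v)=\{\alpha_k\}$, so $F(uv)=F(u)\cap F(v)$ is a singleton depending injectively on the edge. Distinct edges receive distinct singleton edge-colors, which simultaneously delivers edge distinctness and edge incomparability (distinct singletons are never in $\subset$ relation). Second, the private tag $\beta_i\in F(v_i)$ is absent from every other $F(v_j)$, so $F(v_i)\not\subseteq F(v_j)$ for $i\neq j$; this supplies both the vertex incomparability condition and, a fortiori, the vertex distinctness condition. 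Third, I would verify that $\mathcal{E}=F(V(G))$ sits in $\mathcal{E}(\Lambda_0^2)$ as a 3I-hyperedge set: Integrity holds because every $\alpha_k$ appears in the colors of the endpoints of $e_k$ and every $\beta_i$ appears in $F(v_i)$; Independence is the vertex incomparability just shown; Intersection holds by connectedness together with $p\geq 2$, since each $v_i$ has a neighbor $v_j$ with $\{\alpha_{v_iv_j}\}\subseteq F(v_i)\cap F(v_j)$. Hence $F$ is one admissible set-coloring with $|\Lambda_0|=p+q$, exhibiting a finite upper bound for the set of attainable $|\Lambda|$.

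For part (B), let $\mathcal{N}(G)\subseteq Z^0$ denote the set of all values $|\Lambda^*|$ arising from any admissible set-coloring $F^*:V(G)\to\mathcal{E}^*\in\mathcal{E}(\Lambda^2_*)$ meeting the seven conditions above. Part (A) shows $\mathcal{N}(G)\neq\emptyset$, so by the well-ordering of $Z^0$ there is a smallest element $|\Lambda|\in\mathcal{N}(G)$; any realizer $F$ of this minimum is the coloring promised by the theorem, and the inequality $|\Lambda|\leq|\Lambda_*|$ holds by this choice.

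The main obstacle, and where I expect the bulk of the verification to live, is confirming that the seven conditions are mutually compatible with membership in $\mathcal{E}(\Lambda^2)$: the private-tag device $\beta_i$ is what guarantees Independence without perturbing the intersection pattern that drives the induced edge coloring, while the edge-tag device $\alpha_k$ is what keeps $F(uv)=F(u)\cap F(v)$ a unique singleton per edge and enforces global edge distinctness (not merely on adjacent edges as in Theorem \ref{thm:integer-set-vs-vertex-intersected-graph}). A small degenerate case $p=1$, where Intersection cannot hold because $|\mathcal{E}|=1$, should be excluded or treated separately; connectedness together with $p\geq 2$ suffices everywhere else.
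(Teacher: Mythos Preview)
The paper does not actually supply a proof of Theorem~\ref{thm:theorem3535}; immediately after the statement it only refers the reader to three illustrative examples $K_{4,1},K_{4,2},K_{4,3}$ in Figure~\ref{fig:optimal-set-matrix} and notes the existence of transformations among them. Your proposal therefore fills a genuine gap rather than reproducing an existing argument.

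Your construction is correct and clean. The edge-tag/vertex-tag device $F(v_i)=\{\alpha_k:v_i\in e_k\}\cup\{\beta_i\}$ works exactly as you claim: simplicity of $G$ forces $F(u)\cap F(v)=\{\alpha_k\}$ for $e_k=uv$ (no parallel edges means no second shared $\alpha$), the private $\beta_i$ blocks all vertex inclusions, and singleton edge-colors are automatically pairwise incomparable. The 3I verification is sound, with connectedness and $p\geq 2$ supplying Intersection. The well-ordering step for part~(B) is the right way to extract the minimal $|\Lambda|$ once part~(A) exhibits a witness. Note that your witness has $|\Lambda_0|=p+q$, which is generally far from optimal (the paper's $K_{4,3}$ example achieves $|\Lambda|=6$ whereas $p+q=10$), but this is immaterial since you only need non-emptiness of $\mathcal{N}(G)$. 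The degenerate case $p=1$ you flag is correctly excluded; a single-vertex graph cannot satisfy the Intersection axiom of Definition~\ref{defn:new-hypergraphs-sets}.
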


\begin{figure}[h]
\centering
\includegraphics[width=14cm]{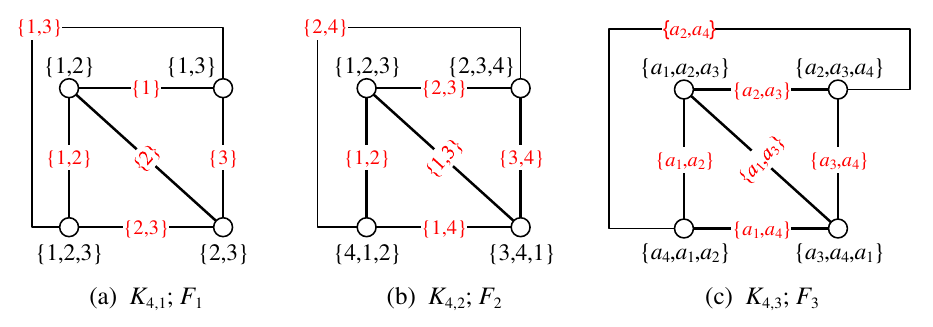}\\
\caption{\label{fig:optimal-set-matrix} {\small Three different set-colorings of the complete graph $K_4$.}}
\end{figure}

For understanding Theorem \ref{thm:theorem3535}, see three examples $K_{4,1}$, $K_{4,2}$ and $K_{4,3}$ shown in Fig.\ref{fig:optimal-set-matrix}. There are transformations $\theta_{i,j}$ holding $F_j=\theta_{i,j}(F_i)$, and $T_{code}(K_{4,j},F_j)=\theta_{i,j}(T_{code}(K_{4,i},F_i))$ for $1\leq i,j\leq 3$ and $i\neq j$.

\begin{problem}\label{qeu:444444}
Let $\mathcal{E}\big (\Lambda^2\big )=\{\mathcal{E}_i:~i\in [1,n(\Lambda)]\}$ be the hypergraph set defined in Definition \ref{defn:new-hypergraphs-sets}, so each 3I-hyperedge set $\mathcal{E}_i$ holds $\Lambda=\bigcup _{e\in \mathcal{E}_i}e$ and forms a hypergraph $\mathcal{H}^i_{yper}=(\Lambda, \mathcal{E}_i)$, which induces the ve-intersected graph $G_i$. \textbf{Find} possible connections between two hypergraphs $\mathcal{H}^i_{yper}=(\Lambda, \mathcal{E}_i)$ and $\mathcal{H}^j_{yper}=(\Lambda, \mathcal{E}_j)$ (resp. $G_i$ and $G_j$) if $i\neq j$.
\end{problem}

\begin{defn}\label{defn:set-set-distinguishing-colorings}
\textbf{Distinguishing hyperedge-set colorings.} By the proper total set-coloring $F$ defined in Definition \ref{defn:vertex-intersected-graph-hypergraph}, we have the following set-sets
\begin{equation}\label{eqa:set-set-groups}
{
\begin{split}
C_v(x,F)=&\{F(y):y\in N_{ei}(x)\}\textrm{ (\emph{local v-color set-set})};\\
C_v[x,F]=&\{F(x)\}\cup C_v(x,F)\textrm{ (\emph{closed local v-color set-set})};\\
C_e(x,F)=&\{F(xy):y\in N_{ei}(x)\}\textrm{ (\emph{local e-color set-set})};\\
C_e[x,F]=&\{F(x)\}\cup C_e(x,F)\textrm{ (\emph{closed local e-color set-set})};\\
C_{ve}(x,F)=&C_v(x,F)\cup C_e(x,F)\textrm{ (\emph{local ve-color set})};\\
C_{ve}[x,F]=&\{F(x)\}\cup C_v(x,F)\cup C_e(x,F)\textrm{ (\emph{closed local ve-color set-set})};\\
C_{ve}\{x,F\}=&\{C_e(x,F), C_e[x,F], C_v[x,F],C_{ve}[x,F]\}\textrm{ (\emph{closed local (4)-color set-set})}.
\end{split}}
\end{equation} The set-sets defined in Eq.(\ref{eqa:set-set-groups}) enables us to obtain the adjacent-type distinguishing hyperedge-set colorings of the $(p,q)$-graph $H$ as follows:
\begin{asparaenum}[\textbf{Set-set}-1. ]
\item A \emph{v-adjacent distinguishing hyperedge-set coloring} holds $C_v(u,F)\neq C_v(v,F)$ for each edge $uv\in E(H)$.
\item A \emph{closed v-adjacent distinguishing hyperedge-set coloring} holds $C_v[u,F]\neq C_v[v,F]$ for each edge $uv\in E(H)$.
\item An \emph{e-adjacent distinguishing hyperedge-set coloring} holds $C_e(u,F)\neq C_e(v,F)$ for each edge $uv\in E(H)$.
\item A \emph{closed e-adjacent distinguishing hyperedge-set coloring} holds $C_e[u,F]\neq C_e[v,F]$ for each edge $uv\in E(H)$.
\item A \textrm{ve-adjacent distinguishing hyperedge-set coloring} holds $C_{ve}(u,F)\neq C_{ve}(v,F)$ for each edge $uv\in E(H)$.
\item A \textrm{closed ve-adjacent distinguishing hyperedge-set coloring} holds $C_{ve}[u,F]\neq C_{ve}[v$, $F]$ for each edge $uv\in E(H)$.
\item A \textrm{closed $(4)$-adjacent distinguishing hyperedge-set coloring} holds $C_{ve}\{u,F\}\neq C_{ve}\{v$, $F\}$ for each edge $uv\in E(H)$.\qqed
\end{asparaenum}
\end{defn}

\begin{defn}\label{defn:parameterized-hyperedge-set-colorings}
\textbf{Parameterized total hyperedge-set coloring.} A bipartite $(p,q)$-graph $G$ admits a \emph{parameterized total hyperedge-set coloring} $\varphi: V(G)\cup E(G)\rightarrow \mathcal{E}\in \mathcal{E}\big (\Lambda_{col}^2\big )$, where the coloring set $\Lambda_{col}=\{f_1,f_2,\dots ,f_B\}$ is defined in Definition \ref{defn:more-string-total-coloring}. The set-coloring $\varphi$ satisfies the following constraints:

(1) $|\varphi(u)|=|e_u|\geq \textrm{deg}_G(u)$ for each vertex $u\in V(G)$ and $e_u\in \mathcal{E}$;

(2) $|\varphi(uv)|=|e_{uv}|\geq 1$ for each edge $uv\in E(G)$ and $e_{uv}\in \mathcal{E}$;

(3) each $f_s\in \Lambda_{col}$ is in $\varphi(w)$ for some $w\in V(G)\cup E(G)$;

(4) each pair of adjacent edges $xy$ and $xz$ with $y,z\in N_{ei}(x)$ holds $\varphi(xy)\neq \varphi(xz)$;

(5) each $a_{uv}\in \varphi(uv)$ for each edge $uv\in E(G)$ corresponds to $a_{u}\in \varphi(u)$ and $a_{v}\in \varphi(v)$, such that there is a constraint $W[f_s(a_{u}), f_s(a_{uv}), f_s(a_{v})]=0$ for some $f_s\in \Lambda_{col}$;

(6) each $b_{u}\in \varphi(u)$ (resp. $b_{v}\in \varphi(v)$) for each edge $uv\in E(G)$ corresponds to some $b_{uv}\in \varphi(uv)$ and $b_{v}\in \varphi(v)$ (resp. $b_{u}\in \varphi(u)$), such that there is a constraint $W[f_i(b_{u}), f_i(b_{uv}), f_i(b_{v})]=0$ for some $f_i\in \Lambda_{col}$;

(7) the total color set $\varphi(V(G)\cup E(G))=\Lambda_{col}$.\qqed
\end{defn}

\begin{defn} \label{defn:set-hyperedge-sets}
\cite{Yao-Ma-arXiv-2024-13354} Let $H^{yper}_{color}(G)$ be the set of hyperedge sets for all set-colorings of a connected graph $G$, such that each 3I-hyperedge set $\mathcal{E}\in H^{yper}_{color}(G)$ defines a set-coloring $F:V(G)\rightarrow \mathcal{E}$, where $\bigcup_{e\in \mathcal{E}}e=\Lambda$, and the connected graph $G$ is a ve-intersected graph of the hypergraph $\mathcal{H}_{yper}=(\Lambda,\mathcal{E})$ based on a 3I-hyperedge set $\mathcal{E}\in \mathcal{E}\big (\Lambda^2\big )$ defined in Definition \ref{defn:vertex-intersected-graph-hypergraph}. For another set-coloring $F\,':V(G)\rightarrow \mathcal{E}\,'\in H^{yper}_{color}(G)$ with $\bigcup_{e\in \mathcal{E}\,'}e=\Lambda\,'$, we defined the third set-coloring $F^*:V(G)\rightarrow \mathcal{E}^*$, such that $F^*(x)=F(x)\cup F\,'(x)$ for each vertex $x\in V(G)$ and $F^*(uv)=F(uv)\cup F\,'(uv)$ for each edge $uv\in E(G)$, and $\mathcal{E}^*=\mathcal{E}\cup \mathcal{E}\,'$ with $e\in \mathcal{E}\setminus (\mathcal{E}\cap \mathcal{E}')$ holding $e\not \subseteq e\,'\in \mathcal{E}'\setminus (\mathcal{E}\cap \mathcal{E}')$, as well as

(1) $\Lambda^*=\Lambda=\Lambda\,'$;

(2) $\Lambda^*=\Lambda \cup \Lambda\,'$ if $\Lambda \neq \Lambda\,'$. \\
Then we call the set-coloring $F^*$ \emph{union set-coloring} of two set-colorings $F$ and $F\,'$, the hyperedge set $\mathcal{E}^*$ \emph{hyperedge union set} of two 3I-hyperedge sets $\mathcal{E}$ and $\mathcal{E}\,'$. \qqed
\end{defn}

\begin{thm}\label{thm:666666}
\cite{Yao-Ma-arXiv-2024-13354} (i) Any connected graph $H$ admits a total set-coloring $f:V(H)\cup E(H)\rightarrow \mathcal{E}\in \mathcal{E}\big (\Lambda^2\big )$, such that $f(u)\cap f(v)\neq \emptyset$ for each edge $uv\in E(G)$.

(ii) A complete graph $K_n$ of $n$ vertices admits a total set-coloring $F:V(K_n)\cup E(K_n)\rightarrow \mathcal{E}\in \mathcal{E}([1,n]^2)$, such that $F(uv)\supseteq F(u)\cap F(v)\neq \emptyset$ for each edge $uv\in E(K_n)$, and the 3I-hyperedge set $\mathcal{E}$ is \emph{proper}.
\end{thm}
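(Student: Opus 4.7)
For part (i), the plan is essentially a direct appeal to Theorem \ref{thm:integer-set-vs-vertex-intersected-graph}: given any connected graph $H$, that theorem produces an integer ground set $\Lambda = [1, M]$, a 3I-hyperedge set $\mathcal{E} \in \mathcal{E}(\Lambda^2)$, and a set-coloring $F: V(H) \to \mathcal{E}$ whose induced edge coloring $F(uv) = F(u) \cap F(v)$ is nonempty on every edge and satisfies $F(uv) \neq F(uw)$ for distinct neighbors. Extending $F$ to a total map $f: V(H) \cup E(H) \to \mathcal{E}$ by taking $f(uv) := F(u) \cap F(v)$ on edges and $f = F$ on vertices immediately yields $f(u) \cap f(v) \neq \emptyset$ for each $uv \in E(H)$, and the image $f(V(H) \cup E(H)) \subseteq \mathcal{E}$ lies in $\mathcal{E}(\Lambda^2)$, concluding (i).

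For part (ii), I would begin from the natural candidate $F(v_i) = [1,n] \setminus \{i\}$ on the vertices of $K_n$. This gives $n$ pairwise distinct $(n-1)$-subsets of $[1,n]$ forming an antichain (equal cardinality), with $F(v_i) \cap F(v_j) = [1,n] \setminus \{i,j\} \neq \emptyset$ for $n \geq 3$. To color the $\binom{n}{2}$ edges, assign to each $v_iv_j$ a subset $F(v_iv_j) \supseteq [1,n] \setminus \{i,j\}$ chosen from a distinct antichain layer of the Boolean lattice on $[1,n]$. A concrete route is to switch to a uniform $k$-subset construction with $k$ slightly larger than $n/2$: by an Erd\H{o}s--Ko--Rado style argument the family of $k$-subsets through a fixed element is pairwise intersecting, and $\binom{n}{k}$ exceeds $n + \binom{n}{2}$ for $n$ sufficiently large, giving enough distinct hyperedges to assign to $V(K_n) \cup E(K_n)$ while respecting $F(v_iv_j) \supseteq F(v_i) \cap F(v_j)$. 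Integrity is then automatic by choosing the underlying labels so that every element of $[1,n]$ appears in some chosen $k$-set, and intersection follows from $2k > n$.

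The principal obstacle is properness in part (ii). The edge color $F(v_iv_j)$ must contain $[1,n] \setminus \{i,j\}$ yet cannot be a subset of any vertex color $F(v_k) = [1,n] \setminus \{k\}$, and naively enlarging the $(n-2)$-intersection by any single element of $\Lambda$ collapses it onto an existing vertex color; this is what forces the switch to a uniform $k$-layer of the Boolean lattice with $k$ near $n/2$, or alternatively an induction on $n$ in which one vertex is added at a time and all existing hyperedges are re-decorated with the new element to preserve the antichain property. At the end, I would verify the three defining axioms of Definition \ref{defn:new-hypergraphs-sets}, namely independence (antichain), intersection (pairwise meeting by the $2k > n$ bound), and integrity (label coverage); the first is the only delicate condition, the other two follow directly from the construction.
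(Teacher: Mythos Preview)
The paper does not supply its own proof of this theorem; it is simply cited from \cite{Yao-Ma-arXiv-2024-13354}. So there is nothing to compare against directly, and your proposal must stand on its own.

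For part (i), your appeal to Theorem \ref{thm:integer-set-vs-vertex-intersected-graph} is the right idea, but the extension to edges as written is flawed. You set $f(uv) := F(u)\cap F(v)$ and then claim the total image lies in a 3I-hyperedge set $\mathcal{E}$. That fails Independence in Definition \ref{defn:new-hypergraphs-sets}: the intersection $F(u)\cap F(v)$ is a proper subset of $F(u)$, so the family containing both cannot be a 3I-hyperedge set. The fix is easy once you notice that part (i) places no constraint on the edge colors beyond membership in $\mathcal{E}$ --- the only hypothesis is $f(u)\cap f(v)\neq\emptyset$, which involves the \emph{vertex} colors alone. So keep the vertex coloring from Theorem \ref{thm:integer-set-vs-vertex-intersected-graph} and assign each edge an arbitrary element of that same $\mathcal{E}$; nothing more is needed.

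For part (ii), you correctly identify the real obstruction: with $F(v_i)=[1,n]\setminus\{i\}$, any edge color containing $[1,n]\setminus\{i,j\}$ and having at least $n-1$ elements collapses onto some vertex color, killing properness. Your proposed escape via a uniform $k$-layer with $k$ near $n/2$ is plausible in spirit but not yet a proof. Two points are left hanging. First, if all colors are $k$-sets and you require $F(v_iv_j)\supseteq F(v_i)\cap F(v_j)$, the intersection already has size at least $2k-n$, and you must check there are enough $k$-sets above each such intersection to assign \emph{distinct} colors to all $\binom{n}{2}$ edges while avoiding the $n$ vertex colors --- this counting argument is absent. Second, your Erd\H{o}s--Ko--Rado remark gives pairwise intersection only for the star through a fixed element, which is stronger than what Definition \ref{defn:new-hypergraphs-sets} actually requires (each hyperedge meets \emph{some} other hyperedge), but it also unnecessarily restricts the supply of available $k$-sets. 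The argument would be cleaner if you dropped the star restriction, used $2k>n$ to guarantee pairwise intersection of all $k$-sets, and then did the counting $\binom{n}{k}\ge n+\binom{n}{2}$ directly; you should also say what happens for small $n$ where that inequality may fail.
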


\begin{problem}\label{qeu:444444}
The set $H^{yper}_{color}(G)$ defined in Definition \ref{defn:set-hyperedge-sets} can be classified into two parts
\begin{equation}\label{eqa:555555}
H^{yper}_{color}(G)=I^{yper}_{color}(G)\bigcup N^{yper}_{color}(G)
\end{equation} such that each hyperedge set of $I^{yper}_{color}(G)$ (as a \emph{public-key set}) is not the union set of any two sets of the set $H^{yper}_{color}(G)$ (as a \emph{private-key set}), but each hyperedge set of $N^{yper}_{color}(G)$ is the hyperedge union set of some two hyperedge sets of $H^{yper}_{color}(G)$. \textbf{Determine} the hyperedge set $I^{yper}_{color}(G)$ for a connected graph $G$.
\end{problem}

Motivated from Definition \ref{defn:set-hyperedge-sets}, we have the following the union operation of hyperedge sets:

\begin{prop}\label{prop:hyperedge-sets-unions}
\textbf{The union operation of hyperedge sets.} Let two finite sets $\Lambda_a$ (as a \emph{public-ky set}) and $\Lambda_b$ (as a \emph{private-ky set}) hold $\Lambda_a\cap \Lambda_b\neq \emptyset$, and there are two 3I-hyperedge sets $\mathcal{E}_a\in \mathcal{E}(\Lambda_a^2)$ and $\mathcal{E}_b\in \mathcal{E}(\Lambda_b^2)$. Then, there is a new 3I-hyperedge set $\mathcal{E}^*=\mathcal{E}_a\cup \mathcal{E}_b$ (as an \emph{authentication set}) holding the new finite set $\Lambda=\bigcup_{e\in \mathcal{E}^*}e$ true, where $\Lambda=\Lambda_a\cup \Lambda_b$.
\end{prop}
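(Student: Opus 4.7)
The plan is to verify directly that $\mathcal{E}^{*}=\mathcal{E}_{a}\cup \mathcal{E}_{b}$ satisfies the three defining axioms of a 3I-hyperedge set from Definition \ref{defn:new-hypergraphs-sets}, namely \emph{Independence}, \emph{Intersection}, and \emph{Integrity}, with respect to the new ground set $\Lambda=\Lambda_{a}\cup \Lambda_{b}$. The overall strategy is to leverage the fact that each of $\mathcal{E}_{a}$ and $\mathcal{E}_{b}$ is already a 3I-hyperedge set in its own power set, so that two of the three properties pass immediately to the union; the remaining property requires an extra non-containment hypothesis inherited from the preceding Definition \ref{defn:set-hyperedge-sets}, which the proposition tacitly invokes.

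First I would settle \emph{Integrity}, which is purely set-theoretic: distributing the union,
\begin{equation*}
\bigcup_{e\in \mathcal{E}^{*}}e=\bigcup_{e\in \mathcal{E}_{a}}e\;\cup\;\bigcup_{e\in \mathcal{E}_{b}}e=\Lambda_{a}\cup \Lambda_{b}=\Lambda,
\end{equation*}
so the new ground set $\Lambda$ is exactly the union of all hyperedges in $\mathcal{E}^{*}$. Next, for \emph{Intersection}, pick any $e\in \mathcal{E}^{*}$; by construction $e$ lies in $\mathcal{E}_{a}$ or in $\mathcal{E}_{b}$, and in whichever set it lies the 3I-hypothesis supplies another hyperedge $e\,'$ in the same set with $e\cap e\,'\neq \emptyset$. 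Since $e\,'\in \mathcal{E}_{a}\cup \mathcal{E}_{b}=\mathcal{E}^{*}$, the Intersection condition holds in $\mathcal{E}^{*}$ as well. (The assumption $\Lambda_{a}\cap \Lambda_{b}\neq \emptyset$ is not strictly needed for this step, but it guarantees that the union ground set remains meaningfully connected, and in particular ensures the construction is not trivially a disjoint juxtaposition.)

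The main obstacle is \emph{Independence}, because containment between a hyperedge of $\mathcal{E}_{a}$ and one of $\mathcal{E}_{b}$ is not automatically excluded. Within $\mathcal{E}_{a}$ (and within $\mathcal{E}_{b}$) no such containment occurs by hypothesis. For the mixed case with $e\in \mathcal{E}_{a}\setminus \mathcal{E}_{b}$ and $e\,'\in \mathcal{E}_{b}\setminus \mathcal{E}_{a}$, I would invoke the same non-containment clause that the author built into Definition \ref{defn:set-hyperedge-sets}, namely that each $e$ in the symmetric difference satisfies $e\not\subseteq e\,'$ for every $e\,'$ in the other piece; this is precisely the proviso under which a hyperedge union set is declared to be formed. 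Combining the three verifications yields $\mathcal{E}^{*}\in \mathcal{E}\bigl(\Lambda^{2}\bigr)$, and since the Integrity calculation already produced $\Lambda=\Lambda_{a}\cup \Lambda_{b}$, the proposition follows. A natural follow-up remark would observe that without the mixed non-containment assumption the construction can still be rescued by passing to the Graham reduction (\textbf{\textrm{Con}}-8 in Definition \ref{defn:more-conceptd-hypergraphs}), which prunes any hyperedge of $\mathcal{E}^{*}$ that is properly contained in another.
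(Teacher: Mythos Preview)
The paper states this proposition without proof, so there is no argument to compare against. Your verification of the three 3I-axioms is correct and, in fact, more careful than the paper: you rightly observe that \emph{Integrity} and \emph{Intersection} transfer automatically to $\mathcal{E}^{*}=\mathcal{E}_{a}\cup\mathcal{E}_{b}$, while \emph{Independence} does not follow from the hypotheses as literally stated (a hyperedge of $\mathcal{E}_{a}$ could in principle be contained in one of $\mathcal{E}_{b}$). Your fix---importing the non-containment proviso from Definition~\ref{defn:set-hyperedge-sets}---is exactly the implicit assumption the paper is relying on, and your fallback remark about the Graham reduction is a reasonable way to salvage the general case. In short, your proposal supplies the argument the paper omits and correctly flags the one nontrivial point.
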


\begin{rem}\label{rem:333333}
By Proposition \ref{prop:hyperedge-sets-unions}, there is some 3I-hyperedge set $\mathcal{E}_k\in \mathcal{E}\big (\Lambda^2\big )$ holding the 3I-hyperedge set $\mathcal{E}_k\setminus \mathcal{E}^*_k=\mathcal{E}_k\,''\in \mathcal{E}(\Lambda_k^2)$ with $k=a,b$, such that the 3I-hyperedge set $\mathcal{E}_k= \mathcal{E}^*_k\cup \mathcal{E}_k\,''$, however, not necessarily the set $ \mathcal{E}^*_k\in \mathcal{E}(\Lambda_k^2)$, even the set $\mathcal{E}^*_k$ is not a hyperedge set based on two finite sets $\Lambda_a$ and $\Lambda_b$.

(i) If the finite set $\Lambda_a$ is a proper subset of the finite set $\Lambda_b$, namely $\Lambda_a\subset \Lambda_b$, then the result of Proposition \ref{prop:hyperedge-sets-unions} is still valid.

(ii) If $\Lambda_a\cap \Lambda_b=\emptyset$ in Proposition \ref{prop:hyperedge-sets-unions}, then the \emph{union hyperedge set} $\mathcal{E}= \mathcal{E}\,'\cup \mathcal{E}\,''$ with two 3I-hyperedge sets $\mathcal{E}\,'\in \mathcal{E}(\Lambda_a^2)$ and $\mathcal{E}\,''\in \mathcal{E}(\Lambda_b^2)$ forms a \emph{hyperedge dis-connected hypergraph}.\qqed
\end{rem}

\subsection{Connectivity of hypergraphs}

Suppose that $G$ is a hyperedge-connected ve-intersected graph of a hyperedge-connected hypergraph $\mathcal{H}_{yper}=(\Lambda,\mathcal{E})$ based on a 3I-hyperedge set $\mathcal{E}\in \mathcal{E}\big (\Lambda^2\big )$ (Ref. Definition \ref{defn:more-terminology-group}), so $G$ admits a total set-coloring $F:V(G)\cup E(G)\rightarrow \mathcal{E}$. We define a total set-coloring $F^*$ for the vertex-split graph $G\wedge S$ for a proper subset $S\subset V(G)$ defined in Definition \ref{defn:split-v-set-vertex-split-graphss} as follows:

(A-1) $F^*(x)=F(x)$ for $x\in V(G)\cup E(G)$ and $x\not\in S\,'\cup S\,''$, as well as $F^*(w\,')=F(w)$ and $F^*(w\,'')=F(w)$ for $w\in S$ and $w\,',w\,''\in S\,'\cup S\,''$; and

(A-2) $F^*(uv)=F(uv)$ for $uv\in E(G-S)$, $F^*(uw\,')=F(uw)$ and $F^*(uw\,'')=F(uw)$ for edges $uw\,'$, $uw\,''\in E(G\wedge S)$.

Suppose that the vertex-split graph $G\wedge S$ has subgraphs $G_1,G_2,\dots ,G_m$, such that

(i) $V(G\wedge S)=\bigcup ^m_{i=1}V(G_i)$ with $V(G_i)\cap V(G_j)=\emptyset$ if $i\neq j$, and

(ii) $E(G\wedge S)=\bigcup ^m_{i=1}E(G_i)$ with $E(G_i)\cap E(G_j)=\emptyset$ if $i\neq j$,\\
then the vertex-split graph $G\wedge S$ is not hyperedge-connected, also, the vertex-split graph $G\wedge S$ is disconnected in general, and we call $S$ a \emph{vertex-split set-cut-set} of the graph $G$. For each subgraph $G_i$, the set-coloring $F$ induces a total set-coloring $F_i:V(G_i)\cup E(G_i)\rightarrow \mathcal{E}_i$ with $\mathcal{E}_i\subset \mathcal{E}$ with $i\in [1,m]$, and call each subgraph $G_i$ \emph{partial hypergraph}.

Conversely, we do the vertex-coinciding operation defined in Definition \ref{defn:vertex-split-coinciding-operations} to the subgraphs $G_1,G_2,\dots ,G_m$ of the vertex-split graph $G\wedge S$ by vertex-coinciding two vertices $w\,'$ and $w\,''$ into one vertex $w=w\,'\bullet w\,''$, and get the original hyperedge-connected ve-intersected graph $G$, we write this case as $G=[\bullet]^m_{k=1} G_k$.

In the view of homomorphism, we get that a group of graphs $G_1,G_2,\dots ,G_m$ above is graph homomorphism to $G$, denoted as $(G_1,G_2,\dots ,G_m)\rightarrow G$, namely, $G\wedge S\rightarrow G$.

\begin{defn} \label{defn:hypergraph-connectivity}
\cite{Yao-Ma-arXiv-2024-13354} \textbf{Hypergraph connectivity.} Suppose that a vertex-split graph $G\wedge S^*$ of the hyperedge-connected ve-intersected graph $G$ of a hypergraph $\mathcal{H}_{yper}=(\Lambda,\mathcal{E})$ based on a 3I-hyperedge set $\mathcal{E}\in \mathcal{E}\big (\Lambda^2\big )$ holds $|S^*|\leq |S|$ for any vertex-split graph $G\wedge S$, where $G\wedge S$ is not hyperedge-connected, then the number $|S^*|$ is called \emph{hyperedge split-connected number}, written as $|S^*|=n_{vsplit}(G)=n_{vsplit}(\mathcal{H}_{yper})$. Since $G-S^*$ is a disconnected graph having components $G_1-S^*,G_2-S^*,\dots ,G_m-S^*$, that is, the hyperedge-connected ve-intersected graph $G$ is \emph{vertex $|S^*|$-connectivity}, and the hypergraph $\mathcal{H}_{yper}=(\Lambda,\mathcal{E})$ is \emph{hyperedge $|\mathcal{E}^*|$-connectivity}, where $\mathcal{E}^*=\{e:F(w)=e\in \mathcal{E}, w\in S^*\}$ makes the hyperedge set $\mathcal{E}\setminus \mathcal{E}^*$ to be \emph{hyperedge disconnected}, we call the hyperedge set $\mathcal{E}^*$ \emph{hyperedge set-cut-set} of the hypergraph $\mathcal{H}_{yper}=(\Lambda,\mathcal{E})$.\qqed
\end{defn}

\begin{rem}\label{rem:333333}
In the view of decomposition, the hyperedge-connected ve-intersected graph $G$ can be decomposed into vertex disjoint partial hypergraphs $G_1,G_2,\dots ,G_m$.

For $w\in S$ and $w\,',w\,''\in S\,'\cup S\,''$ in the case (A-1) above, we redefine $F^*(w\,')=F(uw)$ and $F^*(w\,'')=F(w)\setminus F^*(w\,')$, since $F(uw)\subseteq F(w)\cap F(u)$. Thereby, we get more families of subgraphs like the above graphs $G_1,G_2,\dots ,G_m$, in other words, a hyperedge-connected hypergraph can be decomposed into many groups of hyperedge disjoint partial hypergraphs. \qqed
\end{rem}

As known, the vertex-splitting connectivity of a connected graph is equivalent to its own vertex connectivity proven in \cite{Wang-Su-Yao-2021-computer-science}, so we have the following result:

\begin{thm}\label{thm:666666}
The hyperedge splitting connectivity of a hyperedge-connected hypergraph is equal to its own hyperedge connectivity.
\end{thm}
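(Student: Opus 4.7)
The plan is to reduce the hypergraph statement to its graph analogue via the ve-intersected graph machinery. Let $\mathcal{H}_{yper}=(\Lambda,\mathcal{E})$ be hyperedge-connected, and let $G$ be a ve-intersected graph of $\mathcal{H}_{yper}$ under the total set-coloring $F:V(G)\cup E(G)\rightarrow \mathcal{E}$ guaranteed by Definition \ref{defn:vertex-intersected-graph-hypergraph}. Since hyperedge-connectivity of $\mathcal{H}_{yper}$ is defined precisely so that $G$ is connected (via the hyperedge-path correspondence in Definition \ref{defn:more-terminology-group}), and since $F$ gives a bijection between vertices of $G$ and hyperedges of $\mathcal{E}$, the cut structures on both sides are in one-to-one correspondence: a subset $S\subseteq V(G)$ corresponds to the hyperedge subset $\mathcal{E}_S=\{F(w):w\in S\}\subseteq \mathcal{E}$, and $|S|=|\mathcal{E}_S|$.

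First, I would verify from Definition \ref{defn:hypergraph-connectivity} that the hyperedge splitting connectivity $n_{vsplit}(\mathcal{H}_{yper})$ is exactly the vertex-splitting connectivity of $G$, i.e., the minimum cardinality of a set $S\subseteq V(G)$ such that $G\wedge S$ is disconnected; this is immediate because the induced total set-coloring $F^*$ on $G\wedge S$ defined in (A-1)--(A-2) produces precisely the vertex-disjoint partial hypergraphs whose existence witnesses the failure of hyperedge-connectivity. Symmetrically, the hyperedge connectivity of $\mathcal{H}_{yper}$ equals the vertex connectivity of $G$, since a minimum hyperedge set-cut-set $\mathcal{E}^*$ corresponds through $F^{-1}$ to a minimum vertex cut of $G$.

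Next, I would invoke the cited result of Wang--Su--Yao \cite{Wang-Su-Yao-2021-computer-science}, stated just before the theorem: for any connected graph, the vertex-splitting connectivity equals the vertex connectivity. Applied to the connected graph $G$, this yields the identity
\begin{equation*}
n_{vsplit}(G)=\kappa(G).
\end{equation*}
Chaining the three equalities then gives
\begin{equation*}
n_{vsplit}(\mathcal{H}_{yper})=n_{vsplit}(G)=\kappa(G)=|\mathcal{E}^*|,
\end{equation*}
which is the desired conclusion.

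The main obstacle is ensuring the correspondence $S\leftrightarrow \mathcal{E}_S$ really preserves both the splitting operation and disconnection. The subtlety is that Theorem \ref{thm:infinite-hypergraph-v-a-graph} tells us a hypergraph admits many ve-intersected graphs, so the argument must be independent of which representative $G$ is chosen; I would handle this by noting that the definition of $n_{vsplit}(\mathcal{H}_{yper})$ is intrinsic to $\mathcal{H}_{yper}$ (via hyperedge set-cut-sets), whereas the chain above uses only one convenient $G$, and any other choice yields the same numerical value because both endpoints of the chain are hypergraph invariants. A second minor point to check is that when a vertex $w$ with $F(w)=e$ is split into $w',w''$, the corresponding hyperedge $e$ is effectively duplicated in the decomposition, which is exactly why $|S^*|=|\mathcal{E}^*|$; the set-coloring extension in (A-1)--(A-2) and the remark following Definition \ref{defn:hypergraph-connectivity} make this explicit.
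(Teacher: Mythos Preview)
Your proposal is correct and follows exactly the route the paper intends: the paper gives no formal proof, merely stating that the vertex-splitting connectivity of a connected graph equals its vertex connectivity (citing \cite{Wang-Su-Yao-2021-computer-science}) and then declaring the hypergraph statement as an immediate consequence via the ve-intersected graph correspondence set up in Definition \ref{defn:hypergraph-connectivity}. Your write-up simply makes explicit the bijection $S\leftrightarrow \mathcal{E}_S$ and the chain $n_{vsplit}(\mathcal{H}_{yper})=n_{vsplit}(G)=\kappa(G)=|\mathcal{E}^*|$ that the paper leaves implicit.
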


\subsection{Colorings of hypergraphs}

The authors in \cite{Weifan-Wang-Kemin-Zhang-2000} presented a survey about the colorings of hypergraphs in the thirty years before 2000.

\begin{defn} \label{defn:join-type-edge-set-coloring}
\cite{Yao-Ma-arXiv-2024-13354} Let $\mathcal{E}$ be a set of some subsets of the power set $\Lambda^2$ based on a finite set $\Lambda$ such that each hyperedge $e\in \mathcal{E}$ satisfies $e\neq \emptyset$ and corresponds to some hyperedge $e\,'\in \mathcal{E} \setminus e$ holding $e\cap e\,'\neq \emptyset$, as well as $\Lambda=\bigcup _{e\in \mathcal{E}}e$. Suppose that a connected graph $H$ admits an \emph{edge set-coloring} $F\,': E(H)\rightarrow \mathcal{E}$ holding $F\,'(uv)\neq F\,'(uw)$ for any two adjacent edges $uv,uw\in E(H)$, and the vertex color set $F\,'(w)$ for each vertex $w\in V(G)$ is induced by one of the following cases:
\begin{asparaenum}[\textbf{\textrm{Edgeinduce}}-1.]
\item $F\,'(w)=\{F\,'(wz):z\in N_{ei}(w)\}\subseteq \Lambda^2$.
\item $F\,'(w)=\bigcup _{z\in N_{ei}(w)}F\,'(wz)\subseteq \Lambda$.
\end{asparaenum}
We call $H$ \emph{edge-set-colored graph}, and have:

(i) The \emph{edge-induced graph} $L_H$ of the edge-set-colored graph $H$ has its own vertex set $V(L_H)= E(H)$, and admits a vertex set-coloring $F\,': V(L_H)\rightarrow \mathcal{E}$, such that two vertices $w_{uv}(:=uv)$ and $w_{xy}(:=xy)$ of $V(L_H)$ are the ends of an edge of $L_H$, if, and only if, $F\,'(w_{uv})\cap F\,'(w_{xy})\neq \emptyset$ (i.e. $F\,'(uv)\neq F\,'(xy)$ in $H$).

(ii) The edge-induced graph $L_H$ is a \emph{ve-intersected graph} of the hypergraph $\mathcal{H}_{yper}=(\Lambda,\mathcal{E})$ based on a 3I-hyperedge set $\mathcal{E}\in \mathcal{E}\big (\Lambda^2\big )$.\qqed
\end{defn}

\begin{thm}\label{thm:hyperedge-vs-vertex-coloring}
A proper hyperedge coloring of a hypergraph $\mathcal{H}_{yper}=(\Lambda,\mathcal{E})$ based on a 3I-hyperedge set $\mathcal{E}\in \mathcal{E}\big (\Lambda^2\big )$ is equivalent to a proper vertex-coloring of a ve-intersected graph $G$ of the hypergraph $\mathcal{H}_{yper}=(\Lambda,\mathcal{E})$, and vice versa. Thereby, we have $\chi(G)=\chi\,'(\Lambda,\mathcal{E})$, where $\chi(G)$ is the \emph{chromatic number} of the graph $G$, and $\chi\,'(\Lambda,\mathcal{E})$ is the \emph{hyperedge-chromatic index} of the hypergraph $\mathcal{H}_{yper}=(\Lambda,\mathcal{E})$.
\end{thm}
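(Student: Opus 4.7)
The plan is to use the defining bijection between hyperedges of $\mathcal{H}_{yper}=(\Lambda,\mathcal{E})$ and vertices of the ve-intersected graph $G$ that is built into Definition \ref{defn:vertex-intersected-graph-hypergraph}. Recall that the total set-coloring $F:V(G)\cup E(G)\to \mathcal{E}$ satisfies $F(V(G))=\mathcal{E}$ and $F(x)\neq F(y)$ for each edge $xy\in E(G)$; moreover, by construction, two vertices $x,y\in V(G)$ are adjacent precisely when their associated hyperedges $F(x),F(y)\in\mathcal{E}$ satisfy $F(x)\cap F(y)\neq\emptyset$. So the map $\psi:\mathcal{E}\to V(G)$ sending $e\mapsto F^{-1}(e)$ is a bijection that carries the ``intersection relation'' on $\mathcal{E}$ onto the adjacency relation of $G$.

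Given this bijection, I would first argue the forward direction. Let $\eta:\mathcal{E}\to[1,k]$ be a proper hyperedge coloring of $\mathcal{H}_{yper}$, meaning $\eta(e)\neq\eta(e')$ whenever $e\cap e'\neq\emptyset$ for distinct $e,e'\in\mathcal{E}$. Define $\widehat{\eta}:V(G)\to[1,k]$ by $\widehat{\eta}(x)=\eta(F(x))$. For any edge $xy\in E(G)$, we have $F(x)\cap F(y)\neq\emptyset$ with $F(x)\neq F(y)$, so $\eta(F(x))\neq\eta(F(y))$, hence $\widehat{\eta}(x)\neq\widehat{\eta}(y)$; thus $\widehat{\eta}$ is a proper vertex-coloring of $G$ using the same number of colors.

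For the converse, let $\sigma:V(G)\to[1,k]$ be a proper vertex-coloring of $G$. Define $\widetilde{\sigma}:\mathcal{E}\to[1,k]$ by $\widetilde{\sigma}(e)=\sigma(\psi(e))=\sigma(F^{-1}(e))$. If $e,e'\in\mathcal{E}$ are distinct with $e\cap e'\neq\emptyset$, then by the ve-intersected graph definition there is an edge between $F^{-1}(e)$ and $F^{-1}(e')$ in $G$, so $\sigma(F^{-1}(e))\neq\sigma(F^{-1}(e'))$, which gives $\widetilde{\sigma}(e)\neq\widetilde{\sigma}(e')$; hence $\widetilde{\sigma}$ is a proper hyperedge coloring. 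Combining both directions shows that the minimum number of colors coincide, i.e., $\chi(G)=\chi\,'(\Lambda,\mathcal{E})$.

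There is essentially no technical obstacle here: the theorem is a structural correspondence, and once the bijection $\psi$ is in place, the two implications reduce to translating the condition ``$F(x)\cap F(y)\neq\emptyset$ iff $xy\in E(G)$'' back and forth. The only small point worth flagging in the write-up is the distinction between hyperedge pairs with $e\cap e'\neq\emptyset$ and the corresponding adjacent vertices; since Definition \ref{defn:vertex-intersected-graph-hypergraph} requires $F(x)\neq F(y)$ on edges of $G$ and $F$ is injective on $V(G)$, the correspondence $\psi$ is genuinely a bijection (no collapsing of hyperedges), which is what makes the equality $\chi(G)=\chi\,'(\Lambda,\mathcal{E})$ hold on the nose rather than merely as an inequality.
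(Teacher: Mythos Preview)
Your argument is correct and is exactly the natural one: transport colorings along the bijection between $\mathcal{E}$ and $V(G)$ and use that adjacency in $G$ coincides with nonempty intersection of the corresponding hyperedges. The paper itself states this theorem without proof (it passes directly to a remark about Reed's conjecture), so there is no alternative approach to compare against; your write-up simply makes explicit the structural correspondence already recorded in \textbf{Ter}-7 of Definition~\ref{defn:more-terminology-group}.

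One small point worth tightening: you assert that $F$ is injective on $V(G)$, but Definition~\ref{defn:vertex-intersected-graph-hypergraph} literally only demands $F(x)\neq F(y)$ for \emph{adjacent} $x,y$. The paper does treat the ve-intersected graph as having one vertex per hyperedge (cf.\ \textbf{Ter}-1 in Definition~\ref{defn:more-terminology-group}, where $\deg_{\mathcal{E}}(e)=\deg_H(x)$ with $F(x)=e$), so your reading is the intended one; but in the final write-up it would be cleaner to state explicitly that you are taking $G$ with $|V(G)|=|\mathcal{E}|$, so that $F|_{V(G)}$ is a genuine bijection and $\psi=F^{-1}$ is well defined.
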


\begin{rem}\label{rem:333333}
In Definition \ref{defn:join-type-edge-set-coloring}, an edge-set-colored graph $H$ may be a subgraph of a ve-intersected graph of a hypergraph $\mathcal{H}_{yper}=(\Lambda,\mathcal{E}^*)$ with $\mathcal{E}^*\neq \mathcal{E}$ because of \textbf{Edgeinduce}-1 and \textbf{Edgeinduce}-2 defined in Definition \ref{defn:join-type-edge-set-coloring}. However, the \emph{edge-induced graph} $L_H$ is not the \emph{line graph} of the edge-set-colored graph $H$. Theorem \ref{thm:hyperedge-vs-vertex-coloring} tells us: The proper hyperedge coloring problem of a hypergraph is a NP-type problem, since there is a well-known conjecture on the proper vertex-colorings of graphs, that is, Bruce Reed in 1998 conjectured: The \emph{chromatic number} $\chi (G)\leq \left \lceil \frac{\Delta(G)+1+K(G)}{2}\right \rceil $, where $\Delta(G)$ is the \emph{maximum degree} of the graph $G$ and $K(G)$ is the \emph{maximum clique number} of the graph $G$. This conjecture is still unsolved today.\qqed
\end{rem}

\begin{defn} \label{defn:hyperedge-hyper-total-coloring}
\cite{Yao-Ma-arXiv-2024-13354} A \emph{hyper-total coloring} $\theta$ of a hypergraph $\mathcal{H}_{yper}=(\Lambda,\mathcal{E})$ based on a 3I-hyperedge set $\mathcal{E}\in \mathcal{E}\big (\Lambda^2\big )$ (Ref. Definition \ref{defn:new-hypergraphs-sets}) is defined by

(i) $\theta:\mathcal{E}\rightarrow [1,b]$, and $\theta(e_i)\neq \theta(e_j)$ if $e_i\cap e_j\neq \emptyset$;

(ii) each vertex color $\theta(x_{i,j})\in [1,b]$, and $\theta(x_{i,j})\neq \theta(x_{i,k})$ for some two distinct vertices $x_{i,j},x_{i,k}\in e_i$ if $|e_i|\geq 2$.

And $\chi\,''(\Lambda,\mathcal{E})$ is the \emph{smallest number} of $b$ for which $\mathcal{H}_{yper}$ admits a hyper-total coloring.\qqed
\end{defn}

\begin{rem}\label{rem:333333}
For a \emph{hyperedge coloring} $\varphi: \mathcal{E}\rightarrow [1,M]$, such that the elements of the hyperedge neighbor set $N_{ei}(e_i)=\big \{e_j: e_i\cap e_j\neq \emptyset,e_j\in \mathcal{E}\setminus e_i \big \}$ of a hyperedge $e_i$ are colored with different colors from each other, and the largest number $\Delta(\mathcal{E}_{\cap})=\max\{|N_{ei}(e_i)|:e_i\in \mathcal{E}\}$ holds the following inequalities
\begin{equation}\label{eqa:555555}
\Delta(\mathcal{E}_{\cap})\leq M\leq \Delta(\mathcal{E}_{\cap})+1
\end{equation} true by the famous Vizing's theorem on the edge colorings of ordinary graphs.\qqed
\end{rem}

For a 3I-hyperedge set $\mathcal{E}\in \mathcal{E}\big (\Lambda^2\big )$, we color the vertices of the vertex set $\Lambda$ with $k$ colors such that each hyperedge $e\in \mathcal{E}$ with $|e|\geq 2$ contains two vertices colored with different colors. Clearly, different 3I-hyperedge sets correspond to different vertex-colorings of the vertex set $\Lambda$.

\begin{defn} \label{defn:111111}
A proper vertex $k$-coloring of a hypergraph $\mathcal{H}_{yper}=(\Lambda,\mathcal{E})$ based on a 3I-hyperedge set $\mathcal{E}\in \mathcal{E}\big (\Lambda^2\big )$ is a mapping $F:\Lambda\rightarrow [0,k-1]$, such that $F$ does not create monochromatic hyperedges (i.e. every hyperedge $e$ contains at least two vertices, namely, $|e|\geq 2$, which receive different colors). The number
\begin{equation}\label{eqa:555555}
\chi(\Lambda,\mathcal{E})=\min \{k:\textrm{ each $k$-coloring of $\Lambda$ based on a 3I-hyperedge set $\mathcal{E}$}\}
\end{equation} is called \emph{hypervertex chromatic number} of the hypergraph $\mathcal{H}_{yper}=(\Lambda,\mathcal{E})$.\qqed
\end{defn}

The authors in \cite{Bjorklund-Husfeldt-Koivisto-2009} pointed: The problem of deciding a given $k$-uniform hypergraph to be $k$-colorable is NP-complete.

\subsection{Homomorphism and isomorphism of hypergraphs}

\subsubsection{Hypergraph homomorphism}

\begin{defn} \label{defn:hypergraph-operation-homomorphisms}
\textbf{Hypergraph homomorphism.} For two 3I-hyperedge sets $\mathcal{E}_i$ and $\mathcal{E}_j$ of the hypergraph set $\mathcal{E}\big (\Lambda^2\big )$ based on a finite set $\Lambda$, we have:

(i) If there exists a coloring $\varphi:\mathcal{E}_i\rightarrow \mathcal{E}_j$ such that each hyperedge $e_{i,s}\in \mathcal{E}_i$ corresponds to its own image $\varphi(e_{i,s})\in \mathcal{E}_j$, and moreover two hyperedges $e_{i,s}$ and $e_{i,t}$ hold a property $P$ in $\mathcal{E}_i$ if, and only if two hyperedges $\varphi(e_{i,s})$ and $\varphi(e_{i,t})$ of the 3I-hyperedge set $\mathcal{E}_j$ hold this property $P$ too, we say that $\mathcal{E}_i$ is \emph{$P$-property hypergraph homomorphism} to $\mathcal{E}_j$, denoted as $\mathcal{E}_i\rightarrow _P\mathcal{E}_j$.

(ii) For an operation ``$[\bullet]$'' on the hypergraph set $\mathcal{E}\big (\Lambda^2\big )$, we have a \emph{hypergraph $[\bullet]$-operation homomorphism} $(\mathcal{E}_i, \mathcal{E}_j)\rightarrow \mathcal{E}_{i[\bullet] j}$ if $\mathcal{E}_i[\bullet] \mathcal{E}_j=\mathcal{E}_{i[\bullet] j}\in \mathcal{E}\big (\Lambda^2\big )$. \qqed
\end{defn}

\begin{problem}\label{qeu:444444}
As the operation ``$[\bullet]=\bigcup$'' in Definition \ref{defn:hypergraph-operation-homomorphisms}, we have a hypergraph $\bigcup$-operation homomorphism $(\mathcal{E}_i, \mathcal{E}_j)\rightarrow \mathcal{E}_{i\cup j}$ since $\mathcal{E}_{i\cup j}=\mathcal{E}_i\cup \mathcal{E}_j$. However, it is useful to find more hypergraph $[\bullet]$-operation homomorphisms.
\end{problem}

\begin{defn} \label{defn:111111}
By Definition \ref{defn:hypergraph-basic-definition} and Definition \ref{defn:hypergraph-operation-homomorphisms}, we define the following colorings:

(i) A graph $G$ admits a \emph{hypergraph-set total coloring} $\beta:V(G)\cup E(G)\rightarrow \mathcal{E}\big (\Lambda^2\big )$ if $\beta(uv)=\beta(u)\cup \beta(v)$ for each edge $uv\in E(G)$ holds the hypergraph $\bigcup$-operation homomorphism
\begin{equation}\label{eqa:555555}
(\beta(u), \beta(v))=(\mathcal{E}_i, \mathcal{E}_j)\rightarrow \mathcal{E}_{i\cup j}=\beta(uv)
\end{equation}

(ii) A graph $G$ admits a \emph{hypergraph-set vertex coloring} $\theta:V(G)\rightarrow \mathcal{E}\big (\Lambda^2\big )$ if each edge $uv\in E(G)$ holds a $P$-property hypergraph homomorphism $\theta(u)=\mathcal{E}_i\rightarrow _P\mathcal{E}_j=\theta(v)$ true.\qqed
\end{defn}

\begin{defn}\label{defn:definition-graph-homomorphism}
\cite{Bondy-2008} A \emph{graph homomorphism} $G\rightarrow H$ from a graph $G$ into another graph $H$ is a coloring $f: V(G) \rightarrow V(H)$ such that each edge $f(u)f(v)\in E(H)$ if, and only if each edge $uv\in E(G)$.\qqed
\end{defn}

\begin{defn} \label{defn:W-constraint-coloring-graph-homomorphism}
A \emph{$W$-constraint colored graph homomorphism} $G\rightarrow _{color}H$ is defined as: A graph $G$ admits a $W$-constraint coloring $F$ and another graph $H$ admits another $W$-constraint coloring $F^*$, and there is a graph homomorphism $\varphi:V(G)\rightarrow V(H)$, such that the $W$-constraint $W[F(u),F(uv)$, $F(v)]=0$ holds true if, and only if the $W$-constraint $W[F^*(\varphi(u))$, $F^*(\varphi(u)\varphi(v))$, $F^*(\varphi(v))]=0$ holds true.\qqed
\end{defn}

Motivated from Definition \ref{defn:definition-graph-homomorphism} and Definition \ref{defn:W-constraint-coloring-graph-homomorphism}, we have:

\begin{defn} \label{defn:111111}
Let $G[\mathcal{E}]$ be a set-colored graph admitting a set-coloring $F:V(G)\rightarrow \mathcal{E}$ on a graph $G$ and a hypergraph $\mathcal{H}_{yper}=(\Lambda,\mathcal{E})$ based on a 3I-hyperedge set $\mathcal{E}\in \mathcal{E}\big (\Lambda^2\big )$, and let $H[\mathcal{E}^*]$ be a set-colored graph admitting a set-coloring $F^*:V(H)\rightarrow \mathcal{E}^*$ on another graph $H$ and another hypergraph $\mathcal{H}_{yper}=(\Lambda_*,\mathcal{E}^*)$ with a 3I-hyperedge set $\mathcal{E}^*\in \mathcal{E}\big (\Lambda^2_*\big )$.

Since $F(uv)=F(u)[\bullet]F(v)$ if, and only if $F^*(\varphi(u)\varphi(v))=F^*(\varphi(u))[\bullet]F^*(\varphi(v))$ under the coloring $\varphi: \Lambda \rightarrow \Lambda_*$ and an operation ``$[\bullet]$'', we have defined a \emph{set-colored graph homomorphism} $G[\mathcal{E}]\rightarrow H[\mathcal{E}^*]$, and a \emph{hypergraph homomorphism} $(\Lambda,\mathcal{E})\rightarrow (\Lambda_*,\mathcal{E}^*)$.\qqed
\end{defn}

\begin{example}\label{exa:8888888888}
Observe Fig.\ref{fig:v-split-homomorphism} and Fig.\ref{fig:v-split-homomorphism-1}, we have four set-colored graph homomorphisms $S_k[\mathcal{E}_k]\rightarrow S_{k-1}[\mathcal{E}_{k-1}]$ for $k\in [1,4]$, where the set-colored graph $S_0=S$ shown in Fig.\ref{fig:v-split-homomorphism-2}. Conversely, each set-colored graph $S_i$ is a result of doing the vertex-splitting operation to $S_{i-1}$ for $i\in [1,4]$. More or less, we have shown the \emph{hyperedge-splitting operation} and the \emph{hyperedge-coinciding operation} of hypergraphs shown in Fig.\ref{fig:v-split-homomorphism}, Fig.\ref{fig:v-split-homomorphism-1} and Fig.\ref{fig:v-split-homomorphism-2}.\qqed
\end{example}

\begin{figure}[h]
\centering
\includegraphics[width=16.4cm]{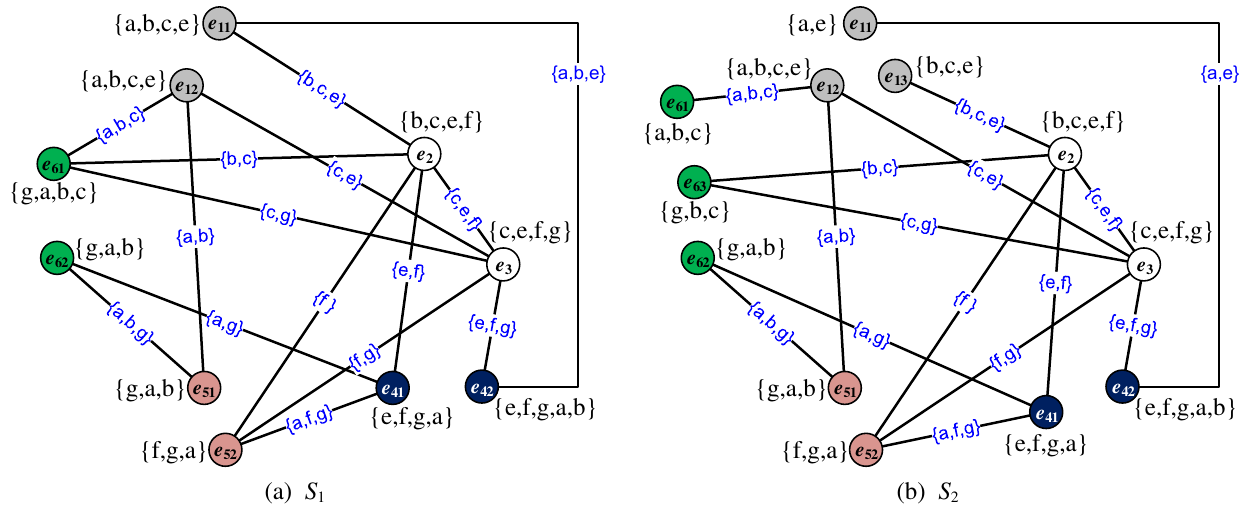}\\
\caption{\label{fig:v-split-homomorphism}{\small The first scheme for illustrating the hyperedge-splitting operation and the hyperedge-coinciding operation.}}
\end{figure}

\begin{defn} \label{defn:hyperedge-coinciding-splitting-homomorphism}
\textbf{Hyperedge set homomorphism.} Let $\Lambda$ be a finite set. If there are two hyperedges $e_1,e_2\in \mathcal{E}^*\in \mathcal{E}\big (\Lambda^2\big )$ holding $|e_1\cup e_2|=|e_1|+|e_2|$, then we get a \emph{hyperedge set homomorphism} $\mathcal{E}^*\rightarrow \mathcal{E}$, where $e_1\cup e_2\in \mathcal{E}\in \mathcal{E}\big (\Lambda^2\big )$, and $\mathcal{E}^*\setminus \{e_1,e_2\}=\mathcal{E}\setminus \{e_1\cup e_2\}$, this operation process is called \emph{hyperedge-coinciding operation} on 3I-hyperedge sets. Conversely, we split the hyperedge $e_1\cup e_2\in \mathcal{E}$ into two hyperedges $e_1$ and $e_2$ to obtain the 3I-hyperedge set $\mathcal{E}^*$, this operation process is called \emph{hyperedge-splitting operation} on 3I-hyperedge sets.\qqed
\end{defn}

\begin{rem}\label{rem:333333}
If $e_{\cap}=e_1\cap e_2\neq \emptyset $ in Definition \ref{defn:hyperedge-coinciding-splitting-homomorphism}, we hyperedge-split the hyperedge $e_1\cup e_2\in \mathcal{E}$ into two hyperedges $e_1=(e_1\cup e_2)\setminus (e_2\cup e_{\cap})$ and $e_2=(e_1\cup e_2)\setminus (e_1\cup e_{\cap})$. In other words, doing the hyperedge-splitting operation to a hyperedge set $\mathcal{E}$ can obtain two or more hyperedge sets $\mathcal{E}^*$ holding $\mathcal{E}^*\rightarrow \mathcal{E}$ true. For understanding the above hyperedge-splitting operation, refer to Fig.\ref{fig:v-split-homomorphism}, Fig.\ref{fig:v-split-homomorphism-1} and Fig.\ref{fig:v-split-homomorphism-2}, and, by the hyperedge-splitting operation and the hyperedge-coinciding operation, we have the following \emph{hyperedge set homomorphisms}:
$$
S_4\rightarrow _{hyperedge}S_3\rightarrow _{hyperedge}S_2\rightarrow _{hyperedge}S_1\rightarrow _{hyperedge}S
$$ also, each $S_k$ for $k\in [1,4]$ holds $S_k\rightarrow _{hyperedge}S$.\qqed
\end{rem}

\begin{figure}[h]
\centering
\includegraphics[width=16.4cm]{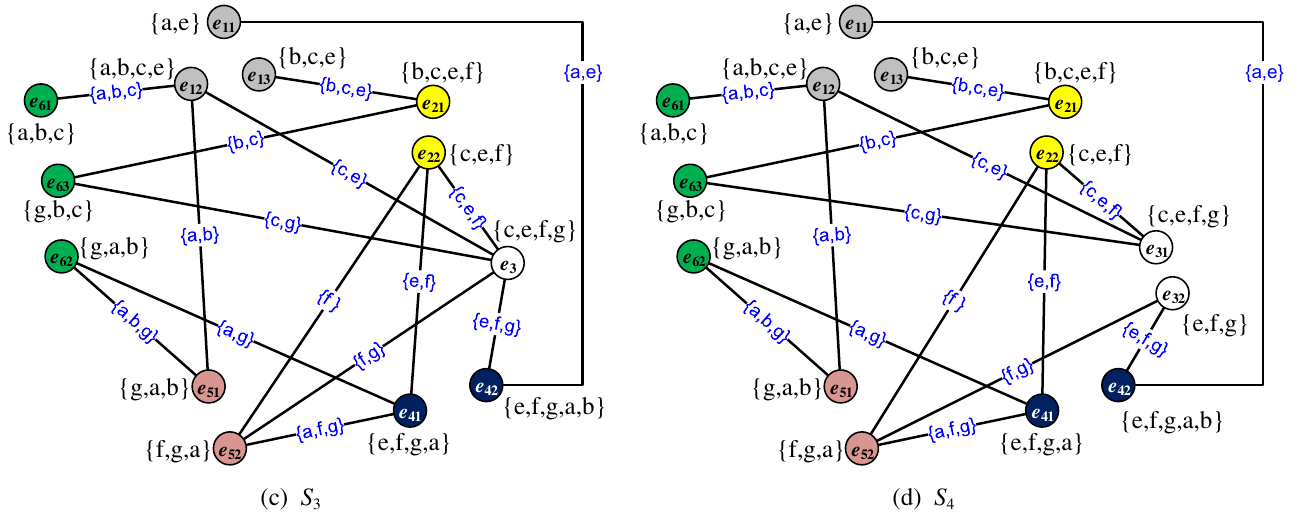}\\
\caption{\label{fig:v-split-homomorphism-1}{\small The second scheme for illustrating the hyperedge-splitting operation and the hyperedge-coinciding operation.}}
\end{figure}

\begin{thm}\label{thm:666666}
\textbf{Hypergraph homomorphism.} Let $\mathcal{E}\big (\Lambda^2\big )$ be the hypergraph set defined on a finite set $\Lambda$. If each 3I-hyperedge set $\mathcal{E}\in \mathcal{E}\big (\Lambda^2\big )$ with some hyperedge $e\in \mathcal{E}$ holding $|e|\geq 2$ is \emph{hyperedge set homomorphism} to another 3I-hyperedge set $\mathcal{E}^*\in \mathcal{E}\big (\Lambda^2\big )$, namely, $\mathcal{E}\rightarrow \mathcal{E}^*$ for $|\mathcal{E}|\geq 1+|\mathcal{E}^*|$, then we have a \emph{hypergraph homomorphism} $(\Lambda,\mathcal{E})\rightarrow (\Lambda,\mathcal{E}^*)$.
\end{thm}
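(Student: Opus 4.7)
\emph{Proof plan.} The plan is to argue by induction on the integer $n=|\mathcal{E}|-|\mathcal{E}^*|\geq 1$, which, by Definition \ref{defn:hyperedge-coinciding-splitting-homomorphism} and its accompanying Remark, counts exactly the number of elementary hyperedge-coinciding operations needed to pass from $\mathcal{E}$ to $\mathcal{E}^*$. At each step a pair of hyperedges $e_1,e_2$ is replaced by $e_1\cup e_2$, so a length-$n$ sequence of such reductions connects the two 3I-hyperedge sets. Once the single-step case is handled, the composition of hypergraph homomorphisms delivers the general statement.

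For the base case $n=1$, I would fix the unique pair $e_1,e_2\in\mathcal{E}$ whose union equals some hyperedge $e_{12}:=e_1\cup e_2\in\mathcal{E}^*$ and set $\mathcal{E}^*\setminus\{e_{12}\}=\mathcal{E}\setminus\{e_1,e_2\}$. I then define $\varphi:\mathcal{E}\to\mathcal{E}^*$ by $\varphi(e_1)=\varphi(e_2)=e_{12}$ and $\varphi(e)=e$ otherwise, paired with the identity map on $\Lambda$. To check this is a hypergraph homomorphism in the sense of Definition \ref{defn:hypergraph-operation-homomorphisms} and Definition \ref{defn:111111}, I would verify that the intersection property $P$ (i.e.\ $e\cap e'\neq\emptyset$) is preserved: since $e\subseteq\varphi(e)$ for every $e\in\mathcal{E}$, any intersection $e\cap e'\neq\emptyset$ forces $\varphi(e)\cap\varphi(e')\neq\emptyset$, and the converse follows by considering preimages together with the description of $\mathcal{E}^*\setminus\{e_{12}\}$. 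Integrity $\bigcup_{e\in\mathcal{E}}e=\Lambda=\bigcup_{e^*\in\mathcal{E}^*}e^*$ is automatic because coinciding preserves the total union of vertices. This gives $(\Lambda,\mathcal{E})\to(\Lambda,\mathcal{E}^*)$.

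For the inductive step, assume the result for all differences $\leq n$ and suppose $|\mathcal{E}|-|\mathcal{E}^*|=n+1$. By the definition of hyperedge set homomorphism, I factor $\mathcal{E}\to\mathcal{E}^*$ through an intermediate 3I-hyperedge set $\mathcal{E}\,'\in\mathcal{E}(\Lambda^2)$ with $|\mathcal{E}\,'|=|\mathcal{E}|-1$, namely, $\mathcal{E}\to\mathcal{E}\,'\to\mathcal{E}^*$. The base case supplies a hypergraph homomorphism $(\Lambda,\mathcal{E})\to(\Lambda,\mathcal{E}\,')$ and the inductive hypothesis supplies $(\Lambda,\mathcal{E}\,')\to(\Lambda,\mathcal{E}^*)$. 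Composing the corresponding maps on hyperedges yields the required $(\Lambda,\mathcal{E})\to(\Lambda,\mathcal{E}^*)$.

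The hard part will be the verification, during the base case, that the intermediate set $\mathcal{E}\,'$ really belongs to $\mathcal{E}(\Lambda^2)$, i.e.\ that after coinciding $e_1,e_2\mapsto e_1\cup e_2$ the three axioms of Definition \ref{defn:new-hypergraphs-sets} still hold. Integrity is immediate, and Intersection transfers for every hyperedge other than $e_{12}$ because any former partner of $e_1$ or $e_2$ becomes a partner of $e_{12}$; the only delicate point is exhibiting a partner for $e_{12}$ itself, which I would handle by invoking the Intersection axiom for $e_1$ in $\mathcal{E}$ and taking cases according to whether the witnessing hyperedge equals $e_2$ (in which case, assuming $|\mathcal{E}|\geq 3$, another partner can be located; for $|\mathcal{E}|=2$ the statement is vacuous since $|\mathcal{E}^*|\geq 1$). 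Independence requires $e_{12}\not\subseteq e$ and $e\not\subseteq e_{12}$ for every $e\in\mathcal{E}\setminus\{e_1,e_2\}$; the first inclusion would contradict the Independence of $e$ versus $e_1$ (or $e_2$) in $\mathcal{E}$, and the second would contradict Independence within $\mathcal{E}^*$. This combinatorial bookkeeping, rather than any deep argument, is the principal obstacle.
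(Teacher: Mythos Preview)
The paper states this theorem \emph{without proof}; it is evidently meant as the direct hypergraph-level packaging of Definition~\ref{defn:hyperedge-coinciding-splitting-homomorphism}: once a hyperedge set homomorphism $\mathcal{E}\rightarrow\mathcal{E}^*$ is in hand, one attaches the common vertex set $\Lambda$ and declares $(\Lambda,\mathcal{E})\rightarrow(\Lambda,\mathcal{E}^*)$. Your inductive scheme is a reasonable way to unpack this, but it over-engineers something the paper treats as essentially definitional, and there are two genuine gaps in the details.

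First, you misidentify the ``hard part.'' Definition~\ref{defn:hyperedge-coinciding-splitting-homomorphism} already requires that \emph{both} the source and the target of every elementary coinciding step lie in $\mathcal{E}(\Lambda^2)$; hence the hypothesis $\mathcal{E}\rightarrow\mathcal{E}^*$ (a chain of such steps) \emph{gives} you each intermediate $\mathcal{E}\,'\in\mathcal{E}(\Lambda^2)$. There is nothing to verify. Your attempted verification is in any case flawed: to rule out $e\subseteq e_{12}$ you appeal to ``Independence within $\mathcal{E}^*$,'' but in the inductive step the set under scrutiny is $\mathcal{E}\,'$, not $\mathcal{E}^*$, so this is circular. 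Worse, the claim is false in general: with $e_1=\{1,2\}$, $e_2=\{3,4\}$, $e=\{2,3\}$ all lying in a 3I-hyperedge set, one has $e\subseteq e_1\cup e_2$ while Independence among $e,e_1,e_2$ holds. So a single coinciding need not land in $\mathcal{E}(\Lambda^2)$; membership of the intermediates must come from the hypothesis, as the paper intends.

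Second, if you are aiming for a $P$-property hypergraph homomorphism in the sense of Definition~\ref{defn:hypergraph-operation-homomorphisms} with $P$ the intersection property, the biconditional fails. Take $\Lambda=\{1,\dots,5\}$, $e_1=\{1,2\}$, $e_2=\{3,4\}$, $e_3=\{2,5\}$, $e_4=\{4,5\}$; this is a 3I-hyperedge set with $e_1,e_2$ disjoint. After coinciding to $e_{12}=\{1,2,3,4\}$ your map has $\varphi(e_1)\cap\varphi(e_4)=e_{12}\cap e_4=\{4\}\neq\emptyset$, yet $e_1\cap e_4=\emptyset$. Thus your assertion that ``the converse follows by considering preimages'' is incorrect. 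The theorem should be read, as the paper reads it, as a formal promotion of the hyperedge-set relation $\mathcal{E}\rightarrow\mathcal{E}^*$ to the hypergraph level, not as an instance of Definition~\ref{defn:hypergraph-operation-homomorphisms}.
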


\begin{figure}[h]
\centering
\includegraphics[width=10cm]{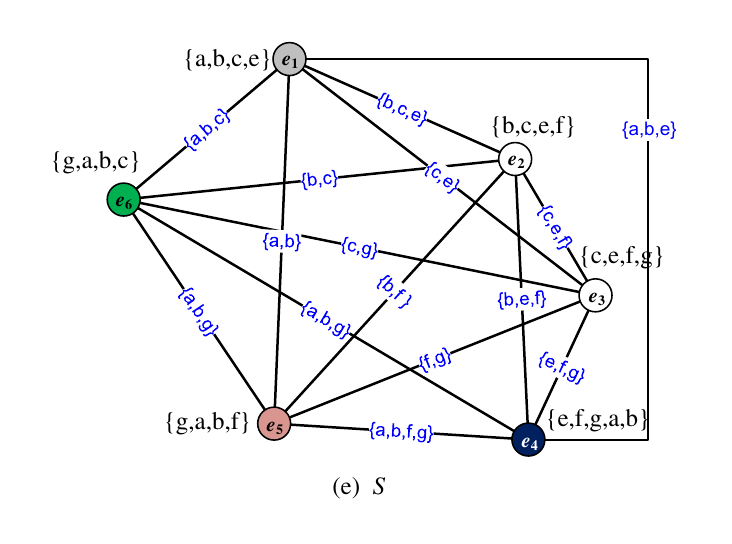}\\
\caption{\label{fig:v-split-homomorphism-2}{\small The third scheme for illustrating the hyperedge-splitting operation and the hyperedge-coinciding operation.}}
\end{figure}

\subsubsection{Isomorphisms between hypergraphs}

Recall the following famous conjecture :
\begin{conj}\label{conj:0000000000}
(Kelly-Ulam's Reconstruction Conjecture (1942), \cite{Bondy-2008}) Let $G$ and $H$ be graphs with $n$ vertices. If there is a bijection $f: V(G)\rightarrow V(H)$ such that two vertex-removing graphs $G-u \cong H-f(u)$ for each vertex $u\in V(G)$, then $G \cong H$.
\end{conj}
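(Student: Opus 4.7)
The plan is to translate the Reconstruction Conjecture into the hypergraph/ve-intersected graph language developed earlier in the paper. The natural setup is to encode each graph $G$ on $n$ vertices as a hypergraph $\mathcal{H}_{yper}(G)=(V(G),\mathcal{E}(G))$ whose hyperedges carry the data of the vertex-deleted decks, so that the vertex bijection $f:V(G)\to V(H)$ with $G-u\cong H-f(u)$ becomes a hyperedge-level correspondence. In this setting the goal is to upgrade this hyperedge-level correspondence to a full set-colored graph homomorphism $G[\mathcal{E}]\rightarrow H[\mathcal{E}^*]$ in the sense of the Definition of set-colored graph homomorphism, and then argue that it must be an isomorphism.

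The key steps, in order, would be the following. First, assign to each vertex $u\in V(G)$ the ``deck hyperedge'' $e_u=E(G-u)$ regarded as a subset of the power set on the edge-set of $G$, and do the same for $H$, producing 3I-hyperedge sets $\mathcal{E}(G)$ and $\mathcal{E}(H)$. Second, use Kelly's counting lemma (the standard reconstructibility of subgraph counts from the deck) together with the hyperedge-coinciding operation of Definition \ref{defn:hyperedge-coinciding-splitting-homomorphism} to show that each edge of $G$ appears in exactly $n-2$ of the deck hyperedges, so that the hyperedge-coinciding operation $e_u\,[\bullet]\,e_v$ (for varying $u,v$) recovers the full edge set of $G$ from $\mathcal{E}(G)$, and likewise for $H$. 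Third, use the bijection $f$ to line up $e_u$ with $e_{f(u)}$ under a Topcode-matrix equality $T_{code}(G,F)=T_{code}(H,F^*)$ as in Theorem \ref{thm:theorem3535}, and invoke the resulting set-colored graph homomorphism to produce the required isomorphism $G\cong H$.

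The hard part, and indeed the whole reason this statement is presented as a Conjecture rather than a theorem, is step two: showing that the deck determines $G$ uniquely up to the hyperedge-coinciding operation is precisely the Reconstruction Conjecture itself, open since Kelly and Ulam in 1942. The counting arguments recover vertex degrees, subgraph frequencies, connectivity, and many other invariants, but there is no known way to bridge from these invariants to a full isomorphism without circular reasoning. Known partial results cover trees (Kelly, 1957), disconnected graphs, regular graphs, maximal planar graphs, and all graphs on at most $11$ vertices by computer verification, and any proof via the present set-coloring/hypergraph framework would ultimately have to clear this classical barrier.

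Consequently, my honest proposal is a reformulation rather than a resolution: recast the deck as the 3I-hyperedge set $\mathcal{E}(G)\in\mathcal{E}(\Lambda^2)$, recast the vertex bijection $f$ as a candidate hypergraph isomorphism $(\Lambda,\mathcal{E}(G))\to(\Lambda,\mathcal{E}(H))$, and observe that the conjecture becomes the assertion that every such deck-level hypergraph isomorphism lifts to a graph isomorphism via the hyperedge-coinciding operation. I would then attack special cases (trees, hyperedge-hamiltonian hypergraphs, maximal planar hypergraphs in the sense of Definition \ref{defn:hyperedge-structure-hypergraphs}) where the lifting can be performed by induction on $|\Lambda|$, in line with the inductive arguments already used in Theorem \ref{thm:integer-set-vs-vertex-intersected-graph} and the maximal-planar result of the paper.
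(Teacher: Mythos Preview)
Your proposal correctly recognizes the essential point: this statement is presented in the paper as a famous open \emph{conjecture}, not as a theorem, and the paper offers no proof of it whatsoever. It is simply recalled (with attribution to Kelly--Ulam, 1942, via \cite{Bondy-2008}) as motivation for the vertex-splitting isomorphism criterion in the subsequent Theorem~\ref{thm:2-vertex-split-graphs-isomorphic} and for the hypergraph isomorphism conjectures that follow it. There is therefore no ``paper's own proof'' to compare against.

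Your reformulation into the 3I-hyperedge-set language is in the spirit of what the paper does elsewhere, and your explicit acknowledgment that step two \emph{is} the Reconstruction Conjecture, so that the argument is circular if pushed to a full proof, is exactly right. One small caution: the paper does not itself carry out the deck-as-hyperedge-set encoding you sketch, so your reformulation is your own contribution rather than something the paper establishes; be careful not to attribute to Theorem~\ref{thm:theorem3535} or Definition~\ref{defn:hyperedge-coinciding-splitting-homomorphism} more than they actually deliver, since neither is designed for deck reconstruction and neither would shortcut the classical obstruction.
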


\begin{thm}\label{thm:2-vertex-split-graphs-isomorphic}
\cite{Yao-Wang-2106-15254v1} Suppose that two connected graphs $G$ and $H$ admit a coloring $h:V(G)\rightarrow V(H)$. In general, a vertex-split graph $G\wedge u$ with $\textrm{deg}_G(u)\geq 2$ is not unique, so we have a vertex-split graph set $S_G(u)=\{G\wedge u\}$, naturally, we have another vertex-split graph set $S_H(h(u))=\{H\wedge h(u)\}$. If each vertex-split graph $L\in S_G(u)$ corresponds to another vertex-split graph $T\in S_H(h(u))$ holding $L\cong T$ true, and vice versa, we write this fact as
\begin{equation}\label{eqa:555555}
G\wedge u\cong H\wedge h(u)
\end{equation} then we claim that $G$ is \emph{isomorphic} to $H$, namely, $G\cong H$.
\end{thm}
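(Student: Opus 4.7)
The plan is to exploit the inverse relationship between vertex-splitting and vertex-coinciding established in Definition \ref{defn:vertex-split-coinciding-operations}: every $L\in S_G(u)$ collapses back to $G$ by coinciding its split pair $\{u',u''\}$, and every $T\in S_H(h(u))$ collapses back to $H$ by coinciding its split pair $\{v',v''\}$. An isomorphism $\phi:L\to T$ that sends $\{u',u''\}$ onto $\{v',v''\}$ descends through these inverse operations to an isomorphism $G\to H$, so the proof reduces to exhibiting such a matched isomorphism.

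First I would pick a vertex $u\in V(G)$ with $\textrm{deg}_G(u)\geq 2$; the boundary case in which no such $u$ exists forces the connected graph $G$ to be $K_1$ or $K_2$, which is handled by direct inspection. By hypothesis, for any fixed $L\in S_G(u)$ there is a matched $T\in S_H(h(u))$ with $L\cong T$ under some graph isomorphism $\phi$. Granting the matching $\phi(\{u',u''\})=\{v',v''\}$, I would define $\tilde\phi:V(G)\to V(H)$ by $\tilde\phi(u)=h(u)$ and $\tilde\phi(x)=\phi(x)$ for $x\ne u$, and verify adjacency preservation directly: each edge $uz\in E(G)$ corresponds uniquely to one of $u'z$ or $u''z$ in $E(L)$, whose $\phi$-image is an edge incident to $\{v',v''\}$ in $T$, corresponding back to $h(u)\tilde\phi(z)\in E(H)$; each edge $xy$ of $G$ with $x,y\ne u$ is preserved verbatim by $\phi$ and reappears as $\tilde\phi(x)\tilde\phi(y)\in E(H)$. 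Injectivity of $\tilde\phi$ follows from the injectivity of $\phi$ together with $\phi(\{u',u''\})=\{v',v''\}$, and surjectivity from cardinality matching ($|V(L)|=|V(G)|+1=|V(T)|=|V(H)|+1$). This yields the desired isomorphism $\tilde\phi:G\to H$.

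The main obstacle is securing the split-pair matching $\phi(\{u',u''\})=\{v',v''\}$: a priori $\phi(\{u',u''\})$ is merely some pair of vertices in $T$ with disjoint neighborhoods, and such pairs need not be unique. I would overcome this by choosing the extremal partition in which $u'$ has only one neighbor $w\in N_{ei}(u)$, so that $u'$ becomes a leaf of $L$ attached at $w$; the matched $T$ must then exhibit a leaf playing the analogous role, and by varying $w$ over $N_{ei}(u)$ and invoking the ``vice versa'' direction of the hypothesis to swap between elements of $S_G(u)$ and $S_H(h(u))$, one combinatorially forces a matched pair $(L,T)$ whose witnessing isomorphism respects the split pairs. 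The most delicate sub-step is ruling out mismatches when $T$ contains multiple pairs of vertices with disjoint neighborhoods; this is resolved by a counting argument on such pairs, using the bijective structure of the correspondence $S_G(u)\leftrightarrow S_H(h(u))$ to align the isomorphism classes of splits and thereby pin down the vertex-level correspondence.
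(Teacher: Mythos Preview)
The paper does not supply its own proof of this theorem: it is stated with a citation to \cite{Yao-Wang-2106-15254v1} and is followed only by Remark~\ref{rem:333333} on the complexity of enumerating the set $S_G(u)$ via integer partitions. There is therefore no in-paper argument against which to compare your proposal.

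On the proposal itself: your reduction is correctly framed. The observation that vertex-coinciding is inverse to vertex-splitting (Definition~\ref{defn:vertex-split-coinciding-operations}), and the verification that an isomorphism $\phi:L\to T$ satisfying $\phi(\{u',u''\})=\{v',v''\}$ descends to an isomorphism $\tilde\phi:G\to H$, are both sound; the edge-by-edge check and the cardinality count are fine. You also locate the genuine difficulty exactly where it lies: the hypothesis only hands you an abstract isomorphism $L\cong T$, and nothing forces any particular witnessing $\phi$ to carry the split pair of $L$ onto the split pair of $T$.

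Where the proposal falls short is in resolving that difficulty. Passing to the extremal leaf-split ($|N_{ei}(u')|=1$) does force $\phi(u')$ to be a leaf of $T$, but $T$ may already contain leaves that have nothing to do with $\{v',v''\}$, so this alone does not pin down the image of $u'$. Your closing appeal to ``varying $w$ over $N_{ei}(u)$'' and ``a counting argument on pairs with disjoint neighborhoods, using the bijective structure of the correspondence $S_G(u)\leftrightarrow S_H(h(u))$'' is precisely the reconstruction-type step that carries all the weight, and it is left as a sketch rather than an argument. In a statement of this flavor that step \emph{is} the proof; until it is written out (for instance, by showing that the multiset of isomorphism types of one-neighbor splits determines the local structure at $u$ up to isomorphism, and then propagating), the proposal is an outline with the key lemma missing.
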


\begin{rem}\label{rem:333333}
The vertex-splitting graph $G\wedge u$ with degree $\textrm{deg}_G(u)\geq 2$ forms a vertex-split graph set $S_G(u)=\{G\wedge u\}$ in Theorem \ref{thm:2-vertex-split-graphs-isomorphic}. However, determining this graph set $S_G(u)=\{G\wedge u\}$ will meet the Integer Partition Problem (IPP).

For the computational complexity of IPP, the authors, in \cite{Bing-et-al-arXiv-asymmetric-4520331}, have partitioned a positive integer $m\geq 2$ into a group of $a_i$ parts $m_{i,1},m_{i,2},\dots ,m_{i,a_i}$ holding $m=m_{i,1}+m_{i,2}+\cdots +m_{i,a_i}$ with each $m_{i,j}>0$ and $a_i\geq 2$. Correspondingly, the vertex $u$ of the graph $G$ is vertex-split into vertices $u_{i,1},u_{i,2},\dots ,u_{i,a_i}$, such that the adjacent neighbor set $N_{ei}(u)=\bigcup^{a_i}_{j=1}N_{ei}(u_{i,j})$ in the vertex-splitting graph $G\wedge u$, where two adjacent neighbor sets $N_{ei}(u_{i,j})\cap N_{ei}(u_{i,k})=\emptyset$ for $j\neq k$. Suppose there are $P_{art}(m)$ groups of such $a_i$ parts. Computing the number $P_{art}(m)$ can be transformed into finding the number $A(m,a_i)$ of solutions of \emph{Diophantine equation} $m=\sum ^k_{i=1}ix_i$. There is a recursive formula
\begin{equation}\label{eqa:integer-partition-problem}
A(m,a_i)=A(m,a_i-1)+A(m-a_i,a_i)
\end{equation}
with $0 \leq a_i\leq m$. It is not easy to compute the exact value of $A(m,a_i)$ defined in Eq.(\ref{eqa:integer-partition-problem}), for example, the authors in \cite{Shuhong-Wu-Accurate-2007} and \cite{WU-Qi-qi-2001} computed exactly
\begin{equation}\label{eqa:555555}
{
\begin{split}
A(m,6)=&\biggr\lfloor \frac{1}{1036800}(12m^5 +270m^4+1520m^3-1350m^2-19190m-9081)+\\
&\frac{(-1)^m(m^2+9m+7)}{768}+\frac{1}{81}\left[(m+5)\cos \frac{2m\pi}{3}\right ]\biggr\rfloor
\end{split}}
\end{equation}

On the other hands, for any odd integer $m\geq 7$, it was conjectured $m=p_1+p_2+p_3$ with three primes $p_1,p_2,p_3$ from the famous Goldbach's conjecture: \emph{Every even integer, greater than 2, can be expressed as the sum of two primes}. In other words, computing $A(m,3)$ is not a slight work, also, it is difficult to express an odd integer $m=\sum^{3n}_{k=1} p\,'_k$ with each $p\,'_k$ is a prime integer.\qqed
\end{rem}

\begin{thm}\label{thm:hypergraphs-isomorphic}
Suppose that $G_i$ is a v-intersected graph of the hypergraph $\mathcal{H}^i_{yper}=(\Lambda,\mathcal{E}_i)$ with $i=1,2$, and $|\mathcal{E}_1|=|\mathcal{E}_2|$, as well as $G_i-e$ for any edge $e\in E(G_i)$ is not a v-intersected graph of the hypergraph $\mathcal{H}^i_{yper}=(\Lambda,\mathcal{E}_i)$ for $i=1,2$. If $G_1\cong G_2$, then $(\Lambda,\mathcal{E}_1)=\mathcal{H}^1_{yper}\cong \mathcal{H}^2_{yper}=(\Lambda,\mathcal{E}_2)$.
\end{thm}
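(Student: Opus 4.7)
The plan is to use the graph isomorphism $\phi:V(G_1)\to V(G_2)$, together with the defining set-colorings $\varphi_i:V(G_i)\to \mathcal{E}_i$ of Definition \ref{defn:new-intersected-hypergraphss}, to induce a bijection between the hyperedge sets $\mathcal{E}_1$ and $\mathcal{E}_2$ that preserves intersection structure, and then to lift that hyperedge bijection to a permutation $\tau:\Lambda\to\Lambda$ witnessing the hypergraph isomorphism $\mathcal{H}^1_{yper}\cong \mathcal{H}^2_{yper}$.

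First I would establish that the edge-minimality hypothesis forces each coloring $\varphi_i$ to be a bijection, so that $|V(G_i)|=|\mathcal{E}_i|$. The intended argument: if two distinct vertices $u\neq v$ of $G_i$ share a color $\varphi_i(u)=\varphi_i(v)=e$, then for any edge $uw$ incident to $u$ the converse clause of Definition \ref{defn:new-intersected-hypergraphss} is already witnessed by $u$; so edge $uw$ is removable unless $v$ has no neighbor of color $\varphi_i(w)$. Pushing this through symmetrically for $v$ and using the 3I-properties of $\mathcal{E}_i$ (in particular that $\Lambda=\bigcup_{e\in\mathcal{E}_i}e$ together with $|\mathcal{E}_1|=|\mathcal{E}_2|$ and $|V(G_1)|=|V(G_2)|$ from $G_1\cong G_2$) should contradict minimality unless the color classes are singletons.

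With bijective $\varphi_i$ in hand, I would set $\Psi=\varphi_2\circ\phi\circ\varphi_1^{-1}:\mathcal{E}_1\to\mathcal{E}_2$; this is a bijection, and by the forward and converse requirements in Definition \ref{defn:new-intersected-hypergraphss}, for $e,e'\in\mathcal{E}_1$ one has $e\cap e'\neq\emptyset$ iff $\varphi_1^{-1}(e)\varphi_1^{-1}(e')\in E(G_1)$ iff $\phi(\varphi_1^{-1}(e))\phi(\varphi_1^{-1}(e'))\in E(G_2)$ iff $\Psi(e)\cap\Psi(e')\neq\emptyset$. Thus $\Psi$ is an intersection-preserving bijection between $\mathcal{E}_1$ and $\mathcal{E}_2$. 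To upgrade $\Psi$ to a vertex-level map $\tau:\Lambda\to\Lambda$, I would analyze, for each $x\in\Lambda$, its \emph{trace} $T_i(x)=\{e\in\mathcal{E}_i:x\in e\}$; the hyperedge bijection $\Psi$ transports traces to traces, and matching vertices with the same induced trace pattern (with degree and multiplicity data read off from the hypervertex degree series of Definition \ref{defn:full-intersected-hyperedge-sets}) furnishes the desired $\tau$ with $\tau(\mathcal{E}_1)=\mathcal{E}_2$.

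The main obstacle will be Step 1: the paper itself emphasizes (in Theorem \ref{thm:infinite-v-intersected-graphs}) that a hypergraph admits \emph{infinitely many} v-intersected graphs, and nothing in Definition \ref{defn:new-intersected-hypergraphss} outright forbids two adjacent vertices from sharing the same color. So it is genuinely subtle to show that repeated colors cannot survive edge-minimality — one must rule out pathological configurations where two vertices of the same color $e$ split the coverage of the hyperedges intersecting $e$ between them with no overlap. I expect this to require a careful induction or a local exchange argument, possibly invoking the full 3I-properties rather than just edge-minimality. A secondary obstacle is the promotion from the hyperedge bijection $\Psi$ to the vertex bijection $\tau$: intersection-preservation alone is weaker than hypergraph isomorphism, so the trace-matching step must use that both hypergraphs live over the same $\Lambda$ and share the same hypervertex degree series — a fact that itself should be deduced from $\Psi$ preserving all multi-way intersections, not just pairwise ones.
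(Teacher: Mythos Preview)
The paper states this theorem without proof, so there is no argument of theirs to compare against. But your proposal cannot succeed as written, because the theorem --- read with the paper's own notion of hypergraph isomorphism (a bijection $\tau:\Lambda\to\Lambda$ carrying $\mathcal{E}_1$ to $\mathcal{E}_2$) --- appears to be false. Take $\Lambda=\{1,2,3,4,5\}$ with $\mathcal{E}_1=\{\{1,2\},\{2,3,4\},\{4,5\}\}$ and $\mathcal{E}_2=\{\{1,2,3\},\{3,4\},\{4,5\}\}$. Both are 3I-hyperedge sets of size $3$ whose pairwise-intersection pattern is a path; in each case the three-vertex path $P_3$ (with bijective $\varphi_i$) is an edge-minimal v-intersected graph, so $G_1\cong G_2$ and $|\mathcal{E}_1|=|\mathcal{E}_2|$. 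Yet $\mathcal{H}^1_{yper}\not\cong\mathcal{H}^2_{yper}$: in $\mathcal{E}_1$ the unique size-$3$ hyperedge is the \emph{middle} of the intersection path, while in $\mathcal{E}_2$ it is a \emph{leaf}, and any vertex permutation sending $\mathcal{E}_1$ to $\mathcal{E}_2$ would have to preserve both features. This is precisely the obstruction you flagged in your final paragraph --- the v-intersected graph records only \emph{whether} hyperedges meet, not their cardinalities or overlap sizes --- and it is fatal rather than a removable technicality.

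Your Step 1 fails independently of this: for the same $\mathcal{E}_1$, color four vertices $u,u',v,w$ by $\{2,3,4\},\{2,3,4\},\{1,2\},\{4,5\}$ and take edges $uv$ and $u'w$. This $2K_2$ is an edge-minimal v-intersected graph (each edge is the sole witness of its color pair) with non-injective $\varphi$; it is exactly the ``pathological configuration where two vertices of the same color split the coverage'' that you hoped to rule out. So edge-minimality alone does not force bijectivity, and nothing in the hypotheses links $|V(G_i)|$ to $|\mathcal{E}_i|$ to make your counting argument go through.
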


Motivated from Theorem \ref{thm:2-vertex-split-graphs-isomorphic} and Theorem \ref{thm:hypergraphs-isomorphic}, we have the following result:

\begin{thm}\label{thm:hypergraphs-vetex-split-isomorphic}
\cite{Yao-Ma-arXiv-2024-13354} \textbf{Hypergraph isomorphism.} Suppose that two connected graphs $G$ and $H$ admit a coloring $f:V(G)\rightarrow V(H)$, where $G$ is a ve-intersected graph of a hypergraph $\mathcal{H}_{yper}=(\Lambda,\mathcal{E})$ based on a 3I-hyperedge set $\mathcal{E}\in \mathcal{E}\big (\Lambda^2\big )$, and $H$ is a ve-intersected graph of another hypergraph $\mathcal{H}^*_{yper}=(\Lambda^*,\mathcal{E}^*)$ based on a 3I-hyperedge set $\mathcal{E}^*\in \mathcal{E}\big ((\Lambda^*)^2\big )$. Vertex-splitting a vertex $u$ of the ve-intersected graph $G$ with $\textrm{deg}_G(u)\geq 2$ produces a vertex-split graph set $I_G(u)=\{G\wedge u\}$ (resp. a hypergraph set $\{\mathcal{H}_{yper}\wedge e\}$ with $e=F(u)$, since $F:V(G)\rightarrow \mathcal{E}$). Similarly, another vertex-split graph set $I_H(f(u))=\{H\wedge f(u)\}$ is obtained by vertex-splitting a vertex $f(u)$ of the ve-intersected graph $H$ with $\textrm{deg}_H(f(u))\geq 2$, so we have a hypergraph set $\{\mathcal{H}^*_{yper}\wedge e\,'\}$ with $e\,'=F\,'(f(u))$, since $F\,':V(H)\rightarrow \mathcal{E}^*$. If each vertex-split graph $L\in I_G(u)$ (resp. $\mathcal{L}\in \{\mathcal{H}_{yper}\wedge e\}$) corresponds to another vertex-split graph $T\in I_H(f(u))$ (resp. $\mathcal{T}\in \{\mathcal{H}^*_{yper}\wedge e\,'\}$) such that $L\cong T$ (resp. $\mathcal{L}\cong \mathcal{T}$), and vice versa, we write this fact as
\begin{equation}\label{eqa:555555}
G\wedge u\cong H\wedge f(u)~ (\textrm{resp.} ~\mathcal{H}_{yper}\wedge e\cong \mathcal{H}^*_{yper}\wedge e\,')
\end{equation} then we claim that $G\cong H$ (resp. $\mathcal{H}_{yper}\cong \mathcal{H}^*_{yper}$).
\end{thm}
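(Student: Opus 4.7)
The plan is to reduce the hypergraph isomorphism claim to two coupled statements: (1) the underlying ve-intersected graphs are isomorphic, and (2) the set-colorings $F$ and $F^*$ interact with that isomorphism in a way that produces a bijection $\Lambda\rightarrow \Lambda^*$ respecting hyperedges. For (1), I would directly invoke Theorem \ref{thm:2-vertex-split-graphs-isomorphic}: the hypothesis already provides, for every vertex $u\in V(G)$ with $\textrm{deg}_G(u)\geq 2$, a matching of each $L\in I_G(u)$ with some $T\in I_H(f(u))$ satisfying $L\cong T$ and vice versa, which is precisely the criterion of that theorem, yielding $G\cong H$ via some isomorphism $\sigma:V(G)\rightarrow V(H)$ extending $f$. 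Vertices of degree one or zero require only the trivial check that they are forced to correspond under $f$, since Theorem \ref{thm:2-vertex-split-graphs-isomorphic} allows us to restrict attention to splittable vertices and the remaining leaves/isolates are pinned down by adjacency.

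For (2), I would use the fact that $G$ and $H$ are ve-intersected graphs of their respective hypergraphs, so the set-colorings $F:V(G)\rightarrow \mathcal{E}$ and $F^*:V(H)\rightarrow \mathcal{E}^*$ are \emph{bijections onto} $\mathcal{E}$ and $\mathcal{E}^*$ respectively (by Definition \ref{defn:vertex-intersected-graph-hypergraph}, $F(V(G))=\mathcal{E}$ with distinct colors on distinct vertices). Compose to define $\Phi:\mathcal{E}\rightarrow \mathcal{E}^*$ by $\Phi(F(u))=F^*(\sigma(u))$; this is a bijection of hyperedge sets. The issue is to produce a matching bijection $\phi:\Lambda\rightarrow \Lambda^*$ of vertex sets with $\phi(e)=\Phi(e)$ as sets for every $e\in \mathcal{E}$. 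Here I would exploit the hyperedge-side hypothesis: the correspondence $\mathcal{L}\cong \mathcal{T}$ between $\mathcal{H}_{yper}\wedge e$ and $\mathcal{H}^*_{yper}\wedge \Phi(e)$ tells us how the elements of each $e\in \mathcal{E}$ are partitioned when the hyperedge is split, and conversely how multiple hyperedges fuse when we invoke the inverse hyperedge-coinciding operation (Definition \ref{defn:hyperedge-coinciding-splitting-homomorphism}). By iterating split/coincide operations vertex by vertex and tracking which elements of $\Lambda$ must be mapped together, the common refinement of all these partitions pins down a unique bijection $\phi:\Lambda\rightarrow \Lambda^*$.

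The last step is consistency: verify that $\phi(e)\in \mathcal{E}^*$ iff $e\in \mathcal{E}$, and that $\phi(e)=\Phi(e)$ for each $e\in \mathcal{E}$. This follows because an element $x\in \Lambda$ lies in $e\in \mathcal{E}$ precisely when, after vertex-splitting $u=F^{-1}(e)$ in every possible way, $x$ is retained in the image hyperedge of some split component; the analogous characterisation on the $H$-side under $\sigma$ and $\Phi$ then transfers hyperedge membership across $\phi$. Combining this with $G\cong H$ yields the desired hypergraph isomorphism $\mathcal{H}_{yper}\cong \mathcal{H}^*_{yper}$, together with the graph isomorphism $G\cong H$ already in hand.

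The main obstacle I expect is the non-uniqueness of vertex-splittings, which, as noted in the remark following Theorem \ref{thm:2-vertex-split-graphs-isomorphic}, is governed by the Integer Partition Problem: a single vertex $u$ with $\textrm{deg}_G(u)\geq 2$ yields many non-isomorphic members of $I_G(u)$, parameterised by partitions of $N_{ei}(u)$. The subtlety is that the correspondence $L\leftrightarrow T$ supplied by the hypothesis need not be canonical, so when we try to read off the partition of $e=F(u)$ from the partition of $N_{ei}(u)$, we must check that \emph{every} partition on the $G$-side is realised on the $H$-side with the same combinatorial structure at both the graph and the hyperedge level. The ``vice versa'' clause in the hypothesis is what keeps this bookkeeping symmetric and ensures that the refinement process terminates at a well-defined $\phi$, rather than producing conflicting identifications of elements of $\Lambda$.
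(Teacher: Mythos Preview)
The paper does not supply a proof for this theorem: it is stated with a citation to \cite{Yao-Ma-arXiv-2024-13354} and introduced only as ``motivated from Theorem~\ref{thm:2-vertex-split-graphs-isomorphic} and Theorem~\ref{thm:hypergraphs-isomorphic}'', after which the text moves directly to the next proposition. There is therefore no argument in this paper to compare your proposal against; the result is imported wholesale from the cited preprint.

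That said, your outline matches what the paper evidently intends: the graph-level conclusion $G\cong H$ is meant to be read off directly from Theorem~\ref{thm:2-vertex-split-graphs-isomorphic}, and the hypergraph-level conclusion is the parenthetical transfer via the set-colorings $F,F'$. One correction to your step~(2): Definition~\ref{defn:vertex-intersected-graph-hypergraph} only requires $F(x)\neq F(y)$ for \emph{adjacent} $x,y$, and the surjectivity condition $F(V(G)\cup E(G))=\mathcal{E}$ is on the total set, not on $V(G)$ alone. So $F\restriction V(G)$ need not be a bijection onto $\mathcal{E}$, and your map $\Phi(F(u))=F^*(\sigma(u))$ is not automatically well-defined. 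You would need either to pass to the quotient by equal colors or to work with the v-intersected graph of Definition~\ref{defn:new-intersected-hypergraphss} instead, where $\varphi(V(H))=\mathcal{E}$ is asserted on vertices. This is a repair rather than a fatal gap, but it does affect the bookkeeping in your refinement argument for $\phi:\Lambda\rightarrow\Lambda^*$.
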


\begin{prop}\label{proposition:99999}
\textbf{Hypergraph isomorphism.} For tow $k$-uniform hypergraphs $\mathcal{H}_{yper}=(\Lambda,\mathcal{E})$ and $\mathcal{H}'_{yper}=(\Lambda',\mathcal{E}')$, if there is a bijection $\theta:\Lambda\rightarrow \Lambda'$ such that the bijection $\theta$ induces a bijection $\varphi:\mathcal{E}\rightarrow \mathcal{E}'$, then these tow $k$-uniform hypergraphs are isomorphic from each other.\qqed
\end{prop}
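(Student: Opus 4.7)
The plan is to verify that the ordered pair $(\theta, \varphi)$ is precisely a structure-preserving bijection in the sense of standard hypergraph isomorphism, so that no further machinery is needed beyond unwinding definitions and exploiting $k$-uniformity. First I would make explicit what ``$\theta$ induces $\varphi$'' means: for every hyperedge $e=\{x_{i_1},x_{i_2},\dots ,x_{i_k}\}\in \mathcal{E}$ we set the induced image $\theta(e):=\{\theta(x_{i_1}),\theta(x_{i_2}),\dots ,\theta(x_{i_k})\}$, and the hypothesis is that $\varphi(e)=\theta(e)\in \mathcal{E}\,'$ for each such $e$, while this assignment $e\mapsto \theta(e)$ happens to be a bijection from $\mathcal{E}$ onto $\mathcal{E}\,'$.

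Next I would dispatch the elementary size and well-definedness checks. Since $\theta:\Lambda\rightarrow \Lambda\,'$ is a bijection and $\mathcal{H}_{yper}$ is $k$-uniform, every image set $\theta(e)$ has exactly $k$ distinct elements in $\Lambda\,'$; this matches the $k$-uniformity of $\mathcal{H}\,'_{yper}$ and shows the induced map is consistent with the target hyperedge set. I would then verify the Integrity condition of Definition \ref{defn:hypergraph-basic-definition} as a sanity check: $\Lambda\,'=\bigcup _{e\,'\in \mathcal{E}\,'} e\,'=\bigcup _{e\in \mathcal{E}}\varphi(e)=\bigcup _{e\in \mathcal{E}}\theta(e)=\theta\bigl(\bigcup _{e\in \mathcal{E}}e\bigr)=\theta(\Lambda)$, which is automatic since $\theta$ is a bijection onto $\Lambda\,'$.

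After this, the heart of the argument is to verify the incidence-preservation clause of isomorphism. I would show the equivalence $x\in e\Longleftrightarrow \theta(x)\in \varphi(e)$ for every vertex $x\in \Lambda$ and every hyperedge $e\in \mathcal{E}$. The forward direction is immediate from the definition of $\theta(e)$, while the reverse direction uses the injectivity of $\theta$: if $\theta(x)\in \theta(e)$, then $\theta(x)=\theta(x_{i_j})$ for some $j$, hence $x=x_{i_j}\in e$. Together with the surjectivity of $\varphi$ (which guarantees every $e\,'\in \mathcal{E}\,'$ is realized as $\theta(e)$ for a unique $e\in \mathcal{E}$), this yields the full isomorphism $\mathcal{H}_{yper}\cong \mathcal{H}\,'_{yper}$.

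The main obstacle is really only interpretational rather than technical: the proposition is nearly definitional once one fixes what ``isomorphism between hypergraphs'' should mean, so the delicate point is articulating the correct notion (vertex bijection plus hyperedge bijection compatible with incidence) and confirming both directions of hyperedge membership. In particular, one must not conflate ``$\varphi$ is induced by $\theta$'' (a constructive recipe) with ``$\varphi$ is a bijection'' (a non-trivial assumption about $\mathcal{E}\,'$); both are needed, and together they force the isomorphism. A minor companion remark I would append is that, by the $k$-uniformity, no cardinality pathology can occur, so one does not need to separately argue about multi-edges or repeated hyperedges as in the general (non-uniform) case.
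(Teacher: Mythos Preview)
Your proposal is correct, and in fact more detailed than what the paper offers: the paper gives no proof at all for this proposition, presenting it with a \verb|\qqed| marker at the end of the statement itself as an immediate, essentially definitional observation. Your own remark that ``the proposition is nearly definitional once one fixes what `isomorphism between hypergraphs' should mean'' is exactly right, and your careful verification of incidence preservation in both directions (using injectivity of $\theta$ for the reverse implication) supplies the routine check that the paper simply takes for granted.
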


We present the following isomorphism conjectures:

\begin{conj}\label{conj:0000000000}
\textbf{Graph isomorphism conjectures.}

(i) Assume that there are edge subsets $E_G\subset E(G)$ and $E_H\subset E(H)$ with $|E_G|=|E_H|$ such that two \emph{edge-removed graphs} $G-E_G\cong H-E_H$ for two connected $(p,q)$-graphs $G$ and $H$ admitting the isomorphic subgraph similarity. Then $G\cong H$ by Kelly-Ulam's Reconstruction Conjecture.

(ii) If each spanning tree $T_a$ of a connected $(p,q)$-graph $G_a$ corresponds to a spanning tree $T_b$ of another connected graph $G_b$ such that $T_a\cong T_b$, and vice versa, then $G_a\cong G_b$.

(iii) Let both $G$ and $H$ be graphs with $n$ vertices. If each proper subset $S_G\in V(G)\cup E(G)$ corresponds to another proper subset $S_H\in V(H)\cup E(H)$ such that two \emph{subset-removed graphs} $G-S_G \cong H-S_H$, then $G \cong H$.
\end{conj}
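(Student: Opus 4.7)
The plan is to address the three parts sequentially, each by induction on $|V(G)|$ combined with the hyperedge and vertex-split machinery developed in Sections 2--3. For part (i), I would interpret ``isomorphic subgraph similarity'' as the assertion that there is an isomorphism $\psi:G-E_G\to H-E_H$ together with a bijection between the isomorphism types of the $k$-edge-removed subgraphs for $k\le |E_G|$. Starting from $\psi$, I would attempt to add the edges of $E_G$ back one at a time, arguing that each added edge $e=uv$ must be placed between $\psi(u)$ and $\psi(v)$ in $H$, since any alternative placement would violate the subgraph-deck hypothesis. The conclusion then invokes Kelly-Ulam directly on the pair $(G,H)$, as the statement itself signals.

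For part (ii), the first step is to observe that an isomorphism-preserving bijection between the spanning trees of $G_a$ and $G_b$ forces equal spanning-tree counts and, via Kirchhoff's matrix-tree theorem summed over isomorphism classes, the same reduced Laplacian spectrum and identical $|V|$ and $|E|$. Next, for a fixed edge $e\in E(G_a)$, I would restrict the bijection to those spanning trees avoiding $e$; this restricted family should map onto the spanning trees of $G_b$ avoiding some $e'\in E(G_b)$, yielding that $G_a-e$ and $G_b-e'$ carry isomorphic spanning-tree multisets. An inductive application of the same hypothesis then gives $G_a-e\cong G_b-e'$, reducing the claim to the edge-reconstruction variant of Kelly-Ulam. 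The vertex-split criterion of Theorem \ref{thm:2-vertex-split-graphs-isomorphic} can be used to upgrade this edge-deck isomorphism to a full graph isomorphism by controlling how spanning trees decompose when a high-degree vertex is split.

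Part (iii) has the strongest hypothesis, and includes both vertex-deletion (taking $S_G=\{v\}$) and edge-deletion (taking $S_G=\{e\}$) as special cases, so the claim follows as soon as both vertex- and edge-reconstruction are available; I would formalize this by a double induction on $(|V(G)|,|E(G)|)$ using the set-coloring homomorphism framework of Definition \ref{defn:W-constraint-coloring-graph-homomorphism}. The central obstacle for all three parts is identical and unavoidable: Kelly-Ulam's Reconstruction Conjecture has been open since 1942, and the hypergraph tools of this paper (ve-intersected graphs, Topcode-matrix equality, and set-colored graph homomorphism $G[\mathcal{E}]\to H[\mathcal{E}^*]$) reformulate the problem constructively but do not, on their own, supply the missing inductive step. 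A complete proof will therefore require either a verification of reconstruction in the relevant regime, or a new structural invariant extracted from the 3I-hyperedge decomposition that rigidifies the bijection $\psi$ beyond what the subgraph deck already provides.
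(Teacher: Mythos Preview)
The statement you are attempting to prove is presented in the paper as a \emph{conjecture} (inside a \texttt{conj} environment), not as a theorem; the paper offers no proof whatsoever, and indeed places it immediately after the Kelly--Ulam Reconstruction Conjecture precisely to signal that these are open problems in the same spirit. There is therefore no ``paper's own proof'' to compare against.

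Your proposal is not a proof either, and you acknowledge this yourself in the final paragraph: every one of the three parts is ultimately reduced to the Reconstruction Conjecture or its edge-variant, both of which remain open. The inductive schemes you sketch for parts (i)--(iii) all bottom out at exactly the step that has resisted proof since 1942, and the paper's hyperedge-set and vertex-split machinery (Theorem~\ref{thm:2-vertex-split-graphs-isomorphic}, the Topcode-matrix formalism, set-colored homomorphisms) provides reformulations rather than new leverage on that step. In particular, your argument for part (ii)---restricting the spanning-tree bijection to trees avoiding a fixed edge $e$ and claiming this lands on the trees avoiding some single $e'\in E(G_b)$---already fails without further hypotheses: nothing forces the image family to be of the form ``all spanning trees missing one specific edge,'' so the reduction to edge-reconstruction is not justified. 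The honest summary is that the statement is a conjecture, the paper treats it as such, and your outline correctly identifies why it cannot currently be proved.
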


\begin{conj}\label{conj:0000000000}
\cite{Yao-Ma-arXiv-2024-13354} \textbf{Hypergraph isomorphism.} Let $\mathcal{H}_{yper}=(\Lambda,\mathcal{E})$ based on a 3I-hyperedge set $\mathcal{E}\in \mathcal{E}\big (\Lambda^2\big )$ and $\mathcal{H}^*_{yper}=(\Lambda^*,\mathcal{E}^*)$ based on a 3I-hyperedge set $\mathcal{E}^*\in \mathcal{E}\big ((\Lambda^*)^2\big )$ both be hypergraphs with two cardinalities $|\mathcal{E}|=|\mathcal{E}^*|$. If there is a bijection $\theta: \mathcal{E}\rightarrow \mathcal{E}^*$ such that two hyperedges $e\in \mathcal{E}$ and $\theta(e)\in \mathcal{E}^*$ hold two isomorphic hypergraphs
\begin{equation}\label{eqa:555555}
\big (\Lambda\setminus\{e_{\cap}\},\mathcal{E}\setminus e\big )\cong \big (\Lambda^*\setminus\{\theta(e_{\cap})\},\mathcal{E}^*\setminus \theta(e)\big )
\end{equation} where $e_{\cap}\subset e\in \mathcal{E}$ and $e_{\cap}\cap e\,'=\emptyset$ for any $e\,'\in \mathcal{E}\setminus e$, then $\mathcal{H}_{yper} \cong \mathcal{H}^*_{yper}$.
\end{conj}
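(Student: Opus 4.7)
The plan is to reduce the conjectured statement to a reconstruction-type question about the associated ve-intersected graphs and then to attempt to inherit the conclusion from Kelly-Ulam's Reconstruction Conjecture. First I would use Theorem \ref{thm:integer-set-vs-vertex-intersected-graph} to pick ve-intersected graphs $G$ and $H$ of the two hypergraphs together with set-colorings $F:V(G)\to\mathcal{E}$ and $F^*:V(H)\to\mathcal{E}^*$; the bijection $\theta:\mathcal{E}\to\mathcal{E}^*$ then induces a bijection $f:V(G)\to V(H)$ defined by $f=(F^*)^{-1}\circ\theta\circ F$, and the goal $\mathcal{H}_{yper}\cong\mathcal{H}^*_{yper}$ becomes, via Theorem \ref{thm:hypergraphs-isomorphic} and Theorem \ref{thm:hypergraphs-vetex-split-isomorphic}, the assertion that $f$ (or some modification of it) extends to a graph isomorphism $G\to H$.

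Second, I would translate the hypothesis into a statement about vertex-deleted subgraphs. The subset $e_{\cap}\subset e$ is defined to consist of vertices of $\Lambda$ that lie only in $e$ and in no other hyperedge, so erasing $e$ from $\mathcal{E}$ also forces the vertices of $e_{\cap}$ to be deleted from $\Lambda$. At the graph level this is precisely the operation $G\mapsto G-x$ where $F(x)=e$, since removing $x$ eliminates the hyperedge $e$ and quietly discards the now-unused integers of $e_{\cap}$. The hypothesis therefore asserts that, for every vertex $x\in V(G)$, the vertex-deleted graph $G-x$ is set-colored isomorphic to $H-f(x)$. Combined with $|\mathcal{E}|=|\mathcal{E}^*|$, which yields $|V(G)|=|V(H)|$, one is exactly in the hypothesis of Kelly-Ulam applied to the set-colored pair $(G,H)$.

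Third, I would attempt the actual reconstruction step. From the deck matching $\{G-x\cong H-f(x)\}_{x\in V(G)}$ one recovers the vertex count, the edge count via $|E(G)|=\frac{1}{n-2}\sum_{x\in V(G)}|E(G-x)|$, and the degree sequence. Each vertex $x$ has its adjacency in $G$ determined up to an automorphism of $G-x$, and the set-coloring $F$ supplies extra rigidity: adjacency of $x$ to $y$ in $G$ is forced by $F(x)\cap F(y)\neq\emptyset$, and the image of $y$ under any candidate isomorphism is pinned down by the constraint $F^*(f(y))=\theta(F(y))$. Using this rigidity I would run an induction on $|\mathcal{E}|$, stripping one matched pair $(e,\theta(e))$ at a time and recovering the remaining hyperedges from the smaller case by splicing the vertex $x$ back into the isomorphism chosen for $G-x$.

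The main obstacle will be exactly the passage from local deck matching to a single global isomorphism, namely the coherence of the local isomorphisms $\sigma_x:G-x\to H-f(x)$ as the deleted vertex varies; two different choices $\sigma_x$ and $\sigma_{x'}$ need not agree on the common subgraph $G-\{x,x'\}$, and one must select compatible representatives. This is the same gluing difficulty that keeps Kelly-Ulam open, and it is the reason the statement is advanced as a conjecture rather than a theorem. I would expect an unconditional proof to be available only in tractable subcases: when the ve-intersected graphs are trees (where Kelly-Ulam is a theorem); when $\mathcal{E}$ is $k$-uniform (handled by the earlier proposition on $k$-uniform hypergraph isomorphism through a vertex bijection); or when $\mathcal{E}$ contains an ear-free hyperedge-hamiltonian cycle that imposes a canonical cyclic order on the hyperedges. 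Outside these regimes the argument will stall at the coherence step, and the statement remains conjectural.
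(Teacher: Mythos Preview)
The paper does not supply a proof of this statement: it is explicitly stated as a \emph{conjecture} (in the block immediately following the graph-isomorphism conjectures and Theorem~\ref{thm:hypergraphs-vetex-split-isomorphic}) and no argument is offered for it. There is therefore nothing to compare your proposal against.

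That said, your analysis is sound as an \emph{explanation of why the statement is conjectural}. You have correctly recognised that the hypothesis is a hypergraph analogue of the deck hypothesis in Kelly--Ulam: deleting a hyperedge $e$ together with its ``private'' part $e_\cap$ corresponds to deleting the vertex $x$ with $F(x)=e$ from the associated intersected graph, and the required coherence of the local isomorphisms $\sigma_x:G-x\to H-f(x)$ is exactly the unresolved obstruction in graph reconstruction. Your final paragraph is the honest conclusion; the earlier paragraphs should be read not as a proof but as a reduction showing that the hypergraph conjecture is at least as hard as (a coloured version of) Kelly--Ulam.

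One technical caution if you pursue the reduction further: the ve-intersected graph produced by Theorem~\ref{thm:integer-set-vs-vertex-intersected-graph} need not have its vertices in bijection with $\mathcal{E}$ (the definition only requires $F(V(G)\cup E(G))=\mathcal{E}$, not that $F|_{V(G)}$ be injective), so the map $f=(F^*)^{-1}\circ\theta\circ F$ is not well defined in general. To make the reduction precise you would need to work with the \emph{minimal} v-intersected graph in which vertices are identified with hyperedges, as in Theorem~\ref{thm:hypergraphs-isomorphic}, rather than with an arbitrary ve-intersected graph.
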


\begin{thm}\label{thm:graph-set-graph-isomorphism}
Each totally colored and connected graph $H$ corresponds to a totally colored graph set $G_{raph}(H)$, such that each totally colored graph $T\in G_{raph}(H)$ is totally colored graph homomorphism to $H$, namely, $T\rightarrow _{color}H$.
\end{thm}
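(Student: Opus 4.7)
The plan is to build $G_{raph}(H)$ as the closure of $\{H\}$ under the vertex-splitting operations of Definition \ref{defn:vertex-split-coinciding-operations} and Definition \ref{defn:split-v-set-vertex-split-graphss}, together with a natural colour-inheritance rule. Given the totally coloured connected graph $(H,F)$ and any vertex $u\in V(H)$ with $\textrm{deg}_H(u)\geq 2$, vertex-split $u$ into $u\,'$ and $u\,''$ to obtain $H\wedge u$, and extend $F$ to a total colouring $F_{1}$ on $H\wedge u$ by setting $F_{1}(u\,')=F_{1}(u\,'')=F(u)$, $F_{1}(x)=F(x)$ for every other vertex $x$, and $F_{1}(e\,^*)=F(e)$ whenever the edge $e\,^*\in E(H\wedge u)$ corresponds to $e\in E(H)$ under the canonical edge bijection $E(H\wedge u)\rightarrow E(H)$ guaranteed by $|E(H\wedge u)|=|E(H)|$. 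Iterating this step (and its multi-vertex variant) generates the set $G_{raph}(H)$.

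First I would argue by induction on the number of splittings applied to $H$ that each resulting totally coloured graph $T$ satisfies $T\rightarrow _{color}H$. The base case is $T=H$ with the identity map. For the inductive step, suppose $T$ arises from $T_{0}$ by splitting a single vertex $w$ into $w\,'$ and $w\,''$, and that $T_{0}\rightarrow _{color}H$ via some $\varphi_{0}$. Define $\varphi:V(T)\rightarrow V(H)$ by $\varphi(w\,')=\varphi(w\,'')=\varphi_{0}(w)$ and $\varphi(x)=\varphi_{0}(x)$ otherwise. Because vertex-coinciding is the exact inverse of vertex-splitting, each edge $w\,'v$ (resp.~$w\,''v$) of $T$ is carried onto the edge $\varphi_{0}(w)\varphi_{0}(v)\in E(H)$, so $\varphi$ is a graph homomorphism in the sense of Definition \ref{defn:definition-graph-homomorphism}. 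Colour preservation is built in: $F_{T}(w\,')=F_{T}(w\,'')=F_{T_{0}}(w)$ and edge colours are copied through the edge bijection, so any $W$-constraint $W[F(x),F(xy),F(y)]=0$ holding in $H$ at $\varphi(x)\varphi(y)$ also holds in $T$ at the pre-image edge, matching Definition \ref{defn:W-constraint-coloring-graph-homomorphism}.

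The main obstacle is that ``totally coloured'' is not pinned down to a single colouring scheme in the preceding sections; the paper uses set-colourings (Definition \ref{defn:general-definition-set-colorings}), parameterised total colourings (Definition \ref{defn:more-string-total-coloring}), Topcode-pan-colourings (Definition \ref{defn:total-coloring-Topcode-matrixs}), and ve-intersected graph colourings (Definition \ref{defn:vertex-intersected-graph-hypergraph}). I would handle this uniformly by observing that vertex-splitting changes neither the edge set, nor the edge colours, nor the colours at the endpoints of any individual edge; hence every constraint indexed by an edge is automatically stable, and the colour-preserving homomorphism property of Definition \ref{defn:W-constraint-coloring-graph-homomorphism} goes through for each scheme simultaneously. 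A secondary subtlety is the freedom in partitioning $N_{ei}(u)$ between $u\,'$ and $u\,''$ during each split, which merely enlarges $G_{raph}(H)$ without affecting the homomorphism argument.
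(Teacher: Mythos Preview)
The paper states this theorem without proof, so there is no argument to compare against directly. Your vertex-splitting construction is exactly the mechanism the paper relies on implicitly throughout: in Section~3.2 it writes ``$G\wedge S\rightarrow G$'' to record that any vertex-split graph is graph-homomorphic to its source, Example~\ref{example:444444} exhibits $H_1,H_2,H_3\rightarrow H$ as set-coloured homomorphisms arising from splits while preserving the common Topcode-matrix, and the proof of Theorem~\ref{thm:graph-admits-6-set-colorings} invokes ``a connected graph $G$ can be vertex-split into a tree $T$, such that we have a graph homomorphism $T\rightarrow G$''. Your colour-inheritance rule and the induction on the number of splits make this reasoning explicit and are correct; the observation that every $W$-constraint is edge-local, hence stable under splitting, is precisely what the paper's Topcode-matrix formalism (Definition~\ref{defn:total-coloring-Topcode-matrixs}, Theorem~\ref{thm:new-intersected-hypergraph-topcode-matrix}) encodes. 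In short, your proposal supplies the proof the paper omits, using the same underlying idea.
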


\begin{thm}\label{thm:666666}
By Theorem \ref{thm:graph-set-graph-isomorphism}, two totally colored and connected graphs $G$ and $H$ correspond to two totally colored graph sets $G_{raph}(G)$ and $G_{raph}(H)$ respectively, such that each totally colored graph $L\in G_{raph}(G)$ holds $L\rightarrow _{color}G$, and each totally colored graph $T\in G_{raph}(H)$ holds $T\rightarrow _{color}H$. Suppose that each totally colored graph $L\,'\in G_{raph}(G)$ corresponds to a totally colored graph $H\,'\in G_{raph}(H)$ holding $L\,'\cong H\,'$ true, and vice versa, then we claim that $G\cong H$.
\end{thm}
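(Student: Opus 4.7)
The plan is to exploit the fact that $G$ and $H$ are themselves members of their respective graph sets via the identity homomorphism, and then to use the bijective correspondence to extract mutual color-preserving homomorphisms between $G$ and $H$ from which isomorphism can be forced.

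First I would observe that $G \in G_{raph}(G)$, because the identity map is trivially a totally colored graph homomorphism $G \rightarrow_{color} G$; symmetrically $H \in G_{raph}(H)$. Applying the hypothesis to $G$, there exists some $H^{\star} \in G_{raph}(H)$ with $G \cong H^{\star}$. Since $H^{\star} \in G_{raph}(H)$ gives $H^{\star} \rightarrow_{color} H$ by Theorem \ref{thm:graph-set-graph-isomorphism}, composing the isomorphism $G \cong H^{\star}$ with this homomorphism produces a color-preserving homomorphism $\phi: G \rightarrow_{color} H$. The ``vice versa'' part applied to $H \in G_{raph}(H)$ yields some $G^{\star} \in G_{raph}(G)$ with $H \cong G^{\star}$, and the same composition trick gives a color-preserving homomorphism $\psi: H \rightarrow_{color} G$.

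Next I would leverage the pair $(\phi,\psi)$ to conclude $G \cong H$. The composition $\psi \circ \phi: G \rightarrow_{color} G$ is a color-preserving self-homomorphism, and because the total coloring is distinguishing in the Topcode-matrix sense of Theorem \ref{thm:new-intersected-hypergraph-topcode-matrix} (each vertex/edge carries a color that determines its Topcode-column), $\psi \circ \phi$ must fix the color on every vertex, hence coincide with $\mathrm{id}_{V(G)}$; symmetrically $\phi \circ \psi = \mathrm{id}_{V(H)}$. Thus $\phi$ is a bijection $V(G) \to V(H)$; since both $\phi$ and its inverse $\psi$ preserve edges, $\phi$ is a graph isomorphism, and $G \cong H$.

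The main obstacle is the minimality argument in the last paragraph: one has to rule out a non-trivial color-preserving self-homomorphism of $G$. Equivalently, one needs to show that among totally colored connected graphs carrying a fixed Topcode-matrix, the target of the homomorphism relation is unique up to isomorphism, so that the image $\phi(G)$ inside $H$ cannot be a proper color-preserving quotient. The cleanest route is contrapositive: if $\psi\circ\phi$ identified two vertices $u,v$ of $G$, their Topcode-matrix columns would be forced to coincide, contradicting the assumption that $G$ is totally colored (properly, with the distinguishing constraint $F(x)\neq F(y)$ used throughout Definition \ref{defn:vertex-intersected-graph-hypergraph}). Once this uniqueness lemma is in hand, the conclusion $G\cong H$ is immediate.
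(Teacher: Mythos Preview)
The paper states this theorem without proof, so there is nothing to compare against directly; your proposal is an attempt to supply what the authors omitted.

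Your overall strategy---extract mutual colored homomorphisms $\phi:G\rightarrow_{color}H$ and $\psi:H\rightarrow_{color}G$ from the membership $G\in G_{raph}(G)$, $H\in G_{raph}(H)$---is sound and is the natural approach. The weakness is exactly where you flag it: your proposed closure of the gap does not work. You argue that $\psi\circ\phi$ must be the identity because ``the total coloring is distinguishing,'' invoking the constraint $F(x)\neq F(y)$ from Definition~\ref{defn:vertex-intersected-graph-hypergraph}. But that constraint is imposed only for \emph{adjacent} $x,y$; non-adjacent vertices may share a color, and indeed the whole construction of $G_{raph}(T_{code})$ in Theorems~\ref{thm:new-intersected-hypergraph-topcode-matrix} and~\ref{thm:666666666666666} is built on vertex-splitting, which deliberately produces multiple vertices carrying the same set-color. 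So a color-preserving self-homomorphism of $G$ need not fix every vertex, and your contrapositive (``identified vertices would have identical Topcode columns'') proves nothing: identical Topcode columns are perfectly allowed.

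A cleaner repair, and one more in the spirit of the paper's vertex-coinciding framework, is a cardinality argument. In this paper the homomorphisms $T\rightarrow_{color}H$ populating $G_{raph}(H)$ arise from the vertex-coinciding operation (Definition~\ref{defn:vertex-split-coinciding-operations}), so $|V(T)|\geq |V(H)|$ for every $T\in G_{raph}(H)$, with equality only when no coinciding occurs. From $G\cong H^{\star}\in G_{raph}(H)$ you get $|V(G)|\geq |V(H)|$, and symmetrically $|V(H)|\geq |V(G)|$; hence $|V(G)|=|V(H)|=|V(H^{\star})|$. A vertex-coinciding homomorphism $H^{\star}\rightarrow H$ between graphs of equal order is the trivial one, so $H^{\star}\cong H$ and therefore $G\cong H$. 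This bypasses the need to analyze $\psi\circ\phi$ at all and uses only what the paper actually assumes about $G_{raph}(\cdot)$.
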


\begin{defn} \label{defn:set-colored-graphic-group}
\cite{Yao-Ma-arXiv-2024-13354} If a set-colored graph set $F_{\mathcal{E}}(G)=\{G_1,G_2,\dots ,G_n\}$ holds:

(i) Each graph $G_i$ is isomorphic to $G_1$, i.e. $G_i\cong G_1$;

(ii) each set-colored graph $G_i$ admits a total hyperedge-set coloring $F_i:V(G_i)\cup E(G_i)\rightarrow \mathcal{E}_i$, where $\mathcal{E}_i$ is a 3I-hyperedge set of the hypegraph set $\mathcal{E}(\Lambda^2_i)$ defined on a consecutive integer set $\Lambda_i$ for $i\in [1,n]$;

(iii) there is a positive integer $M=\max |\Lambda_1|$, such that the color $F_i(w_s)=\{b_{i,s,1},b_{i,s,2},\dots $, $b_{i,s,c(i,s)}\}$ of each element $w_s$ of $V(G_i)\cup E(G_i)$ is defined as $b_{i,s,r}=b_{1,s,r}+i-1~(\bmod ~M)$ with $r\in [1,c(i,s)]$ and $s\in [1,n]$; and

(iv) the finite module abelian additive operation $G_i [+_k] G_j:=G_i [+] G_j[-]G_k$ is defined by
\begin{equation}\label{eqa:set-colored-graphic-groups}
b_{i,s,r}+b_{j,s,r}-b_{k,s,r}=b_{\lambda,s,r}
\end{equation} with $\lambda=i+j-k~(\bmod ~M)$ for some $b_{i,s,r}\in F_i(w_s)$, $b_{j,s,r}\in F_j(w_s)$ and $b_{\lambda,s,r}\in F_{\lambda}(w_s)$, as well as $b_{k,s,r}\in F_k(w_s)$, where $G_k$ is a preappointed \emph{zero}.

Thereby, we call the set-colored graph set $F_{\mathcal{E}}(G)$ \emph{every-zero set-colored graphic group} (see an example shown in Fig.\ref{fig:set-coloring-group}), rewrite it as $\{F_{\mathcal{E}}(G)$; $[+][-]\}$.\qqed
\end{defn}

\begin{figure}[h]
\centering
\includegraphics[width=16.4cm]{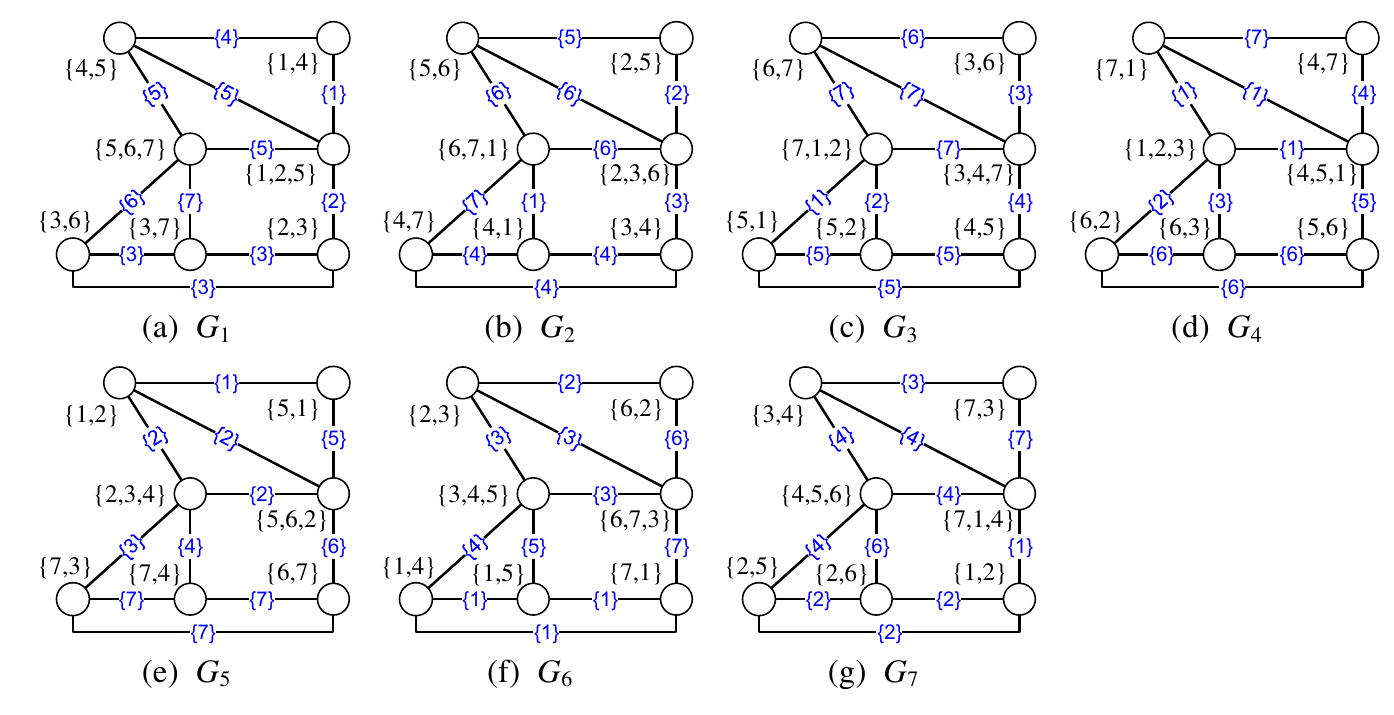}\\
\caption{\label{fig:set-coloring-group}{\small An every-zero set-colored graphic group $\{F_{\mathcal{E}}(G); [+][-]\}$ defined in Definition \ref{defn:set-colored-graphic-group}.}}
\end{figure}

\begin{defn}\label{defn:hypergraph-group-definition}
\textbf{Every-zero hypergraph group.} Suppose that the vertex set $\Lambda_{[1,N]}=[1,N]$. For getting a structural representation of the hypergraph set $\mathcal{E}\big (\Lambda_{[1,N]}^2\big )$ of all hyperedge sets defined on $\Lambda_{[1,N]}=[1,N]$, we take a particular 3I-hyperedge set $\mathcal{E}_1=\{e_{1,1},e_{1,2},\dots $, $e_{1,a_1}\}\in \mathcal{E}\big (\Lambda_{[1,N]}^2\big )$, where $a_1\geq 2$ and hyperedges $e_{1,s}=\{x_{1,s,1},x_{1,s,2},\dots ,x_{1,s,m_s}\}$ holding $x_{1,s,t}\in [1,N]$ with $t\in [1,m_s]$ and $s\in [1,a_1]$.

We use this especial hyperedge set $\mathcal{E}_1$ to make 3I-hyperedge sets $\mathcal{E}_i=\{e_{i,1},e_{i,2},\dots ,e_{i,a_i}\}$ with $i\in [1,N]$ and hyperedges $e_{i,s}=\{x_{i,s,1},x_{i,s,2},\dots ,x_{i,s,m_s}\}$ holding $x_{i,s,t}=x_{1,s,t}+(i-1)~(\bmod~N)\in [1,N]$ for $t\in [1,m_s]$ and $s\in [1,a_1]$. The set of 3I-hyperedge sets $\mathcal{E}_i$ generated by $\mathcal{E}_1$ is denoted as $G(\mathcal{E}_1)$.

We get an \emph{every-zero hypergraph group} $\big \{G(\mathcal{E}_1);[+][-]\big \}$ since any pair of 3I-hyperedge sets $\mathcal{E}_i$ and $\mathcal{E}_j$ of the set $G(\mathcal{E}_1)$ holds the finite module abelian additive operation $\mathcal{E}_i[+_k]\mathcal{E}_j$ true, where the finite module abelian additive operation
\begin{equation}\label{eqa:every-zero-hypergraph-group11}
\mathcal{E}_i[+_k]\mathcal{E}_j:=\mathcal{E}_i[+]\mathcal{E}_j[-]\mathcal{E}_k=\mathcal{E}_\lambda
\end{equation} for any preappointed \emph{zero} $\mathcal{E}_k\in G(\mathcal{E}_1)$ is defined as follows:
\begin{equation}\label{eqa:hypergraph-group-operation}
{
\begin{split}
\big (x_{i,s,t}+x_{j,s,t}\big )-x_{k,s,t}&=x_{1,s,t}+(i-1)+x_{1,s,t}+(j-1)-\big [x_{1,s,t}+(k-1)\big ]~(\bmod~N)\\
&=x_{1,s,t}+(i+j-k-1)~(\bmod~N)\\
&=x_{\lambda,s,t}\in [1,N]
\end{split}}
\end{equation} with $\lambda=i+j-k-1~(\bmod~N)$, where $x_{i,s,t}\in e_{i,s}\in \mathcal{E}_i$ and $x_{j,s,t}\in e_{j,s}\in \mathcal{E}_j$, as well as $x_{k,s,t}\in e_{k,s}\in \mathcal{E}_k$ with $t\in [1,m_s]$ and $s\in [1,a_1]$.\qqed
\end{defn}

We propose that following two classes of hypergraph-type Top-graph databases:

\begin{defn} \label{defn:111111}
$^*$ Suppose that a total coding graph set $\textbf{\textrm{G}}=\{G_1, G_2, \dots, G_M\}$ forms an every-zero graph group $\{F(\textbf{\textrm{G}}); [+][-]\}$ based on the finite module abelian additive operation, and the set $S=\{e_1, e_2, \dots, e_n\}$ with $e_i\in \textbf{\textrm{G}}^2$ is a 3I-hyperedge set of the power set $\textbf{\textrm{G}}^2$ defined in Definition \ref{defn:new-hypergraphs-sets}.

(i) If a connected graph $H$ admits a proper vertex set-coding $\beta: V(H)\rightarrow S$, such that each edge $xy\in E(H)$ holds $\beta(x)\cap \beta(y)\neq \emptyset$, and moreover each pair of hyperedges $e_i,e_j\in S$ with $|e_i\cap e_j|\geq 1$ corresponds to an edge $uv\in E(H)$ holding $\beta(u)=e_i$ and $\beta(v)=e_j$, then we call the connected graph $H$ to be a \textbf{\emph{vertex-intersected hypergraph-type Top-graph database}}.

(ii) If a connected graph $L$ admits a proper total set-coding $\beta: V(L)\cup E(L)\rightarrow S$, and each edge $uv\in E(L)$ holds $\beta(uv)=e_{ij}$, for some total coding graph $G_c\in e_{ij}$, then there are total coding graphs $G_a\in e_{i}=\beta(u)$ and $G_b\in e_{j}=\beta(v)$, such that
\begin{equation}\label{eqa:555555}
G_a[+]G_b[-]G_k=G_c
\end{equation} with $c=a+b-k (\bmod ~ M)$ for an arbitrarily preappointed zero $G_k\in \textbf{\textrm{G}}$, then the connected graph $L$ is called \textbf{\emph{total hypergraph-type Top-graph database}}.\qqed
\end{defn}

\section{Non-ordinary Hypergraphs}

We, in this section, will build up three kinds of non-ordinary hypergraphs: Topology-hypergraphs, Code-hypergraphs and Operation-hypergraphs.

\subsection{Topology-hypergraphs}

\subsubsection{$K_{tree}$-hypergraphs}

\begin{defn} \label{defn:complete-graph-spanning-trees}
Let $S_{pan}(K_n)$ be the set of spanning trees of a complete graph $K_n$ of $n$ vertices, where $K_n$ admits a vertex coloring $f:V(K_n)\rightarrow [1,n]$ holding the vertex color set $f(V(K_n))=[1,n]$ true. We have:

(1) The cardinality $|S_{pan}(K_n)|=n^{n-2}$ by the famous Cayley's formula $\tau (K_{n})=n^{n-2}$.

(2) Each spanning tree $T_i\in S_{pan}(K_n)$ admits a vertex coloring $f_i:V(T_i)\rightarrow [1,n]$ holding the vertex color set $f_i(V(T_i))=[1,n]$.

(3) $S_{pan}(K_n)=\bigcup ^{A_n}_{k=1}N^k_{pan}(K_n)$, where $A_n$ is the number of non-isomorphic spanning tree classes in the $n^{n-2}$ spanning trees, such that

\quad (3.1) any pair of spanning trees $T_{k,i},T_{k,j}\in N^k_{pan}(K_n)$ holds $T_{k,i}\not\cong T_{k,j}$ if $i\neq j$;

\quad (3.2) any spanning tree $T_{k,i}\in N^k_{pan}(K_n)$ is isomorphic to some spanning tree $T_{l,t}\in N^l_{pan}(K_n)$ for each $l\in [1,A_n]\setminus \{k\}$.

(4) $S_{pan}(K_n)=\bigcup ^{B_n}_{k=1}I^k_{pan}(K_n)$, where $B_n$ is the number of isomorphic spanning tree classes in the $n^{n-2}$ spanning trees, such that

\quad (4.1) any pair of spanning trees $T_{i,j},T_{i,t}\in I^i_{pan}(K_n)$ holds $T_{i,j}\cong T_{i,t}$;

\quad (4.2) any spanning tree $T_{k,i}\in I^k_{pan}(K_n)$ is not isomorphic to any spanning tree $T_{l,s}\in I^p_{pan}(K_n)$ for each $p\in [1,B_n]\setminus \{k\}$.\qqed
\end{defn}

\begin{prop}\label{prop:99999}
Each spanning tree set $I^k_{pan}(K_n)$ defined in the case (4) of Definition \ref{defn:complete-graph-spanning-trees} is the union of several graphic groups, that is, $I^k_{pan}(K_n)=\bigcup ^{m_k}_{i=1}\{F(G_i);[+][-]\}$, where each spanning tree set $F(G_i)=\{T_{i,1},T_{i,2},\dots ,T_{i,n}\}$ with $T_{i,j}\cong T_{i,y}$ and each spanning tree $T_{i,s}$ admits a vertex coloring $f_{i,s}$ defined in the case (2) of Definition \ref{defn:complete-graph-spanning-trees}, such that $f_{i,j}(x)=f_{i,1}(x)+j-1~(\bmod ~n)$ for $x\in V(T_{i,1})=V(T_{i,j})$ with $j\in [1,n]$.
\end{prop}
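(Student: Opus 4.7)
The plan is to obtain the decomposition of $I^k_{pan}(K_n)$ as the orbit decomposition of the natural $\mathbb{Z}_n$-action on labeled spanning trees by cyclic relabeling of vertices, and then to identify each orbit with an every-zero set-colored graphic group in the sense of Definition \ref{defn:set-colored-graphic-group}, taking $M=n$ and $\Lambda=[1,n]$.

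First I would fix a representative $T_{i,1}\in I^k_{pan}(K_n)$ with its inherited vertex coloring $f_{i,1}:V(T_{i,1})\to [1,n]$ inherited from the natural labeling of $K_n$. For each $j\in [1,n]$, I would then construct $T_{i,j}$ as the labeled spanning tree of $K_n$ on the same vertex set as $T_{i,1}$ whose coloring satisfies $f_{i,j}(x)=f_{i,1}(x)+(j-1)~(\bmod~n)$ and whose edge set is the image of $E(T_{i,1})$ under the shift $x\mapsto x+(j-1)~(\bmod~n)$. Since this shift is a bijection of $[1,n]$, it is a tree isomorphism $T_{i,1}\to T_{i,j}$, so $T_{i,j}\in I^k_{pan}(K_n)$ and the collection $F(G_i)=\{T_{i,1},\ldots,T_{i,n}\}$ is well defined. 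Letting $G_1,G_2,\ldots,G_{m_k}$ run over representatives of the distinct $\mathbb{Z}_n$-orbits in $I^k_{pan}(K_n)$ yields a partition $I^k_{pan}(K_n)=\bigcup_{i=1}^{m_k}F(G_i)$.

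To verify the every-zero graphic group axioms on each $F(G_i)$, I would compute the operation pointwise on the vertex colors: for a preappointed zero $T_{i,t}$ and any $j,s\in [1,n]$,
\[
f_{i,j}(x)+f_{i,s}(x)-f_{i,t}(x)=f_{i,1}(x)+(j+s-t-1)~(\bmod~n)=f_{i,\lambda}(x),
\]
with $\lambda=j+s-t~(\bmod~n)$, so that $T_{i,j}[+]T_{i,s}[-]T_{i,t}=T_{i,\lambda}\in F(G_i)$ in accordance with Eq.(\ref{eqa:set-colored-graphic-groups}). Closure and uniqueness are then immediate; associativity and commutativity descend from $(\mathbb{Z}_n,+)$; the preappointed zero $T_{i,t}$ serves as the identity; and the inverse of $T_{i,j}$ is $T_{i,2t-j~(\bmod~n)}$.

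The main obstacle is showing that the orbit $F(G_i)$ actually has cardinality $n$, or else interpreting the group statement correctly when it has smaller size. A labeled spanning tree $T$ may coincide with its own shift by some $d$ with $1\le d\le n-1$, as happens for the path $2{-}1{-}3{-}4$ in $K_4$ under $x\mapsto x+2~(\bmod~4)$, in which case $F(G_i)$ has order $n/d_0$ with $d_0$ the least positive shift that stabilizes $T$. The cleanest way forward is to bound the stabilizer by exploiting the uniqueness of the centroid (or bi-centroid) of $T$: any shift-symmetry must permute this centroid, which in the centroid case forces $d\equiv 0~(\bmod~n)$ and in the bi-centroid case forces $d=n/2$ with $n$ even, and the latter subcase can then be resolved by a direct analysis of the two half-trees obtained by removing the central edge. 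The graphic-group axioms are unaffected by the orbit size, so the proposition is proved in the refined form that each $F(G_i)$ is an every-zero set-colored graphic group whose order divides $n$, with order exactly $n$ in the generic case.
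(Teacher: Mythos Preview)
The paper states this proposition without proof, so there is nothing to compare against directly; your orbit-decomposition argument under the cyclic $\mathbb{Z}_n$-action on labeled spanning trees is exactly the construction the statement is pointing at, and your verification of the finite module abelian additive operation matches the paper's pattern in Definition~\ref{defn:set-colored-graphic-group} and Definition~\ref{defn:hypergraph-group-definition}.

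Your identification of the orbit-size obstacle is a genuine observation that the paper does not address. Your example of the path $2\text{--}1\text{--}3\text{--}4$ in $K_4$ being fixed by the shift $x\mapsto x+2~(\bmod~4)$ is correct and shows that $|F(G_i)|$ can be a proper divisor of $n$; so the listed elements $T_{i,1},\dots,T_{i,n}$ are not always pairwise distinct as elements of $S_{pan}(K_n)$. The paper's convention elsewhere (e.g.\ Example~\ref{exa:finite-module-abelian-additive-operation}, Definition~\ref{defn:set-set-additive-groups}) is to index the group by $[1,n]$ regardless and let the additive operation act on indices, so the group axioms still hold formally; but as a decomposition of the \emph{set} $I^k_{pan}(K_n)$ into disjoint orbits, your refined statement---that each $F(G_i)$ has order dividing $n$---is the accurate one. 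Your centroid/bi-centroid sketch is the right tool, though note that your own path example already lands in the bi-centroid case with nontrivial stabilizer, so that case cannot be argued away: the conclusion really is ``order dividing $n$'' rather than ``order $n$''.
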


\begin{example}\label{exa:8888888888}
By the notation of Definition \ref{defn:complete-graph-spanning-trees}, the complete graph $K_{26}$ has
$$26^{24}=9,106,685,769,537,220,000,000,000,000,000,000$$ colored spanning trees in the spanning tree set $S_{pan}(K_{26})$. Since there are $t_{26}=279,793,450$ non-isomorphic spanning trees of 26 vertices in $K_{26}$, so there are
\begin{equation}\label{eqa:555555}
26^{24}\div t_{26}=32,547,887,627,595,300,000,000,000
\end{equation} trees being isomorphic to the non-isomorphic spanning trees of 26 vertices.

For the case (3) of Definition \ref{defn:complete-graph-spanning-trees}, we have $A_{26}~(\geq 26^{24}\div t_{26})$ classes $N^k_{pan}(K_{26})$ of non-isomorphic spanning trees, and each cardinality $|N^k_{pan}(K_{26})|\leq t_{26}$ for $k\in [1,A_{26}]$.

Clearly, $B_{26}=t_{26}$ in the case (4) of Definition \ref{defn:complete-graph-spanning-trees}.\qqed
\end{example}

\begin{defn} \label{defn:vertex-coinciding-two-spanning-trees}
We define a graph operation ``$[\bullet^{coin}_{colo}]$'' between two spanning trees $T_j,T_k\in S_{pan}(K_n)$ defined in Definition \ref{defn:complete-graph-spanning-trees} by vertex-coinciding a vertex $u_i$ of $T_j$ with a vertex $x_i$ of $T_k$ into one vertex $u_i\bullet x_i$ if $f_j(u_i)=f_k(x_i)=i$ for $i\in [1,n]$, and the resultant graph after removing multiple-edge is denoted as $T_j[\bullet^{coin}_{colo}]T_k$.\qqed
\end{defn}

\begin{thm}\label{thm:666666}
By Definition \ref{defn:complete-graph-spanning-trees} and Definition \ref{defn:vertex-coinciding-two-spanning-trees}, any spanning tree $T_i\in S_{pan}(K_n)$ corresponds to two spanning trees $T_j,T_k\in S_{pan}(K_n)$, such that the spanning tree $T_i$ is a subgraph of the graph $T_j[\bullet^{coin}_{colo}]T_k$, here, $T_i\not \cong T_j$, $T_i\not \cong T_k$ and $T_j\not \cong T_k$.
\end{thm}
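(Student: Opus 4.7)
The plan is to prove this constructively by performing a pair of edge swaps on $T_i$. Given a spanning tree $T_i$ of $K_n$, recall that an \emph{edge swap} replaces an edge $e \in E(T_i)$ with a non-tree edge $e' \in E(K_n) \setminus E(T_i)$ that reconnects the two components of $T_i - e$; the result is another spanning tree whose edge set differs from $E(T_i)$ by precisely the pair $(e,e')$. The construction: pick two distinct edges $e_1, e_2 \in E(T_i)$, together with suitable replacement edges $e_1', e_2' \in E(K_n) \setminus E(T_i)$, and set
$$T_j := T_i - e_1 + e_1', \qquad T_k := T_i - e_2 + e_2'.$$

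First I would verify the subgraph inclusion, which is essentially automatic. Since $E(T_j) = (E(T_i) \setminus \{e_1\}) \cup \{e_1'\}$ and $E(T_k) = (E(T_i) \setminus \{e_2\}) \cup \{e_2'\}$, and since $e_1 \neq e_2$, every edge of $T_i$ lies in at least one of $T_j, T_k$. Because the vertex colorings $f_j, f_k$ are bijections with $[1,n]$, the operation $T_j [\bullet^{coin}_{colo}] T_k$ identifies like-colored vertices and removes multi-edges, so its edge set is exactly $E(T_j) \cup E(T_k) \supseteq E(T_i)$; hence $T_i$ is a subgraph of $T_j [\bullet^{coin}_{colo}] T_k$.

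The real content of the theorem is arranging the three non-isomorphism conditions $T_i \not\cong T_j$, $T_i \not\cong T_k$, and $T_j \not\cong T_k$ simultaneously, and this is where I expect the main obstacle. The natural invariant to track is the degree sequence: an edge swap modifies the degrees of up to four vertices, so by choosing the endpoints of $e_s$ and $e_s'$ carefully one can force the degree sequence of $T_s$ to differ from that of $T_i$. I would organize the argument by a short case analysis on the structure of $T_i$: (i) if $T_i$ has two edges whose "swap classes" produce distinct degree sequences (the generic case), pick $(e_1,e_1')$ and $(e_2,e_2')$ accordingly; (ii) if $T_i$ is a path $P_n$, swap an endpoint edge $e_1$ to create a vertex of degree $3$ and swap an internal edge $e_2$ in a differently located way so that the two resulting "Y"-type trees are non-isomorphic; (iii) if $T_i$ is a star $K_{1,n-1}$, replace the edge-swap construction for $T_j$ or $T_k$ with the explicit choice of a path and a near-star, which still satisfy the coverage identity $E(T_j)\cup E(T_k) \supseteq E(T_i)$ by direct inspection.

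The hardest part will be case (iii) and the symmetric case $T_i = P_n$, where few swap variations are available; here one uses that for $n \geq 5$ at least three non-isomorphic tree shapes exist on $n$ vertices, giving enough freedom to make $T_j$ and $T_k$ mutually non-isomorphic. For $n \leq 4$ the theorem genuinely fails (for instance, $T_i = P_4$ in $K_4$ admits only $K_{1,3}$ as the other isomorphism class, forcing $T_j \cong T_k$), so the statement tacitly presumes $n \geq 5$; I would flag this assumption explicitly in the write-up.
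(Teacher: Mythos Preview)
Your plan is sound and, interestingly, runs in the \emph{opposite} direction from the paper's own argument. The paper does not start from $T_i$ at all: it fixes two spanning trees $T_j,T_k$ with $T_j\not\cong T_k$, observes that the overlay $T_j[\bullet^{coin}_{colo}]T_k$ is connected and contains a cycle, and then repeatedly deletes cycle edges (while retaining two adjacent edges $x_mx_1\in E(T_j)$ and $x_1x_2\in E(T_k)$) until a spanning tree $T_i$ is left inside the union. In other words, the paper effectively proves ``for every non-isomorphic pair $T_j,T_k$ there is a $T_i$ inside their overlay,'' which is the converse quantifier order to the stated theorem. Your edge-swap construction, by contrast, begins with the given $T_i$ and manufactures $T_j,T_k$ so that $E(T_i)\subseteq E(T_j)\cup E(T_k)$ holds by design; this matches the theorem as written.

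Your handling of the non-isomorphism conditions is also more careful than the paper's. The paper's justification is simply ``the vertex color set $f(V(K_n))=[1,n]=f(V(T_i))=f(V(T_j))=f(V(T_k))$, so $T_i\not\cong T_j$, \ldots,'' which at best establishes inequality as \emph{labelled} trees, not non-isomorphism as abstract graphs. Your degree-sequence bookkeeping is the right tool here, and your case split (generic / path / star) is the natural way to organise it. Do be aware that in the star case a single edge swap always produces the same isomorphism type (every edge of $K_{1,n-1}$ is equivalent), so for one of $T_j,T_k$ you will indeed need a two-step swap or an ad~hoc choice as you indicate; just make sure the coverage $E(T_i)\subseteq E(T_j)\cup E(T_k)$ survives that modification (it does, provided the edge you drop from $T_j$ is kept in $T_k$). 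Finally, your observation that the statement fails for $n\le 4$ is correct and worth stating explicitly; the paper does not address it.
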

\begin{proof}Obviously, the graph $T_j[\bullet^{coin}_{colo}]T_k$ is connected and has at least a cycle $C=x_1x_2\cdots x_mx_1$ with $m\geq 3$, because of $T_j\not \cong T_k$. Without loss of generality, $x_mx_1\in E(T_j)$ and $x_1x_2\in E(T_k)$, we remove an edge $x_2x_3$ from the graph $T_j[\bullet^{coin}_{colo}]T_k$, the resultant graph is denoted as $G_1=\big (T_j[\bullet^{coin}_{colo}]T_k\big )-x_2x_3$, clearly, $G_1$ is connected and $x_1x_2,x_mx_1\in E(G_1)$.

If $G_1$ has a cycle $C_1$, we remove an edge $u_1v_1\in E(C_1)\setminus \{x_1x_2,x_mx_1\}$ from $G_1$, and get a connected graph $G_2=G_1-u_1v_1$, go on in this way, we obtain a spanning tree $T_i\in S_{pan}(K_n)$ holding $x_1x_2,x_mx_1\in E(T_i)$ true. Notice that the vertex color set
$$f(V(K_n))=[1,n]=f(V(T_i))=f(V(T_j))=f(V(T_j))
$$ so $T_i\not \cong T_j$, $T_i\not \cong T_k$ and $T_j\not \cong T_k$. The proof of the theorem is complete.
\end{proof}

\begin{problem}\label{qeu:444444}
\textbf{Determine} the number $A_n$ defined in Definition \ref{defn:complete-graph-spanning-trees}.
Since the number $\tau (K_{m,n})$ of all spanning trees of a bipartite complete graph $K_{m,n}$ is $\tau (K_{m,n})=m^{n-1}n^{m-1}$, \textbf{do} researching works like that for the complete graph $K_{n}$.
\end{problem}

\begin{defn} \label{defn:spanning-tree-hyperedge-set}
A hyperedge set $\mathcal{E}_i=\{e_{i,1},e_{i,2},\dots ,e_{i,i_a}\}$ is a set of subsets of the spanning tree set $S_{pan}(K_n)$ defined in Definition \ref{defn:complete-graph-spanning-trees} holds $S_{pan}(K_n)=\bigcup_{e_{i,s}\in \mathcal{E}_i}e_{i,s}$, and has one of the following properties:
\begin{asparaenum}[\textbf{\textrm{Prop}}-1.]
\item \label{Prop:11} Each hyperedge $e_{i,s}\in \mathcal{E}_i$ corresponds to a hyperedge $e_{i,t}\in \mathcal{E}_i\setminus e_{i,s}$ holding $e_{i,s}\cap e_{i,t}\neq \emptyset$.
\item \label{Prop:22} A spanning tree $T_{i,k}\in e_{i,k}$ is a proper subgraph of the graph $T_{i,s}[\bullet^{coin}_{colo}]T_{i,t}$ for some two spanning trees $T_{i,s}\in e_{i,s}$ and $T_{i,t}\in e_{i,t}$.
\item \label{Prop:33} Two spanning trees $T_{i,s}\in e_{i,s}$ and $T_{i,t}\in e_{i,t}$ hold the \emph{adding-edge-removing operation}
\begin{equation}\label{eqa:adding-edge-removing-operation}
T_{i,t}=T_{i,s}+x_iy_i-u_iv_i\quad (T_{i,t}-x_iy_i\cong T_{i,s}-u_iv_i)
\end{equation} for $x_iy_i\not\in E(T_{i,s})$ and $u_iv_i\in E(T_{i,s})$, we write this \emph{adding-edge-removing operation} as $T_{i,t}=\pm_e[T_{i,t}]$. Thereby, the spanning tree set $S_{pan}(K_n)$ is a \emph{fixed-point spanning-tree set} under the \emph{adding-edge-removing operation} shown in Eg.(\ref{eqa:adding-edge-removing-operation}).\qqed
\end{asparaenum}
\end{defn}

\begin{thm}\label{thm:adding-edge-removing-sets}
\cite{Yao-Su-Ma-Wang-Yang-arXiv-2202-03993v1} Let $T_{\pm e}(\leq n)$ be the set of trees of $p$ vertices with $p\leq n$. Then each tree $H\in T_{\pm e}(\leq n)$ is a star $K_{1,p-1}$, or corresponds to another tree $T\in T_{\pm e}(\leq n)$ holding the \emph{adding-edge-removing operation} $H-uv\cong T-xy$ for $xy\in E(T)$ and $uv \in E(H)$, so $T_{\pm e}(\leq n)$ is a \emph{fixed-point tree set} under the \emph{adding-edge-removing operation} shown in Eg.(\ref{eqa:adding-edge-removing-operation}).
\end{thm}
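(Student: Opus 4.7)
The plan is to argue by case analysis on whether $H$ is a star. First I would handle the trivial case: if $H \cong K_{1,p-1}$, there is nothing to prove. So assume $H$ is a tree on $p \leq n$ vertices that is not a star, which forces the diameter of $H$ to be at least $3$, and hence $H$ contains at least one edge $uv \in E(H)$ whose two endpoints $u$ and $v$ both have degree at least $2$ in $H$. Such an edge will serve as the removed edge in the $\pm e$ operation.

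Next I would remove the chosen edge $uv$: since $H$ is a tree, $H - uv$ is a forest with exactly two components $H_1 \ni u$ and $H_2 \ni v$, and because both $u$ and $v$ have degree $\geq 2$ in $H$, we have $|V(H_1)| \geq 2$ and $|V(H_2)| \geq 2$. Thus I can select vertices $u' \in V(H_1) \setminus \{u\}$ and $v' \in V(H_2) \setminus \{v\}$. Define $T := (H - uv) + u'v'$. Since $T$ is obtained from the forest $H - uv$ by adding a single new edge joining its two components, $T$ is connected and has $p - 1$ edges on $p$ vertices, so $T$ is a tree, and $T \in T_{\pm e}(\leq n)$.

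Setting $xy := u'v'$, I would verify directly that $T - xy = (H - uv) + u'v' - u'v' = H - uv$, so $H - uv \cong T - xy$ (in fact equal as labeled graphs), with $uv \in E(H)$ and $xy \in E(T)$, which matches the hypothesis of the adding-edge-removing operation in Eq.(\ref{eqa:adding-edge-removing-operation}). Because $uv \notin E(T)$ while $uv \in E(H)$, the trees $T$ and $H$ differ as labeled graphs, so $T$ is indeed another element of $T_{\pm e}(\leq n)$, and the closure of $T_{\pm e}(\leq n)$ under the operation $\pm_e$ follows, establishing that it is a fixed-point tree set.

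The main obstacle, should one interpret $T_{\pm e}(\leq n)$ up to isomorphism rather than as labeled trees, will be showing that the replacement edge $u'v'$ can always be chosen so that $T \not\cong H$. This can be handled by exploiting the freedom in selecting $u'$ and $v'$: when $\max\{|V(H_1)|,|V(H_2)|\} \geq 3$, there are at least two structurally inequivalent ways to reconnect, and a short extremal argument (for instance, choosing $u'$ to be a leaf of $H_1$ at maximum distance from $u$, and likewise $v'$ in $H_2$) yields a $T$ whose degree sequence or diameter differs from that of $H$. The remaining small edge case $|V(H_1)| = |V(H_2)| = 2$, i.e.\ $H = P_4$, is verified directly: $P_4$ pairs with $K_{1,3}$, since removing the central edge of $P_4$ and removing any edge of $K_{1,3}$ both produce a disjoint union of two $P_2$'s.
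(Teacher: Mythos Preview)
The paper does not actually contain a proof of this theorem --- it is stated with a citation to \cite{Yao-Su-Ma-Wang-Yang-arXiv-2202-03993v1} and no argument is given here. So there is nothing in the paper to compare against; I can only assess your argument on its own merits.

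Your overall strategy is sound: pick an internal edge of a non-star tree, delete it, and reconnect the two components differently. The labeled-tree version of the claim follows immediately as you say. The substantive part is the unlabeled (isomorphism-class) reading, and there your write-up has one concrete error and one under-specified step.

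The error is in your $P_4$ verification. Removing any edge of $K_{1,3}$ yields $P_3 + K_1$, not $P_2 + P_2$; so pairing the \emph{central} edge of $P_4$ with an edge of $K_{1,3}$ fails. The fix is to remove a \emph{leaf} edge of $P_4$ instead: $P_4 - (\text{leaf edge}) \cong K_1 + P_3 \cong K_{1,3} - (\text{any edge})$, and $K_{1,3} \not\cong P_4$.

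The under-specified step is your ``short extremal argument'' for the generic case. Rather than juggling diameters, a cleaner route that handles every non-star tree uniformly (including $P_4$) is the following. Let $w$ be a vertex of maximum degree $\Delta$ in $H$. Since $H$ is not a star, there exists a leaf $\ell$ \emph{not} adjacent to $w$ (otherwise every leaf of $H$ is a neighbour of $w$, and a leaf of the subtree hanging off any non-leaf neighbour of $w$ would give a contradiction). Let $\ell'$ be the unique neighbour of $\ell$ and set
\[
T \;:=\; H - \ell\ell' + \ell w .
\]
Then $T$ is a tree on the same vertex set, $T - \ell w = H - \ell\ell'$, and $\Delta(T) \geq \deg_T(w) = \Delta + 1 > \Delta(H)$, so $T \not\cong H$. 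This single paragraph replaces both your extremal sketch and the separate $P_4$ check.
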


\begin{thm}\label{thm:adding-edge-removing-w-type-sets}
\cite{Yao-Su-Ma-Wang-Yang-arXiv-2202-03993v1} Let $G_{tree}(\leq n)$ be the set of trees of $p$ vertices with $p\leq n$. Then each tree $H\in G_{tree}(\leq n)$ admits a $W$-constraint coloring and corresponds to another tree $T\in G_{tree}(\leq n)$ admitting a $W$-constraint coloring holding $H-uv\cong T-xy$ (resp. $H+xy\cong T+uv$) under the \emph{adding-edge-removing operation} for some edges $xy\in E(T)$ and $uv \in E(H)$.
\end{thm}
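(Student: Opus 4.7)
The plan is to upgrade the proof of Theorem \ref{thm:adding-edge-removing-sets} from the category of uncolored trees to the category of $W$-constraint colored trees, showing that the edge-exchange $H-uv\cong T-xy$ can always be accompanied by a consistent transfer and repair of the coloring. I would proceed by induction on $p=|V(H)|$, with base case the star $K_{1,p-1}$, which is well known to admit every one of the $W$-constraint colorings listed in Definition \ref{defn:more-string-total-coloring} (graceful, harmonious, edge-magic, odd-graceful, etc.), and whose natural edge-exchange partner is the caterpillar obtained by detaching one leaf and reattaching it to another leaf.

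For the inductive step, fix a non-star tree $H\in G_{tree}(\leq n)$ together with a $W$-constraint coloring $f:V(H)\cup E(H)\to[a,b]$. Choose a pendant edge $uv\in E(H)$ with $u$ a leaf, and apply the relocation used in the proof of Theorem \ref{thm:adding-edge-removing-sets} in \cite{Yao-Su-Ma-Wang-Yang-arXiv-2202-03993v1} to move $u$ to a new attachment point, producing a tree $T\in G_{tree}(\leq n)$ together with an edge $xy\in E(T)$ satisfying $T-xy\cong H-uv$. The forest $H-uv$ inherits from $f$ a partial coloring that still satisfies $W[f(a),f(ab),f(b)]=0$ at every surviving edge $ab$. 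Transport this partial coloring across the isomorphism $\phi:H-uv\to T-xy$ to obtain a partial coloring $g$ of $T-xy$ with $g(\phi(a))=f(a)$ on vertices and $g(\phi(a)\phi(b))=f(ab)$ on edges.

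It then remains to color the single new vertex $y$ (the image of the relocated leaf) and the single new edge $xy$. I would set $g(y)=f(u)$ and choose $g(xy)$ as the unique value satisfying $W[g(x),g(xy),g(y)]=0$; since every $W$-constraint appearing in Definition \ref{defn:more-string-total-coloring} is affine in the three arguments, this edge-label exists inside the ambient range $[a,b]$ whenever $g(x)$ and $g(y)$ are given. The converse direction $H+xy\cong T+uv$ is an immediate rephrasing of the same exchange after swapping the roles of $H$ and $T$, so it requires no separate argument.

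The main obstacle is \emph{injectivity}: for graceful, odd-graceful, harmonious, edge-magic, and related $W$-constraint colorings, vertex labels or edge labels must be pairwise distinct, so the transported $g$ could collide with the freshly assigned $g(xy)$. I plan to handle this by choosing the leaf $u$ to remove so that $f(u)$ is an extremal label (maximum or minimum) whose removal vacates the needed slot, and by invoking the dual labelling $f'(z)=\max f(S)+\min f(S)-f(z)$ of Definition \ref{defn:define-labelling-basic} together with the parameterized freedom in the $(k_s,d_s)$-coloring family of Definition \ref{defn:more-string-total-coloring} to shift any residual conflict. If no single leaf removal suffices, a two-step exchange (removing $u$, then swapping with another leaf before reattaching at $y$) gives enough slack; the availability of such a two-step move at every non-star tree is precisely what Theorem \ref{thm:adding-edge-removing-sets} supplies.
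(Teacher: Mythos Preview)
The paper does not supply its own proof of this theorem; it is quoted from the cited reference \cite{Yao-Su-Ma-Wang-Yang-arXiv-2202-03993v1}, so there is no in-paper argument to compare your proposal against.

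That said, your plan appears to read more into the statement than is there. The theorem asserts only that (i) $H$ admits a $W$-constraint coloring, (ii) there exists a tree $T$ with $H-uv\cong T-xy$, and (iii) $T$ also admits a $W$-constraint coloring. It does \emph{not} require the coloring on $T$ to be obtained by transporting $H$'s coloring across the isomorphism $H-uv\to T-xy$ and then repairing the new edge. Under the natural reading, the structural clause (ii) is exactly Theorem~\ref{thm:adding-edge-removing-sets}, and clauses (i) and (iii) follow independently from results such as Theorem~\ref{thm:tree-graceful-total-coloringss} (every tree admits a $(k,d)$-gracefully total coloring) applied separately to $H$ and to $T$. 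Your transport-and-repair strategy is attacking a harder, unrequested compatibility statement.

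If you insist on the stronger transported-coloring version, then the injectivity section is a genuine gap rather than a detail. For the graceful constraint, moving a leaf from $v$ to a new neighbour $x$ while keeping $g(y)=f(u)$ forces the new edge label $|g(x)-g(y)|$, and nothing in ``choose $f(u)$ extremal'' or ``apply the dual labelling'' guarantees this value is absent from the surviving edge-label set; indeed for many graceful trees every candidate reattachment point produces a collision. The appeal to ``parameterized freedom'' in Definition~\ref{defn:more-string-total-coloring} does not help either, because changing $(k_s,d_s)$ rescales all labels simultaneously and cannot separate one collision without creating another. The two-step exchange does not obviously add slack: it introduces a second new edge whose label must also avoid the surviving set. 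As stated, this part of your plan is a hope, not an argument.
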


\begin{defn} \label{defn:three-vertex-coincided-intersected-graphs}
Suppose that a graph $H$ admits a total coloring $F:V(H)\cup E(H)\rightarrow \mathcal{E}_i$, where the hyperedge set $\mathcal{E}_i=\{e_{i,1},e_{i,2},\dots ,e_{i,i_a}\}$ is a set of subsets of the spanning tree set $S_{pan}(K_n)$ defined in Definition \ref{defn:complete-graph-spanning-trees}.

(i) If each edge $uv\in E(H)$ holds $F(uv)\supseteq F(u)\cap F(v)\neq \emptyset$, also, holds \textbf{\textrm{Prop}}-\ref{Prop:11} in Definition \ref{defn:spanning-tree-hyperedge-set}, then $H$ is called \emph{$K_{tree}$-spanning ve-intersected graph} of the $K_{tree}$-hypergraph $\mathcal{H}_{yper}=(S_{pan}(K_n),\mathcal{E}_i)$.

(ii) If each edge $uv\in E(H)$ holds $T_{i,k}\subset T_{i,s}[\bullet^{coin}_{colo}]T_{i,t}$ for some spanning trees $T_{i,k}\in F(uv)$, $T_{i,s}\in F(u)$ and $T_{i,t}\in F(v)$ (also, \textbf{\textrm{Prop}}-\ref{Prop:22} in Definition \ref{defn:spanning-tree-hyperedge-set}), then $H$ is called \emph{$K_{tree}$-spanning vertex-coincided graph} of the $K_{tree}$-hypergraph $\mathcal{H}_{yper}=(S_{pan}(K_n),\mathcal{E}_i)$.

(iii) If each edge $uv\in E(H)$ holds $T_{i,k}=\pm_e[T_{i,s}]$ and $T_{i,t}=\pm_e[T_{i,k}]$ (also, \textbf{\textrm{Prop}}-\ref{Prop:33} in Definition \ref{defn:spanning-tree-hyperedge-set}) for some spanning trees $T_{i,k}\in F(uv)$, $T_{i,s}\in F(u)$ and $T_{i,t}\in F(v)$, then $H$ is called \emph{$K_{tree}$-spanning added-edge-removed graph} of the $K_{tree}$-hypergraph $\mathcal{H}_{yper}=(S_{pan}(K_n),\mathcal{E}_i)$.\qqed
\end{defn}

\begin{rem}\label{rem:333333}
We generalize Definition \ref{defn:complete-graph-spanning-trees}, Definition \ref{defn:vertex-coinciding-two-spanning-trees}, Definition \ref{defn:spanning-tree-hyperedge-set} and Definition \ref{defn:three-vertex-coincided-intersected-graphs} to ordinary graphs. Let $S_{pan}(G)$ be the set of all spanning trees of a connected graph $G$. There are the $G_{tree}$-spanning ve-intersected graph, $G_{tree}$-spanning vertex-coincided graph and the $G_{tree}$-spanning added-edge-removed graph of the $G_{tree}$-hypergraph $\mathcal{H}_{yper}=(S_{pan}(G),\mathcal{E})$.

However, vertex-splitting a graph $G$ into vertex disjoint spanning trees is a NP-complete problem, since ``Counting trees in a graph is \#P-complete'' \cite{Jerrum-Mark-1994-information}.\qqed
\end{rem}

\textbf{$K_{tree}$-spanning lattice.} We select randomly spanning trees $T^c_1,T^c_2,\dots ,T^c_m$ from the spanning tree set $S_{pan}(K_n)$ to form a \emph{spanning tree base} $\textbf{\textrm{T}}^c=(T^c_1,T^c_2,\dots $, $T^c_m)$ with $T^c_i\not \subset T^c_j$ and $T^c_i\not \cong T^c_j$ if $i\neq j$ and have a permutation $J_1,J_2,\dots ,J_A$ of vertex disjoint spanning trees $a_1T^c_1,a_2T^c_2,\dots ,a_mT^c_m$, where $A=\sum ^m_{k=1}a_k\geq 1$. By Definition \ref{defn:vertex-split-coinciding-operations}, we do the vertex-coinciding operation ``$[\bullet ]$'' to two spanning trees $J_1$ and $J_2$ by vertex-coinciding a vertex $u$ of the spanning tree $J_1$ with a vertex $v$ of the spanning tree $J_2$ into one vertex $u\bullet v$ if these two vertices are colored the same color, and then get a connected graph $H_1=J_1[\bullet]J_2$, next we get another connected graph $H_2=H_1[\bullet]J_3$ by the same action for obtaining the connected graph $H_1=J_1[\bullet]J_2$; go on in this way, we get connected graphs $H_k=H_{k-1}[\bullet]J_{k+1}$ for $k\in [1,A]$ with $H_0=J_1$. Write $H_A=[\bullet]^m_{k=1}a_kT^c_k$, and the following set
\begin{equation}\label{eqa:K-tree-spanning-lattice}
\textbf{\textrm{L}}(Z^0[\bullet]\textbf{\textrm{T}}^c)=\left \{[\bullet]^m_{k=1}a_kT^c_k:~a_k\in Z^0, \sum^m_{k=1} a_k\geq 1, T^c_k\in \textbf{\textrm{T}}^c \right \}
\end{equation} is called \emph{$K_{tree}$-spanning lattice} based on the spanning tree base $\textbf{\textrm{T}}^c \subseteq S_{pan}(K_n)$ which is the set of spanning trees of a complete graph $K_n$.

Thereby, each connected graph $G\in \textbf{\textrm{L}}(Z^0[\bullet]\textbf{\textrm{T}}^c)$ shown in Eq.(\ref{eqa:K-tree-spanning-lattice}) can be vertex-split into the vertex disjoint spanning trees $a_1T^c_1$, $a_2T^c_2$, $\dots $, $a_mT^c_m$ with $\sum ^m_{k=1}a_k\geq 1$, and forms a $G_{tree}$-hypergraph $\mathcal{H}_{yper}=(S_{pan}(G),\mathcal{E})$ for a 3I-hyperedge set $\mathcal{E}\in \mathcal{E}(S^2_{pan}(G))$ (Ref. Definition \ref{defn:more-conceptd-hypergraphs} and Definition \ref{defn:new-hypergraphs-sets}).

In the view of homomorphism, the group of spanning trees $a_1T^c_1$, $a_2T^c_2$, $\dots $, $a_mT^c_m$ is graph homomorphism to $G$, denoted as
\begin{equation}\label{eqa:555555}
\big (a_1T^c_1,a_2T^c_2,\dots ,a_mT^c_m\big )\rightarrow _{[\bullet]}G
\end{equation}

\vskip 0.2cm

\textbf{$W$-operation tree-lattices.} Let $\textbf{\textrm{H}}=\{G_1,G_2,\dots ,G_m\}$ be a set of connected graphs of $q$ edges, where $G_1=K_{1,q}$, and each connected graph $G_i$ for $i\in [2,m]$ is not a tree and has $q=|E(G_i)|$ edges, and moreover $S_{plit}(G_i)$ for $i\in [2,m]$ is a set of trees of $q$ edges obtained by vertex-splitting $G_i$ into trees of $q$ edges. We have a tree set $T_{ree}(q)=\bigcup^m_{i=1}S_{plit}(G_i)$, where $S_{plit}(G_1)=\{G_1=K_{1,q}\}$.

We call the set $\textbf{\textrm{S}}_{plit}=\big (S_{plit}(G_1),S_{plit}(G_2),\dots ,S_{plit}(G_m)\big )$ \emph{$q$-tree-set base}, where $S_{plit}(G_k)=\{T_{k,1}, T_{k,2},\dots ,T_{k,s_k}\}$ for $k\in [1,m]$, $N=\sum^m_{k=1} s_k$.
\begin{quote}
Let $L_{i_1},L_{i_2},\dots, L_{i_N}$ be a permutation of vertex disjoint trees $a_1T_{1,1}, a_1T_{1,2},\dots ,a_1T_{1,s_1}$ (abbreviated as $a_1S_{plit}(G_1)$), $a_2T_{2,1}, a_2T_{2,2},\dots ,a_2T_{2,s_2}$ (abbreviated as $a_1S_{plit}(G_2)$), $\dots$, $a_mT_{m,1}, a_mT_{m,2},\dots ,a_mT_{m,s_m}$ (abbreviated as $a_mS_{plit}(G_m)$).
\end{quote}
We do the vertex-coinciding operation to the trees $L_{i_1},L_{i_2},\dots, L_{i_N}$ in the following way: We vertex-coincide a vertex of the tree $L_{i_1}$ with a vertex of the tree $L_{i_2}$ together to produce a tree $H_1=L_{i_1}[\bullet]L_{i_2}$, and then we vertex-coincide a vertex of the tree $H_1$ with a vertex of the tree $L_{i_3}$ together to make a tree $H_2=H_1[\bullet]L_{i_3}$; go on in this way, we get trees $H_j=H_{j-1}[\bullet]L_{i_{j+1}}$ for $j\in [2,N-1]$. For simplicity, we write
\begin{equation}\label{eqa:vertex-coinciding-trees}
H_{N-1}=H_{N-2}[\bullet]L_{i_{N}}=L_{i_1}[\bullet]L_{i_2}[\bullet]\dots [\bullet]L_{i_N}=[\bullet]^m_{k=1}a_kS_{plit}(G_k)
\end{equation}
and get a \emph{vertex-coinciding tree-lattice} as follows
\begin{equation}\label{eqa:vertex-coinciding-tree-lattice}
\textbf{\textrm{L}}(Z^0[\bullet]\textbf{\textrm{S}}_{plit})=\left \{[\bullet]^m_{k=1}a_kS_{plit}(G_k):~a_k\in Z^0,~\sum ^m_{k=1}a_k\geq 1, S_{plit}(G_k)\in \textbf{\textrm{S}}_{plit}\right \}
\end{equation}

We join a vertex of the tree $L_{i_1}$ with a vertex of the tree $L_{i_2}$ by an edge to produce a tree $I_1=L_{i_1}[\ominus]L_{i_2}$, and then we join a vertex of the tree $I_1$ with a vertex of the tree $L_{i_3}$ by an edge to make a tree $I_2=I_1[\ominus]L_{i_3}$; go on in this way, we get trees $I_j=I_{j-1}[\ominus]L_{i_{j+1}}$ for $j\in [2,N-1]$. Similarly with the above Eq.(\ref{eqa:vertex-coinciding-trees}), we write
\begin{equation}\label{eqa:vertex-join-trees}
I_{N-1}=I_{N-2}[\ominus]L_{i_{N}}=L_{i_1}[\ominus]L_{i_2}[\ominus]\dots [\ominus]L_{i_N}=[\ominus]^m_{k=1}a_kS_{plit}(G_k)
\end{equation} and obtain an \emph{edge-joining tree-lattice}
\begin{equation}\label{eqa:edge-joining-tree-lattice}
\textbf{\textrm{L}}(Z^0[\ominus]\textbf{\textrm{S}}_{plit})=\left \{[\ominus]^m_{k=1}a_kS_{plit}(G_k):~a_k\in Z^0,~\sum ^m_{k=1}a_k\geq 1, S_{plit}(G_k)\in \textbf{\textrm{S}}_{plit}\right \}
\end{equation}

Notice that each tree $T_{k,i}\in S_{plit}(G_k)=\{T_{k,1}, T_{k,2},\dots ,T_{k,s_k}\}$ for $i\in [1,s_k]$ and $k\in [1,m]$ holds $T_{k,i}\rightarrow G_k$, we do the graph homomorphism to the trees in $[\bullet]^m_{k=1}a_kS_{plit}(G_k)$ and $[\ominus]^m_{k=1}a_kS_{plit}(G_k)$, and get the following two graph lattice:
\begin{equation}\label{eqa:vertex-coinciding-graphs-lattice}
\textbf{\textrm{L}}(Z^0[\bullet]\textbf{\textrm{H}})=\left \{[\bullet]^m_{k=1}A_kG_k:~A_k\in Z^0,~\sum ^m_{k=1}A_k\geq 1, G_k\in \textbf{\textrm{H}}\right \}
\end{equation}
and
\begin{equation}\label{eqa:vertex-coinciding-graphs-lattice}
\textbf{\textrm{L}}(Z^0[\ominus]\textbf{\textrm{H}})=\left \{[\ominus]^m_{k=1}B_kG_k:~B_k\in Z^0,~\sum ^m_{k=1}B_k\geq 1, G_k\in \textbf{\textrm{H}}\right \}
\end{equation}

\subsubsection{$F_{orest}$-hypergraphs}

\begin{defn} \label{defn:produce-forests-spanning-trees}
Removing the edges of one edge set $E_i=\{e_{i,1},e_{i,2}, \dots,e_{i,a_i}\}$ from a spanning tree $T_i$ of a complete graph $K_n$, such that the resultant graph $F_{i}=T_i-E_i$ is just a forest having component trees $T_{i,1},T_{i,2}, \dots,T_{i,b_i}$ with $|V(T_{i,j})|\geq 2$ for $j\in [1,b_i]$. Notice that $V(K_n)=V(T_i)=V(F_i)=\bigcup^{b_i}_{j=1}V(T_{i,j})$. Let $F_{orest}(T_i)=\{F_{j}:j\in[1,n_{orest}(T_i)]\}$ be the set of distinct forests produced from a spanning tree $T_i$ of $S_{pan}(K_n)$, where $n_{orest}(T_i)$ is the number of distinct forests produced by the spanning tree $T_i$. So, the forest set $F_{orest}(K_n)=\{F_{orest}(T_i):T_i\in S_{pan}(K_n)\}$ of distinct forests is produced by the spanning trees of $S_{pan}(K_n)$.\qqed
\end{defn}

\begin{defn} \label{defn:two-operations-forests}
\textbf{Adding-edgeset-removing operation.} Since the spanning tree $T_i=F_{i}+E_i$ obtained by adding the edges of an edge set $E_i\subset E(K_n)$ in Definition \ref{defn:produce-forests-spanning-trees}, then we can add the edges of other edge set $E_j\subset E(K_n)$ to the forest $F_{i}$, such that the resultant graph $F_{i}+E_j$ is just a spanning tree $T_j\in S_{pan}(K_n)$, that is, $T_j=F_{i}+E_j$. By the \emph{adding-edge-removing operation}, we get graphs $T_i-E_i=F_{i}=T_j-E_j$, and
\begin{equation}\label{eqa:add-eset-move-2-spanning-trees}
T_j=T_i-E_i+E_j,~E_i\subset E(T_i),~E_j\cap E(T_i)=\emptyset
\end{equation} denoted as $T_j=\pm_E[T_i]$.

\textbf{Forest vertex-coinciding operation.} We define a graph operation ``$[\bullet^{coin}_{forest}]$'' between two forests $F_j,F_k\in F_{orest}(K_n)$ defined in Definition \ref{defn:produce-forests-spanning-trees} by vertex-coinciding a vertex $x_i$ of $F_j$ with a vertex $y_i$ of $F_k$ into one vertex $x_i\bullet y_i$ if $f_j(x_i)=f_k(y_i)=i$ for $i\in [1,n]$ (it is guaranteed by Definition \ref{defn:complete-graph-spanning-trees}), and the resultant graph after removing multiple-edge is denoted as $F_j[\bullet^{coin}_{forest}]F_k$.\qqed
\end{defn}

\begin{thm}\label{thm:666666}
A forest of a connected graph $G$ can be produced by many spanning trees of the spanning tree set $S_{pan}(G)$ by the removing edge operation defined in Definition \ref{defn:two-operations-forests}.
\end{thm}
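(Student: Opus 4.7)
The plan is to characterize exactly which spanning trees of $G$ give rise to a given forest $F$ under the edge-removing operation of Definition \ref{defn:two-operations-forests}, and then to show that this set is typically large. First, I would fix a spanning forest $F \subseteq G$ with components $T_1, T_2, \dots, T_b$ satisfying $\bigcup^b_{j=1}V(T_j) = V(G)$, as required by Definition \ref{defn:produce-forests-spanning-trees}. A spanning tree $T \in S_{pan}(G)$ satisfies $T - E_T = F$ for some edge subset $E_T \subset E(T)$ if and only if $E(F) \subseteq E(T)$; in that case $E_T = E(T) \setminus E(F)$ is uniquely determined. Thus the number of spanning trees producing $F$ equals the number of spanning trees of $G$ that contain $F$ as a subgraph.

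The key reduction is to contract each component $T_j$ of $F$ to a single vertex $v_j$, yielding a connected multigraph $G/F$ on the vertex set $\{v_1,v_2,\dots,v_b\}$ with one edge $v_iv_j$ for every edge of $G$ joining $V(T_i)$ and $V(T_j)$; edges of $G$ lying inside a single $V(T_j)$ are discarded, as they would create a cycle together with $F$. Spanning trees of $G$ that contain $F$ are then in bijection with spanning trees of $G/F$: the $b-1$ edges of $T \setminus E(F)$ project onto a spanning tree of $G/F$, and conversely any spanning tree of $G/F$ lifts to a spanning tree of $G$ by adjoining the edges of $F$. Hence the count in question equals $\tau(G/F)$.

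To conclude ``many,'' I would invoke the Matrix-Tree Theorem on $G/F$: whenever $G/F$ contains at least one cycle (equivalently, whenever $G$ has at least one edge crossing between components of $F$ that is not already needed to connect them uniquely), one has $\tau(G/F) \geq 2$, and in fact $\tau(G/F)$ grows rapidly with the number of inter-component edges of $G$. Given any spanning tree $T_0$ producing $F$ via the removal set $E_0 = E(T_0) \setminus E(F)$, every other lifted spanning tree $T$ yields $T - (E(T) \setminus E(F)) = F$ by the adding-edgeset-removing formula in Eq.(\ref{eqa:add-eset-move-2-spanning-trees}), so $F$ is produced by an entire family of size $\tau(G/F)$. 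To make the assertion nontrivial it suffices to choose $F$ so that $G/F$ is not already a tree, e.g.\ by taking $F$ to omit an edge of $G$ which lies on a cycle whose remaining edges are not all in $F$.

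The main obstacle is that ``many'' is quantified only informally in the statement; the honest form is that the number of producing spanning trees is exactly $\tau(G/F)$, and this equals $1$ precisely when $G/F$ is itself a tree, which is the degenerate case one must either exclude or acknowledge. A secondary technical point is to verify that the lifted spanning trees indeed produce $F$ in the sense of Definition \ref{defn:produce-forests-spanning-trees} with each component having at least two vertices; this holds automatically because the components of $F$ are fixed throughout the lifting process, so the $|V(T_{i,j})| \geq 2$ condition is inherited from the chosen $F$.
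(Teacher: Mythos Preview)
Your argument is correct and considerably more substantive than what the paper offers: the paper states this theorem with no proof at all. The contraction-and-bijection idea you use is the standard way to count spanning trees containing a fixed forest, and your identification of the count as $\tau(G/F)$ is exactly right; the honest caveat you flag about the degenerate case $\tau(G/F)=1$ is also appropriate, since the paper's word ``many'' is never made precise. In short, you have supplied a genuine proof where the paper provides none, so there is no alternative approach to compare against.

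One very minor tightening: when you write that edges of $G$ inside a single $V(T_j)$ ``are discarded,'' it is slightly cleaner to say that under contraction they become loops in $G/F$, and loops never appear in any spanning tree, so they play no role in the bijection. This is what you mean, but phrasing it that way makes the bijection statement literally ``spanning trees of $G$ containing $F$ $\leftrightarrow$ spanning trees of the multigraph $G/F$'' without any side condition about discarding edges.
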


\begin{thm}\label{thm:666666}
Two forests $F_j,F_k\in F_{orest}(K_n)$ produce a forest $F_i\subset F_j[\bullet^{coin}_{forest}]F_k$ under the graph operation $[\bullet^{coin}_{forest}]$ on the forests of $F_{orest}(K_n)$ defined in Definition \ref{defn:produce-forests-spanning-trees} and Definition \ref{defn:two-operations-forests}.
\end{thm}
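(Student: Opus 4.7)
The plan is to realise $F_i$ as a spanning sub-forest of the combined graph $G:=F_j[\bullet^{coin}_{forest}]F_k$. First I would invoke Definition \ref{defn:produce-forests-spanning-trees} to note that every component of $F_j$ and of $F_k$ has at least two vertices, so every vertex of $F_j$ (and of $F_k$) already has degree at least one. Under the vertex-coinciding operation $[\bullet^{coin}_{forest}]$, a fused vertex $x_i\bullet y_i$ of $G$ acquires the neighbour set $N_{F_j}(x_i)\cup N_{F_k}(y_i)$ once multi-edges are suppressed, so its degree in $G$ is at least $\max(|N_{F_j}(x_i)|,|N_{F_k}(y_i)|)\geq 1$. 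Hence $G$ has no isolated vertex.

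Next I would take $F_i$ to be a spanning forest of $G$ in the usual sense: a spanning tree is chosen inside each connected component of $G$. Because $G$ has no isolated vertex, every component of $G$ has size at least two, and therefore every tree of $F_i$ contains at least two vertices. This is precisely the component-size condition built into Definition \ref{defn:produce-forests-spanning-trees}, and by construction $F_i\subseteq G$.

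The remaining step is to verify the membership $F_i\in F_{orest}(K_n)$, which requires exhibiting a spanning tree $T_i\in S_{pan}(K_n)$ and an edge subset $E_i\subset E(T_i)$ with $F_i=T_i-E_i$. Since $F_i$ spans $V(K_n)=[1,n]$ and $K_n$ contains every possible edge, I would extend $F_i$ to a spanning tree $T_i$ of $K_n$ by repeatedly adding edges of $K_n$ between distinct components of $F_i$ until only one component remains. Setting $E_i:=E(T_i)\setminus E(F_i)$ then gives $F_i=T_i-E_i\in F_{orest}(T_i)\subseteq F_{orest}(K_n)$, completing the argument.

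The main obstacle is not technical but interpretative: one must resist asserting the stronger statement that $G$ itself is already a forest, since $F_j$ and $F_k$ can contribute edges whose endpoints coincide to create short cycles in $G$ (for instance, $F_j=\{12,23\}$ and $F_k=\{13\}$ close a triangle after coinciding). The argument above therefore relies only on two facts that survive this phenomenon, namely the degree-$\geq 1$ bound preserved by multi-edge deletion and the completeness of $K_n$ used to close $F_i$ into a spanning tree, neither of which is threatened by the possible presence of cycles in $G$.
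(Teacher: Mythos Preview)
Your proposal is correct. The paper in fact states this theorem without proof; the only nearby model is the preceding spanning-tree theorem on $T_j[\bullet^{coin}_{colo}]T_k$, whose proof iteratively deletes cycle-edges from the combined graph until an acyclic spanning subgraph remains. Your argument is the natural forest analogue, phrased more cleanly as ``take a spanning tree inside each component of $G$'', and you add two verifications the paper never spells out: that $G$ has no isolated vertex (so every component of the resulting $F_i$ has at least two vertices, matching Definition~\ref{defn:produce-forests-spanning-trees}), and that $F_i$ extends to a spanning tree of $K_n$ (so $F_i\in F_{orest}(K_n)$). Your caution about not claiming $G$ itself is a forest is well placed.

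One cosmetic remark: the paper writes $F_i\subset F_j[\bullet^{coin}_{forest}]F_k$, but throughout it uses $\subset$ loosely for $\subseteq$; strict containment can fail (e.g.\ when $F_j=F_k$), and the spanning-tree proof has the same feature. Reading it as $\subseteq$, as you do, is the intended meaning.
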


\begin{defn} \label{defn:coincided-intersected-graphs-by-hyperedge-sets}
A hyperedge set $\mathcal{E}_i=\{e_{i,1},e_{i,2},\dots ,e_{i,i_a}\}$ is a set of subsets of the forest set $F_{orest}(K_n)$ defined in Definition \ref{defn:produce-forests-spanning-trees} holds $F_{orest}(K_n)=\bigcup_{e_{i,s}\in \mathcal{E}_i}e_{i,s}$, and has one of the following properties:
\begin{asparaenum}[\textbf{\textrm{Forest}}-1.]
\item \label{Spantree:11} Each hyperedge $e_{i,s}\in \mathcal{E}_i$ corresponds to another hyperedge $e_{i,t}\in \mathcal{E}_i\setminus e_{i,s}$ holding $e_{i,s}\cap e_{i,t}\neq \emptyset$.
\item \label{Spantree:22} A forest $F_{i,k}\in e_{i,k}$ is a proper subgraph of the graph $F_{i,s}[\bullet^{coin}_{forest}]F_{i,t}$ defined in Definition \ref{defn:two-operations-forests} for some forests $F_{i,s}\in e_{i,s}$ and $F_{i,t}\in e_{i,t}$.
\item \label{Spantree:33} Two spanning trees $T_{i,s}\in e_{i,s}$ and $T_{i,t}\in e_{i,t}$ hold the \emph{adding-edgeset-removing operation} $T_{i,t}=T_{i,s}+E^*_i-E_i$ for two edge sets $E^*_i\cap E(T_{i,s})=\emptyset$, $E^*_i\subset E(T_{i,t})$ and $E_i\subset E(T_{i,s})$, also $T_{i,t}=\pm_E[T_{i,t}]$ according to Definition \ref{defn:two-operations-forests}.\qqed
\end{asparaenum}
\end{defn}

Similarly with the graphs defined in Definition \ref{defn:three-vertex-coincided-intersected-graphs}, then Definition \ref{defn:coincided-intersected-graphs-by-hyperedge-sets} enables us to have the following graphs:

\begin{defn} \label{defn:111111}
A graph $H$ admits a total coloring $g:V(H)\cup E(H)\rightarrow \mathcal{E}_i$ for a hyperedge set $\mathcal{E}_i$ of subsets of $F_{orest}(K_n)$.

(i) If each edge $uv\in E(H)$ holds $g(uv)\supseteq g(u)\cap g(v)\neq \emptyset$, also, holds \textbf{\textrm{Spantree}}-\ref{Spantree:11} in Definition \ref{defn:coincided-intersected-graphs-by-hyperedge-sets}, then $H$ is called \emph{$K$-forest ve-intersected graph} of the $K_{tree}$-hypergraph $\mathcal{H}_{yper}=(F_{orest}(K_n),\mathcal{E}_i)$.

(ii) If each edge $uv\in E(H)$ holds $F_{i,s}[\bullet^{coin}_{forest}]F_{i,t}$ for some forests $F_{i,k}\in g(uv)$, $F_{i,s}\in g(u)$ and $F_{i,t}\in g(v)$ (also, \textbf{\textrm{Spantree}}-\ref{Spantree:22} in Definition \ref{defn:coincided-intersected-graphs-by-hyperedge-sets}), then $H$ is called \emph{$K$-forest vertex-coincided graph} of the $K_{tree}$-hypergraph $\mathcal{H}_{yper}=(F_{orest}(K_n),\mathcal{E}_i)$.

(iii) If each edge $uv\in E(H)$ holds $T_{i,k}=\pm_e[T_{i,s}]$ and $T_{i,t}=\pm_e[T_{i,k}]$ (also, \textbf{\textrm{Spantree}}-\ref{Spantree:33} in Definition \ref{defn:coincided-intersected-graphs-by-hyperedge-sets}) for some spanning trees $F_{i,k}\in g(uv)$, $F_{i,s}\in g(u)$ and $F_{i,t}\in g(v)$, then $H$ is called \emph{$K$-forest added-edgeset-removed graph} of the $K_{tree}$-hypergraph $\mathcal{H}_{yper}=(F_{orest}(K_n),\mathcal{E}_i)$.\qqed
\end{defn}

\begin{rem}\label{rem:333333}
A \emph{forest} $S(k)=\{T_1,T_2,\dots, T_k\}$ with $k\in [1,n]$ has its vertex number $n=\sum^{k} _{i=1}|V(T_i)|$. Takacs, in \cite{L-Takacs-574-781-1990}, showed the number $F_{orest}(n)$ of distinct forests $S(k)$ with $k\in [1,n]$ to be
\begin{equation}\label{eqa:Forest-number-Takacs}
F_{orest}(n)=\frac{n!}{n+1}\sum^{\lfloor\frac{n}{2}\rfloor}_{k=0}(-1)^k\frac{(2k+1)(n+1)^{n-2k}}{2^k\cdot k!\cdot (n-2k)!}
\end{equation}
\begin{equation}\label{eqa:555555}
F_{orest}(n)=H_n(n+1)-nH_{n-1}(n+1)
\end{equation} where $H_n(x)$ is the $n$-th Hermite polynomial such that
\begin{equation}\label{eqa:555555}
H_n(x)=n!\sum^{\lfloor\frac{n}{2}\rfloor}_{k=0}\frac{(-1)^k x^{n-2k}}{2^k\cdot k!\cdot (n-2k)!}=\frac{1}{\sqrt{2\pi}}\int ^{+\infty}_{-\infty}e^{-\frac{u^2}{2}}(x-iu)^n\textrm{d}u
\end{equation}

As $k=0$ in the formula (\ref{eqa:Forest-number-Takacs}), we get the famous Caley's formula $F_{orest}(n)=\tau(K_{n+1})=(n+1)^{n-1}$. As $k=1$ in the formula (\ref{eqa:Forest-number-Takacs}), we have \begin{equation}\label{eqa:555555}
F_{orest}(n)=(n+1)^{n-1}+\frac{n!}{n+1}\frac{(-1)3(n+1)^{n-2}}{2(n-2)!}=(n+1)^{n-1}-\frac{3n(n-1)(n+1)^{n-3}}{2}
\end{equation}

Partitioning an positive integer $n$ as a sum $n=\sum^{k} _{i=1}|V(T_i)|$ with $|V(T_i)|\geq 2$ for $i\in [1,k]$ is the Integer Partition Problem (IPP), however, there is no polynomial algorithm for IPP. \qqed
\end{rem}

\subsubsection{$E_{hami}$-hypergraphs}

\begin{defn}\label{defn:edge-hamiltonian-connected}
Suppose that a connected graph $G$ contains a particular subgraph $L$ with $|E(L)|\geq 1$. We say $G$ to be \emph{edge-$L$-graph}, if each edge $uv\in E(G)$ is in some subgraph $L$ of the graph $G$. And if each edge $uv\in E(G)$ is in a hamiltonian cycle of the connected graph $G$, we say $G$ to be \emph{edge-hamiltonian}, and moreover we say $G$ to be \emph{edge-hamiltonian-path} if each edge $uv\in E(G)$ is in a hamiltonian path of the connected graph $G$. A \emph{hamiltonian-connected graph} holds that any pair of vertices $w$ and $z$ is connected by a hamiltonian-path $P(w,z)$. A \emph{spanning-tree-connected graph} holds that any two vertices $w$ and $z$ are the leaves of each spanning-tree $T$ of the graph $G$.\qqed
\end{defn}

However, it is not easy to determine whether any connected graph is \emph{edge-spanning-tree} defined in Definition \ref{defn:edge-hamiltonian-connected}.

\begin{problem}\label{qeu:1111}
\textbf{Characterize} edge-hamiltonian graphs and edge-hamiltonian-path graphs.
\end{problem}

Let $G$ be an edge-hamiltonian $(p,q)$-graph (Ref. Definition \ref{defn:edge-hamiltonian-connected}), and $f$ be a vertex coloring from $V(G)$ to $[1,p]$, such that $f(V(G))=[1,p]$. We have a hamiltonian cycle set $E_{hami}(G)$, which contains all hamiltonian cycles of the edge-hamiltonian $(p,q)$-graph $G$. Each 3I-hyperedge set $\mathcal{E}\in \mathcal{E}(E^2_{hami}(G))$ with $\bigcup_{e\in \mathcal{E}}e=E_{hami}(G)$ forms a hypergraph $\mathcal{H}_{yper}=(E_{hami}(G),\mathcal{E})$. We define the \emph{$\Gamma$-operation} by the following operations:
\begin{asparaenum}[\textbf{Ehami}-1]
\item Each hyperedge $e_i\in \mathcal{E}$ corresponds to another hyperedge $e\,'_i\in \mathcal{E}\setminus e_i$, such that $e_i\cap e\,'_i=\{H_{i,1}$, $H_{i,2}$, $\dots $, $H_{i,a_i}\}$ with $a_i\geq 1$, where each $H_{i,j}$ for $j\in [1,a_i]$ is a hamiltonian cycle of the edge-hamiltonian $(p,q)$-graph $G$.
\item A hamiltonian cycle $H_{i}\in e_i$ corresponds to another hamiltonian cycle $H_{j}\in e_j$ holding $H_{k}\subset H_{i}\cup H_{j}$ for some hamiltonian cycle $H_{k}\in e_k\in \mathcal{E}$.
\item A hamiltonian cycle $H_{i}\in e_i$ corresponds to another hamiltonian cycle $H_{j}\in e_j$ holding the \emph{adding-edgeset-removing operation}
$$H_{i}-\{x_1y_1,x_2y_2\}\cong H_{j}-\{u_1v_1,u_2v_2\}
$$ for edges $x_1y_1,x_2y_2\in E(H_{i})$ and $u_1v_1,u_2v_2\in E(H_{j})$.
\end{asparaenum}

Thereby, the $\Gamma$-operation hypergraph $\mathcal{H}_{yper}=(E_{hami}(G),\mathcal{E})$ has its own $\Gamma$-operation graph $H$ admitting a total set-coloring $F:V(H)\cup E(H)\rightarrow \mathcal{E}\in \mathcal{E}(E^2_{hami}(G))$, such that $F(V(H)\cup E(H))=\mathcal{E}$, and the edge color $F(\alpha\beta)$ for each edge $\alpha\beta\in E(H)$ and two vertex colors $F(\alpha)$ and $F(\beta)$ hold one of operations \textbf{Ehami}-$t$ with $t\in [1,3]$ true.

\subsubsection{$G$-hypergraphs}

\begin{defn} \label{defn:G-hypergraphs-abstract-operation}
Let $G_{pub}=\{G_1,G_2,\dots ,G_m\}$ (as a \emph{public-key set}), $H_{pri}=\{H_1,H_2,\dots ,H_m\}$ (as a \emph{private-key set}) and $A_{uth}=\{A_1,A_2$, $\dots $, $ A_m\}$ (as an \emph{authentication set}) be graph sets. There is a $[\ast]$-operation holding $G_i[\ast]H_i=A_i$ true for $i\in [1,m]$. We have a $G$-hypergraph $H_{yper}=(\Psi,\mathcal{E})$ based on a hyperedge set $\mathcal{E}\in \mathcal{E}_{gen}(\Psi^2)$ (Ref. Definition \ref{defn:new-hypergraphs-sets}), where $\Psi=G_{pub}\cup H_{pri}$ is a graph set, such that each $G$-hyperedge set $e\in \mathcal{E}$ corresponds to another $G$-hyperedge set $e\,'\in \mathcal{E}\setminus e$ holding $G_i[\ast]H_i=A_i\in A_{uth}$ for some graph $G_i\in e$ and some graph $H_i\in e\,'$.\qqed
\end{defn}

We show the following examples for understanding Definition \ref{defn:G-hypergraphs-abstract-operation}:
\begin{asparaenum}[\textbf{\textrm{Example}}-1. ]
\item The operation $G_i[\ast]H_i=A_i$ for $i\in [1,m]$ in Definition \ref{defn:G-hypergraphs-abstract-operation} is $G_i\cup H_i=K_{n_i}$ with three vertex sets $V(G_i)=V(H_i)=V(K_{n_i})$ and three edge sets $E(G_i)\cap E(H_i)=\emptyset$ and $E(G_i)\cup E(H_i)=E(K_{n_i})$, where $K_{n_i}$ is a complete graph of $n_i$ vertices, and ``$[\ast]=\cup$'' is the union operation on ordinary graphs.
\item Suppose that a connected $(p,q)$-graph $G$ admits a proper vertex coloring $f:V(G)\rightarrow [1,p]$ such that the vertex color set $f(V(G))=[1,p]$. We vertex-split the connected $(p,q)$-graph $G$ into connected unique-cycle graphs $G_i$ with $i\in [1,n_{vs}(G)]$, where $n_{vs}(G)$ is the number of connected graphs, such that $G_i\not\cong G_j$ and $|E(G_i)|=|E(G)|$, and each graph $G_i$ admits a proper vertex coloring $f_i:V(G_i)\rightarrow [1,p]$, such that $f_i$ is induced by the proper vertex coloring $f$, as well as the vertex color set $f(V(G_i))=[1,p]$. Let $\Gamma=S_{cycle}(G)$ contain these connected unique-cycle graphs $G_i$ with $i\in [1,n_{vs}(G)]$. We get a \emph{$G$-hypergraph} $H_{yper}=(\Gamma,\mathcal{E})$ based on a 3I-hyperedge set $\mathcal{E}\in \mathcal{E}(\Gamma^2)$, such that a graph $H$ admits a vertex coloring $F:V(H)\rightarrow \mathcal{E}$ holding $F(u)[\bullet ]F(v)$ for each edge $uv\in E(H)$ based on a graph operation ``$[\bullet ]$''. For example, each connected unique-cycle graph $T_u\in F(u)$ corresponds to another connected unique-cycle graph $T_v\in F(v)$ such that the operation ``$T_u[\bullet ]T_v$'' is the \emph{adding-edge-removing operation} defined as follows
\begin{equation}\label{eqa:555555}
T_v=T_u+xy-st,~st\in E(T_u)\textrm{ and }xy\in E(T_v)
\end{equation}
\item There is another set $S_{tree}(G)$ being the set of trees of $q$ edges obtained by vertex-splitting a connected $(p,q)$-graph $G$ into the trees of $q$ edges, such that each tree $T_i\in S_{tree}(G)$ holds $|E(T_i)|=|E(G)|$. Let $|S_{tree}(G)|=n_{tree}(G)$, and let $\Omega=S_{tree}(G)\cup C_{omp}(G)$, where the complementary graph set $C_{omp}(G)=\{L_i:i\in [1,n_{tree}(G)]\}$ holding $T_i\cup L_i=K_{q+1}$. We obtain a \emph{$G$-hypergraph} $H_{yper}=(\Omega,\mathcal{E})$ based on an IOI-hyperedge set $\mathcal{E}\in \mathcal{E}(\Omega^2)$, such that each $G$-hyperedge set $e\in \mathcal{E}$ corresponds to another $G$-hyperedge set $e\,'\in \mathcal{E}\setminus e$ holding $T_i\cup L_i=K_{q+1}$ for some tree $T_i\in e$ and some complementary graph $L_i\in e\,'$.
\item The graphs $G_i,H_i$ and $A_i$ in the graph sets $G_{pub}$, $H_{pri}$ and $A_{uth}$ defined in Definition \ref{defn:G-hypergraphs-abstract-operation} are hamiltonian graphs. The operation $G_i[\ast]H_i=A_i$ for $i\in [1,m]$ is defined as follows: $C_{G}=x_1x_2\cdots x_ax_1$ is a hamiltonian cycle of $G_i$, and $C_{H}=y_1y_2\cdots y_by_1$ is a hamiltonian cycle of $H_i$. We join $x_1$ with $y_1$ by an edge $x_1y_1$, and join $x_a$ with $y_b$ by an edge $x_ay_b$, the resultant graph is just the graph $A_i=C_{G}[\ominus]C_{H}$, and the graph $A_i$ has a hamiltonian cycle $C_{G[\ominus]H}=x_1x_2\dots x_ay_by_{b-1}\cdots y_2y_1x_1$, such that each $G$-hyperedge $e\in \mathcal{E}$ corresponds to another $G$-hyperedge $e\,'\in \mathcal{E}\setminus e$ holding $C_{G}[\ominus]C_{H}=C_{G[\ominus]H}$ for some tree $G_i\in e$ and some complementary graph $H_i\in e\,'$.
\item \label{exa:example-555} Each graph in the graph sets $G_{pub}$, $H_{pri}$ and $A_{uth}$ defined in Definition \ref{defn:G-hypergraphs-abstract-operation} is hamiltonian-connected defined in Definition \ref{defn:edge-hamiltonian-connected}, i.e., there is a hamiltonian path $P(x,y)$ for any pair of vertices $x$ and $y$ in the graphs $G_i,H_i$ and $A_i$ repectively. The operation $G_i[\ast]H_i=A_i$ for $i\in [1,m]$ is defined as follows: $P_{G}(x_1,x_a)=x_1x_2\cdots x_a$~(denoted as $x_1\rightarrow_{hami} x_a$) is a hamiltonian path of $G_i$, and $P_{H}(y_1,y_b)=y_1y_2\cdots y_b$~(denoted as $y_1\rightarrow_{hami} y_b$) is a hamiltonian path of $H_i$. Let $N_{ei}(x_1)=\{u_1,u_2,\dots ,u_r\}$ and $N_{ei}(y_1)=\{v_1,v_2,\dots ,v_s\}$.

\qquad (i) We join the vertex $y_b$ with each vertex $u_i\in N_{ei}(x_1)$ by edges $y_bu_i$, and join the vertex $x_a$ with each vertex $v_j\in N_{ei}(y_1)$ by edges $x_av_j$.

\qquad (ii) We join the vertex $x_a$ with the vertex $y_b$ by an edge $x_ay_b$, and join the vertex $x_1$ with the vertex $y_1$ by an edge $x_1y_1$.

\qquad The resultant graph is just $A_i=G_i[\ominus_{path}]H_i$, and the graph $A_i$ has a hamiltonian cycle $P_{G}(x_1,x_a)+x_1y_1+x_ay_b+P_{H}(y_1,y_b)$. We show the graph $A_i$ to be hamiltonian-connected.

\qquad \textbf{Case-1.} For two vertices $x_{i_r},x_{i_t}\in V(G_i)\subset V(A_i)$, there is a hamiltonian path
\begin{equation}\label{eqa:555555}
P(x_{i_r},x_{i_t})=x_{i_r}x_{i_r+1}\cdots x_1u^*\cdots x_a\cdots x_{i_t}\textrm{ (denoted as }x_{i_r}\rightarrow_{hami} x_{i_t})
\end{equation} of the graph $G_i$, where $u^*\in N_{ei}(x_1)$, we rewrite this hamiltonian path by $x_{i_r}\rightarrow_{hami} x_{i_t}$. We can construct a hamiltonian path $P_{A_i}(x_{i_r},x_{i_t})$ of the graph $A_i$ as follows
\begin{equation}\label{eqa:555555}
P_{A_i}(x_{i_r},x_{i_t}):~x_{i_r}\rightarrow x_1\rightarrow y_1\rightarrow_{hami} y_b\rightarrow u^*\rightarrow x_a\rightarrow x_{i_t}
\end{equation} See Fig.\ref{fig:hami-connected}(a).

\qquad \textbf{Case-2.} For any vertex $x_{i_r}\in V(G_i)\subset V(A_i)$ and an arbitrary vertex $y_{j_r}\in V(H_i)\subset V(A_i)$, the graph $A_i$ has a hamiltonian path $x_{i_r}\rightarrow_{hami} x_1\rightarrow y_1\rightarrow_{hami} y_{j_r}$ (see Fig.\ref{fig:hami-connected}(a)).
\end{asparaenum}

The graph $A_i$ can be constructed more complex, see Fig.\ref{fig:hami-connected}(b), where two graphs $T_i$ and $L_i$ are hamiltonian-connected.

\begin{figure}[h]
\centering
\includegraphics[width=16.4cm]{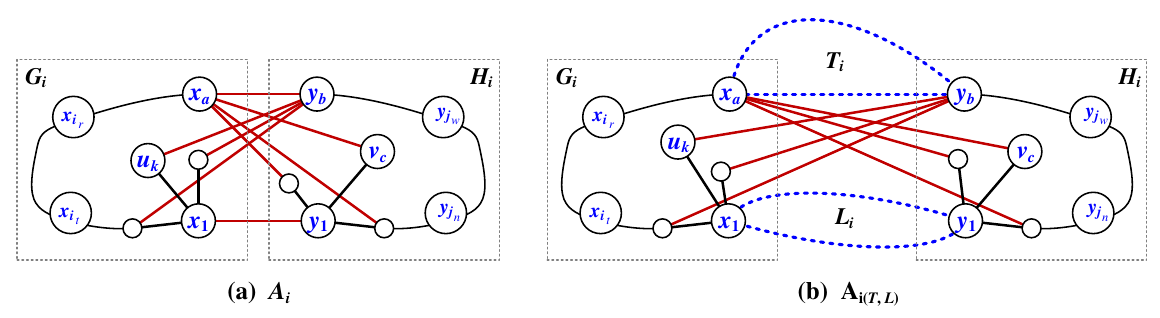}\\
\caption{\label{fig:hami-connected}{\small A scheme for illustrating the $W$-constraint hyperedge sets.}}
\end{figure}

\begin{thm}\label{thm:hami-connected-string-than-edge-hami}
For hamiltonian-connected graphs and edge-hamiltonian graphs, we have the following connections:

(i) If a graph $G$ is hamiltonian-connected, then the graph $G$ is edge-hamiltonian.

(ii) If a graph $H$ is not edge-hamiltonian, then the graph $H$ is not hamiltonian-connected.
\end{thm}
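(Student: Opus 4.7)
The plan is to observe that part (ii) is simply the contrapositive of part (i), so the entire argument reduces to establishing (i). For (i), I would argue directly: fix an arbitrary edge $uv \in E(G)$, and use the hypothesis that $G$ is hamiltonian-connected (Definition \ref{defn:edge-hamiltonian-connected}) to produce a hamiltonian path between the two endpoints of this edge.

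First I would invoke the defining property of hamiltonian-connectedness applied to the pair $(u,v)$: there exists a hamiltonian path $P(u,v) = uw_1w_2\cdots w_{p-2}v$ starting at $u$ and ending at $v$ that visits every vertex of $G$ exactly once. Next, I would append the edge $uv$ to close this path into a cycle $C = uw_1w_2\cdots w_{p-2}vu$. Since $P(u,v)$ already traverses every vertex of $G$ exactly once and the only added edge $uv$ closes the path without introducing any repeated vertex, $C$ is a hamiltonian cycle of $G$. By construction the edge $uv$ lies on $C$, and since $uv$ was arbitrary, every edge of $G$ lies on some hamiltonian cycle, which is precisely the edge-hamiltonian property in Definition \ref{defn:edge-hamiltonian-connected}.

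For (ii), I would simply note it is the contrapositive of (i): if $H$ were hamiltonian-connected then by (i) it would be edge-hamiltonian, contradicting the hypothesis. No separate argument is needed.

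There is essentially no obstacle here, since the proof uses only the definitions. The single subtle point worth mentioning in the write-up is that the argument requires $uv \in E(G)$ (not merely an arbitrary pair of vertices), so that closing the hamiltonian path $P(u,v)$ by the edge $uv$ actually yields a cycle in $G$; this is exactly why hamiltonian-connectedness is stronger than edge-hamiltonicity rather than the two being equivalent. Correspondingly, the converse of (i) fails in general, so no claim of equivalence should be stated.
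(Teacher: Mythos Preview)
Your proposal is correct and matches the paper's proof essentially verbatim: for (i) the paper also takes an arbitrary edge $uv$, invokes the hamiltonian path $P(u,v)$, and observes that $P(u,v)+uv$ is a hamiltonian cycle through $uv$; for (ii) the paper spells out the contrapositive explicitly (an edge $xy$ in no hamiltonian cycle forces no hamiltonian path $P(x,y)$), which is exactly what your ``contrapositive of (i)'' amounts to.
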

\begin{proof} (i) Since any edge $uv$ corresponds to a hamiltonian path $P(u,v)$, then $P(u,v)+uv$ is a hamiltonian cycle.

(ii) Since the graph $H$ is not edge-hamiltonian, then there is an edge $xy\in E(H)$, such that there is no hamiltonian cycle containing the edge $xy$, in other words, there is no hamiltonian path $P(x,y)$ in the graph $H$.
\end{proof}

\begin{prop}\label{proposition:99999}
In the above Example \ref{exa:example-555}, the methods of constructing a hamiltonian-connected graph $A_i$ can be used to show: \emph{No necessary and sufficient condition for determining whether a graph is a hamiltonian-connected graph}.
\end{prop}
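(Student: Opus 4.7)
The plan is to argue by reduction from the hamiltonian problem, leveraging the flexibility of the $[\ominus_{path}]$-construction in Example~\ref{exa:example-555}. First I would recall that, as noted earlier in the paper (citing Karp 1972), deciding whether a connected graph contains a hamiltonian cycle is NP-complete, and it is widely accepted that there is no known necessary and sufficient structural condition for hamiltonicity. Since hamiltonian-connectedness demands the existence of a hamiltonian path between every pair of vertices, it is at least as strong a property as the existence of a hamiltonian cycle (indeed, by Theorem~\ref{thm:hami-connected-string-than-edge-hami}, every hamiltonian-connected graph is edge-hamiltonian and therefore hamiltonian). Hence any necessary and sufficient condition for hamiltonian-connectedness would automatically yield such a condition for a property at least as hard as hamiltonicity.

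Next I would turn the construction in Example~\ref{exa:example-555} into a reduction. Given an arbitrary hamiltonian-connected gadget $H^{*}$ with a distinguished hamiltonian path $P_{H^{*}}(y_1,y_b)$, and given an arbitrary graph $G$ whose hamiltonian-connectedness is to be tested, form $G^{*}=G[\ominus_{path}]H^{*}$ exactly as in Example~\ref{exa:example-555}: join $y_b$ to every neighbor of $x_1$, join $x_a$ to every neighbor of $y_1$, and add edges $x_1y_1$ and $x_ay_b$. The two-case analysis of Example~\ref{exa:example-555} gives the ``if'' direction: whenever $G$ is hamiltonian-connected, the explicit paths $x_{i_r}\rightarrow_{hami}x_1\rightarrow y_1\rightarrow_{hami}y_b\rightarrow u^{*}\rightarrow x_a\rightarrow x_{i_t}$ and the cross-block paths of Case-2 exhibit hamiltonian-connectedness of $G^{*}$.

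The hard part will be the converse: showing that a hamiltonian path between the right pair of vertices of $G^{*}$ must project to a hamiltonian path of $G$. The delicate step is to analyze the few edges crossing between the $V(G)$-side and the $V(H^{*})$-side, and to argue that any hamiltonian path of $G^{*}$ between two chosen vertices of $V(G)$ can use at most two crossing edges, forcing its restriction to $V(G)$ to traverse every vertex of $G$ consecutively. Once this projection lemma is established, a polynomial-time decision procedure for hamiltonian-connectedness would decide hamiltonian-connectedness (and therefore, by iterating with suitably chosen gadgets, hamiltonicity) of arbitrary inputs, contradicting the open status of the hamiltonian problem.

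Finally I would close the proof by remarking that the $[\ominus_{path}]$-construction can itself be iterated arbitrarily, producing an infinite family of non-isomorphic hamiltonian-connected graphs from any two hamiltonian-connected building blocks. This rules out any finite local characterization: any candidate necessary and sufficient condition $C$ phrased in terms of a bounded list of forbidden subgraphs or of bounded-depth local invariants would fail to separate the iterates $G^{(k)}=G^{(k-1)}[\ominus_{path}]H^{*}$ from non-hamiltonian-connected perturbations of them. Combined with the NP-hardness argument above, this justifies the proposition's assertion that no (efficiently verifiable, structurally uniform) necessary and sufficient condition exists.
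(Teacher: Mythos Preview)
The paper provides no explicit proof of this proposition; it is asserted immediately after Example~\ref{exa:example-555} and Theorem~\ref{thm:hami-connected-string-than-edge-hami} as a consequence of the $[\ominus_{path}]$-construction, by analogy with the remark (made earlier in the paper, citing Karp) that the hamiltonian cycle problem is NP-complete. The implicit argument is simply: the construction shows one can manufacture hamiltonian-connected graphs of arbitrary complexity from smaller ones, so the class has no tractable structural characterization, just as for hamiltonian graphs.

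Your attempt is considerably more ambitious than anything the paper offers, and the overall strategy (reduce from hamiltonicity, exploit the gluing construction, appeal to NP-completeness) is reasonable. But there is a genuine gap at the step you yourself flag as ``the hard part.'' In the construction of $G^{*}=G[\ominus_{path}]H^{*}$, the crossing edges are not just $x_1y_1$ and $x_ay_b$: one also joins $y_b$ to \emph{every} vertex of $N_{ei}(x_1)$ and $x_a$ to \emph{every} vertex of $N_{ei}(y_1)$. A hamiltonian path in $G^{*}$ can therefore weave back and forth between the two sides many times, and its restriction to $V(G)$ need not be a single contiguous segment. Your claimed projection lemma (``at most two crossing edges, forcing the restriction to $V(G)$ to be a hamiltonian path of $G$'') is not established and, as stated, is likely false without additional constraints on $H^{*}$ and on the choice of endpoints. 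Without this converse, you do not have a reduction, and the NP-hardness conclusion does not follow.

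A second, more foundational issue is that the proposition itself is informal: ``no necessary and sufficient condition'' is not a precise mathematical statement (the tautological condition always exists). You implicitly interpret it as ``no polynomial-time decidable condition,'' which is one reasonable reading, but the paper never makes that precise either. Your closing paragraph about ruling out finite forbidden-subgraph characterizations gestures at a different formalization and would require its own argument; iterating the construction does produce infinitely many examples, but that alone does not preclude a finite forbidden-subgraph description. In short: your outline goes well beyond the paper, but the key reduction step is only asserted, not proved, and would need substantial repair.
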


\begin{figure}[h]
\centering
\includegraphics[width=13cm]{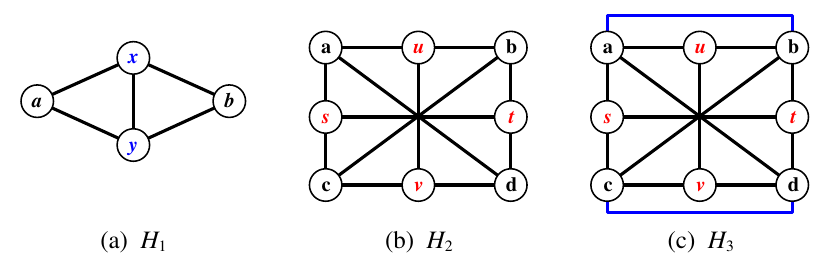}\\
\caption{\label{fig:3-hami-properties}{\small Three hamiltonian properties.}}
\end{figure}

\begin{rem}\label{rem:333333}
Let $P_1$ be the hamiltonian-connected property, $P_2$ be the spanning-tree-connected property, and $P_3$ be the edge-hamiltonian property. For hamiltonian property set $P_{hami}=\big \{P^1_{hami}$, $P^2_{hami}$, $P^3_{hami}\big \}$, we have a graph $G$ is a $P^i_{hami}$-graph with $i\in [1,3]$, or the graph $G$ is a total-$P_{hami}$-graph. By Definition \ref{defn:edge-hamiltonian-connected} and Theorem \ref{thm:hami-connected-string-than-edge-hami}, we can see that $P^1_{hami}$ is stronger than $P^j_{hami}$ with $j=2,3$. If $P^j_{hami}$ with $j=2,3$ is not true, then $P^1_{hami}$ is not true too.

In Fig.\ref{fig:3-hami-properties}, the graph $H_1$ is not a total-$P_{hami}$-graph; the graph $H_2$ is not a $P^1_{hami}$-graph, but is a $P^i_{hami}$-graph with $i=2,3$; and the graph $H_3$ is a total-$P_{hami}$-graph.

\textbf{Characterize} total-$P_{hami}$-graphs. \qqed
\end{rem}

\begin{defn} \label{defn:p-property-hypergraphs}
\textbf{The $P$-property hypergraph.} Let $\mathcal{P}=\{P_1,P_2,\dots ,P_m\}$ be a graphic property set, and $\textrm{\textbf{G}}=\{G_1,G_2,\dots ,G_n\}$ be a graph set. For a hyperedge set $\mathcal{E}$ with $\bigcup_{e\in \mathcal{E}}e=\mathcal{P}$, a graph $H$ has its own vertex set $V(H)=G$, and admits a vertex set-coloring $\varphi:V(H)\rightarrow \mathcal{E}$, such that each graph $G_i\in G=V(H)$ has each one of the property subset $\varphi(G_i)\in \mathcal{E}$, and $\varphi(u)\cap \varphi(v)=e_u\cap e_v\neq \emptyset $ for each edge $uv\in E(H)$ and $e_u, e_v\in \mathcal{E}$. Conversely, if a hyperedge $e\in \mathcal{E}$ corresponds to another hyperedge $e\,'\in \mathcal{E}\setminus e$ holding $e\cap e\,'\neq \emptyset $, then there is an edge $xy\in E(H)$ holding $\varphi(x)=e$ and $\varphi(y)=e\,'$, so $H$ is a v-intersected graph of the \emph{$P$-property hypergraph} $H_{yper}=(\mathcal{P},\mathcal{E})$. \qqed
\end{defn}

\subsection{Code-hypergraphs}

Graph coloring is one of the important techniques in topology code theory (TCT), and is also the main method of asymmetric topology encryption.

\subsubsection{$C_{olor}$-hypergraphs}

\begin{defn} \label{defn:coloring-graph-set-hypergraph}
Suppose that a graph $G$ admits colorings $f_1,f_2,\dots ,f_n$ with \textbf{Independence} $f_i\neq f_j$ if $i\neq j$, and each coloring $f_i$ corresponds to another coloring $f_j$ with $i\neq j$ such that there is a transformation $\theta_{i,j}$ holding \textbf{Operation} $f_j=\theta_{i,j}(f_i)$. Let $\Lambda_{C}=\{f_1,f_2,\dots ,f_n\}$, and each IOI-hyperedge set $\mathcal{E}_{C}\in \mathcal{E}(\Lambda^2_{C})$ holds \textbf{Integrity} $\Lambda_{C}=\bigcup_{e\in \mathcal{E}_{C}}e$, we get a \emph{$C_{olor}$-hypergraph} $H_{yper}=(\Lambda_{C},\mathcal{E}_{C})$ if a hyperedge $e\in \mathcal{E}_{C}$ corresponds to another hyperedge $e\,'\in \mathcal{E}_{C}\setminus e$ such that $f\in e$ and $f\,'\in e\,'$ hold $f\,'=\theta(f\,')$.

The Topcode-matrix set $\Lambda_{matrix}=\{T_{code}(G,f_i):i\in [1,n]\}$ forms a Topcode-matrix hypergraph $H^{matrix}_{yper}=(\Lambda_{matrix},\mathcal{E})$ for an IOI-hyperedge set $\mathcal{E}\in \mathcal{E}(\Lambda^2_{matrix})$.

The Topcode-matrix graph set $\Lambda_{graph}=\{G_{raph}(T_{code}(G,f_i)):i\in [1,n]\}$ forms a graph-set hypergraph $H^{graph}_{yper}=(\Lambda_{graph},\mathcal{E})$ for an IOI-hyperedge set $\mathcal{E}\in \mathcal{E}\big (\Lambda^2_{graph}\big )$.\qqed
\end{defn}

\begin{thm}\label{thm:666666}
Suppose that $\Lambda=\{g_i:i\in [1,m]\}$ is a coloring set, and there is a transformation $\theta_{i,j}$ holding $g_j=\theta_{i,j}(g_i)$ for any pair of colorings $g_i,g_j\in \Lambda$ (Ref. Definition \ref{defn:coloring-graph-set-hypergraph}). Then each connected graph $G$ admits a set-coloring $F:V(G)\rightarrow \mathcal{E}$ for an IOI-hyperedge set $\mathcal{E}\in \mathcal{E}\big (\Lambda^2\big )$, such that $F(u)=g_i$, $F(v)=g_j$ and $F(v)=\theta_{i,j}(F(u))$ for each edge $uv\in E(G)$.
\end{thm}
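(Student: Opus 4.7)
The plan is to mirror the inductive construction of Theorem~\ref{thm:integer-set-vs-vertex-intersected-graph}, replacing the concrete integer set $[1,M]$ by the abstract coloring set $\Lambda$ and using the family of transformations $\{\theta_{i,j}\}$ to realise the ``operation-intersection'' that defines an IOI-hyperedge set (see the Remark following Definition~\ref{defn:new-hypergraphs-sets}). Concretely, define the operation $[\bullet]$ on singleton subsets of $\Lambda$ by $\{g_i\}[\bullet]\{g_j\}:=\theta_{i,j}$, which is nonempty for every ordered pair by hypothesis; then every collection of singletons $\mathcal{E}\subseteq \{\{g\}:g\in\Lambda\}$ is automatically an IOI-hyperedge set of $\mathcal{E}\big(\Lambda^2\big)$ whenever $|\mathcal{E}|\geq 2$.

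First I would induct on $n=|V(G)|$. The base case $n=1$ is trivial. For the inductive step, pick a spanning tree $T$ of $G$ of maximum number of leaves and a leaf $x\in V(T)$, so that $G-x$ is still connected. By the inductive hypothesis there is a set-coloring $F_x:V(G-x)\to \mathcal{E}_x\in \mathcal{E}\big(\Lambda^2\big)$ with $F_x(u)=\{g_{\sigma(u)}\}$ obeying the required edge-relation $F_x(v)=\theta_{\sigma(u),\sigma(v)}\big(F_x(u)\big)$ for every edge $uv\in E(G-x)$. Let $N_{ei}(x)=\{y_1,\ldots,y_m\}$ with $\sigma(y_s)=i_s$, choose an index $i_0$ with $g_{i_0}\in \Lambda$, extend $F(x):=\{g_{i_0}\}$ and $F(u):=F_x(u)$ for $u\in V(G-x)$, and take $\mathcal{E}:=\mathcal{E}_x\cup\{\{g_{i_0}\}\}$.

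The edges $xy_s$ are covered automatically because the hypothesis supplies the transformation $\theta_{i_0,i_s}$ for each $s\in [1,m]$, i.e.\ $F(y_s)=\theta_{i_0,i_s}(F(x))$, and the remaining edges inherit the property from $F_x$. This simultaneously shows that $\mathcal{E}\in \mathcal{E}\big(\Lambda^2\big)$ is IOI under the operation $[\bullet]$, since each hyperedge $\{g_{i_0}\}\in \mathcal{E}$ is operation-intersected with $\{g_{i_s}\}\in \mathcal{E}$, and hyperedges in $\mathcal{E}_x$ retain their operation-intersection partners.

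The main obstacle is navigating the ambiguity of the statement $F:V(G)\to \mathcal{E}$ together with $F(u)=g_i\in \Lambda$: one must commit to interpreting the hyperedges in $\mathcal{E}$ as singletons $\{g_i\}$ (the natural reading that keeps $\mathcal{E}\subseteq \Lambda^2$), so that the IOI condition is expressed via the operation $[\bullet]$ rather than set-theoretic intersection. A secondary subtlety is the choice of $i_0$ when the graph forces the reuse of a coloring (for instance when $|\Lambda|<|V(G)|$); this is handled by allowing repeated singletons, which is still consistent because $\theta_{i,i}=\mathrm{id}$ can be adopted without loss of generality, or by noting that the statement only requires \emph{existence} of $\mathcal{E}$ and hence $|\Lambda|$ may be assumed large enough so that each vertex receives a distinct coloring. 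Once these conventions are fixed, the inductive step goes through routinely.
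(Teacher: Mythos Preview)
The paper states this theorem without proof; it is immediately followed by Problem~\ref{question:assembl-graphs-hypergraphs} with no intervening argument. So there is nothing to compare your proposal to.

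Evaluated on its own merits, your approach is sound but more elaborate than the statement requires. The decisive observation, which you do make implicitly, is that the hypothesis supplies a transformation $\theta_{i,j}$ for \emph{every} ordered pair $(i,j)$; consequently, \emph{any} assignment $u\mapsto g_{\sigma(u)}$ satisfies the edge condition $F(v)=\theta_{\sigma(u),\sigma(v)}(F(u))$ automatically. The inductive leaf-removal you borrow from Theorem~\ref{thm:integer-set-vs-vertex-intersected-graph} is therefore unnecessary machinery: a direct one-line assignment already does the job, and connectedness of $G$ plays no role beyond the paper's standing convention.

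Two technical points deserve tightening. First, your operation $\{g_i\}[\bullet]\{g_j\}:=\theta_{i,j}$ outputs a transformation, not a subset of $\Lambda$; the IOI condition in the Remark after Definition~\ref{defn:new-hypergraphs-sets} asks that $e[\bullet]e'\neq\emptyset$, so you should instead set $\{g_i\}[\bullet]\{g_j\}:=\{\theta_{i,j}(g_i)\}=\{g_j\}$, which is a genuine nonempty subset. Second, the Integrity clause $\Lambda=\bigcup_{e\in\mathcal{E}}e$ forces every $g_i$ to appear in some hyperedge of $\mathcal{E}$; your construction $\mathcal{E}=\{\{g_{\sigma(u)}\}:u\in V(G)\}$ only achieves this when $\sigma$ is surjective onto $[1,m]$. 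The cleanest fix is to take $\mathcal{E}=\big\{\{g_i\}:i\in[1,m]\big\}$ outright (all singletons), which is an IOI-hyperedge set under the operation above regardless of $G$, and then let $F$ be any map into it. Your remark that ``$|\Lambda|$ may be assumed large enough'' goes in the wrong direction for this issue.
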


\begin{problem}\label{question:assembl-graphs-hypergraphs}
A total coloring partition the vertices and edges of the graph into \emph{total independent sets}. For a graph $G$, we

(i) let $V_i=\{u_{i,1},u_{i,2},\dots ,u_{i,a_i}\}$ be an independent vertex set of $V(G)$, such that each vertex $u_{i,j}\in V_i$ with $j\in [1,a_i]$ is colored with color $i$.

(ii) let $E_i=\{e_{i,1},e_{i,2},\dots ,e_{i,b_i}\}$ be an independent edge set of $E(G)$, such that each edge $e_{i,j}\in E_i$ with $j\in [1,b_i]$ is colored with $i$th color, but two ends of the edge $e_{i,j}$ are not colored.

For the $i$-color ve-set $S_i=V_i\cup E_i$ with $i\in [1,k]$ defined above, we get a hypergraph $\mathcal{H}_{yper}=(\Lambda(G),\mathcal{E})$ with its own vertex set $\Lambda(G)=V(G)\cup E(G)$ and its own hyperedge set $\mathcal{E}=\bigcup^k_{i=1}S_i$ holding $\Lambda(G)=\bigcup_{S_i\in \mathcal{E}}S_i$. \textbf{Dose} $G$ admit a proper total coloring $f:V(G)\cup E(G)\rightarrow [1,k]$, such that the $i$-color ve-set $S_i=\{w:~w\in V(G)\cup E(G)\textrm{ and }f(w)=i\}$ for each $i\in [1,k]$?

Without doubt, there are some hyperedge sets $\mathcal{E}^a\in \mathcal{E}_{gen}\big (\Lambda^2(G)\big )$ with $a\in [1,m]$, such that each hypergraph $\mathcal{H}^a_{yper}=(\Lambda(G),\mathcal{E}^a)$ produces a proper total coloring of the graph $G$, where each hyperedge set $\mathcal{E}^a=\bigcup^k_{i=1} S^a_i=\bigcup^k_{i=1} (V^a_i\cup E^a_i)$ with $a\in [1,m]$, and moreover subsets $V^a_i\subset V(G)$ and $E^a_i\subset E(G)$ are independent sets in the graph $G$.
\end{problem}

\begin{problem}\label{qeu:444444}
A $k$-level $T$-tree $H$ is a tree, where the tree $T$ is the root tree, such that each \emph{leaf-removing tree} $H_{i-1}=H_{i}-L(H_{i})$ for $i\in [1,k]$ with $k\geq 1$, and $H_{0}=T$, \textbf{characterize} $k$-level $T$-trees with $k\geq 1$.
\end{problem}

\subsubsection{$D_{nei}$-hypergraphs}

\begin{defn} \label{defn:dist-nei-hypergraphs}
Suppose that a $(p,q)$-graph $G$ admits a total coloring $f:V(G)\cup E(G)\rightarrow S_{thing}$ holding the total color set $f(V(G)\cup E(G))=S_{thing}$, such that $f(u)$, $f(uv)$ and $f(v)$ for each edge $uv\in E(G)$ satisfy a constraint set $R_{est}(c_1,c_2,\dots ,c_m)$. There are four neighbor sets for each vertex $u$ of the graph $G$ as follows:
\begin{equation}\label{eqa:555555}
{
\begin{split}
C_v(u)&=\{f(v):v\in N_{ei}(u)\},~C_v[u]=C_v(u)\cup \{f(u)\}\\
C_e(u)&=\{f(uv):v\in N_{ei}(u)\},~C_e[u]=C_e(u)\cup \{f(u)\}
\end{split}}
\end{equation}
We have a $3$I-hyperedge set $\mathcal{E}_{ei}\in \mathcal{E}\big (S^2_{thing}\big )$ as follows
\begin{equation}\label{eqa:555555}
\mathcal{E}_{ei}=\{C_v(u),C_v[u],C_e(u),C_e[u]:u\in V(G)\}
\end{equation} Clearly, the total color set $f(V(G)\cup E(G))=S_{thing}=\bigcup_{e\in \mathcal{E}_{ei}}e$. Then the graph $G$ admits a total hyperedge-set coloring $F:V(G)\rightarrow \mathcal{E}_{ei}$ holding one of the following neighbor cases
\begin{asparaenum}[\textbf{\textrm{Nei}}-1.]
\item (v-neighbor) $F(u)=C_v[u]$ for each vertex $u\in V(G)$
\item (e-neighbor) $F(u)=C_e(u)$ for each vertex $u\in V(G)$
\item (ve-neighbor) $F(u)=C_e(u)\cup C_v(u)$ for each vertex $u\in V(G)$
\item (all-neighbor) $F(u)=C_e(u)\cup C_v(u)\cup \{f(u)\}$ for each vertex $u\in V(G)$
\end{asparaenum}
and obey the constraint set $R^*_{est}(a_1,a_2,\dots ,a_n)$, then we say that $G$ is \emph{neighbor-set graph} of the \emph{$D_{nei}$-hypergraph} $H_{yper}=(S_{thing},\mathcal{E}_{ei})$.\qqed
\end{defn}

\begin{thm}\label{thm:666666}
Each $W$-constraint coloring $f$ of a graph $G$ corresponds to a $D_{nei}$-hypergraph $H_{yper}=(\bigwedge_f,\mathcal{E}_{f})$ with its own vertex set $\bigwedge_f=f(S)$ for $S\subseteq V(G)\cup E(G)$ and a hyperedge set $\mathcal{E}_{f}\in \mathcal{E}_{gen}\big (\bigwedge^2_f\big )$, such that each hyperedge $e\in \mathcal{E}_{f}$ corresponds to another hyperedge $e\,'\in \mathcal{E}_{f}\setminus e$ holding $e\cap e\,'\neq\emptyset$.
\end{thm}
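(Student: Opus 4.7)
The plan is to construct $\mathcal{E}_f$ explicitly from the four neighbor-sets attached to each vertex by the coloring $f$, exactly along the lines of Definition \ref{defn:dist-nei-hypergraphs}, and then verify the two properties the theorem requires: integrity (that the union of hyperedges recovers $\bigwedge_f$) and the pairwise intersection condition. Since the target family $\mathcal{E}_{gen}\big(\bigwedge^2_f\big)$ admits any hyperedge collection (the remark after Definition \ref{defn:new-hypergraphs-sets} makes this point explicit), the membership statement $\mathcal{E}_f\in \mathcal{E}_{gen}(\bigwedge^2_f)$ is essentially free once each element of $\mathcal{E}_f$ is a nonempty subset of $\bigwedge_f$.

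First I would fix $S=V(G)\cup E(G)$ (discarding, if necessary, any isolated vertex so that every vertex has at least one neighbor and at least one incident edge), and set $\bigwedge_f=f(S)$. For each $u\in V(G)$ I form the sets $C_v(u),\,C_v[u],\,C_e(u),\,C_e[u]$ as in Eq.~(27), and define
\begin{equation*}
\mathcal{E}_f=\{C_v(u),\,C_v[u],\,C_e(u),\,C_e[u]:~u\in V(G)\},
\end{equation*}
discarding any $C_v(u)$ or $C_e(u)$ that happens to be empty. Each element of $\mathcal{E}_f$ is a nonempty subset of $\bigwedge_f$, which places $\mathcal{E}_f$ in $\mathcal{E}_{gen}(\bigwedge^2_f)$.

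Next I would verify integrity. Any vertex color $f(u)$ belongs to $C_v[u]\in \mathcal{E}_f$, and any edge color $f(uv)$ belongs to $C_e(u)$ (and $C_e(v)$), so $f(S)\subseteq \bigcup_{e\in \mathcal{E}_f}e$; the reverse inclusion is immediate because every hyperedge is a set of values of $f$. For the intersection property I would fix a hyperedge $e\in \mathcal{E}_f$; by construction $e$ is one of the four neighbor-sets at some vertex $u$. Pick any $v\in N_{ei}(u)$. Then $f(v)\in C_v(u)\cap C_v[v]$, $f(uv)\in C_e(u)\cap C_e(v)$, $f(uv)\in C_e[u]\cap C_e[v]$, and $f(u)\in C_v[u]\cap C_v[v]$, so in each of the four cases $e$ admits a partner $e'\in \mathcal{E}_f\setminus\{e\}$ with $e\cap e'\neq \emptyset$. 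Cross-type partners such as $C_v[u]\cap C_e[u]\ni f(u)$ also work when they are available, giving additional flexibility.

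The main obstacle is the degenerate case of isolated or pendant vertices, where $C_v(u)$ or $C_e(u)$ may collapse to the empty set or the vertex may have only one neighbor, which can make some of the four candidates coincide or vanish; this is handled cleanly by pruning empty sets from $\mathcal{E}_f$ and by observing that closed neighbor-sets $C_v[u],\,C_e[u]$ are always nonempty because they contain $f(u)$. A secondary remark is that the theorem asks only for $S\subseteq V(G)\cup E(G)$ and membership in the general family $\mathcal{E}_{gen}$, so we need not insist on the stronger 3I-hyperedge conditions (Independence or the stricter form of Integrity); this is why the construction goes through uniformly for every $W$-constraint coloring $f$ regardless of the specific $W$-constraint used.
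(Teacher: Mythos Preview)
Your approach is correct and is essentially the construction the paper has in mind: the theorem is stated in the paper without proof, immediately after Definition~\ref{defn:dist-nei-hypergraphs}, and is meant to follow directly from that definition by taking $\mathcal{E}_f=\mathcal{E}_{ei}=\{C_v(u),C_v[u],C_e(u),C_e[u]:u\in V(G)\}$ exactly as you do. Your verification of integrity and of the intersection property (via an adjacent vertex $v$ and the shared colors $f(v)$, $f(uv)$, $f(u)$) is the natural argument, and your remarks on pruning empty sets and on why membership in $\mathcal{E}_{gen}$ rather than the stricter 3I family suffices are well placed.
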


\begin{example}\label{exa:8888888888}
Suppose that the $(p,q)$-graph $G$ is a graph appeared in Definition \ref{defn:dist-nei-hypergraphs}, we present the following examples for constructing $D_{nei}$-hypergraphs:
\begin{asparaenum}[\textbf{\textrm{Dnei}}-1.]
\item $[1,M]$ is a consecutive integer set. About the constraint set $R_{est}(c_1,c_2$, $\dots $, $c_m)$, there are:

\qquad $c_1:f(u)\neq f(v)$ for each edge $uv\in E(G)$;

\qquad $c_2:f(uv)\neq f(uw)$ for neighbors $v,w\in N_{ei}(u)$ and $u\in V(G)$.

\qquad About the constraint set $R^*_{est}(a_1,a_2,\dots ,a_n)$, there are:

\qquad $a_1:F(x)\supset C_v(x)\neq C_v(y)\subset F(y)$ for each edge $xy\in E(G)$ with degrees $\textrm{deg}_G(u)\geq 2$ and $\textrm{deg}_G(v)\geq 2$;

\qquad $a_2:F(x)\supset C_e(x)\neq C_e(y)\subset F(y)$ for each edge $xy\in E(G)$ with degrees $\textrm{deg}_G(u)\geq 2$ and $\textrm{deg}_G(v)\geq 2$.

\qquad Hence, the graph $G$ is called \emph{adjacent ve-neighbor distinguishing graph} of the \emph{$D_{nei}$-hypergraph} $H_{yper}=([1,M]$, $\mathcal{E}_{ei})$ based on the hyperedge-set coloring $F$ defined in Definition \ref{defn:dist-nei-hypergraphs}. As $M=\chi\,''(G)$ is the total chromatic number, it is not easy to determine the total coloring $f$ in Definition \ref{defn:dist-nei-hypergraphs}.

\item $[1,p+q]$ is a consecutive integer set, and $G$ is a $(p,q)$-graph. About the constraint set $R_{est}(c_1,c_2,\dots ,c_m)$, there are:

\qquad $c_1:f(u)\neq f(x)$ for distinct vertices $u,x\in V(G)$;

\qquad $c_1:f(uv)\neq f(xy)$ for distinct edges $uv,xy\in E(G)$;

\qquad $c_3:f(u)+f(uv)+f(v)=k$ for each edge $uv\in E(G)$.

\qquad About the constraint set $R^*_{est}(a_1,a_2,\dots ,a_n)$, there are:

\qquad $a_1:F(u)\cap F(v)\neq \emptyset$ for each edge $uv\in E(G)$.

\qquad $a_2:C_e(u)=F(u)\neq F(v)=C_e(v)$ for each edge $uv\in E(G)$ with degrees $\textrm{deg}_G(u)\geq 2$ and $\textrm{deg}_G(v)\geq 2$.

\qquad So, the $(p,q)$-graph $G$ is called \emph{adjacent e-neighbor distinguishing edge-magic graph} of the \emph{$D_{nei}$-hypergraph} $H_{yper}=([1,p+q]$, $\mathcal{E}_{ei})$ based on the hyperedge-set coloring $F$ defined in Definition \ref{defn:dist-nei-hypergraphs}.

\item $[0,q]$ is a consecutive integer set, and $G$ is a $(p,q)$-graph. About the constraint set $R_{est}(c_1,c_2,\dots ,c_m)$, there are:

\qquad $c_1:f(u)\neq f(x)$ for distinct vertices $u,x\in V(G)$;

\qquad $c_2:f(uv)\neq f(xy)$ for distinct edges $uv,xy\in E(G)$;

\qquad $c_3:f(uv)=|f(u)-f(v)|$ for each edge $uv\in E(G)$;

\qquad $c_4:f(E(G))=\{f(uv):uv\in E(G)\}=[1,q]$.

\qquad About the constraint set $R^*_{est}(a_1,a_2,\dots ,a_n)$, there are:

\qquad $a_1:F(u)\cap F(v)\neq \emptyset$ for each edge $uv\in E(G)$.

\qquad $a_2:C_e(u)\cup C_v(u)\cup \{f(u)\}=F(u)\neq F(v)=C_e(v)\cup C_v(v)\cup \{f(v)\}$ for each edge $uv\in E(G)$ with degrees $\textrm{deg}_G(u)\geq 2$ and $\textrm{deg}_G(v)\geq 2$.

\qquad Thereby, we say the $(p,q)$-graph $G$ to be \emph{adjacent all-neighbor distinguishing graceful graph} of the \emph{$D_{nei}$-hypergraph} $H_{yper}=([0,q]$, $\mathcal{E}_{ei})$ based on the hyperedge-set coloring $F$ defined in Definition \ref{defn:dist-nei-hypergraphs}.

\item $[0,q]$ is a consecutive integer set, and $G$ is a bipartite $(p,q)$-graph with $V(G)=X\cup Y$ and $X\cup Y=\emptyset$. About the constraint set $R_{est}(c_1,c_2,\dots ,c_m)$, there are:

\qquad $c_1:u\in X$ and $v\in Y$ for each edge $uv\in E(G)$;

\qquad $c_2:f(u)\neq f(x)$ for distinct vertices $u,x\in V(G)$;

\qquad $c_3:f(uv)\neq f(xy)$ for distinct edges $uv,xy\in E(G)$;

\qquad $c_4:f(uv)=|f(u)-f(v)|$ for each edge $uv\in E(G)$;

\qquad $c_5:f(E(G))=\{f(uv):uv\in E(G)\}=[1,q]$;

\qquad $c_6:$ the set-ordered constraint $\max f(X)<\min f(Y)$ holds true.

\qquad About the constraint set $R^*_{est}(a_1,a_2,\dots ,a_n)$, there are:

\qquad $a_1:F(u)\cap F(v)\neq \emptyset$ for each edge $uv\in E(G)$.

\qquad $a_2:C_e(u)\cup C_v(u)\cup \{f(u)\}=F(u)\neq F(v)=C_e(v)\cup C_v(v)\cup \{f(v)\}$ for each edge $uv\in E(G)$ with degrees $\textrm{deg}_G(u)\geq 2$ and $\textrm{deg}_G(v)\geq 2$.

\qquad Thereby, the bipartite $(p,q)$-graph $G$ is called \emph{adjacent all-neighbor distinguishing set-ordered graceful graph} of the \emph{$D_{nei}$-hypergraph} $H_{yper}=([0,q]$, $\mathcal{E}_{ei})$ based on the hyperedge-set coloring $F$ defined in Definition \ref{defn:dist-nei-hypergraphs}.\qqed
\end{asparaenum}
\end{example}

\begin{cor}\label{cor:666666}
Since each tree $T$ of $q$ edges admits a set-ordered gracefully total coloring $f$ by Theorem \ref{thm:tree-graceful-total-coloringss}, then the tree $T$ induces a $D_{nei}$-hypergraph $H_{yper}=([0,q],\mathcal{E}_{ei})$ by Definition \ref{defn:dist-nei-hypergraphs}.
\end{cor}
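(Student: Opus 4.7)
The approach is to directly unpack the two citations and verify that the data assembled from the set-ordered graceful total coloring meets the conditions of Definition \ref{defn:dist-nei-hypergraphs}. First I would invoke Theorem \ref{thm:tree-graceful-total-coloringss} to obtain a set-ordered graceful total coloring $f:V(T)\cup E(T)\to [0,q]$ with respect to a vertex bipartition $V(T)=X\cup Y$ satisfying the set-ordered constraint $\max f(X)<\min f(Y)$, the injectivity of $f$ on $V(T)$, and $f(E(T))=[1,q]$ with $f(uv)=|f(u)-f(v)|$. This fixes the ground set of the candidate hypergraph as $S_{thing}=[0,q]$.

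Next, for every vertex $u\in V(T)$ I would form the four color neighbor sets $C_v(u)$, $C_v[u]$, $C_e(u)$, $C_e[u]$ as in Definition \ref{defn:dist-nei-hypergraphs} and collect the set-set
\[
\mathcal{E}_{ei}=\{C_v(u),\,C_v[u],\,C_e(u),\,C_e[u]:u\in V(T)\}.
\]
The Integrity axiom $\bigcup_{e\in \mathcal{E}_{ei}}e=[0,q]$ is immediate, since every edge-label in $[1,q]$ occurs in some $C_e(u)$ and every vertex-label occurs in some $C_v[u]$; jointly these exhaust $[0,q]$. Intersection is witnessed by the canonical companions $(C_v(u),C_v[u])$ and $(C_e(u),C_e[u])$, which share $f(u)$ trivially, and by $(C_e[u],C_e[v])$ for any edge $uv\in E(T)$, which share $f(uv)$.

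For Independence in the sense of Definition \ref{defn:new-hypergraphs-sets} I would argue that the set-ordered graceful structure forbids accidental containment among distinct neighbor sets: the bipartition $X,Y$ together with $\max f(X)<\min f(Y)$ separates vertex-labels from both endpoints of any edge, while injectivity of $f$ on $E(T)$ (graceful labeling gives distinct edge-labels in $[1,q]$) separates the $C_e$-families. If a pathological containment arises on a small degenerate tree, I would pass to the Graham reduction GR-2 and work with the resulting irreducible subfamily, which still covers $[0,q]$ and therefore remains a 3I-hyperedge set in $\mathcal{E}([0,q]^2)$; this confirms that $H_{yper}=([0,q],\mathcal{E}_{ei})$ is a legitimate $D_{nei}$-hypergraph.

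Finally I would set the total hyperedge-set coloring $F:V(T)\to \mathcal{E}_{ei}$ in, say, the \textbf{all-neighbor} case $F(u)=C_e(u)\cup C_v(u)\cup\{f(u)\}$ and check the $R^*_{est}$ constraint: for each edge $uv\in E(T)$ the intersection $F(u)\cap F(v)$ contains $f(uv)$, and the distinguishing inequality $F(u)\neq F(v)$ follows from the graceful identity $f(uv)=|f(u)-f(v)|$ combined with injectivity of $f$ on $V(T)$. The main obstacle I anticipate is this last step, namely ruling out the coincidence $F(u)=F(v)$ on edges whose both endpoints have degree at least two; here the argument must exploit that the unique edge-label $f(uv)$ is shared by both neighborhoods yet the distinct vertex-labels $f(u)\neq f(v)$ cannot simultaneously be reproduced as edge-labels at both endpoints without forcing a repeated label in $f(E(T))$, which contradicts $f(E(T))=[1,q]$. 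Once this is settled, the tree $T$ qualifies as a neighbor-set graph of $H_{yper}=([0,q],\mathcal{E}_{ei})$, completing the proof.
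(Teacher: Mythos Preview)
The paper offers no proof for this corollary; it is recorded as an immediate consequence of the two cited results, so your strategy of simply unpacking Theorem \ref{thm:tree-graceful-total-coloringss} to obtain $f$ and then feeding $f$ into the construction of Definition \ref{defn:dist-nei-hypergraphs} is exactly the intended argument, and in fact you are being more careful than the paper.

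Two small corrections to your details. First, in your Intersection check you say the pair $(C_v(u),C_v[u])$ ``share $f(u)$ trivially''; that is false, since $f(u)\notin C_v(u)$ by definition. What they actually share is all of $C_v(u)$, which is nonempty because $T$ has edges, so your conclusion survives but the stated reason does not. Second, and more seriously, your Independence argument cannot work as written: the containments $C_v(u)\subsetneq C_v[u]$ and $C_e(u)\subsetneq C_e[u]$ hold for \emph{every} vertex $u$ whenever $f$ is injective on $V(T)$ (as any graceful labeling is), so no appeal to the set-ordered structure will ``forbid accidental containment'' --- the containment is built into the family. Your fallback to Graham reduction GR-2 is therefore not a contingency for ``small degenerate trees'' but the mandatory step in every case. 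Note that the paper's own Definition \ref{defn:dist-nei-hypergraphs} simply asserts that $\mathcal{E}_{ei}$ is a 3I-hyperedge set without addressing this, so the issue you flagged is real and is glossed over in the source; your reduction step is a legitimate repair.
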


\begin{thm}\label{thm:graceful-total-sequence-coloring}
\cite{Yao-Su-Wang-Hui-Sun-ITAIC2020} Every tree $T$ with diameter $D(T)\geq 3$ and $s+1=\left \lceil \frac{D(T)}{2}\right \rceil $ admits at least $2^{s}$ different \emph{gracefully total sequence colorings} if two sequences $A_M, B_q$ holding $0<b_j-a_i\in B_q$ for $a_i\in A_M$ and $b_j\in B_q$.
\end{thm}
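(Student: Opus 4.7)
The plan is to build the $2^s$ colorings by constructing a single ``canonical'' gracefully total sequence coloring of $T$ and then exhibiting $s$ independent binary switches, each of which transforms a valid coloring into another valid (and distinct) one. First I would pick a diameter path $P = u_0 u_1 \cdots u_{D(T)}$ and root $T$ at a center (one center if $D(T)$ is even, one of the two bicenters if $D(T)$ is odd); this partitions $V(T)$ into BFS levels $L_0, L_1, \ldots, L_s$, where $s+1 = \lceil D(T)/2 \rceil$. The natural bipartition $V(T) = X \cup Y$ (even-level vs.\ odd-level vertices) together with the set-ordered constraint used in the graceful-type labelings recalled just before the theorem supplies a vertex color scheme in which even-level vertices are colored from $A_M$ and odd-level vertices from $B_q$; the hypothesis $0 < b_j - a_i \in B_q$ for $a_i \in A_M,\ b_j \in B_q$ guarantees that every induced edge color $f(uv) = f(v) - f(u)$ lies in $B_q$, which is exactly the gracefully total sequence condition.

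Next I would produce the canonical coloring $f_0$ explicitly. Listing the children of each vertex from left to right along the natural planar drawing of $T$ rooted at the center, I would assign to the leaves of the deepest level the smallest available elements of $A_M$ (or $B_q$, depending on parity), and then work upward, using the ``subtree shifting'' idea familiar from Abhyankar / Stanton--Zarnke / Gallian-style constructions: at each internal vertex $v$ on levels $1,2,\ldots,s$, the labels in the subtree $T_v$ can be either the ``low block'' or the ``high block'' of the still-unused elements of the appropriate sequence, and the $0 < b_j - a_i$ condition ensures both choices produce legal edge differences. This is where the $s$ binary choices come from: exactly one independent flip per level from $1$ through $s$ (the root level $0$ is fixed and contributes no choice, because $s+1 = \lceil D(T)/2 \rceil$ counts the levels available above a fixed center).

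For each choice vector $\varepsilon = (\varepsilon_1, \ldots, \varepsilon_s) \in \{0,1\}^s$ I would define $f_\varepsilon$ by performing the flip at each level $i$ with $\varepsilon_i = 1$, and then verify: (i) the vertex color set is still injective (flips act by the involution $a \mapsto \max A_M + \min A_M - a$ on $A_M$ blocks and similarly on $B_q$ blocks, which preserves the sequence); (ii) each induced edge color $b_j - a_i$ still lies in $B_q$ (follows from the hypothesis because each flip is a dual involution compatible with the sequence arithmetic); (iii) distinct $\varepsilon$ yield distinct $f_\varepsilon$, which reduces to observing that the flip at level $i$ is the unique transformation whose support is the union of subtrees rooted at level-$i$ vertices, so different $\varepsilon$'s differ on the level of the highest-indexed flip. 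This gives the required $2^s$ distinct colorings.

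The main obstacle, and the step I would spend the most care on, is verifying (ii) uniformly across the $2^s$ choice vectors: the flips at different levels interact, and one must check that after an arbitrary combination of them the differences across every edge still land in $B_q$ rather than escaping the sequence. The cleanest way is probably to prove a lemma that, for each subtree $T_v$, the multiset of labels inside $T_v$ and the label on the parent edge are jointly invariant (as an unordered pair of blocks) under the flip at $v$'s level, so the edge going from $v$ to its parent sees a difference of the form $b - a$ with $a \in A_M,\ b \in B_q$ in either orientation; the hypothesis then gives the result. Once this lemma is in place, the binary independence of the flips and the $2^s$ count follow immediately, and the case split on the parity of $D(T)$ only affects whether the center is a vertex or an edge, not the counting argument.
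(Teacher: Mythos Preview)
The paper does not contain a proof of this theorem. It is quoted verbatim from the external reference \cite{Yao-Su-Wang-Hui-Sun-ITAIC2020} and used only as a black box to derive the subsequent corollary about $D_{nei}$-hypergraphs; no argument, sketch, or indication of method appears in the present paper. Consequently there is nothing here against which to compare your proposal.

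That said, a brief remark on the proposal itself: your level-by-level binary-flip strategy is the natural one and matches the spirit of the set-ordered graceful constructions the authors use elsewhere (e.g.\ Theorem~\ref{thm:tree-graceful-total-coloringss} and Definition~\ref{defn:kd-w-type-colorings}), and the count $s+1=\lceil D(T)/2\rceil$ of BFS levels from a center is exactly the right source for $s$ independent choices. The one place to be careful is your step~(ii): the hypothesis $0<b_j-a_i\in B_q$ is stated only for $a_i\in A_M$ and $b_j\in B_q$, so after a flip you must confirm that every edge still has its $X$-end labeled from $A_M$ and its $Y$-end from $B_q$ (not the reverse), since the hypothesis gives you nothing about $a_i-b_j$. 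Your dual involutions $a\mapsto \max A_M+\min A_M-a$ keep each part inside its own sequence, so this should go through, but it is worth stating explicitly that the bipartition is preserved by every flip before invoking the hypothesis.
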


By Theorem \ref{thm:graceful-total-sequence-coloring} and Definition \ref{defn:dist-nei-hypergraphs}, we have:

\begin{cor}\label{cor:666666}
Each tree $T$ with diameter $D(T)\geq 3$ and $s+1=\left \lceil \frac{D(T)}{2}\right \rceil $ induces at least $2^{s}$ different $D_{nei}$-hypergraphs $H^i_{yper}=(A_M\cup B_q,\mathcal{E}^i_{ei})$ for $i\in [1,2^{s}]$.
\end{cor}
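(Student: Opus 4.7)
The plan is to combine Theorem \ref{thm:graceful-total-sequence-coloring} with the construction in Definition \ref{defn:dist-nei-hypergraphs} and then verify that distinct graceful total sequence colorings give rise to distinct $D_{nei}$-hypergraphs. First I would invoke Theorem \ref{thm:graceful-total-sequence-coloring} to obtain a family $\{f_i\}_{i\in [1,2^s]}$ of pairwise distinct gracefully total sequence colorings of the tree $T$, each of them being a total coloring $f_i:V(T)\cup E(T)\rightarrow A_M\cup B_q$ that fulfils the ``graceful-type'' constraints required for a set-ordered set-up (distinct labels on vertices, distinct labels on edges, and the graceful difference rule on each edge).

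Next, for each coloring $f_i$ I would apply Definition \ref{defn:dist-nei-hypergraphs} with the pan-set $S_{thing}=A_M\cup B_q$ and assemble the four neighbor sets $C_v(u),C_v[u],C_e(u),C_e[u]$ at every vertex $u\in V(T)$. Collecting these over all vertices yields a hyperedge set
\begin{equation*}
\mathcal{E}^i_{ei}=\big\{C_v(u),C_v[u],C_e(u),C_e[u]:u\in V(T)\big\},
\end{equation*}
and since every label of $A_M\cup B_q$ appears as some $f_i(u)$ or $f_i(uv)$, we obtain $A_M\cup B_q=\bigcup_{e\in \mathcal{E}^i_{ei}}e$. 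This certifies that $H^i_{yper}=(A_M\cup B_q,\mathcal{E}^i_{ei})$ is indeed a $D_{nei}$-hypergraph in the sense of Definition \ref{defn:dist-nei-hypergraphs}, producing $2^s$ candidate hypergraphs.

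The main obstacle is verifying that the $2^s$ hypergraphs obtained this way are \emph{pairwise distinct}. Two different total colorings could in principle yield the same neighbor system if their labels were merely permuted. I would handle this by tracking how the $2^s$ colorings are generated in the proof of Theorem \ref{thm:graceful-total-sequence-coloring}: each choice corresponds to an independent binary decision (typically swapping a label block on one side of the set-ordered bipartition at a particular level of the tree). I would then show that any such swap changes at least one of the closed edge-neighbor sets $C_e[u]$ or one of the closed vertex-neighbor sets $C_v[u]$ at the vertex where the swap is performed, because the graceful difference constraint $f(uv)=|f(u)-f(v)|$ forces the multiset of edge labels at $u$ to be determined by the multiset of vertex labels in $N_{ei}(u)$ and $f(u)$. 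A careful case analysis at the vertex realizing the swap then shows that the corresponding hyperedge set changes, hence $\mathcal{E}^i_{ei}\neq \mathcal{E}^j_{ei}$ whenever $i\neq j$.

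Finally, I would assemble these three ingredients into the statement: the $2^s$ colorings yield $2^s$ legitimate $D_{nei}$-hypergraphs by Definition \ref{defn:dist-nei-hypergraphs}, and the distinctness argument above guarantees the lower bound $2^s$. If any of the $2^s$ colorings turn out to collapse under the induced hyperedge identification, the bound would still hold by grouping equivalence classes, but the direct combinatorial matching between colorings and hyperedge structures is the cleanest route and is the part I expect to require the most care.
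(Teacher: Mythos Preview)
Your approach matches the paper's exactly: the corollary is stated there as an immediate consequence, prefaced only by ``By Theorem \ref{thm:graceful-total-sequence-coloring} and Definition \ref{defn:dist-nei-hypergraphs}, we have:'' with no further argument. Your additional care about verifying that distinct colorings yield distinct hyperedge systems goes beyond what the paper provides, so if anything your write-up would be more complete than the original.
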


\subsubsection{$W$-constraint hyperedge sets}

We construct $W$-constraint hyperedge sets by some parameterized colorings in this subsection.

\begin{example}\label{exa:8888888888}
Suppose that a bipartite $(p,q)$-graph $H$ admits a total coloring $f:V(H)\cup E(H)\rightarrow [0,q]$ subject to a constraint set $R_{est}(c_1,c_2,c_3,c_4)$. Since $V(H)=X\cup Y$ and $X\cap Y=\emptyset $, the coloring $f$ holds the constraints of the constraint set $R_{est}(c_1,c_2,c_3,c_4)$ as follows:

$c_1:$ The \emph{labeling constraint} $f(u)\neq f(w)$ for distinct vertices $u,w\in V(H)$, and $|f(V(H))|=p$;

$c_2:$ the \emph{set-ordered constraint} $\max f(X)<\min f(X)$;

$c_3:$ the \emph{$W$-constraint} $W[f(x),f(xy),f(y)]=\big [f(y)-f(x)\big ]-f(xy)=0$ for each edge $xy\in E(H)$ with $x\in X$ and $y\in Y$; and

$c_4:$ the edge color set holds the graceful constraint
$$
f(E(H))=[1,q]=\{f(xy)=f(y)-f(x):x\in X,y\in Y,xy\in E(H)\}
$$ true.

Also, the coloring $f$ is called \emph{set-ordered graceful labeling}. \qqed
\end{example}

\begin{thm}\label{thm:graph-admits-6-set-colorings}
\cite{Bing-Yao-arXiv:2207-03381} Each connected graph $G$ admits each one of the following $W$-constraint $(k,d)$-total set-colorings for $W$-constraint $\in \{$graceful, harmonious, edge-difference, edge-magic, felicitous-difference, graceful-difference$\}$.
\end{thm}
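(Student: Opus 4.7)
The plan is to reduce the statement for an arbitrary connected graph $G$ to the case of a tree, and then lift a suitable group of ordinary parameterized colorings of the tree to a set-coloring of $G$ via the vertex-coinciding operation.

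First I would fix a spanning tree $T$ of the connected $(p,q)$-graph $G$ and view the remaining edges of $E(G)\setminus E(T)$ as arising from a sequence of vertex-coinciding operations applied to a vertex-split tree (equivalently, $G$ can be obtained from some tree $T^*$ by a finite sequence of vertex-coinciding steps, which is the inverse process of vertex-splitting in Definition \ref{defn:vertex-split-coinciding-operations}, Definition \ref{defn:split-v-set-vertex-split-graphss}). This reduces the problem to producing, on $T^*$, a family of $W$-constraint $(k,d)$-total colorings that survive the coinciding operation when interpreted as set-colorings.

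Next, invoking Theorem \ref{thm:graceful-total-sequence-coloring} together with the parameterized framework of Definition \ref{defn:more-string-total-coloring}, the tree $T^*$ admits, for each fixed choice of $W$-constraint from the six listed (graceful, harmonious, edge-difference, edge-magic, felicitous-difference, graceful-difference), a group of $W$-constraint $(k_s,d_s)$-parameterized colorings $f_1,f_2,\dots,f_B$ of the form specified in Eq.(\ref{eqa:555555}) and Eq.(\ref{eqa:three-parameterized-colorings}). Bundling these $B$ colorings coordinate-wise yields a parameterized total set-coloring
\[
\theta(w)=\{f_1(w),f_2(w),\dots,f_B(w)\},\quad w\in V(T^*)\cup E(T^*),
\]
which satisfies the set-interpreted $W$-constraint $W[\theta(u),\theta(uv),\theta(v)]=0$ for each edge $uv$ in $T^*$ in the sense that some representative triple $(a_u,a_{uv},a_v)$ realizing $W=0$ exists in the respective sets.

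Finally, when we perform the sequence of vertex-coinciding operations $T^*\to G$, each coincidence $u'\bullet u''\mapsto u$ forces $\theta(u)=\theta(u')\cup \theta(u'')$, which still contains, for every edge incident to $u$ in $G$, at least one triple realizing the $W$-constraint, because that edge descended from an edge of $T^*$ and the corresponding coordinate value is preserved in the union. Restricting the edge set-colors in the same way gives the desired $W$-constraint $(k,d)$-total set-coloring of $G$. Iterating the argument over the six values of the $W$-constraint completes the proof.

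The main obstacle I expect is the verification that the $W$-constraint remains witnessed after each vertex-coinciding step: one has to pick, in advance, the parameterized family $f_1,\dots,f_B$ on $T^*$ so that, for every pair of vertices that will later be coincided, at least one parameter index $s$ supplies matching values $a_u\in \theta(u')$, $a_v\in \theta(u'')$ that combine with the appropriate edge value to realize $W=0$. This is arranged by taking $B$ large enough (Definition \ref{defn:more-string-total-coloring} already allows $B\geq 13$) and by using the freedom in the permutation parameters $i_k,j_k,s_k$ to spread the witnessing triples across the set; the set-ordered structure of the underlying graceful labeling (Theorem \ref{thm:graceful-total-sequence-coloring}) ensures there is enough room to do so. Routine verification of the six $W$-constraint equations, analogous to the ones carried out for the bipartite case in Definition \ref{defn:more-string-total-coloring}, then finishes the argument.
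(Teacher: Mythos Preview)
Your proposal follows essentially the same strategy as the paper's proof: vertex-split $G$ into a tree $T^*$, endow $T^*$ with ordinary $(k,d)$-total colorings of the required $W$-type, and push these forward to a set-coloring of $G$ by taking unions over the preimages of each vertex (respectively edge) under the homomorphism $T^*\rightarrow G$. The paper does exactly this, citing directly that every tree admits each of the six $(k,d)$-total colorings $g_1,\dots,g_6$, and then setting $F(u)=\{g_i(u^*):i\in[1,6],\,u^*\in\theta^{-1}(u)\}$ and similarly on edges.

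Two points where you overcomplicate the argument. First, you do not need a family $f_1,\dots,f_B$ of colorings of the same $W$-type; a \emph{single} $(k,d)$-total coloring $g$ of $T^*$ of the given $W$-type already suffices, because every edge $uv$ of $G$ is the image of some edge $u^*v^*$ of $T^*$, and the triple $(g(u^*),g(u^*v^*),g(v^*))$ lands in $F(u)\times F(uv)\times F(v)$ and witnesses $W=0$ automatically. Your invocation of Definition~\ref{defn:more-string-total-coloring} is slightly off: that definition bundles $B$ colorings of \emph{different} $W$-types, not $B$ instances of the same type. Second, the ``main obstacle'' you flag---arranging in advance that some parameter index witnesses the constraint after each coincidence---is a non-issue for the same reason: the witness comes for free from the single preimage edge, so no combinatorial matching or use of permutation parameters is needed. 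With these simplifications your argument collapses to the paper's three-line proof.
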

\begin{proof} Since a connected graph $G$ can be vertex-split into a tree $T$, such that we have a graph homomorphism $T\rightarrow G$ under a mapping $\theta:V(T)\rightarrow V(G)$, and each tree admits a graceful $(k,d)$-total coloring $g_1$, a harmonious $(k,d)$-total coloring $g_2$, an edge-difference $(k,d)$-total coloring $g_3$, an edge-magic $(k,d)$-total coloring $g_4$, a felicitous-difference $(k,d)$-total coloring $g_5$ and a graceful-difference $(k,d)$-total coloring $g_6$. Then the connected graph $G$ admits a $(k,d)$-total set-coloring $F$ defined by $F(u)=\{g_i(u^*):i\in [1,6],u^*\in V(T)\}$ for each vertex $u\in V(G)$ with $u=\theta(u^*)$ for $u^*\in V(T)$, and $F(uv)=\{g_i(u^*v^*):i\in [1,6],u^*v^*\in E(T)\}$ for each edge $uv\in E(G)$ with $uv=\theta(u^*)\theta(v^*)$ for each edge $u^*v^*\in E(T)$.

The proof of the theorem is complete.
\end{proof}

The following Definition \ref{defn:odd-even-separable-6C-labeling} shows us a constraint set $R_{est}(c_1,c_2,\dots, c_8)$:

\begin{defn}\label{defn:odd-even-separable-6C-labeling}
\cite{Yao-Sun-Zhang-Mu-Sun-Wang-Su-Zhang-Yang-Yang-2018arXiv} A total labeling $f:V(G)\cup E(G)\rightarrow [1,p+q]$ for a bipartite $(p,q)$-graph $G$ is a bijection and holds the following constraints:
\begin{asparaenum}[(i) ]
\item (total color set) $f(V(G)\cup E(G))=[1,p+q]$;
\item (e-magic) $f(uv)+|f(u)-f(v)|=k$;

\item (ee-difference) each edge $uv$ matches with another edge $xy$ holding one of $f(uv)=|f(x)-f(y)|$ and $f(uv)=2(p+q)-|f(x)-f(y)|$ true;

\item (ee-balanced) let $s(uv)=|f(u)-f(v)|-f(uv)$ for $uv\in E(G)$, then there exists a constant $k\,'$ such that each edge $uv$ matches with another edge $u\,'v\,'$ holding one of $s(uv)+s(u\,'v\,')=k\,'$ and $2(p+q)+s(uv)+s(u\,'v\,')=k\,'$ true;

\item (EV-ordered) $\min f(V(G))>\max f(E(G))$ (resp. $\max f(V(G))<\min f(E(G))$, or $f(V(G))$ $\subseteq f(E(G))$, or $f(E(G))$ $\subseteq f(V(G))$, or $f(V(G))$ is an odd-set and $f(E(G))$ is an even-set);

\item (ve-matching) there exists a constant $k\,''$ such that each edge $uv$ matches with one vertex $w$ such that $f(uv)+f(w)=k\,''$, and each vertex $z$ matches with one edge $xy$ such that $f(z)+f(xy)=k\,''$, except the \emph{singularity} $f(x_0)=\lfloor \frac{p+q+1}{2}\rfloor $;

\item (set-ordered constraint) $\max f(X)<\min f(Y)$ (resp. $\min f(X)>\max f(Y)$) for the bipartition $(X,Y)$ of $V(G)$.

\item (odd-even separable) $f(V(G))$ is an odd-set containing only odd numbers, as well as $f(E(G))$ is an even-set containing only even numbers.
\end{asparaenum}

We call $f$ \emph{odd-even separable 6C-labeling}.\qqed
\end{defn}

\begin{defn} \label{defn:kd-w-type-colorings}
\cite{Yao-Su-Wang-Hui-Sun-ITAIC2020} Let $G$ be a bipartite and connected $(p,q)$-graph, then its vertex set $V(G)=X\cup Y$ with $X\cap Y=\emptyset$ such that each edge $uv\in E(G)$ holds $u\in X$ and $v\in Y$. Let integers $a,k,m\geq 0$, $d\geq 1$ and $q\geq 1$. We have two parameterized sets
\begin{equation}\label{eqa:555555}
{
\begin{split}
S_{m,k,a,d}=& \big \{k+ad,k+(a+1)d,\dots ,k+(a+m)d\big \},\\
O_{2q-1,k,d}=& \big \{k+d,k+3d,\dots ,k+(2q-1)d\big \}
\end{split}}
\end{equation} with two cardinalities $|S_{m,k,a,d}|=m+1$ and $|O_{2q-1,k,d}|=q$. Suppose that the bipartite and connected $(p,q)$-graph $G$ admits a coloring
\begin{equation}\label{eqa:555555}
f:X\rightarrow S_{m,0,0,d}=\big \{0,d,\dots ,md\big \},~f:Y\cup E(G)\rightarrow S_{n,k,0,d}=\big \{k,k+d,\dots ,k+nd\big \}
\end{equation} with integers $k\geq 0$ and $d\geq 1$, here it is allowed $f(x)=f(y)$ for some distinct vertices $x,y\in V(G)$. Let $c$ be a non-negative integer. We define the following \emph{parameterized colorings}:
\begin{asparaenum}[\textbf{\textrm{Ptol}}-1. ]
\item If the edge color $f(uv)=|f(u)-f(v)|$ for each edge $uv\in E(G)$, and two color sets
\begin{equation}\label{eqa:555555}
f(E(G))=S_{q-1,k,0,d},\quad f(V(G)\cup E(G))\subseteq S_{m,0,0,d}\cup S_{q-1,k,0,d}
\end{equation} then $f$ is called a \emph{$(k,d)$-gracefully total coloring}; and moreover $f$ is called a \emph{strongly $(k,d)$-graceful total coloring} if $f(x)+f(y)=k+(q-1)d$ for each matching edge $xy$ of a matching $M$ of the graph $G$.
\item If the edge color $f(uv)=|f(u)-f(v)|$ for each edge $uv\in E(G)$, and two color sets
$$f(E(G))=O_{2q-1,k,d},~f(V(G)\cup E(G))\subseteq S_{m,0,0,d}\cup S_{2q-1,k,0,d}$$ then $f$ is called a \emph{$(k,d)$-odd-gracefully total coloring}; and moreover $f$ is called a \emph{strongly $(k,d)$-odd-graceful total coloring} if $f(x)+f(y)=k+(2q-1)d$ for each matching edge $xy$ of a matching $M$ of the graph $G$.
\item If there is a color set
$$\big \{f(u)+f(uv)+f(v):uv\in E(G)\big \}=\big \{2k+2ad,2k+2(a+1)d,\dots ,2k+2(a+q-1)d\big \}
$$ with $a\geq 0$ and the total color set $f(V(G)\cup E(G))\subseteq S_{m,0,0,d}\cup S_{2(a+q-1),k,a,d}$, then $f$ is called a \emph{$(k,d)$-edge antimagic total coloring}.
\item If the edge color $f(uv)=f(u)+f(v)~(\bmod^*qd)$ defined by
\begin{equation}\label{eqa:555555}
f(uv)-k=\big [f(u)+f(v)-k\big ]~(\bmod ~qd),~uv\in E(G)
\end{equation} and the edge color set $f(E(G))=S_{q-1,k,0,d}$, then we call $f$ \emph{$(k,d)$-harmonious total coloring}.
\item If the edge color $f(uv)=f(u)+f(v)~(\bmod^*qd)$ defined by $f(uv)-k=[f(u)+f(v)-k](\bmod ~qd)$ for each edge $uv\in E(G)$, and the edge color set $f(E(G))=O_{2q-1,k,d}$, then we call $f$ \emph{$(k,d)$-odd-elegant total coloring}.
\item If the \emph{edge-magic constraint} $f(u)+f(uv)+f(v)=c$ for each edge $uv\in E(G)$, the edge color set $f(E(G))=S_{q-1,k,0,d}$, and the vertex color set $f(V(G))\subseteq S_{m,0,0,d}\cup S_{q-1,k,0,d}$, then $f$ is called \emph{strongly edge-magic $(k,d)$-total coloring}; and moreover $f$ is called \emph{edge-magic $(k,d)$-total coloring} if the cardinality $|f(E(G))|\leq q$ and $f(u)+f(uv)+f(v)=c$ for each edge $uv\in E(G)$.
\item If the \emph{edge-difference constraint} $f(uv)+|f(u)-f(v)|=c$ for each edge $uv\in E(G)$ and the edge color set $f(E(G))=S_{q-1,k,0,d}$, then $f$ is called \emph{strongly edge-difference $(k,d)$-total coloring}; and moreover $f$ is called \emph{edge-difference $(k,d)$-total coloring} if the cardinality $|f(E(G))|\leq q$ and $f(uv)+|f(u)-f(v)|=c$ for each edge $uv\in E(G)$.
\item If the \emph{felicitous-difference constraint} $|f(u)+f(v)-f(uv)|=c$ for each edge $uv\in E(G)$ and the edge color set $f(E(G))=S_{q-1,k,0,d}$, then $f$ is called \emph{strongly felicitous-difference $(k,d)$-total coloring}; and moreover we call $f$ \emph{felicitous-difference $(k,d)$-total coloring} if the cardinality $|f(E(G))|\leq q$ and $\big |f(u)+f(v)-f(uv)\big |=c$ for each edge $uv\in E(G)$.
\item If the \emph{graceful-difference constraint} $\big ||f(u)-f(v)|-f(uv)\big |=c$ for each edge $uv\in E(G)$ and the edge color set $f(E(G))=S_{q-1,k,0,d}$, then we call $f$ to be \emph{strongly graceful-difference $(k,d)$-total coloring}; and we call $f$ \emph{graceful-difference $(k,d)$-total coloring} if the cardinality $|f(E(G))|\leq q$ and $\big ||f(u)-f(v)|-f(uv)\big |=c$ for each edge $uv\in E(G)$.\qqed
\end{asparaenum}
\end{defn}

\begin{thm}\label{thm:tree-graceful-total-coloringss}
\cite{Yao-Su-Wang-Hui-Sun-ITAIC2020} Each tree admits a $(k,d)$-gracefully total coloring defined in Definition \ref{defn:kd-w-type-colorings}, also, a set-ordered gracefully total coloring as $(k,d)=(1,1)$, and a set-ordered odd-gracefully total coloring as $(k,d)=(1,2)$.
\end{thm}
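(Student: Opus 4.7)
The plan is to reduce the general $(k,d)$ case to the classical set-ordered graceful labeling by an affine transformation, so the substantive work concentrates on the $(k,d)=(1,1)$ instance. The first step is to establish that every tree $T$ with $q$ edges admits a set-ordered graceful labeling $g:V(T)\to[0,q]$, with bipartition $V(T)=X\cup Y$ satisfying $\max g(X)<\min g(Y)$ and edge-difference set $\{g(y)-g(x):xy\in E(T),\,x\in X,\,y\in Y\}=[1,q]$. My route is induction on $q$: peel off a leaf $\ell$ from one end of a longest path in $T$, apply the inductive hypothesis to $T-\ell$, and reinsert $\ell$ with the label needed to realize the missing edge-difference $q$. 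The delicate point is to arrange shifts so that the new label stays in the correct bipartition class and $\max g(X)<\min g(Y)$ is preserved; this can be handled by a controlled relabeling of $Y$ (e.g.\ adding $1$ to all labels past a threshold) to open the required slot.

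Given $g$, I would then promote it to a $(k,d)$-gracefully total coloring $f$ by the affine rule
\[
f(x)=d\cdot g(x)\ \text{for}\ x\in X, \qquad f(y)=d\cdot g(y)+(k-d)\ \text{for}\ y\in Y,
\]
and set $f(xy)=f(y)-f(x)=d\cdot(g(y)-g(x))+(k-d)$ for each edge $xy$ with $x\in X$, $y\in Y$. Because $g$ is set-ordered, $f(xy)=|f(u)-f(v)|$ holds automatically. A direct range check yields $f(X)\subseteq S_{|X|-1,0,0,d}$, $f(Y)\subseteq S_{q-1,k,0,d}$, and $f(E(T))=\{k,k+d,\dots,k+(q-1)d\}=S_{q-1,k,0,d}$, verifying every clause of \textbf{Ptol}-1 in Definition~\ref{defn:kd-w-type-colorings}. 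Specializing, $(k,d)=(1,1)$ makes the affine map the identity so that $f=g$ is the set-ordered graceful labeling, while $(k,d)=(1,2)$ produces edge colors $\{1,3,5,\dots,2q-1\}$ and vertex colors in two arithmetic progressions of common difference $2$, recovering the set-ordered odd-gracefully total coloring in the usual sense.

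The main obstacle is the inductive base step: genuine set-ordered gracefulness for every tree is an $\alpha$-labeling property and is not known to hold universally in the classical sense, since it strictly strengthens the open Graceful Tree Conjecture. What rescues the present argument is the explicit clause in Definition~\ref{defn:kd-w-type-colorings} allowing $f(x)=f(y)$ for distinct vertices: I need only the images on $X$ and $Y$ (as multisets) together with the set-ordered inequality and the correct edge-difference set, not genuine injectivity of $g$. With this relaxation, the inductive reinsertion of the peeled leaf always has a free label available in its bipartition class, so the full Graceful Tree Conjecture need not be invoked; the bulk of the work reduces to organizing the leaf-peeling and label-shifting so that both bipartition classes and the full progression of $q$ distinct edge differences are simultaneously respected.
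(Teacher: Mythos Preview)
The paper does not supply a proof of this theorem; it is quoted as a cited result from \cite{Yao-Su-Wang-Hui-Sun-ITAIC2020}, so there is no in-paper argument to compare your proposal against. I can therefore only assess your sketch on its own terms.

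Your affine reduction from the general $(k,d)$ case to $(k,d)=(1,1)$ is correct and is exactly the right way to organize the proof: once a set-ordered graceful total coloring $g$ exists, the map $x\mapsto d\cdot g(x)$ on $X$ and $y\mapsto d\cdot g(y)+(k-d)$ on $Y$ transports every clause of \textbf{Ptol}-1 verbatim, and your derivations of the specializations $(1,1)$ and $(1,2)$ are accurate. You are also right to flag that Definition~\ref{defn:kd-w-type-colorings} explicitly permits $f(x)=f(y)$ for distinct vertices; this is the crucial relaxation that makes the theorem provable, since it decouples the statement from genuine $\alpha$-labelings.

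One correction and one caution. First, the existence of injective set-ordered graceful labelings ($\alpha$-labelings) for all trees is not ``open'': it is known to be \emph{false} --- there are concrete trees (already of diameter four) with no $\alpha$-labeling. What is open is the weaker Graceful Tree Conjecture; an $\alpha$-labeling would imply gracefulness, not conversely. So your rescue via non-injectivity is not a convenience but a necessity. Second, your inductive step (``shift the $Y$-labels to open a slot for the reinserted leaf'') is stated only as a plan, and this is where the real bookkeeping lives: after you shift, you must check that the set-ordered inequality $\max g(X)<\min g(Y)$ survives, that the new leaf's label lands in the correct bipartition range $S_{m,0,0,d}$ or $S_{n,k,0,d}$, and that no previously realized edge difference is destroyed. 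With non-injectivity allowed this can be made to work (a clean route is to root the tree, assign edge labels $1,\dots,q$ by a depth-first traversal so that along every root-to-leaf path the labels are monotone, and then let each vertex inherit the alternating partial sum along its root path), but your current write-up does not yet pin down which shift is applied in which case, so the argument as stated is a plausible outline rather than a complete proof.
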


\begin{defn} \label{defn:set-orderedw-hyperedge-set}
A \emph{set-ordered $W$-constraint hyperedge set}
\begin{equation}\label{eqa:555555}
\mathcal{E}_i=\big \{e^x_{i,j}:j\in [1,a_x]\big\}\bigcup \big \{e^E_{i,j}:j\in [1,b_E]\big\}\bigcup \big \{e^y_{i,j}:j\in [1,c_y]\big\}
\end{equation} holds:
\begin{asparaenum}[\textbf{\textrm{Sochs}}-1.]
\item \textbf{(Hyperedge set)} $\mathcal{E}_i\in \mathcal{E}\big([a,b]^2\big)$ and $[a,b]=\bigcup_{e\in \mathcal{E}_i}e$;
\item \textbf{(Set-ordered constraint)} $\max \big \{\max e^x_{i,j}:j\in [1,a_x]\big \}<\min \big \{\min e^y_{i,j}:j\in [1,c_y]\big \}$;
\item \textbf{($W$-constraint)} each $\gamma \in e^E_{i,k}$ with $k\in [1,b_E]$ corresponds to $\alpha\in e^x_{i,s}$ for some $s\in [1,a_x]$ and $\beta\in e^y_{i,t}$ for some $t\in [1,c_y]$ holding the $W$-constraint $W[\alpha,\gamma,\beta]=0$.
\end{asparaenum}

And moreover, we say a set-ordered $W$-constraint hyperedge set $\mathcal{E}_i$ to be \emph{\textbf{full}} if

(i) each $\alpha\in e^x_{i,s}$ for $s\in [1,a_x]$ corresponds to $\gamma \in e^E_{i,k}$ for some $k\in [1,b_E]$ and $\beta\in e^y_{i,t}$ for some $t\in [1,c_y]$ holding the $W$-constraint $W[\alpha,\gamma,\beta]=0$; and

(ii) each $\beta\in e^y_{i,t}$ with $t\in [1,c_y]$ corresponds to $\alpha\in e^x_{i,s}$ for some $s\in [1,a_x]$ and $\gamma \in e^E_{i,k}$ for some $k\in [1,b_E]$ holding the $W$-constraint $W[\alpha,\gamma,\beta]=0$.\qqed
\end{defn}

Suppose that a bipartite $(p,q)$-graph $H$ admits a $W$-constraint total coloring $f:V(H)\cup E(H)\rightarrow [a,b]$, such that the total color set $f(V(H)\cup E(H))=[a,b]$. Since $V(H)=X\cup Y$ with $X\cap Y=\emptyset$, then we have a hyperedge set $\mathcal{E}^*=\{f(X), f(E(H)),f(Y)\}$ with
\begin{equation}\label{eqa:makk-hyperedge-sets}
[a,b]=\bigcup_{e\in \mathcal{E}^*}e=f(X)\bigcup f(E(H))\bigcup f(Y)
\end{equation} such that each $f(x_i)\in f(X)$ corresponds to some $f(e_i)\in f(E(H))$ and $f(y_i)\in f(Y)$ with $e_i=x_iy_i\in E(H)$ holding a $W$-constraint $W[f(x_i),f(e_i),f(y_i)]=0$ and some other constraints of a constraint set $R_{est}(c_1,c_2,\dots, c_m)$.

\begin{example}\label{exa:8888888888}
In the hyperedge set $\mathcal{E}^*=\{f(X), f(E(H)),f(Y)\}$, three color sets $f(X)$, $f(E(H))$ and $f(Y)$ defined in Eq.(\ref{eqa:makk-hyperedge-sets}) can form small color subsets, so there are many hyperedge sets $\mathcal{E}\in \mathcal{E}([a,b]^2)$, like the above hyperedge set $\mathcal{E}^*=\{f(X), f(E(H))$, $f(Y)\}$.

In Fig.\ref{fig:W-constraint-hypergraphs}, the bipartite Hanzi-graph $H_1$ admits a total set-ordered graceful labeling $h:V(H_1)\cup E(H_1)\rightarrow [0,9]$, such that the color set $h(V(H_1)\cup E(H_1))=[0,9]$.

(i) Notice that $V(H_1)=X\cup Y$ with $X\cap Y=\emptyset$, we have a hyperedge set $\mathcal{E}_1=\{e_{1,1},e_{1,2},e_{1,3}\}$, where $h(X)=e_{1,1}=\{0,2,3,4\}$, $h(Y)=e_{1,2}=\{5,7,8,9\}$ and $h(E(H_1))=e_{1,3}=[1,9]$, such that each $\alpha\in e_{1,1}$ corresponds to $\beta\in e_{1,2}$ and $\gamma\in e_{1,3}$ holding the set-ordered graceful-constraint
\begin{equation}\label{eqa:set-ordered-graceful-constraints}
\{\gamma=\beta-\alpha:\alpha\in e_{1,1},\beta\in e_{1,2},\gamma\in e_{1,3}\}=[1,9]
\end{equation} and vice versa. So, $\mathcal{E}_1$ is full, and moreover, $[0,9]=\bigcup_{e_{1,i}\in \mathcal{E}_1}e_{1,i}=\bigcup ^3_{i=1}e_{1,i}$.

(ii) The second hyperedge set is $\mathcal{E}_2=\{e_{2,1},e_{2,2},e_{2,3}, e_{2,4},e_{2,5}\}$, where $e_{2,1}=\{0,2\}$, $e_{2,2}=\{3,4\}$, $e_{2,3}=\{5,7\}$, $e_{2,4}=\{8,9\}$ and $e_{2,5}=[1,9]$. Clearly, the hyperedge set $\mathcal{E}_2$ is full and holds the set-ordered graceful-constraint like that shown in Eq.(\ref{eqa:set-ordered-graceful-constraints}), as well as $[0,9]=\bigcup_{e_{2,i}\in \mathcal{E}_2}e_{2,i}=\bigcup ^5_{i=1}e_{2,i}$.

(iii) The third hyperedge set $\mathcal{E}_3=\{e_{3,1},e_{3,2},e_{3,3}, e_{3,4},e_{3,5}\}$ (see $H_2$ shown in Fig.\ref{fig:W-constraint-hypergraphs}), where $e_{3,1}=\{0,2\}$, $e_{3,2}=\{0,3\}$, $e_{3,3}=\{0,4\}$, $e_{3,4}=\{5,7\}$, $e_{3,5}=\{5,8\}$, $e_{3,6}=\{5,9\}$, $e_{3,7}=[1,5]$ and $e_{3,8}=[6,9]$. It is not hard to verify that the hyperedge set $\mathcal{E}_3$ is full and holds the set-ordered graceful-constraint like that shown in Eq.(\ref{eqa:set-ordered-graceful-constraints}), as well as $[0,9]=\bigcup_{e_{3,i}\in \mathcal{E}_3}e_{3,i}=\bigcup ^8_{i=1}e_{3,i}$.

The above three hyperedge sets $\mathcal{E}_1,\mathcal{E}_2,\mathcal{E}_3$ are the full set-ordered $W$-constraint hyperedge sets according to Definition \ref{defn:set-orderedw-hyperedge-set}. \qqed
\end{example}

\begin{figure}[h]
\centering
\includegraphics[width=15cm]{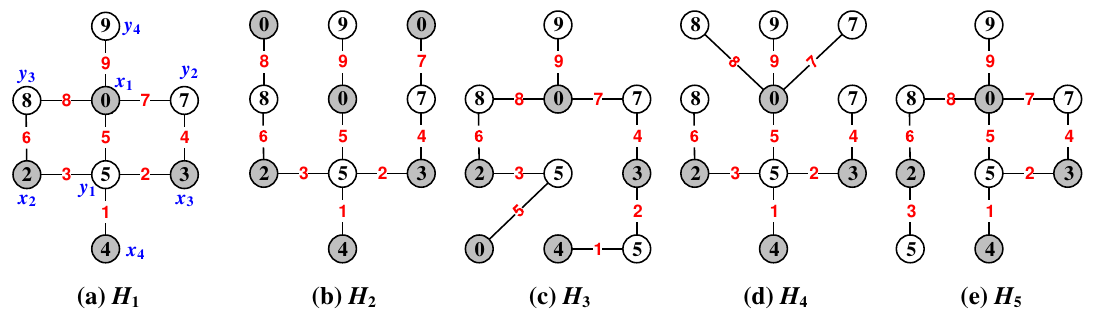}\\
\caption{\label{fig:W-constraint-hypergraphs}{\small A scheme for illustrating the $W$-constraint hyperedge sets.}}
\end{figure}

\begin{defn} \label{defn:hyperedge-set-generator}
Suppose that a bipartite graph $G$ admits a set-ordered $W$-constraint total coloring $f:V(G)\cup E(G)\rightarrow [a,b]$, and $V(G)=X\cup Y$ with $X\cap Y=\emptyset$ holding the set-ordered constraint $\max f(X)<\min f(Y)$. If a set-ordered $W$-constraint hyperedge set $\mathcal{E}_i$ defined in Definition \ref{defn:set-orderedw-hyperedge-set} holds $f(X)=\big \{e^x_{i,j}:j\in [1,a_x]\big \}$, $f(E)=\big \{e^E_{i,j}:j\in [1,b_E]\big \}$ and $f(Y)=\big \{e^y_{i,j}:j\in [1,c_y]\big \}$, then the bipartite graph $G$ is called \emph{topological generator} of the set-ordered $W$-constraint hyperedge set $\mathcal{E}_i$, and the symbol $G_{ener}(G,\mathcal{E})$ denotes the set containing all set-ordered $W$-constraint hyperedge sets generated topologically by the bipartite graph $G$.\qqed
\end{defn}

\begin{thm}\label{thm:666666}
If a bipartite graph $G$ admits a set-ordered $W$-constraint total coloring $f:V(G)\cup E(G)\rightarrow [a,b]$, or a $(k,d)$-$W$-constraint total coloring defined in Definition \ref{defn:kd-w-type-colorings}, then there exists a topological generator $G_{ener}(G,\mathcal{E})$ defined in Definition \ref{defn:hyperedge-set-generator}.
\end{thm}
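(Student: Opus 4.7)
The goal is to exhibit at least one set-ordered $W$-constraint hyperedge set $\mathcal{E}$ produced by the given coloring $f$, so that the generating set $G_{ener}(G,\mathcal{E})$ defined in Definition \ref{defn:hyperedge-set-generator} is non-empty. The plan is constructive: use the three natural color classes of the total coloring as the ``trivial'' hyperedge set, verify the three defining conditions (\textbf{Sochs}-1, \textbf{Sochs}-2, \textbf{Sochs}-3), and then refine this base partition to populate $G_{ener}(G,\mathcal{E})$ with many more hyperedge sets.

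First I would set $\Lambda=[a,b]$ (or, in the parameterized case of Definition \ref{defn:kd-w-type-colorings}, $\Lambda=S_{m,0,0,d}\cup S_{n,k,0,d}$) and look at the three images $f(X)$, $f(E(G))$, $f(Y)$. Because $f$ is a total coloring whose total image covers $[a,b]$, these three sets together satisfy $[a,b]=f(X)\cup f(E(G))\cup f(Y)$, which gives \textbf{Sochs}-1 after I check $\{f(X), f(E(G)), f(Y)\}\in \mathcal{E}\big([a,b]^2\big)$; the ``3I'' intersection property is immediate once we observe that each $f(E(G))$-element arises from an incident pair, hence $f(E(G))\cap f(X)\neq\emptyset$ or $f(E(G))\cap f(Y)\neq\emptyset$ (adjusting the degenerate case where some edge color coincides with a vertex color). \textbf{Sochs}-2 is nothing more than the set-ordered hypothesis $\max f(X)<\min f(Y)$ already supplied by the coloring.

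The key step is \textbf{Sochs}-3, the $W$-constraint. For each $\gamma\in f(E(G))$, pick an edge $uv\in E(G)$ with $f(uv)=\gamma$, $u\in X$, $v\in Y$; then $\alpha:=f(u)\in f(X)$ and $\beta:=f(v)\in f(Y)$ satisfy $W[\alpha,\gamma,\beta]=0$ because $f$ is by hypothesis a $W$-constraint total coloring. Hence the base tripartition $\mathcal{E}^{*}=\{f(X),f(E(G)),f(Y)\}$ is a valid set-ordered $W$-constraint hyperedge set, and the \emph{full} property of Definition \ref{defn:set-orderedw-hyperedge-set} additionally holds because every $\alpha\in f(X)$ (resp.\ $\beta\in f(Y)$) is realized as $f(u)$ (resp.\ $f(v)$) for some edge $uv\in E(G)$, which supplies the matching $\gamma$ and $\beta$ (resp.\ $\alpha$ and $\gamma$). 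So $\mathcal{E}^{*}\in G_{ener}(G,\mathcal{E})$, proving the existence.

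To enlarge $G_{ener}(G,\mathcal{E})$ beyond this single element, I would then refine: split $f(X)$ into any partition $\{e^x_{i,1},\dots,e^x_{i,a_x}\}$, $f(Y)$ into $\{e^y_{i,1},\dots,e^y_{i,c_y}\}$, and $f(E(G))$ into $\{e^E_{i,1},\dots,e^E_{i,b_E}\}$, subject only to the condition that each edge-color block $e^E_{i,k}$ has at least one witness triple $(\alpha,\gamma,\beta)$ from adjacent blocks. The verification of \textbf{Sochs}-1, \textbf{Sochs}-2, \textbf{Sochs}-3 carries over block-by-block. The only obstacle worth flagging is the full property when one insists on finely splitting the middle block $f(E(G))$: guaranteeing that every element of a block $e^x_{i,s}$ still witnesses some $\gamma$ in the chosen partition of $f(E(G))$ constrains the admissible refinements, and this is precisely the condition already met by Example showing $\mathcal{E}_1,\mathcal{E}_2,\mathcal{E}_3$ in Fig.\ref{fig:W-constraint-hypergraphs}. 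Hence every refinement compatible with incidence yields another element of $G_{ener}(G,\mathcal{E})$, completing the proof.
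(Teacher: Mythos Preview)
The paper states this theorem without proof, so there is no formal argument to compare against. Your constructive approach---taking the tripartition $\mathcal{E}^{*}=\{f(X),f(E(G)),f(Y)\}$ as the base hyperedge set and then refining it---is exactly what the paper illustrates in the example immediately preceding Definition~\ref{defn:hyperedge-set-generator} (the hyperedge sets $\mathcal{E}_1,\mathcal{E}_2,\mathcal{E}_3$ built from the Hanzi-graph $H_1$), so your argument reconstructs the intended reasoning.

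One point deserves a small caution. Your verification of \textbf{Sochs}-1 asserts $\mathcal{E}^{*}\in\mathcal{E}\big([a,b]^2\big)$, i.e.\ that $\mathcal{E}^{*}$ is a 3I-hyperedge set in the sense of Definition~\ref{defn:new-hypergraphs-sets}. But the Independence clause ($e\not\subset e'$ for distinct $e,e'$) can fail for the trivial tripartition: in the paper's own example one has $f(Y)=\{5,7,8,9\}\subset[1,9]=f(E(H_1))$. So either the paper is reading \textbf{Sochs}-1 loosely (requiring only Integrity and Intersection, not strict Independence), or the base tripartition should be replaced by a mild refinement that separates $f(E(G))$ from $f(X)\cup f(Y)$. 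Your argument for Intersection is also a bit quick: you appeal to ``each $f(E(G))$-element arises from an incident pair,'' but what is actually needed is that the \emph{sets} $f(E(G))$, $f(X)$, $f(Y)$ pairwise overlap, which follows in the standard graceful/$(k,d)$ cases because edge colors share values with vertex colors, not from incidence alone. These are definitional wrinkles in the paper rather than genuine gaps in your strategy; once \textbf{Sochs}-1 is interpreted consistently with the paper's own examples, your proof goes through.
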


\begin{defn} \label{defn:w-constraint-hypergraphs}
Let $[a,b]$ be a consecutive integer set. For a 3I-hyperedge set $\mathcal{E}\in \mathcal{E}([a,b]^2)$, if each hyperedge $e\in \mathcal{E}$ corresponds to two hyperedges $e\,'\in \mathcal{E}\setminus e$ and $e\,''\in \mathcal{E}\setminus \{e,e\,'\}$, such that there are numbers $\alpha\in e$, $\beta\in e\,'$ and $\gamma\in e\,''$ holding the $W$-constraint $W[\alpha,\beta,\gamma]=0$ and some other constraints of a constraint set $R_{est}(c_1,c_2,\dots, c_m)$, we call the hypergraph $\mathcal{H}_{yper}=([a,b],\mathcal{E})$ to be \emph{$W$-constraint hypergraph}, and the 3I-hyperedge set $\mathcal{E}$ to be \emph{$W$-constraint 3I-hyperedge set}.

Moreover, if each number $\alpha\in e\in \mathcal{E}$ corresponds to two numbers $\beta\in e\,'\in \mathcal{E}\setminus e$ and $\gamma\in e\,''\in \mathcal{E}\setminus \{e,e\,'\}$ holding the $W$-constraint $W[\alpha,\beta,\gamma]=0$ and some other constraints of the constraint set $R_{est}(c_1,c_2,\dots, c_m)$, then we say the $W$-constraint 3I-hyperedge set $\mathcal{E}$ to be \emph{full}, and the hypergraph $\mathcal{H}_{yper}=([a,b],\mathcal{E})$ to be \emph{full $W$-constraint hypergraph}.\qqed
\end{defn}

\begin{problem}\label{question:444444}
By Definition \ref{defn:set-orderedw-hyperedge-set} and Definition \ref{defn:w-constraint-hypergraphs}: \textbf{(A)} \textbf{Make} $W$-constraint hyperedge sets based on a consecutive integer set $[a,b]$. For example, three numbers $\alpha\in e\in \mathcal{E}$, $\beta\in e\,'\in \mathcal{E}\setminus e$ and $\gamma\in e\,''\in \mathcal{E}\setminus \{e,e\,'\}$ hold the following \emph{magic-constraints}:

(A-i) The \emph{edge-magic constraint} $\alpha+\gamma+\beta=k$ for a non-negative constant $k$.

(A-ii) The \emph{edge-difference constraint} $\gamma+|\alpha-\beta|=k$ for a non-negative constant $k$.

(A-iii) The \emph{graceful-difference constraint} $\big||\alpha-\beta|-\gamma \big|=k$ for a non-negative constant $k$.

(A-iv) The \emph{felicitous-difference constraint} $\big|\alpha+\beta-\gamma \big|=k$ for a non-negative constant $k$.

\textbf{(B)} \textbf{Determine} the topological generator $G_{ener}(G,\mathcal{E})$ defined in Definition \ref{defn:hyperedge-set-generator} for a bipartite graph $G$ and one $W$-constraint from the edge-magic constraint, the edge-difference constraint, the graceful-difference constraint and the felicitous-difference constraint.
\end{problem}

\begin{defn} \label{defn:Edge-w-constraint-graphs}
\textbf{The edge-$W$-constraint graphs of $W$-constraint hypergraphs.} For a $W$-constraint hypergraph $\mathcal{H}_{yper}=([a,b],\mathcal{E})$ based on a $3$I-hyperedge set $\mathcal{E}\in \in \mathcal{E}([a,b]^2)$, a graph $G$ admits a \emph{total hyperedge-set coloring} $F:V(G)\cup E(G)\rightarrow \mathcal{E}$, such that there are $c\in F(uv)$, $a\in F(u)$ and $b\in F(v)$ holding the $W$-constraint $W[a,c,b]=0$ and some other constraints of a constraint set $R_{est}(c_1,c_2,\dots, c_m)$.

Conversely, three numbers $\alpha\in e\in \mathcal{E}$, $\beta\in e\,'\in \mathcal{E}$ and $\gamma\in e\,''\in \mathcal{E}$ holding the $W$-constraint $W[\alpha,\beta,\gamma]=0$ and some other constraints of the constraint set $R_{est}(c_1,c_2,\dots, c_m)$ correspond to always an edge $xy\in E(G)$ such that $\gamma\in F(xy)$, $\alpha\in F(x)$ and $\beta\in F(y)$, then we call the graph $G$ \emph{edge-$W$-constraint graph} of the $W$-constraint hypergraph $\mathcal{H}_{yper}=([a,b],\mathcal{E})$.\qqed
\end{defn}

\begin{rem}\label{rem:333333}
In Definition \ref{defn:Edge-w-constraint-graphs}, an edge-$W$-constraint graph of the $W$-constraint hypergraph $\mathcal{H}_{yper}=([a,b],\mathcal{E})$ is like some ve-intersected graph of a hypergraph defined in Definition \ref{defn:vertex-intersected-graph-hypergraph}.\qqed
\end{rem}

\subsubsection{Parameterized hypergraphs}

We will use the following terminology and natation:

\begin{asparaenum}[$\bullet$~]
\item For a \emph{parameterized set} $\Lambda_{(m,b,n,k,a,d)}=S_{m,0,b,d}\cup S_{n,k,a,d}$ with
\begin{equation}\label{eqa:555555}
S_{m,0,b,d}=\big \{bd,(b+1)d\dots, md\big \},~ S_{n,k,a,d}=\big \{k+ad,k+(a+1)d,\dots ,k+(a+n)d\big \}
\end{equation} the power set $\Lambda^2_{(m,b,n,k,a,d)}$ collects all subsets of the parameterized set $\Lambda_{(m,b,n,k,a,d)}$.

\item A \emph{parameterized hyperedge set} $\mathcal{E}^P\in \mathcal{E}_{gen}(\Lambda^2_{(m,b,n,k,a,d)})$ holds $\bigcup _{e\in \mathcal{E}^P}e=\Lambda_{(m,b,n,k,a,d)}$ true.
\end{asparaenum}

Motivated from the hypergraph definition, we present the parameterized hypergraph as follows:

\begin{defn}\label{defn:parameterized-hypergraph-basic-definition}
\cite{Bing-Yao-arXiv:2207-03381} A \emph{parameterized hypergraph} $\mathcal{P}_{hyper}=(\Lambda_{(m,b,n,k,a,d)},\mathcal{E}^P)$ defined on a parameterized hypervertex set $\Lambda_{(m,b,n,k,a,d)}$ holds:

(i) Each element of $\mathcal{E}^P$ is not empty and called a \emph{parameterized hyperedge};

(ii) $\Lambda_{(m,b,n,k,a,d)}=\bigcup _{e\in \mathcal{E}^P}e$, where each element of $\Lambda_{(m,b,n,k,a,d)}$ is called a \emph{vertex}, and the cardinality $|\Lambda_{(m,b,n,k,a,d)}|$ is \emph{order} of $\mathcal{P}_{hyper}$;

(iii) $\mathcal{E}^P$ is called \emph{parameterized hyperedge set}, and the cardinality $|\mathcal{E}^P|$ is \emph{size} of $\mathcal{P}_{hyper}$. \qqed
\end{defn}

\begin{problem}\label{problem:99999}
By Definition \ref{defn:parameterized-hypergraph-basic-definition}, let $S(\Lambda^P)=\big \{\mathcal{E}^P_1,\mathcal{E}^P_2,\dots ,\mathcal{E}^P_{M}\big \}$ be the set of parameterized hyperedge sets defined on a parameterized set $\Lambda_{(m,b,n,k,a,d)}$, so that each parameterized hyperedge set $\mathcal{E}^P_i$ with $i\in [1,M]$ holds $\Lambda_{(m,b,n,k,a,d)}=\bigcup _{e\in \mathcal{E}^P_i}e$ true. \textbf{Estimate} the number $M$ of parameterized hypergraphs defined on the parameterized set $\Lambda_{(m,b,n,k,a,d)}$.
\end{problem}

\begin{defn} \label{defn:operation-graphs-pa-hypergraph}
\cite{Bing-Yao-arXiv:2207-03381} Let $\textbf{\textrm{O}}=(O_1,O_2,\dots ,O_m)$ be an operation set with $m\geq 1$, and if an element $c$ is obtained by implementing an operation $O_i\in \textbf{\textrm{O}}$ to other two elements $a,b$, we write this case as $c=a[O_i]b$. Suppose that a $(p,q)$-graph $G$ admits a proper $(k,d)$-total set-coloring $F: V(G)\cup E(G)\rightarrow \mathcal{E}^P$, where $\mathcal{E}^P$ is the parameterized hyperedge set of a parameterized hypergraph $\mathcal{P}_{hyper}=(\Lambda_{(m,b,n,k,a,d)},\mathcal{E}^P)$ defined in Definition \ref{defn:parameterized-hypergraph-basic-definition}. There are the following constraints:
\begin{asparaenum}[\textrm{\textbf{Pahy}}-1.]
\item \label{pahyper:parameterized-graph-3} Only one operation $O_k\in \textbf{\textrm{O}}$ holds $c_{uv}=a_u[O_k]b_v$ for each edge $uv\in E(G)$, where $a_u\in F(u)$, $b_v\in F(v)$ and $c_{uv}\in F(uv)$.
\item \label{pahyper:parameterized-graph-1} For each operation $O_i\in \textbf{\textrm{O}}$, each edge $uv\in E(G)$ holds $c_{uv}=a_u[O_i]b_v$ with some elements $a_u\in F(u)$, $b_v\in F(v)$ and $c_{uv}\in F(uv)$.
\item \label{pahyper:parameterized-graph-2} Each element $z\in F(uv)$ for each edge $uv\in E(G)$ corresponds to an operation $O_j\in \textbf{\textrm{O}}$, such that $z=x[O_j]y$ for some $x\in F(u)$ and $y\in F(v)$.
\item \label{pahyper:parameterized-graph-v} Each element $a_x\in F(x)$ for any vertex $x\in V(G)$ corresponds to an operation $O_s\in \textbf{\textrm{O}}$ and an adjacent vertex $y\in N_{ei}(x)$, such that $z_{xy}=a_x[O_s]b_y$ for some $z_{xy}\in F(xy)$ and $b_y\in F(y)$.
\item \label{pahyper:parameterized-graph-4} Each operation $O_t\in \textbf{\textrm{O}}$ corresponds to some edge $xy\in E(G)$ holding $c_{xy}=a_x[O_t]b_y$ for $a_x\in F(x)$, $b_y\in F(y)$ and $c_{xy}\in F(xy)$.
\item \label{pahyper:parameterized-graph-must} If there are three different sets $e_i,e_j,e_k\in F(V(G))$ and an operation $O_r\in \textbf{\textrm{O}}$ holding $e_i[O_r]e_j\subseteq e_k$, then there exists an edge $xy\in E(G)$, such that $F(x)=e_i$, $F(y)=e_j$ and $F(xy)=e_k$.
\end{asparaenum}
\noindent \textbf{Then, the $(p,q)$-graph $G$ is}:
\begin{asparaenum}[\textrm{\textbf{Ograph}}-1.]
\item a \emph{$(k,d)$-$\textbf{\textrm{O}}$-vertex operation graph} of the parameterized hypergraph $\mathcal{P}_{hyper}$ if Pahy-\ref{pahyper:parameterized-graph-1} and Pahy-\ref{pahyper:parameterized-graph-must} hold true.
\item a \emph{$(k,d)$-$\textbf{\textrm{O}}$-edge operation graph} of the parameterized hypergraph $\mathcal{P}_{hyper}$ if Pahy-\ref{pahyper:parameterized-graph-1}, Pahy-\ref{pahyper:parameterized-graph-2} and Pahy-\ref{pahyper:parameterized-graph-must} hold true.
\item a \emph{$(k,d)$-$\textbf{\textrm{O}}$-total operation graph} of the parameterized hypergraph $\mathcal{P}_{hyper}$ if Pahy-\ref{pahyper:parameterized-graph-1}, Pahy-\ref{pahyper:parameterized-graph-2}, Pahy-\ref{pahyper:parameterized-graph-v} and Pahy-\ref{pahyper:parameterized-graph-must} hold true.
\item a \emph{$(k,d)$-edge operation graph} of the parameterized hypergraph $\mathcal{P}_{hyper}$ if Pahy-\ref{pahyper:parameterized-graph-2} and Pahy-\ref{pahyper:parameterized-graph-must} hold true.
\item a \emph{$(k,d)$-vertex operation graph} of the parameterized hypergraph $\mathcal{P}_{hyper}$ if Pahy-\ref{pahyper:parameterized-graph-v} and Pahy-\ref{pahyper:parameterized-graph-must} hold true.
\item a \emph{$(k,d)$-total operation graph} of the parameterized hypergraph $\mathcal{P}_{hyper}$ if Pahy-\ref{pahyper:parameterized-graph-2}, Pahy-\ref{pahyper:parameterized-graph-v} and Pahy-\ref{pahyper:parameterized-graph-must} hold true.
\item a \emph{$(k,d)$-non-uniform operation graph} of the parameterized hypergraph $\mathcal{P}_{hyper}$ if Pahy-\ref{pahyper:parameterized-graph-4} and Pahy-\ref{pahyper:parameterized-graph-must} hold true.
\item a \emph{$(k,d)$-one operation graph} of the parameterized hypergraph $\mathcal{P}_{hyper}$ if Pahy-\ref{pahyper:parameterized-graph-3} and Pahy-\ref{pahyper:parameterized-graph-must} hold true.\qqed
\end{asparaenum}
\end{defn}

\begin{example}\label{exa:8888888888}
About Definition \ref{defn:operation-graphs-pa-hypergraph}, if the operation set $\textbf{\textrm{O}}$ contains only one operation ``$\bigcap$'', which is the \emph{intersection operation} on sets, and the $(p,q)$-graph $G$ satisfies the constraints Pahy-\ref{pahyper:parameterized-graph-1} and Pahy-\ref{pahyper:parameterized-graph-must} of Definition \ref{defn:operation-graphs-pa-hypergraph}, then $G$ is called \emph{$(k,d)$-ve-intersected graph} of the parameterized hypergraph $\mathcal{P}_{hyper}$. As $(k,d)=(1,1)$, the graph $G$ is just a \emph{ve-intersected graph} of a hypergraph $H_{hyper}$ defined in \cite{Jianfang-Wang-Hypergraphs-2008}.\qqed
\end{example}

\subsection{Operation-hypergraphs}

\subsubsection{Pan-operation graphs of non-ordinary hypergraphs}

In particular cases, pan-operation graphs of non-ordinary hypergraphs are v-intersected graphs and ve-intersected graphs of ordinary hypergraphs introduced in Definition \ref{defn:new-intersected-hypergraphss} and Definition \ref{defn:vertex-intersected-graph-hypergraph}, respectively.

\begin{defn} \label{defn:pan-operation-graph-a-hypergraph}
\textbf{The pan-operation graphs of non-ordinary hypergraphs.} Let $[\bullet]$ be an operation on sets, and let $\mathcal{E}$ be an IOI-hyperedge set of $\mathcal{E}(\Lambda^2)$ defined on a finite set $\Lambda$ such that each hyperedge $e\in \mathcal{E}$ corresponds to another hyperedge $e\,'\in \mathcal{E}$ holding $e[\bullet] e\,'$ to be one subset of the power set $\Lambda^2$, and $\Lambda=\bigcup _{e\in \mathcal{E}}e$, as well as $R_{est}(c_0,c_1,c_2,\dots ,c_m)$ be a constraint set with $m\geq 0$, in which the first constraint $c_0:= e[\bullet] e\,'\in \Lambda^2$. If a graph $G$ admits a total set-coloring $\theta: V(H)\cup E(H)\rightarrow \mathcal{E}$ holding $\theta(x)\neq \theta(y)$ for each edge $uv\in E(G)$, and

(i) the first constraint $c_0: ~\theta(uv)\supseteq \theta(u)[\bullet]\theta(v)\neq \emptyset$.

(ii) For some $k\in [1,m]$, there is a function $\pi_k$, such that the $k$th constraint $c_k:=\pi_k (b_{u},b_{uv},b_{v})=0$ with $k\in [1,m]$ for $b_{uv}\in \theta(uv)$, $b_{u}\in \theta(u)$ and $b_{v}\in \theta(v)$.

(iii) If any pair of distinct hyperedges $e_i,e_j\in \mathcal{E}$ holding $e_i[\bullet] e_j\in \Lambda^2$, then there is an edge $x_iy_j$ of the graph $G$, such that $\theta(x_i)=e_i$ and $\theta(y_j)=e_j$.

Then we call $G$ \emph{pan-operation graph} of the non-ordinary hypergraph $\mathcal{H}_{yper}=(\Lambda,\mathcal{E})$. \qqed
\end{defn}

\begin{defn} \label{defn:pan-groups-pan-element-sets}
\textbf{The every-zero pan-group.} An \emph{every-zero pan-group} $\{P_{an}(G);[+][-]\}$ has its own \emph{pan-element set} $P_{an}(G)=\{G_1,G_2$, $\dots$, $G_m\}$ with each pan-element $G_i$ admitting a total pan-coloring $h_i$, and $G_i\cong G_1$. The finite module abelian additive operation $G_i[+]G_j[-]G_k=G_\lambda $ based on the pan-element set $P_{an}(G)$ is defined by
\begin{equation}\label{eqa:3333333}
h_i(w)[+]h_j(w)[-]h_k(w)=h_\lambda(w),~w\in V(G_1)\cup E(G_1)
\end{equation} with $\lambda=i+j-k~(\bmod~M)$ for any preappointed \emph{zero} $G_k\in \{P_{an}(G);[+][-]\}$.\qqed
\end{defn}

\subsubsection{Compound hypergraphs}

From studying relationship between communities in networks, which is the topological structure between hypergraphs, we propose the following two concepts of the hyperedge-set coloring and the compound set-coloring:

\begin{defn}\label{defn:hypergraph-intersected-hypergraph}
\textbf{The hypergraph-intersected graphs.} Let $S_{\mathcal{E}}=\{\mathcal{E}_1,\mathcal{E}_1,\dots, \mathcal{E}_m\}$ with $m\geq 2$ be a \emph{3I-hyperedge set-set} with the 3I-hyperedge sets $\mathcal{E}_i\in\mathcal{E}\big (\Lambda^2\big )$ based on a finite set $\Lambda=\{x_1,x_2$, $\dots$, $x_m\}$ (Ref. Definition \ref{defn:new-hypergraphs-sets}), such that each 3I-hyperedge set $\mathcal{E}_i\in S_{\mathcal{E}}$ corresponds to another 3I-hyperedge set $\mathcal{E}_i\in S_{\mathcal{E}}$ with $i\neq j$ holding $\mathcal{E}_i\cap \mathcal{E}_j\neq \emptyset$. Suppose that a $(p,q)$-graph $G$ admits a \emph{hyperedge-set coloring} $\theta: V(G)\rightarrow S_{\mathcal{E}}$, such that each edge $uv$ of $E(G)$ holds $\theta(u)\cap \theta(v)\neq \emptyset$. Conversely, each pair of 3I-hyperedge sets $\mathcal{E}_i,\mathcal{E}_j\in S_{\mathcal{E}}$ with the intersected operation $\mathcal{E}_i\cap \mathcal{E}_j\neq \emptyset$ corresponds to an edge $xy\in E(G)$, such that $\theta(x)=\mathcal{E}_i$ and $\theta(y)=\mathcal{E}_j$. Then we call $G$ \emph{hypergraph-intersected graph} of the \emph{hyperedge-compound hypergraph} $\mathcal{H}_{comp}=(\Lambda, S_{\mathcal{E}} )$.\qqed
\end{defn}

\begin{rem}\label{rem:333333}
We generalize Definition \ref{defn:new-intersected-hypergraphss}, Definition \ref{defn:vertex-intersected-graph-hypergraph} and Definition \ref{defn:hypergraph-intersected-hypergraph} to the following $[\bullet]$-operation graphs:

(i) In Definition \ref{defn:new-intersected-hypergraphss}, if each edge $uv\in E(H)$ holds $\varphi(u)[\bullet]\varphi(v)$ true for an operation $[\bullet]$ on the sets of a hyperedge set $\mathcal{E}$, then we call $H$ \emph{v-$[\bullet]$-operation graph} of the hypergraph $\mathcal{H}_{yper}=(\Lambda,\mathcal{E})$.

(ii) In Definition \ref{defn:vertex-intersected-graph-hypergraph}, if each edge $uv\in E(H)$ holds $\varphi(u)[\bullet]\varphi(v)$ true for an operation $[\bullet]$ on a group of hyperedge sets, then we call $H$ \emph{ve-$[\bullet]$-operation graph}.

(iii) In Definition \ref{defn:hypergraph-intersected-hypergraph}, if each edge $uv\in E(G)$ holds $\theta(u)[\bullet]\theta(v)$ true for an operation $[\bullet]$ on the sets of a \emph{hyperedge set-set} $S_{\mathcal{E}}$, then we call $G$ \emph{hypergraph-$[\bullet]$-operation graph}.\qqed
\end{rem}

\begin{example}\label{exa:8888888888}
Let $K_{2n}$ be a complete graph of $2n$ vertices. Then $K_{2n}$ has perfect matching groups $\textbf{\textrm{M}}_i=\{M_{i,1},M_{i,2}$, $\dots $, $M_{i,2n-1}\}$ for $i\in [1,m]$. There is a graph $G$ admitting a set-coloring $F:V(G)\rightarrow \mathcal{E}_i$ with each hyperedge set $\mathcal{E}_i=\textbf{\textrm{M}}_i$ such that $F(x)\neq F(y)$ for distinct vertices $x,y\in V(G)$, and each edge $uv\in E(G)$ is colored with an induced set $F(uv)=F(u)\cup F(v)$ if $E(C_k)=F(u)\cup F(v)$, where $C_k$ is a hamiltonian cycle of $K_{2n}$. As the union operation ``$\cup $'' is the abstract operation ``$[\bullet ]$'' appeared in Definition \ref{defn:compound-set-coloring-hypergraph} and $G$ is a complete graph, then we have:
\begin{conj} \label{conj:perfect-1-factorization-conjecture}
\textbf{Perfect 1-Factorization Conjecture} (Anton Kotzig, 1964): For integer $n \geq 2$, $K_{2n}$ can be decomposed into $2n-1$ perfect matchings such that the union ``$\cup $'' of any two matchings forms a hamiltonian cycle of $K_{2n}$.
\end{conj}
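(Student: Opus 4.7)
The Perfect $1$-Factorization Conjecture has been open since 1964, so any honest plan must concede that a full proof is presently out of reach; the following outlines the natural line of attack and identifies the point of resistance. The plan is first to translate the conjecture into a starter problem. Taking the vertex set of $K_{2n}$ as $\mathbb{Z}_{2n-1}\cup\{\infty\}$, a perfect $1$-factorization of the required cyclic type is equivalent to exhibiting a single perfect matching $M_0$ whose finite pairs realize each nonzero element of $\mathbb{Z}_{2n-1}$ exactly once as a difference (a \emph{starter}), so that the translates $M_i=M_0+i$ for $i\in \mathbb{Z}_{2n-1}$ decompose $E(K_{2n})$ into $2n-1$ perfect matchings. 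The Kotzig condition then becomes the requirement that each union $M_i\cup M_j$ be a single Hamilton cycle rather than a disjoint union of shorter even cycles. In the language of Definition \ref{defn:pan-groups-pan-element-sets}, the family $\{M_i\}$ is the orbit of $M_0$ under the finite module abelian additive operation on $\mathbb{Z}_{2n-1}$, and each $M_i\cup M_j$ is a group translate of $M_0\cup M_{j-i}$, so it suffices to verify that $M_0\cup M_j$ is a Hamilton cycle for each $j\in[1,n-1]$.

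The second step is to invoke the two classical starter constructions that settle infinitely many cases. When $2n-1=p$ is an odd prime, the $GK_p$-starter $M_0=\{\{\infty,0\}\}\cup\{\{x^i,-x^i\}:i\in[0,(p-3)/2]\}$, with $x$ a primitive root of $\mathbb{Z}_p$, is known to be perfect, giving a perfect $1$-factorization of $K_{p+1}$; when $2n=p+1$ for an odd prime $p$, a dual construction ($GA$-starter) settles further cases. Sporadic orders such as $2n\in\{16,28,36,40,50,126,170,244,\dots\}$ are settled by ad hoc search. The remaining task is to produce a perfect starter for every residual order, a thin but infinite set of $n$.

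The third step is to attempt one of two avenues for the residual orders. The \emph{probabilistic} line estimates, for a uniformly random starter on $\mathbb{Z}_{2n-1}$, the expected number of indices $j$ for which $M_0\cup M_j$ contains a cycle of length less than $2n$, and aims to show this expectation is bounded away from $n-1$ for large $n$. The \emph{recursive} line attempts to compose a perfect $1$-factorization of $K_{2mn}$ from those of $K_{2m}$ and $K_{2n}$ via a product construction modelled on the pan-operation graphs of Definition \ref{defn:pan-operation-graph-a-hypergraph}, exploiting the every-zero hypergraph group structure of Definition \ref{defn:hypergraph-group-definition} so that Hamiltonicity of one factor union descends to the product.

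The hard part, and the reason the conjecture has resisted proof for sixty years, is controlling the \emph{cycle structure} of $M_0\cup M_j$ simultaneously for all $j$. The starter condition is merely a difference condition and is easy to arrange; but each union $M_0\cup M_j$ is a $2$-regular graph whose cycle type is a highly nonlinear function of $M_0$, and no known algebraic identity forces all $2n-2$ of these unions to be Hamilton cycles unless $2n-1$ has very restrictive number-theoretic structure, most notably primality. A realistic contribution of the present plan is therefore to settle additional sporadic orders by using the every-zero hypergraph group symmetries of Definition \ref{defn:hypergraph-group-definition} to reduce the number of unions requiring direct verification, while flagging that a general proof appears to require a genuinely new tool not contained in the framework developed in this paper.
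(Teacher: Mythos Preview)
The paper does not prove this statement; it is stated as a \textbf{Conjecture} (labelled \texttt{conj} in the source) inside Example~\ref{exa:8888888888}, recorded there only to illustrate that the union operation ``$\cup$'' on perfect matchings fits the abstract $[\bullet]$-operation framework of Definition~\ref{defn:compound-set-coloring-hypergraph}. No proof, partial proof, or even sketch is offered anywhere in the paper, so there is no ``paper's own proof'' against which your proposal can be compared.

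Your write-up is therefore not in conflict with the paper: you correctly flag at the outset that the Perfect $1$-Factorization Conjecture is open, and the remainder is an honest survey of the starter-based cyclic approach, the known prime and prime$+1$ cases, and the obstruction (simultaneous control of the cycle structure of all unions $M_0\cup M_j$). That is an accurate summary of the state of the art. The one caution is that your appeals to Definitions~\ref{defn:pan-groups-pan-element-sets}, \ref{defn:pan-operation-graph-a-hypergraph}, and~\ref{defn:hypergraph-group-definition} are decorative rather than substantive: the every-zero hypergraph group structure in this paper is just the cyclic shift action on $\mathbb{Z}_{2n-1}$, which is already the standard symmetry exploited in starter constructions, so invoking it does not genuinely reduce the number of unions to be checked beyond the reduction to $j\in[1,n-1]$ that you already obtained by translation. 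In short, nothing is wrong, but nothing here goes beyond what is known, and the paper itself makes no claim to the contrary.
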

\end{example}

\begin{conj} \label{conj:c2-KT-conjecture}
$K$-$T$ \textbf{conjecture} (Gy\'{a}r\'{a}s and Lehel, 1978; B\'{e}la Bollob\'{a}s, 1995): For integer $n\geq 3$, given $n$ mutually vertex disjoint trees $T_k$ of $k$ vertices with respect to $1\leq k\leq n$. Then the complete graph $K_n$ can be decomposed into the union of $n$ mutually edge disjoint trees $H_k$, namely $K_n=\bigcup ^{n}_{k=1}H_k$, such that $T_k\cong H_k$ whenever $1\leq k\leq n$, write this case as $\langle T_1,T_2,\dots, T_n\mid K_n\rangle$.
\end{conj}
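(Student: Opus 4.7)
The plan is to view the problem as a hypergraph-guided tree packing, combining a probabilistic near-decomposition with an absorption step. First I would split the tree list into a \emph{large-tree block} $\{T_{\lceil n/\log n\rceil +1},\dots, T_n\}$ and a \emph{small-tree block} $\{T_1,\dots,T_{\lceil n/\log n\rceil}\}$. The small-tree block has total edge count $o(n^{2})$, so if the large-tree block can be packed into $K_n$ leaving a sufficiently pseudo-random remainder graph $R$, then the small trees can be placed greedily into $R$ one by one, each step succeeding because the remainder still has min-degree close to $|V(R)|-1$. This reduction isolates the difficulty to packing the large trees, each of which has at least $n/\log n$ vertices and therefore acts like a ``near-spanning'' subgraph.

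Next I would set up the large-tree packing as a random process. Order the large trees by decreasing size and, for each $T_k$ in turn, choose a uniformly random embedding $\varphi_k : V(T_k)\to V(K_n)$ subject to the constraint that $\varphi_k(E(T_k))$ is disjoint from $\bigcup_{j>k}\varphi_j(E(T_j))$. The feasibility of each step is controlled by a token-counting invariant: after step $k$, I would maintain that the ``used-edge'' multigraph has quasi-random degree sequence, with each vertex having used-degree $\sum_{j>k}\deg_{T_j}(\cdot)$ up to an additive error of $O(\sqrt{n\log n})$. Concentration inequalities (Azuma for edge-exposure martingales, together with the Lovász Local Lemma applied to ``bad'' local configurations where a high-degree vertex of $T_k$ would need too many free neighbours) would be used to show the invariant is preserved with positive probability.

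To finish I would use the \emph{absorption} technique. Before starting the random process, I would reserve a structured ``absorbing'' subgraph $A\subset K_n$ obtained as an edge-disjoint union of blowups of small gadgets, one per large tree, each gadget being flexible enough to realise any one of several local embeddings of the corresponding $T_k$. The random process is run on $K_n - E(A)$; at the end, any residual discrepancy between the desired packing and what was built can be corrected by swapping a handful of edges through the gadgets in $A$, producing the exact isomorphism $T_k\cong H_k$ for every $k$. In the language of the paper, the final object $\langle T_1,T_2,\dots, T_n\mid K_n\rangle$ is an edge-disjoint realisation of the 3I-hyperedge set $\mathcal{E}=\{E(H_k):k\in[1,n]\}\in \mathcal{E}\bigl(E(K_n)^2\bigr)$, and the whole argument says this $\mathcal{E}$ is nonempty.

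The main obstacle is the feasibility of the random step when $T_k$ has vertices of very high degree, since the Local Lemma bounds degrade when $\Delta(T_k)$ is large relative to $n/\log n$; handling stars and near-stars inside the large-tree block requires a separate bounded-degree/high-degree dichotomy, with high-degree trees absorbed into the reserved subgraph $A$ from the outset rather than embedded randomly. A secondary obstacle is base-case verification for small $n$, where the asymptotic tools fail and one must rely on exhaustive constructions (as in the known verifications of the conjecture for small $n$).
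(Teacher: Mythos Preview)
The statement you are trying to prove is labeled \textbf{Conjecture} in the paper, not Theorem: it is the Gy\'ar\'as--Lehel tree packing conjecture, which the paper merely \emph{states} as motivation for the subsequent discussion of tree-group homomorphisms and $[1,n]$-tree hyperedge sets. The paper offers no proof, because none is known; this is a famous open problem. So there is no ``paper's own proof'' to compare your proposal against.

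Your outline is not a crank attempt --- it is recognisably the absorption-plus-random-embedding paradigm that has produced genuine partial results on this and related packing conjectures (bounded-degree tree packings, approximate decompositions, the resolution of Ringel's conjecture). But as a claimed proof of the full conjecture it has a real gap, and you have in fact located it yourself in your final paragraph. When $T_k$ is a star $K_{1,k-1}$ (or close to one), the centre must land on a vertex with $k-1$ free neighbours, and for $k$ close to $n$ this is a rigid global constraint that neither the Local Lemma nor Azuma can handle: the ``bad events'' are not local. Your proposed fix --- shunt all high-degree trees into the absorber $A$ from the outset --- does not work as stated, because the high-degree trees collectively use a constant fraction of $E(K_n)$, so $A$ would have to be of size $\Theta(n^2)$, and then the random process on $K_n - E(A)$ no longer sees a dense enough host graph to maintain the quasi-randomness invariant. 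This high-degree obstacle is precisely why the conjecture is still open in full generality; the existing absorption proofs all impose a maximum-degree bound of the form $\Delta(T_k) \le cn/\log n$ or similar.

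In short: the paper does not prove this, and neither does your sketch. What you have written is a reasonable summary of the state of the art on the \emph{bounded-degree} case, but it does not close the gap to the full conjecture.
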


\textbf{The tree-group homomorphism.} By Conjecture \ref{conj:c2-KT-conjecture}, there is a tree-group homomorphism $(T_1,T_2,\dots, T_n)\rightarrow K_n$ in the view of homomorphism. Each $[1,n]$-tree $e_{i,j}=\{T_{i,j,1},T_{i,j,2},\dots, T_{i,j,n}\}$ with $|V(T_{i,j,s})|=s$ for $s\in [1,n]$ holds $\langle e_{i,j}\mid K_n\rangle$ and $e_{i,j}\rightarrow K_n$ true, or the complete graph $K_n$ can be vertex-splitting into trees $T_1,T_2,\dots, T_n$. A hyperedge set $S_{i}=\{e_{i,1},e_{i,2},\dots, e_{i,m}\}$ is a $[1,n]$-tree set, such that each $[1,n]$-tree $e_{i,j}\in S_{i}$ corresponds to another $[1,n]$-tree $e_{i,k}\in S_{i}\setminus e_{i,j}$ holding $e_{i,j}\cap e_{i,k}\neq \emptyset$. We get a \emph{$[1,n]$-tree homomorphism general hypergraph} $\mathcal{H}_{yper}^{gen}=(T_{ree}(\leq n),S_{i})$, where $T_{ree}(\leq n)$ is the set of trees of $p$ vertices with $1\leq p\leq n$. We have a $[1,n]$-tree set $G_{tree}(K_n)=\{S_{i}=\{e_{i,1},e_{i,2},\dots, e_{i,m}\}:i\in [1,A_n]\}$, where $A_n$ is the number of $[1,n]$-trees.

A graph $G$ admitting a $[1,n]$-tree set-coloring $f:V(G)\rightarrow S_i$, such that each edge $uv\in E(G)$ is colored with $f(uv)=f(u)\cap f(u)\neq \emptyset$. So, the graph $G$ is a v-intersected graph of the $[1,n]$-tree homomorphism general hypergraph $\mathcal{H}_{yper}^{gen}=(T_{ree}(\leq n),S_{i})$ if each pair of $[1,n]$-tree $e_{i,j}$ and $e_{i,k}$ holding $e_{i,j}\cap e_{i,k}\neq \emptyset$ corresponds to an edge of the graph $G$. Notice that the hyperedge set $S_{i}\not \in \mathcal{E}(T^2_{ree}(\leq n))$, since $T_{ree}(\leq n)\neq \bigcup_{e_{i,s}\in S_{i}}e_{i,s}$.

\begin{defn} \label{defn:compound-set-coloring-hypergraph}
Suppose that a graph $G$ admits a proper \emph{compound set-coloring} $\Gamma:V(G)\rightarrow S_{\mathcal{E}}=\{\mathcal{E}_1,\mathcal{E}_1,\dots, \mathcal{E}_m\}$ with $\Gamma(x)\neq \Gamma(y)$ for each edge $xy\in E(G)$, where each $\mathcal{E}_i$ is a set of subsets of a finite set $\Lambda$, and $\Lambda=\bigcup^n_{i=1}\Lambda_i$ with $\Lambda_i=\bigcup_{e_{i,j}\in \mathcal{E}_i}e_{i,j}$, and each $(\Lambda_i, \mathcal{E}_i)$ is a hypergraph. If there are a function $\psi$ and a constraint set $R_{est}(c_1,c_2,\dots, c_m)$ such that each edge $u_iv_j$ is colored with an induced edge color
\begin{equation}\label{eqa:compound-set-coloring-hypergraph-11}
\Gamma(u_iv_j)=\psi(\Gamma(u_i), \Gamma(v_j))\supseteq \Gamma(u_i)\cap \Gamma(v_j)=\mathcal{E}_{u_i}\cap \mathcal{E}_{u_j}\neq\emptyset
\end{equation} or there is an abstract operation ``$[\bullet ]$'' such that each induced edge color set
\begin{equation}\label{eqa:compound-set-coloring-hypergraph-22}
\Gamma(u_iv_j)\supseteq \Gamma(u_i)[\bullet ]\Gamma(v_j)=\mathcal{E}_{u_i}[\bullet ] \mathcal{E}_{u_j}\neq \emptyset
\end{equation} subject to the constraint set $R_{est}(c_1,c_2,\dots, c_m)$, then we call $\mathcal{H}_{comp}=\big (\bigcup^n_{i=1}\Lambda_i, \bigcup^n_{i=1}\mathcal{E}_i\big )$ \emph{compound hypergraph}, and call $G$ \emph{compound ve-intersected graph} of the compound hypergraph $\mathcal{H}_{comp}$ if each intersection $\mathcal{E}_{u_i}\cap \mathcal{E}_{u_j}\neq\emptyset$ corresponds to some edge $u_iv_j$ of the graph $G$. \qqed
\end{defn}

\begin{defn} \label{defn:hypergraph-type-topcode-matrix}
According to Definition \ref{defn:compound-set-coloring-hypergraph}, the compound ve-intersected graph $G$ admits a compound set-coloring $\Gamma:V(G)\rightarrow S_{\mathcal{E}}=\{\mathcal{E}_1,\mathcal{E}_1,\dots, \mathcal{E}_n\}$ holding Eq.(\ref{eqa:compound-set-coloring-hypergraph-11}), so the compound ve-intersected graph $G$ corresponds to its own \emph{hypergraph Topcode-matrix}
\begin{equation}\label{eqa:hypergraph-type-Topcode-matrix}
{
\begin{split}
H^{comp}_{yper}(G)&=\left(
\begin{array}{ccccc}
\Gamma(x_1) & \Gamma(x_2) & \cdots & \Gamma(x_q)\\
\Gamma(x_1y_1) & \Gamma(x_2y_2) & \cdots & \Gamma(x_qy_q)\\
\Gamma(y_1) & \Gamma(y_2) & \cdots & \Gamma(y_q)
\end{array}
\right)=\left(
\begin{array}{ccccc}
\mathcal{E}_{x_1} & \mathcal{E}_{x_2} & \cdots & \mathcal{E}_{x_q}\\
\mathcal{E}_{x_1y_1} & \mathcal{E}_{x_2y_2} & \cdots & \mathcal{E}_{x_qy_q}\\
\mathcal{E}_{y_1} & \mathcal{E}_{y_2} & \cdots & \mathcal{E}_{y_q}
\end{array}
\right)\\
&=(X(S_{\mathcal{E}}),~E(S_{\mathcal{E}}),~Y(S_{\mathcal{E}}))^{T}
\end{split}}
\end{equation}
where $E(G)=\{x_iy_i:i\in [1,q]\}$, $X(S_{\mathcal{E}})=(\mathcal{E}_{x_1},\mathcal{E}_{x_2}, \cdots ,\mathcal{E}_{x_q})$ and $Y(S_{\mathcal{E}})=(\mathcal{E}_{y_1},\mathcal{E}_{y_2}, \cdots ,\mathcal{E}_{y_q})$ are called \emph{v-hypergraph vectors}, and $E(S_{\mathcal{E}})=(\mathcal{E}_{x_1y_1}, \mathcal{E}_{x_2y_2}, \cdots ,\mathcal{E}_{x_qy_q})$ is called \emph{e-hypergraph intersection vector}, as well as $\mathcal{E}_{x_iy_i}=\psi(\mathcal{E}_{x_i},\mathcal{E}_{y_i})$ for each edge $x_iy_i\in E(G)$.\qqed
\end{defn}

\begin{rem}\label{rem:333333}
The hypergraph Topcode-matrix $H^{comp}_{yper}(G)$ defined in Eq.(\ref{eqa:hypergraph-type-Topcode-matrix}) of Definition \ref{defn:hypergraph-type-topcode-matrix} is a three dimensional matrix, and the compound ve-intersected graph $G$ has its own vertices as hypergraphs, its own edges as intersections of the hypergraphs.\qqed
\end{rem}

\subsubsection{Miscellaneous hypergraphs}

\begin{defn} \label{defn:hyper-hypergraphs}
\textbf{The hyper-hypergraph}. By Definition \ref{defn:new-hypergraphs-sets} and a finite set $\Lambda=\{x_1,x_2,\dots ,x_n\}$, we defined a \emph{hyper-hypergraph} as follows: Let $\Phi=\mathcal{E}\big (\Lambda^2\big )=\{\mathcal{E}_i:~i\in [1,n(\Lambda)]\}$ with $n(\Lambda)=|\mathcal{E}\big (\Lambda^2\big )|$, each hyper-hyperedge set $P_i\in \mathcal{E}(\Phi^2)$ is a hyperedge-set set $P_i=\{S_{i,j}:j\in [1,a_i]\}$ with
\begin{equation}\label{eqa:555555}
S_{i,j}=\big \{\mathcal{E}_{i,j,s}\in \mathcal{E}\big (\Lambda^2\big ):s\in [1,b_{i,j}],~j\in [1,a_i]\big \}
\end{equation} and holds $\Phi=\bigcup _{S_{i,j}\in P_i}S_{i,j}$. We get hypergraphs $H^i_{yper}=(\Phi,P_i)$ for $i\in [1,n(\Phi)]$ with $n(\Phi)=|\mathcal{E}\big (\Phi^2\big )|$, and each hypergraph $H^i_{yper}=(\Phi,P_i)$ is called \emph{hyper-hypergraph}, or \emph{hyper$(2)$hypergraph}.\qqed
\end{defn}

We have defined hyper$(n)$hypergraphs for $n\geq 1$ by Definition \ref{defn:hyper-hypergraphs}.

\begin{defn} \label{defn:hyperedge-set-coloring-set-coloring}
\textbf{The vertex coloring of hypergraphs.} Let $H$ be a v-intersected graph of the hypergraph $\mathcal{H}_{yper}=(\Lambda,\mathcal{E})$ based on a 3I-hyperedge set $\mathcal{E}\in \mathcal{E}\big (\Lambda^2\big )$, so we have a hyperedge-set coloring $\varphi: V(H)\rightarrow \mathcal{E}$ defined in Definition \ref{defn:new-intersected-hypergraphss}, such that $\varphi(u)=e_u\in \mathcal{E}$ for each vertex $u\in V(H)$.

Since $H$ admits a proper vertex coloring $\theta:V(H)\rightarrow [1,k]$ holding $\theta(V(H))=[1,k]$, so each vertex $x\in e_u\cap \Lambda$ is colored with the color $\theta(u)\in [1,k]$ for each vertex $u\in V(H)$. For the vertex set $\Lambda$ of the hypergraph $\mathcal{H}_{yper}=(\Lambda,\mathcal{E})$, we get an ordinary set-coloring $\beta:\Lambda\rightarrow [1,k]^2$, such that each vertex $x\in \Lambda$ is colored with a subset $\beta(x)$ of the power set $[1,k]^2$, so $\max\{|\beta(x)|:x\in \Lambda\}\leq k$. We get a hyperedge set $\mathcal{E}^*=\{\beta(x):x\in \Lambda\}$, and the set-coloring $\beta$ is induced by the hyperedge-set coloring $\varphi$ and the proper vertex coloring $\theta$ of the v-intersected graph $H$, denoted as $\beta\prec (\varphi, \theta)$.\qqed
\end{defn}

By Definition \ref{defn:hyperedge-set-coloring-set-coloring}, we construct hyperedge $e_i=\{x:\beta(x)=i,x\in \Lambda\}$ for $i\in [1,k]$, and get a hyperedge set $\mathcal{E}^*=\{e_1,e_2,\dots ,e_k\}$. Clearly, $e_i\not \subseteq e_j$ fort $i\neq j$, and $\Lambda=\bigcup_{e_i\in \mathcal{E}^*}e_i$, as well as each hyperedge $e_i\in \mathcal{E}^*$ corresponds to another hyperedge $e_s\in \mathcal{E}^*\setminus e_i$ holding $e_i\cap e_s\neq \emptyset $. Hence, the hyperedge set $\mathcal{E}^*\in \mathcal{E}\big (\Lambda^2\big )$ is really a 3I-hyperedge set.

\begin{defn} \label{defn:111111}
Let $\Lambda(t)=\{x_{1}(t),x_{2}(t),\dots ,x_{m(t)}(t)\}$, and let $\mathcal{E}(t)$ be a hyperedge set of some subsets of the power set $\Lambda^2(t)$ with $t\in [a,b]$. If the hyperedge set $\mathcal{E}(t)$ is a 3I-hyperedge (IOI-hyperedge) set, namely, $\mathcal{E}(t)\in \mathcal{E}\big (\Lambda^2(t)\big )$, we get a \emph{dynamic hypergraph} $\mathcal{H}_{yper}(t)=(\Lambda(t),\mathcal{E}(t))$ and a \emph{dynamic v-intersected graph} $G(t)$ determined by the dynamic hypergraph $\mathcal{H}_{yper}(t)$. If the hyperedge set $\mathcal{E}(t)$ holds $e\not \in e\,'\in \mathcal{E}(t)\setminus e$ for each hyperedge $e\in \mathcal{E}(t)$, and moreover each hyperedge $e_i\in \mathcal{E}(t)$ corresponds to another hyperedge $e_j\in \mathcal{E}(t)\setminus e_i$ holding $e_i\cap e_j\neq \emptyset$, but $\Lambda(t)\neq \bigcup _{e\in \mathcal{E}(t)}e$, then we get a \emph{dynamic general hypergraph} $\mathcal{H}^{gen}_{yper}(t)=(\Lambda(t),\mathcal{E}(t))$.\qqed
\end{defn}

\vskip 0.4cm

At the end of this article, we recall the \emph{graph network} proposed by 27 scientists from DeepMind, GoogleBrain, MIT and University
of Edinburgh \cite{Peter-Battaglia-Jessica-et-al-arXiv-2018}, they argue that \emph{combinatorial generalization} must be a top priority for AI to achieve human-like abilities, and that structured representations and computations are key to realizing this objective. And they present a new building block for the AI toolkit with a \emph{strong relational inductive bias} --- the \emph{graph network} (or \emph{graph framework}, also, the v-intersected graph $H$ defined in Definition \ref{defn:p-property-hypergraphs}), which generalizes and extends various approaches for Neural Networks that operate on graphs, and provides a straightforward interface for manipulating \emph{structured knowledge} and producing \emph{structured behaviors}. For example, the coloring set $\Lambda_{C}=\{f_1,f_2,\dots ,f_n\}$ defined in Definition \ref{defn:coloring-graph-set-hypergraph} can be considered as a \emph{reasoning function set} introduced in \cite{Peter-Battaglia-Jessica-et-al-arXiv-2018}. The $K_{tree}$-spanning lattice shown in Eq.(\ref{eqa:K-tree-spanning-lattice}), the vertex-coinciding tree-lattice shown in Eq.(\ref{eqa:vertex-coinciding-tree-lattice}) and the edge-joining tree-lattice shown in Eq.(\ref{eqa:edge-joining-tree-lattice}) are \emph{structured knowledge}.

\section*{Acknowledgment}

The author, \emph{Fei Ma}, was supported by the National Natural Science Foundation of China No. 62403381 and the Key Research and Development Plan of Shaanxi Province No.2024GX-YBXM-021. The author, \emph{Bing Yao}, was supported by the National Natural Science Foundation of China under grants No. 61163054, No. 61363060 and No. 61662066.

{\footnotesize

}

\end{document}